\documentclass[11pt,final]{article}
\usepackage{amsmath,amsthm,amsfonts,amssymb,graphicx,hyperref,enumerate,psfrag}
\usepackage[a4paper,margin=2.5cm]{geometry}
\usepackage{fullpage}
\usepackage{enumitem}
\usepackage{algorithm}
\usepackage{algorithmic}
\usepackage{nicefrac}
\usepackage{tikz}

\usepackage[utf8]{inputenc} 
\usepackage{stmaryrd}

\newtheorem{definition}{Definition}
\newtheorem{lemma}{Lemma}[section]

\newtheorem{remark}{Remark}

\newtheorem{proposition}[lemma]{Proposition}
\newtheorem{theorem}[lemma]{Theorem}

\newtheorem{corollary}[lemma]{Corollary}

\newcommand{\dE}{\mathbb {E}}
\newcommand{\dP}{\mathbb {P}}

\newcommand{\dN}{\mathbb {N}}

\newcommand{\cE}{\mathcal {E}}
\newcommand{\cF}{\mathcal {F}}

\newcommand{\cT}{\mathcal {T}}

\newcommand{\bmu}{{\boldsymbol{\mu}}}

 %
 %

\newcommand{\1}{1\!\!{\sf I}}

\newcommand{\Var}{\text{Var}}

\newcommand{\Pmu}{\mathrm P_{\bmu}}
\newcommand{\Emu}{\mathrm E_{\bmu}}
\newcommand{\bP}{\mathbf{P}}
\newcommand{\bE}{\mathbf{E}}
\newcommand{\bnu}{\boldsymbol \nu}

\usepackage{xcolor}

\numberwithin{equation}{section}

\title{Scaling limit of critical random trees in random environment}
\author{Guillaume Conchon-\,\hspace{-0.3mm}-Kerjan\thanks{Department of Mathematical Sciences, University of Bath, Claverton Down, BA2 7AY Bath, UK.\newline Email: \texttt{galck20/d.kious/c.mailler@bath.ac.uk}}  \thanks{GCK and DK are grateful to EPSRC for support through the grant EP/V00929X/1.} \and Daniel Kious\footnotemark[1] \footnotemark[2]  \and C\'ecile Mailler\footnotemark[1] \thanks{CM is grateful to EPSRC for support through the fellowship EP/R022186/1.}}
\date{}
\begin{document}
\maketitle

\begin{abstract}
We consider Bienaym\'e-Galton-Watson trees in random environment, 
where each generation $k$ is attributed a random offspring distribution $\mu_k$, and $(\mu_k)_{k\geq 0}$ is a sequence of 
independent and identically distributed random probability measures.
We work in the ``strictly critical'' regime where, for all $k$, 
the average of $\mu_k$ is assumed to be equal to~$1$ almost surely, 
and the variance of $\mu_k$ has finite expectation.
We prove that, for almost all realizations of the environment 
(more precisely, under some deterministic conditions that the random environment satisfies almost surely),
the scaling limit of the tree in that environment, conditioned to be large, 
is the Brownian continuum random tree.
The habitual techniques used for standard Bienaym\'e-Galton-Watson trees, or trees with exchangeable vertices, do not apply to this case. Our proof therefore provides alternative tools.
\end{abstract}
%
%
%


\section{Introduction}
In this paper, we consider Bienaym\'e-Galton-Watson trees in random environment, in the sense that each generation $k\ge0$ is assigned a random offspring distribution $\mu_k$ sampled from a common measure $\Lambda$, independently over $k$. 
Throughout the paper, we work in the strictly-critical case, i.e.~we assume that, for all $k\ge 1$,  the average of $\mu_k$ is equal to 1 almost surely. 
However, we allow the variance $\sigma_k^2$ to be a non-degenerate random variable.
See Section \ref{sec:def} for a precise definition of the model.

We prove that, if the expectation of $\sigma_k^2$ is finite, then a.s., the environment $\bmu=(\mu_k)_{k\geq 0}$ is such that the scaling limit of the tree in environment $\bmu$, conditioned to have at least $n$ vertices with $n\rightarrow \infty$, is Aldous' continuum random tree, see Theorem \ref{th:main}.

The main technical challenge is that standard techniques, 
developed in the case of a constant environment, do not apply here, 
because the classical exploration algorithms become more difficult to analyse. 
For instance, the exploration process (in depth-first order) is a Markovian random walk when the environment is constant, whereas it becomes a non-Markovian process in our case. 
This is due to the fact that the offspring distribution of the $n$-th vertex (in depth-first order) 
depends on its height, 
which in turn depends on the whole exploration up to that step, 
see Section \ref{sub:novelty}. Another challenge posed by the varying environment is that degrees are not exchangeable, as they do not appear with the same probability at different heights. Hence arguments resorting to exchangeability, that have in particular been developed in the literature of random trees with prescribed degrees (see e.g.~\cite{AddarioDonderwinkel, AldousI, BlancRenaudie, BroutinMarckert}), are not relevant in our setting. 
Other approaches 
developed in recent papers on the width of a tree in varying environment
at a given large height and genealogical properties below that height 
(see~\cite{HarrisPalauPardo, BoenkostFRSchertzer} for the latter
and Section~\ref{sec:disc} for more references)
do not seem to be of use when trying to characterise the structure of the entire tree, and thus its scaling limit.
%
%
%
%
Consequently, we provide an alternative approach for proving scaling limits of random trees. 

Our model is naturally related to Bienaymé-Galton-Watson trees in varying environment, 
which are a generalisation of Bienaymé-Galton-Watson trees in which the offspring distribution
varies from generation to generation according to a deterministic sequence of distributions. These can be thought of as a quenched version of random trees in random environment. 
In Section \ref{sub:ideas}, we provide deterministic conditions on the environment (which are a.s.\ satisfied by our random environment) 
for our main result to hold. 

Both these models, where the offspring distribution depends on the generation, 
are natural extensions of the original Bienaymé-Galton-Watson trees.
Indeed, they provide a framework that takes into account 
the possibility of a variation in the reproduction rate of a population depending, for example, 
on seasons (to simplify, one can think of a population reproducing faster in the summer and slower in the winter).
Therefore, Bienaym\'e-Galton-Watson trees in random environment or in varying environment have been extensively studied in the literature.
We refer the reader to Kersting and Vatutin's book~\cite{KerstingVatutinBook} 
for a comprehensive review of the state of the art on these processes;
more discussions will follow in Section~\ref{sec:disc}.

\subsection{Definition of the model and notation} \label{sec:def}

In this section, we define Bienaym\'e-Galton-Watson trees in varying environment and in random environment,  abbreviated respectively BPVE and BPRE for branching processes in varying environment and in random environment.

All the probability measures and random variables below are defined on a common measurable space $(\Omega, \mathcal{F})$.
An environment is a sequence $\bnu = (\nu_k)_{k\geq 0}$ of probability distributions 
on $\mathbb N = \{0, 1, \ldots\}$.
For all environments $\bnu$,
we define a collection of offspring random variables $(\xi_k^{(i)}, i\ge 1, k\ge 0)$ under a product measure $\mathrm P_{\bnu}$ so that these are independent and, for each $k\ge0$, $\xi_k^{(i)}$ has distribution $\nu_k$, for all $i\ge1$.
Under the measure ${\mathrm P}_{\bnu}$, we define a BPVE  in the environment $\bnu$, or $\bnu$-BPVE, recursively as follows: 
Let $Z_0 = z_0\in\mathbb{N}$ and, for all $k\geq 0$,
\[
Z_{k+1} = \sum_{i=1}^{Z_k} \xi_k^{(i)}.
\]
Note that $(\xi_k^{(i)})_{i\geq 1}$ above is a sequence of i.i.d.\ random variables of distribution $\nu_k$, independent of $Z_k$.
It is classical to interpret this process as the evolution of a population which has  $Z_k$ individuals at generation~$k$ and where each individual $i$, $1\le i\le Z_k$, has $\xi_i^{(k)}$  offspring in generation~$k+1$.

We now give the definition of a BPRE.
Let $\Lambda$ be a probability measure on $\mathcal P(\mathbb N)$, the set of probability measures 
on $\mathbb{N}$, and define the product measure ${\bf P}= \Lambda^{\otimes \mathbb N}$.
 We choose the random environment $\bmu=(\mu_k)_{k\ge0}$ to be distributed under ${\bf P}$, hence it consists of a sequence of i.i.d.~probability measures.
A branching process is a BPRE with environment distribution ${\bf P}$, or ${\bf P}$-BPRE, if it is a BPVE in the environment~$\bmu$ with~$\bmu$ a random variable of distribution $\bf P$. 

We let $\mathrm P_{\bmu}$ denote the {\it quenched} measure and ${\dP}$ the {\it annealed} measure of $\mathcal T^{\bmu}$.
 I.e., $\mathrm P_{\bmu}$ is the distribution of $\mathcal T^{\bmu}$ given $\bmu$, while $\dP$ is the unconditional distribution of $\mathcal T^{\bmu}$.
For all $k\ge0$, we define the average offspring and the variance at generation $k$ by 
\[
\bar\mu_k=\mathrm E_{\bmu}[\xi_k^{(1)}]
\quad\text{ and }\quad
\sigma^2_k=\mathrm E_{\bmu}\left[\left(\xi_k^{(1)}-\bar\mu_k\right)^2\right].
\]
Note that if $\mathbf P$ is the distribution of $\bmu$, 
then $((\bar\mu_k,\sigma_k^2))_{k\geq 0}$ is a sequence of i.i.d.\ random variables taking values in $[0,+\infty]^2$. 
In our main result, we assume that $\bar\mu_k = 1$ almost surely, 
and discuss this condition in Section \ref{subsec:critstrictcrit}.

Scaling limit theorems for random trees are classically expressed as convergence of metric spaces; therefore, we see a BPRE or a BPVE as a metric space, endowed with a measure. 
More precisely, we consider triplets of the form $(\mathcal{T}^{\bmu}, d_{\bmu}, \pi_{\bmu})$,
where $\mathcal T^{\bmu}$ is the set of all the nodes (individuals) created by that process, 
$d_{\bmu}$ is the distance induced by the graph distance on the family tree of the branching process, 
and $\pi_{\bmu}$ is the uniform distribution on the vertices, with total mass~1.

From now on, we may simply refer to $(\mathcal{T}^{\bmu},d_{\bmu},\pi_{\bmu})$ or $\mathcal{T}^{\bmu}$ as a BPRE or a $\bmu$-BPVE, often overlooking the metric space structure.

There is a natural metric on the Polish space of such compact measured metric spaces, 
called the Gromov-Hausdorff-Prokhorov metric, 
which is often used for scaling limits of random geometric objects, and in particular random trees.
For a definition of the Gromov-Hausdorff-Prokhorov metric, we refer the reader to, e.g., \cite[Section~2]{ABBGM17}, 
and the references therein. 

\subsection{Statement of the main result}

In this section, we state our main theorem. We are interested in the family tree of a BPRE, 
seen as a measured metric space, when this tree is large.
Let $\bmu$ be a sequence of distributions drawn under $\mathbf{P}$.
On the environment $\bmu$, for all $n\ge1$, we define the tree $(\mathcal{T}_n,d_n,\pi_n)$ as the $\bmu$-BPVE distributed under the conditional measure $\mathrm P_{\bmu}\left(\left. \ \cdot \ \right\vert |\mathcal{T}^{\bmu}|\ge n\right)$, where $|\cdot|$ denotes the cardinality of a set. 
As in the previous section, 
$\mathcal T_n$ is the set of the $\vert \mathcal T_n\vert$ individuals in the total population, 
$d_n$ is the distance induced by the graph distance on the family tree of the branching process, 
and $\pi_n$ is the uniform distribution on the vertices with total mass~1.

\begin{theorem}\label{th:main}
Assume that  
$\bar\mu_0=1$ $\mathbf{P}$-a.s.~and that $\sigma^2:={\bf E}[\sigma_0^2]\in(0,\infty)$.
Let $\bmu$ be a sequence of i.i.d.~distributions drawn under $\mathbf{P}$
and let $(\mathcal T_n, d_n,\pi_n)$ be a $\bmu$-BPVE.
For ${\bf P}$-almost all $\bmu$, 
\[\Big(\mathcal T_n, \frac1{\sigma\sqrt{\vert \mathcal T_n\vert} } d_n, \pi_n\Big) 
\stackrel{(d)}{\longrightarrow} (\mathcal T, d, \pi),\]
as $n\to\infty$,
on the set of compact metric spaces equipped with the Gromov-Hausdorff-Prokhorov metric,
and where $(\mathcal T, d, \pi)$ is Aldous' continuum random tree.
\end{theorem}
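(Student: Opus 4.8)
The plan is to prove convergence at the level of the coding functions, following the classical Aldous strategy but replacing the Markovian depth-first walk argument with a second-moment analysis that tolerates the non-Markovian behaviour. First I would fix a ``good'' environment $\bmu$: a realization lying in a deterministic set of full $\mathbf{P}$-measure on which the partial sums $\sum_{k=0}^{h-1}\sigma_k^2$ satisfy a law of large numbers, i.e.\ $\frac1h\sum_{k=0}^{h-1}\sigma_k^2 \to \sigma^2$, together with quantitative controls on the fluctuations and on the tails of the $\sigma_k^2$ (e.g.\ a maximal bound $\max_{k\le h}\sigma_k^2 = o(h)$, and uniform integrability of the offspring distributions along the environment). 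The section referenced as \texttt{sub:ideas} is presumably where these deterministic conditions are isolated; I would invoke exactly those.

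Next I would encode the conditioned tree $\mathcal T_n$ by its contour (or height/Łukasiewicz) process. The key structural identity is that the Łukasiewicz path of a $\bmu$-BPVE explored in depth-first order increments, at step $i$, by $\xi_{h_i}^{(j)}-1$ where $h_i$ is the \emph{height} of the $i$-th vertex — and $h_i$ is itself read off from the path (it is the number of strict ascending ladder-type records, or more precisely the current stack depth). So the walk is an autoregression: the law of the next increment depends on the past only through the current height. The main estimate is then a \textbf{quenched functional CLT for this height-dependent walk}: for the good environment $\bmu$, the rescaled Łukasiewicz path $\frac1{\sigma\sqrt n} S_{\lfloor nt\rfloor}$ converges to a Brownian motion. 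Because the variance seen at step $i$ is $\sigma_{h_i}^2$ rather than a constant, one shows that along the exploration the height $h_i$ is, with high probability, of order $\sqrt n$ and ranges over a window long enough that the averaging hypothesis $\frac1h\sum\sigma_k^2\to\sigma^2$ kicks in; hence the predictable quadratic variation $\sum_{i\le nt}\sigma_{h_i}^2$ concentrates around $\sigma^2 nt$. I would prove this via a martingale FCLT (Lindeberg for the jumps using the uniform-integrability part of the good-environment conditions, and an $L^1$ or $L^2$ estimate for the bracket), \emph{after} first establishing the needed a priori control on the height profile $h_i$ by a bootstrap: a crude bound on $\sup_{i\le n} h_i$ feeds into the bracket estimate, which upgrades to the FCLT, which in turn pins down the height process. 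The unconditioned statement is then transferred to the conditioned tree $\{|\mathcal T^{\bmu}|\ge n\}$ by the usual Vervaat-type / conditioning-to-be-positive argument, using that $\PP_{\bmu}(|\mathcal T^{\bmu}|=m)$ has the right polynomial decay (this requires a local-limit refinement of the FCLT, or alternatively a cycle-lemma/absolute-continuity argument comparing with the bridge).

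From the convergence of the Łukasiewicz path one extracts the convergence of the height process: here again the environment dependence enters, because the height function is recovered from the Łukasiewicz path by the Le Gall–Le Jan time-reversal formula, and the ``local slope'' of that transformation involves the offspring law at the relevant generation. I would show that the standard identity $H_i = \#\{j\le i : S_j = \min_{j\le \ell\le i} S_\ell\}$ still holds combinatorially (it is purely a property of depth-first order on plane trees), and that under the good-environment scaling the height process converges to $\frac{2}{\sigma}$ times the normalized Brownian excursion — the factor being dictated precisely by the limiting variance $\sigma^2$, which is why $\sigma$ appears in the normalization of $d_n$ in the statement. Tightness of the height process needs the usual modulus-of-continuity estimate, which I would get from moment bounds on increments of $S$ that are uniform over the good environment. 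Finally, convergence of the height process in $C[0,1]$ implies Gromov–Hausdorff–Prokhorov convergence of $(\mathcal T_n,\frac1{\sigma\sqrt{|\mathcal T_n|}}d_n,\pi_n)$ to the CRT by the standard continuity of the ``tree coded by a function'' map together with convergence of the empirical measure $\pi_n$ to the pushforward of Lebesgue measure, exactly as in Aldous/Le Gall–Duquesne; replacing the deterministic $\sqrt n$ by $\sqrt{|\mathcal T_n|}$ is harmless since $|\mathcal T_n|/n$ is tight away from $0$ and $\infty$ under the conditioning.

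\medskip
\noindent\textbf{Main obstacle.} The crux is the quenched FCLT for the height-indexed walk: one must rule out, on a full-measure set of environments, the possibility that the exploration spends an atypically long time at heights $h$ where the \emph{empirical} average $\frac1h\sum_{k<h}\sigma_k^2$ has not yet equilibrated (small $h$) or where a rare large $\sigma_k^2$ distorts the bracket. This is where the non-Markovianity bites, since the walk and its local time at a given height are dependent, and where the deterministic conditions extracted from the i.i.d.\ structure of $\bmu$ (a law of large numbers with a rate, plus a moment/tail bound on $\sigma_0^2$) have to be exactly strong enough — no more, since we only assume $\mathbf E[\sigma_0^2]<\infty$. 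I expect the proof to route this through a careful two-scale second-moment estimate on $\sum_{i\le n}(\sigma_{h_i}^2-\sigma^2)$ combined with an a priori diffusive bound on the height, closing the loop by a bootstrap as sketched above.
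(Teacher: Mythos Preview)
Your outline for the {\L}ukasiewicz path is essentially the paper's: martingale FCLT with a bracket-concentration argument $\frac1n\sum_{i\le n}\sigma_{H_i}^2\to\sigma^2$, preceded by a priori diffusive bounds on height and width of the explored forest. That part is fine.

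The proposal breaks down at the height process. You write that the identity $H_i=\#\{j\le i: S_j=\min_{j\le\ell\le i}S_\ell\}$ ``still holds combinatorially'' and that ``under the good-environment scaling the height process converges to $\tfrac{2}{\sigma}$ times the excursion''. The identity does hold, but you have not said how you would \emph{prove} the convergence from it. The Duquesne--Le~Gall argument you allude to proceeds by time-reversing the walk, so that records-from-the-future become records-from-the-past, and then uses that the post-record pieces are i.i.d. In varying environment both steps fail: $(X_n-X_k)_{0\le k\le n}$ does \emph{not} have the same law as $(X_k)_{0\le k\le n}$ (the increment at step $k$ depends on $H_k$, and reversing time destroys that relationship), and the excursions between records are not i.i.d.\ either. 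The paper explicitly identifies this as the obstacle and replaces the duality argument by a different mechanism: it exploits the relation $X_n-I_n=\sum_{k<H_n}\zeta_k(n)$ (number of right-of-spine siblings at each height), compares the spine of $\mathcal T$ with that of Kesten's size-biased tree $\mathcal T^*$, and uses an annealed law of large numbers for the $\zeta_k$'s to show that for most vertices $(X-I)/H$ is close to $\sigma^2/2$. A separate continuity argument then controls the exceptional vertices. Nothing in your sketch substitutes for this.

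Your conditioning step is also off. You propose a Vervaat-type or cycle-lemma transfer from the unconditioned walk to $\{|\mathcal T|\ge n\}$ (or even $=n$), possibly via a local limit theorem. The cycle lemma relies on exchangeability of the increments, which is precisely what is lost here; and a quenched local limit theorem for this non-Markovian walk under only $\mathbf E[\sigma_0^2]<\infty$ is far from available. The paper sidesteps this entirely: it works on an infinite forest, proves joint convergence of $(X-I,H)$ on the forest, and then \emph{extracts} the first long excursion, identifying its law with that of the tree conditioned on $|\mathcal T|\ge n$. This is why the statement conditions on $\ge n$ rather than $=n$; the latter is listed as an open problem.
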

Note that the result above is {\it quenched}, in the sense that it holds for almost all environment drawn under $\mathbf{P}$. In fact, we prove the convergence for all environment $\bmu$ satisfying the assumptions \ref{(I)}-\ref{(V)} presented in Section~\ref{sec:disc}, and prove that these conditions are satisfied $\bP$-almost surely.

Moreover, observe that, in Theorem \ref{th:main}, we assume that $\bar\mu_0=1$ $\mathbf{P}$-a.s., which puts us in the case called {\it strictly critical}. We discuss this assumption in Section \ref{subsec:critstrictcrit} below. In particular, this implies that $\mathbf{P}$-a.s., a $\bmu$-BPVE is a.s.~finite.

For the definition and properties of the continuum random tree (CRT), 
we refer to the seminal series of papers by Aldous~\cite{AldousI, AldousII, AldousIII}.
Recalling that ${\bf P}= \Lambda^{\otimes \mathbb N}$, and  choosing $\Lambda$ to be a  Dirac measure 
putting mass on a single probability distribution with finite variance, it is straightforward to see that all of our proofs apply to critical Bienaym\'e-Gatlon-Watson trees, and one can thus recover the classical result of Aldous~\cite{AldousIII} from Theorem \ref{th:main}.
Although our results can be seen as a generalisation of the classical Bienaym\'e-Galton-Watson case,
the classical proofs are not robust when the environment varies, and new ideas are needed. 
This is discussed in more details in Section~\ref{sub:novelty}.


\subsection{Critical and strictly critical BPREs}\label{subsec:critstrictcrit}
In Theorem \ref{th:main}, we give a scaling limit of a {\it critical} random tree conditioned to be large. Nevertheless, contrary to the case of usual Bienaymé-Galton-Watson trees, the notion of criticality for BPRE and BPVE is more delicate, as it is well-explained in~\cite{KerstingVatutinBook}.
The definition below provides two levels of criticality for a BPRE: in Theorem \ref{th:main}, we work in the strictly critical case. (Recall that, under $\mathbf{P}$, $\bmu$ is a sequence of i.i.d.~random distributions, and that $\bar\mu_k$ denotes the average of $\mu_k$.)

\begin{definition}\label{def:crit}
A BPRE of environment distribution $\mathbf{P}$ is called critical if ${\bf E}[\log\bar\mu_0] = 0$. 
It is called strictly critical if $\mu_0 = 1$ $\mathbf{P}$-almost surely.
\end{definition}
Note that a critical BPRE is eventually almost surely extinct,  
see ~\cite[Theorem~2.1]{KerstingVatutinBook} for the critical case, 
and \cite[Section~2.5]{KerstingVatutinBook} for the strictly critical case.
See \cite{KerstingVatutinBook} for a more properties of critical and strictly critical BPREs. 

The assumption of strict criticality made in Theorem \ref{th:main} is necessary. 
We now provide a short argument for the fact that the convergence to Aldous' CRT does not hold in the non-strictly critical case: 
By~\cite[Theorem 2.6]{KerstingVatutinBook}, the probability that a strictly-critical BPRE survives up to generation~$n$ is of order~$\nicefrac1n$, 
which is consistent with critical Bienaym\'e-Galton-Watson trees. 
In contrast, by~\cite[Theorem 5.1]{KerstingVatutinBook}, the probability that a non-strictly critical BPRE survives 
up to generation~$n$ is of order $\nicefrac1{\sqrt n}$. 
This extra large probability is due to the fact that the environment over the first~$n$ generations can be very favorable to survival by looking like a super-critical environment, 
which happens with probability $\nicefrac1{\sqrt n}$. 
Hence, one cannot expect that our theorem holds for non-strictly-critical BPREs.

\section{Sketch of proof and discussion}
In this section, we provide the structure of the proof of Theorem~\ref{th:main}, define the key objects needed, and explain why the classical techniques used for scaling limits of standard Bienaym\'e-Galton-Watson trees do not apply in varying environment. We start by a technical section providing the necessary tools and notation. Then, we describe the plan of the proof in Section~\ref{sub:novelty}. We develop our ideas and detail our technical assumptions in Section~\ref{sub:ideas}. We discuss the existing literature in Section~\ref{sec:disc} and open questions in Section~\ref{sub:open}.

\subsection{Orders on  forests and exploration processes}\label{sectorder}

%
%
Throughout the paper, we work on forests instead of single trees, 
as it is usually easier to observe a large tree occurring naturally in a forest, 
rather than conditioning a single tree to be large.

A forest $\mathcal F$ is a collection $(\mathcal T^{(i)})_{i\geq 1}$ 
of finite trees.
For a finite tree $T$ we often let $(Z_m(T))_{m\ge0}$ denote the sizes of its successive generations. We define the height of the tree as
\begin{equation}
h(T)=\max\{m\ge0: Z_m(T)>0\},
\end{equation} 
and the width of the tree as
\begin{equation}
w(T)= \max_{m\ge0} Z_m(T).
\end{equation}
Similarly, if $\mathcal{F}$ is made of finitely many trees, say $\ell$, we define the height and width of $\mathcal{F}$ by
\begin{equation}\label{def:heightwidth}\begin{split}
h(\mathcal{F})&=\max_{1\le i\le \ell} h(T_i)
\quad \text{ and }\quad
w(\mathcal{F})=\max_{m\ge0} \sum_{i=1}^{\ell} Z_m(T_i).
\end{split}
\end{equation}

When applied to a vertex $x$ of a tree $T$, $h(x)$ will denote the height of $x$ in $T$, i.e.~the graph distance from the root of $T$ to $x$.

Now, we define a lexicographical order on a forest $\mathcal{F}$, which is a labeling defined  as follows. We label the trees one by one, starting with $T^{(1)}$. Start by calling $v_{1,0}$ the root of the first tree $T^{(1)}$ of $\cF$, and $v_{i,1}$, $1\le i \le Z_1(T^{(1)})$, the children of this root. Let $k\ge0$ and assume that we have assigned a label to all the vertices at height at most $k$. If the generation $k+1$ is empty then we have labelled the whole tree. If not, assign the labels  $v_{i,k+1}$, $1\le i \le Z_{k+1}(T^{(1)})$, to the vertices at height $k+1$ of $T^{(1)}$ in such a way that, for all $i_1<i_2$, if $v_{j_1,k+1}$  and $v_{j_2,k+1}$ are offspring of $v_{i_1,k}$ and $v_{i_2,k}$ respectively then $j_1<j_2$. For $i\ge1$, once we have fully labelled the trees $T^{(j)}$ for $1\le j \le i$, we now label the tree $T^{(i+1)}$ in a similar fashion, using the label $\tilde{v}_{\ell,k}=v_{\theta_{i,k}(\ell),k}$, for $\ell\ge 1$ and $k\ge0$, where $\theta_{i,k}(\ell)=\ell+\sum_{j=1}^i Z_k(T^{(j)})$. This ordering is illustrated on Figure~\ref{fig:exploforest}. With this definition, $v_{i,j}$ is the $j$-th vertex (from left to right) at generation $i$.

\begin{figure}
\begin{center}
\includegraphics[width=13cm]{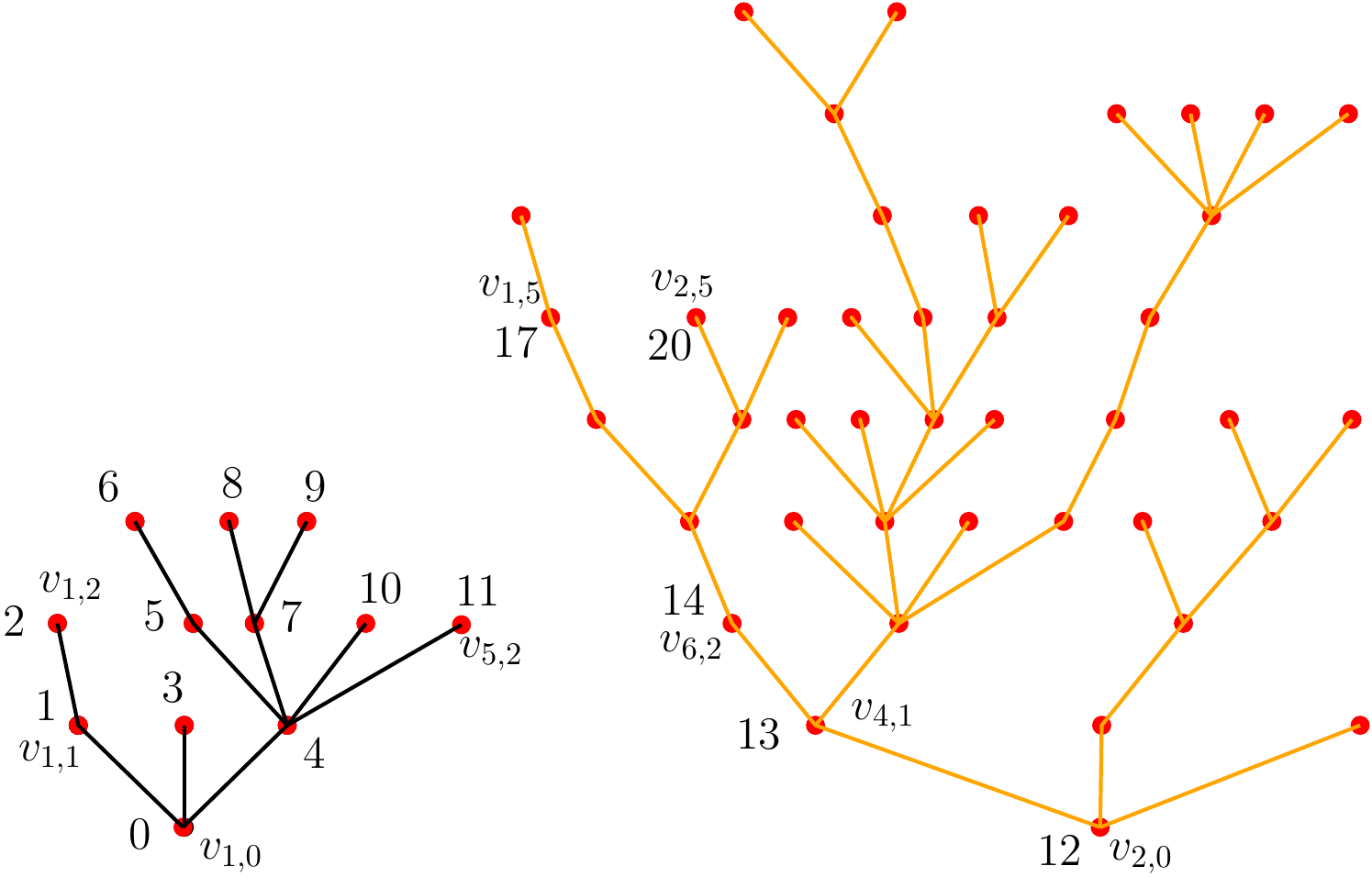}
\end{center}
\caption{A forest with its lexicographical order ($v_{i,j}$'s) and depth-first order (labels $0-14, 17$, and $20$).}
\label{fig:exploforest}
\end{figure}

Over most of the paper, we use another ordering of a forest: the depth-first order. 
This ordering is a priori unrelated to the lexicographical order above, 
but we define them jointly so that they satisfy some relations. 
They are both represented in Figure~\ref{fig:exploforest}. 
The depth-first order consists in assigning labels~$1, 2, 3, \ldots $ to the nodes of the forest
according to a depth-first search. 
The particularity here is that we do the depth-first search in such a way that, for $k\ge0$ and $1\le i<j$, the depth-first label of $v_{i,k}$ is smaller than the one of $v_{j,k}$, see Figure~\ref{fig:exploforest}.

\begin{remark} \label{deflexico} The lexicographical order gives access to a graphical construction of the forest $\mathcal{F}$. Indeed, in a discrete grid with coordinates $(i,k)$ for $i\ge1$ and $k\ge0$, put the node $v_{i,k}$ at position $(i,k)$. Let $\{\xi_{i,k}: i\ge1, k\ge0\}$ be a collection of independent random variables such that for all $i\ge1$ and $k\ge0$, $\xi_{i,k}$ has distribution $\mu_k$. We interpret $\xi_{i,k}$ as the number of children of $v_{i,k}$. Now, we construct the forest $\mathcal{F}$ by assigning $\xi_{i,k}$ edges between $v_{i,k}$ and nodes at height $k+1$ in a planar manner (i.e.~no two edges can cross each other), such that all nodes at height at least one have a parent (i.e.~we use all of them) and such that each connected component is a tree. Consequently, $(v_{i,k})_{i,k}$ corresponds to the lexicographical labelling of that forest, see Figure \ref{fig:forest} for another graphical representation. 
\end{remark}

We now define  the {\L}ukasiewicz path $(X_n)_{n\geq 0}$
and the height process $(H_n)_{n\geq 0}$ associated to the forest $\mathcal{F}$.
For all $n\geq 0$, let $L_n$ be the number of children of the node $n$, and let $H_n$ be its height, defined as the graph-distance from this node to the root of its tree. We let
\begin{align}\label{def:Luka}
X_n = \sum_{i=0}^{n-1} (L_i-1).
\end{align}

Last, we define the {\it forest explored at step $n$} as follows. Recall that we labelled the forest $\mathcal{F}$ in depth-first order, starting from 0. 
For $n\ge0$, let $i_n$ be the unique integer such that the node labelled $n$ belongs to $\mathcal{T}^{(i_n)}$. For all $n\ge0$, we define the explored forest at step $n$ as the finite forest
\begin{align}\label{def:expl_forest}
\mathcal{F}_n=\bigcup_{i=0}^{i_n} \mathcal{T}^{(i)}. 
\end{align}
In words, the explored forest $\mathcal{F}_n$ consists of the trees of $\mathcal{F}$ that have been explored or partially explored at step $n$. Note that by construction, the node $n$ is always included in $\mathcal{F}_n$. 

From now on, $\cF$ will denote an infinite sequence of i.i.d.~$\bmu$-BPVE, unless we state explicitely otherwise.

\subsection{Backbone of the proof}\label{sub:novelty}

As in the case of  Bienaym\'e-Galton-Watson trees, our proof relies on the fact that convergence for the Gromov-Hausdorff-Prokhorov metric is implied by
(and in fact equivalent to) convergence of the normalised height process to a Brownian motion 
(see, e.g.,
~\cite[Equation~(8)]{CKG}).
We prove the convergence in Theorem \ref{th:main} in three main steps:
\begin{enumerate}
\item convergence of the {\L}ukasiewicz path of the forest $\cF_n$ to a reflected Brownian motion;
\item convergence of the height process of $\cF_n$ to the same Brownian motion, up to a multiplicative constant;
\item deducing the convergence of one tree conditioned on having total population at least~$n$ from the convergence of the forest.
\end{enumerate}
Although these steps are identical to those taken in the classical proof, for constant environments, 
Steps 1.\ and 2.\ become much more involved when the environment is random.
Let us now describe a few difficulties one encounters when dealing with random environments.

\medskip
{\bf 1.\ Convergence of the {\L}ukasiewicz path.} As explained above, it is classical  to start by considering a forest, i.e.~a sequence of unconditioned copies of the random tree we want to study, and explore it depth-first using the {\L}ukasiewicz path defined in \eqref{def:Luka}.\\
The reason for introducing this process is that, in the case of a constant environment, this process is a random walk with i.i.d.~steps. Hence the step 1 above is a direct consequence of Donsker's invariance principle, and we obtain that the {\L}ukasiewicz path, properly normalised, converges to a Brownian motion.\\
In the random environment framework, this step is far from trivial. Indeed, the {\L}ukasiewicz path is still a random walk but its increments are no longer identically distributed or independent. More precisely, the distribution of the increment at a given step depends on the height of the vertex being explored, which depends on the whole past trajectory of the exploration. Thus the {\L}ukasiewicz path is no longer Markovian.

To prove convergence of the rescaled {\L}ukasiewicz path,
we use a martingale convergence theorem from~\cite[Theorem 2.1(ii)]{Whitt}, 
which is an extension of~\cite[Theorem~7.1]{EthierKurtz}.
For this theorem to apply, we need to control the global shape of the explored forest: in Section~\ref{sec:basic}, we prove that the height and width of the explored forest at step $n$ are both of order $\Theta(\sqrt{n})$. We also need that, $\bmu$-almost surely,
\begin{equation}\label{eq:crucial1}
\frac1n\sum_{i=1}^n \sigma^2_{H_i} \to \sigma^2,
\end{equation}
in probability as $n\to\infty$, recalling that $\sigma^2_k$ is the variance of $\mu_k$, for all $k\geq 0$.
For a non-varying environment, the limit above would actually be a straightforward equality, while in the case of a varying environment the sum above corresponds to the sum of the variances collected along a random path whose trajectory itself depends on those variances. Here, considering the explored forest is not enough anymore and we need to track the exact position in the forst of the node with depth-first label $n$. Establishing \eqref{eq:crucial1} is far from trivial and is the most tedious step of the proof.

\medskip
{\bf 2.\ Convergence of the height process.} 
After having proved convergence of the normalised {\L}ukasiewicz path to a reflected Brownian motion,
one needs to prove that the height process 
also converges to a Brownian motion. Recall that the height process $H_n$ at step $n$ was defined above \eqref{def:Luka} as the height of the node $n$, i.e.~its graph-distance to the root of its tree.
In the classical case of non-varying environments, this is proved in \cite[Theorem 2.2.1]{DuquesneLeGall}  by noting that:
\begin{enumerate}
\item[(a)] the height of the $n$-th node is equal to the number 
of running minima up to time~$n$ of the {\L}ukasiewicz path $(X_k)_{k\geq 0}$;
\item[(b)]   $(X_n-X_k)_{0\leq k\leq n}$ has the same distribution as $(X_k)_{0\leq k\leq n}$, for all $n\geq 0$;
\item[(c)]  consequently, the height of the $n$-th node has the same law as the number of running maxima (starting from time 0) of $(X_n-X_k)_{0\leq k\leq n}$;
\item[(d)]  the pieces of trajectory of $(X_k)$ between two maxima are i.i.d.;
\item[(e)]  the number of records of $(X_k)_{0\leq k\leq n}$ is asymptotically proportional to $X_n$ with high probability by the Law of Large Numbers.
\end{enumerate}
Crucially, the Steps (b) and (d) both fail in the case of a varying environment. 
\\
Fortunately, there is another link between $X_n$ and $H_n$, as $X_n$ counts the number of unexplored children of each ancestor of the $n$-th vertex (we detail this in the next section, see Figure~\ref{fig:spine}). It turns out that the offspring of the ancestor of the $n$-th vertex at height $k$ has a distribution close to the size-biased version of $\mu_k$, almost independently for different heights. From this, we can derive a Law of large numbers in the annealed environment - hence for almost all realisations of the environment. The combination of this link between $X$ and $H$ with a decomposition of the tree along a spine with size-biased degrees  is used in~\cite{BroutinMarckert} for trees with a prescribed degree sequence (and to some extent in~\cite{AddarioDevroyeJanson} for Galton-Watson trees, in particular in the proof of Theorem 1.2). Compared to both these works, a major additional difficulty in our case is that degrees are not exchangeable, due to the fact that the distribution of the degree of a vertex is conditioned by its height.

\medskip
{\bf 3.\ The tree conditioned to have at least $n$ vertices.}
So far, we have worked on the forest $\mathcal{F}$ where  trees of all sizes naturally appear, but now we would like to obtain information on a single tree conditioned to be large. This is done by extracting the largest excursions of the height process, corresponding to the largest trees of the forest $\cF_n$. The method developed in \cite{Aldous} and \cite{AldousLimic} in the case of non-varying environments still applies in our case. 

%
%

\subsection{Auxiliary results and conditions for varying environments}\label{sub:ideas}
In this section, we give more details on the steps 1 and 2 explained in Section \ref{sub:novelty} above. 
We start by giving deterministic conditions \ref{(I)}-\ref{(V)} on the varying environment, and we will then explain how they imply Theorem~\ref{th:main}. We will show later in Lemma~\ref{lem:deterministicconditions} that these conditions are satisfied $\bP$-a.s.~by a strictly critical BPRE. \\

We need a few more definitions in order to state our conditions. Let  $\bmu$ be a deterministic strictly critical environment, that is, for all $k\ge0$, $\mu_k$ has expectation 1 and variance $\sigma^2_k$. In that case, we define
\[
\sigma^2=\liminf_{n\to\infty}\frac{1}{n}\sum_{k=0}^{n-1}\sigma^2_k.
\]
For all $k\ge0$, let $\bar \xi_k$ be distributed according to the size-biased version of $\mu_k$, 
i.e.~for all $i\ge0$, $\Pmu(\bar \xi_k = i) = i\mu_k(\{i\})$. For all $k\ge0$, let $\zeta_k$ be a uniform random variable in $\{0, \ldots, \bar \xi_k-1\}$. Impose the independence of the family $(\zeta_k)_{k\geq 0}$.
{Following~\cite{KerstingVatutinBook}, we define, for all $k\geq 0$ and $s\in [0,1)$,
\begin{equation}\label{eq:phik}
\varphi_k(s)=\frac{1}{1-f_k(s)}-\frac{1}{1-s},
\end{equation}
where $f_k$ is the generating function of $\mu_k$, and we set $\omega_k(\varepsilon):=\sup_{1-\varepsilon\leq s\leq t< 1}\vert \varphi_k(s)-\varphi_k(t)\vert$} for all $\varepsilon>0$.\\
We are now ready to define our five conditions on the environment $\bmu$:
\begin{enumerate}[label=(\Roman*)]
\item \label{(I)}\textbf{Averaging of the variance along generations.} We have that $\sigma^2>0$ and
\[
\frac{\sigma_0^2+\ldots +\sigma_{n-1}^2}{n}\underset{n\rightarrow +\infty}{\longrightarrow} \sigma^2;
\]
\item \label{(II)} \textbf{Non-degeneracy of the offspring distributions.} There exists a constant $c>0$  such that, for all $0\leq a<b$, 
\[
\liminf_{k\rightarrow +\infty}\frac{\mu_{\lfloor ak\rfloor}(\{0\})+\ldots+ \mu_{\lfloor bk\rfloor}(\{0\})}{(b-a)k}\geq c;
\]
\item \label{(III)}\textbf{Control of the large degrees I.}  There exists a sequence $(h_n)_{n\geq 1}$ such that we have $\lim_{n\rightarrow +\infty}h_n=+\infty$ and such that, for all $\varepsilon >0$, 
\[
\frac{1}{\sqrt{n}}\sum_{k=0}^{\lfloor h_n\sqrt{n}\rfloor}\mathrm E_{\bmu}[\xi_{k}^2\mathbf{1}_{\{\xi_{k}^2\geq \varepsilon n\}}]\underset{n\rightarrow +\infty}{\longrightarrow} 0,
\]
where $\xi_{k}\sim \mu_k$ for all $k\ge 0$; 
\item \label{(IV)} \textbf{Position of the spine.} We have that, $\Pmu$-a.s.,
\[
\frac{1}{n}\sum_{k=0}^{n-1}\zeta_k\underset{n\rightarrow +\infty}{\longrightarrow}\frac{\sigma^2}{2},
\]
where $\sigma^2$ is given by \ref{(I)};
\item \label{(V)} \textbf{Control of the large degrees II.} We have that
\[
\lim_{\varepsilon\rightarrow 0} \limsup_{n\rightarrow +\infty}\frac{1}{n}\sum_{k=0}^n\omega_k(\varepsilon) =0.
\]
\end{enumerate}

We comment below on Conditions \ref{(I)}-\ref{(V)} and in particular explain how they imply our main result.

\noindent
{\bf 1.\ Convergence of the {\L}ukasiewicz path.} 
This part uses only Conditions \ref{(I)}, \ref{(II)} and \ref{(III)}. Condition \ref{(I)} ensures that a high tree should not have inhomogeneities between macroscopic slices of different heights (which might prevent convergence towards a self-similar object like the Continuum Random Tree). 
Without Condition \ref{(II)}, there could be too many offspring distributions close to a Dirac mass at~$1$. 
This would mean that the tree is likely to be long and thin as many of its vertices would have exactly one child. 
Condition \ref{(III)} prevents large degrees, which would create jumps in the scaling limit of the {\L}ukasiewicz path, in a rectangle of height and width of order at least $\sqrt{n}$ that encompasses $\cF_n$. Note that Condition \ref{(III)} is not implied by Conditions \ref{(I)} and \ref{(II)} in general.


As a first step in the proof of Theorem~\ref{th:main}, we show that the {\L}ukasiewicz path converges, after normalisation, to a Brownian motion.

\begin{theorem}\label{thm:Luka}
For any environment $\bmu$ satisfying \ref{(I)}, \ref{(II)} and \ref{(III)}, we have that, under $\Pmu$,
\[
\left(\frac{X_{\lfloor nt\rfloor}}{\sqrt{n}}\right)_{0\leq t\leq 1}\overset{(d)}{\longrightarrow}(\sigma B_t)_{0\leq t\leq 1}\]
as $n\rightarrow \infty$ in the Skorokhod space $D([0,1])$, where $B$ is a standard Brownian motion.
\end{theorem}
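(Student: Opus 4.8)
The plan is to apply the martingale functional central limit theorem, in the form of~\cite[Theorem~2.1(ii)]{Whitt}, to the rescaled {\L}ukasiewicz path $X^{(n)}_t := X_{\lfloor nt\rfloor}/\sqrt n$. Recall that $X_n = \sum_{i=0}^{n-1}(L_i-1)$, where $L_i$ is the number of children of the depth-first node $i$, which has height $H_i$; conditionally on the height, $L_i$ has mean $\bar\mu_{H_i}=1$ and variance $\sigma^2_{H_i}$. Let $\cF_n$ be the natural filtration generated by the exploration up to step $n$ (so $H_n$ is $\cF_n$-measurable, but $L_n$ is not). Then $X_n$ is an $\cF_n$-martingale: $\EE[X_{n+1}-X_n\mid\cF_n]=\bar\mu_{H_n}-1=0$ by strict criticality. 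Its predictable quadratic variation is $\langle X\rangle_n = \sum_{i=0}^{n-1}\sigma^2_{H_i}$. The theorem then reduces the convergence $X^{(n)}\Rightarrow \sigma B$ to two ingredients: (i) the rescaled quadratic variation converges, i.e.\ $\frac1n\sum_{i=0}^{\lfloor nt\rfloor-1}\sigma^2_{H_i}\to \sigma^2 t$ in probability for each $t\in[0,1]$, which is exactly~\eqref{eq:crucial1} (with the extra time parameter $t$, obtained by the same argument applied to $\lfloor nt\rfloor$); and (ii) a conditional Lindeberg condition, namely that for each $\varepsilon>0$, $\frac1n\sum_{i=0}^{\lfloor nt\rfloor-1}\EE[(L_i-1)^2\mathbf 1_{\{|L_i-1|\ge\varepsilon\sqrt n\}}\mid\cF_i]\to 0$ in probability.

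For ingredient~(ii), I would bound the Lindeberg sum by first controlling the heights visited. By the global-shape estimates announced in Section~\ref{sec:basic} (height and width of $\cF_n$ both $\Theta(\sqrt n)$), with probability tending to one all heights $H_i$ for $i\le n$ are at most $h_n\sqrt n$ for a suitable slowly-growing sequence $h_n\to\infty$; on that event the Lindeberg sum is at most $\sum_{k=0}^{\lfloor h_n\sqrt n\rfloor}\EE_{\bmu}[(\xi_k-1)^2\mathbf 1_{\{|\xi_k-1|\ge\varepsilon\sqrt n\}}]$, where we have crudely bounded the number of visits to each height by the width. Since $(\xi_k-1)^2\le 2\xi_k^2+2$ and the truncation forces $\xi_k$ large, this is controlled (up to a harmless change of $\varepsilon$ and lower-order terms) by $\frac1{\sqrt n}\sum_{k=0}^{\lfloor h_n\sqrt n\rfloor}\EE_{\bmu}[\xi_k^2\mathbf 1_{\{\xi_k^2\ge\varepsilon' n\}}]$, which vanishes by Condition~\ref{(III)}. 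I would also record that Condition~\ref{(III)} together with the height/width bounds gives the technical "jump size goes to zero" hypothesis of~\cite{Whitt} (the maximal jump $\max_{i\le nt}|X^{(n)}_{i/n}-X^{(n)}_{(i-1)/n}|\to 0$ in probability), so that the limit has continuous paths. Where do~\ref{(I)} and~\ref{(II)} enter? Condition~\ref{(I)} is used inside the proof of~\eqref{eq:crucial1} to identify the limit of the quadratic variation as $\sigma^2 t$ with $\sigma^2>0$; Condition~\ref{(II)} is used, again via Section~\ref{sec:basic}, to guarantee that the explored forest does not become pathologically tall and thin, which is what makes the $\Theta(\sqrt n)$ height/width control (and hence the reduction above) valid.

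The main obstacle is clearly establishing~\eqref{eq:crucial1}, i.e.\ that $\frac1n\sum_{i=0}^{\lfloor nt\rfloor-1}\sigma^2_{H_i}$ concentrates around $\sigma^2 t$. This is a law of large numbers for an additive functional sampled along the random sequence of heights $(H_i)_{i\ge0}$, and the difficulty is precisely that this sequence is not Markovian and its law depends on the very quantities $\sigma^2_k$ being averaged. The strategy I would use is to replace the depth-first height sequence by something more tractable: exploit the fact (developed in the next subsection, around Figure~\ref{fig:spine}) that $X_n$ equals the number of unexplored children hanging off the ancestors of node $n$, so that the spine of node $n$ carries near-size-biased, nearly-independent degrees across heights; then a Fubini/annealed argument shows that the empirical distribution of heights $\frac1n\sum_{i<n}\delta_{H_i/\sqrt n}$ converges, and combining this with Condition~\ref{(I)} (uniform averaging of $\sigma^2_k$ over blocks) and a variance bound to go from annealed to quenched yields the claim for $\bP$-almost every $\bmu$. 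Carrying this out rigorously is the technical heart of the paper; once~\eqref{eq:crucial1} and the Lindeberg bound above are in hand, Theorem~\ref{thm:Luka} follows by a direct invocation of~\cite[Theorem~2.1(ii)]{Whitt}.
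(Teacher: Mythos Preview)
Your overall framework is right and matches the paper: apply the martingale functional CLT of \cite[Theorem~2.1(ii)]{Whitt} to $X^{(n)}$, reducing to convergence of the predictable quadratic variation (which is exactly~\eqref{eq:crucial1}) and a jump/Lindeberg-type bound handled via the height--width estimates of Section~\ref{sec:basic} together with Condition~\ref{(III)}. The paper adds one technical device you omit: it first stops the martingale at the exit time $\tau_n$ from a box of height $h_n\sqrt n$ and width $w_n\sqrt n$ with $h_n,w_n\to\infty$ slowly (Corollary~\ref{bigrectangle}, Proposition~\ref{mart}); this is what makes the reduction to Condition~\ref{(III)} clean and turns the maximal-jump condition into a genuine $L^1$ statement (Lemma~\ref{mart2}).

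Where your proposal departs from the paper, and where there is a gap, is in the sketch for~\eqref{eq:crucial1}. You suggest using the spine/size-biasing picture to show that the empirical height measure $\tfrac1n\sum_{i<n}\delta_{H_i/\sqrt n}$ converges, followed by an ``annealed to quenched'' transfer. Two problems. First, the spine decomposition (Section~\ref{subsec:spinedecompoheight}) describes the ancestry of a single vertex and is used in the paper only later, to relate $X_n-I_n$ to $H_n$; it does not give direct access to the empirical distribution of the heights $(H_i)_{i<n}$, and no short path from one to the other is indicated. Second, Theorem~\ref{thm:Luka} is stated for \emph{any} deterministic $\bmu$ satisfying \ref{(I)}--\ref{(III)}, so there is no annealed measure to average over; a purely quenched argument under $\Pmu$ is required. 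The paper's actual proof of~\eqref{eq:crucial1} (Lemma~\ref{mart3}) is a coarse-graining: partition the forest into mesoscopic blocks $B^n_{i,k}$ of height $\gamma\sqrt n$ and width $\delta\sqrt n$; use Doob's $L^2$ inequality and Condition~\ref{(I)} to show each block's total weight $W_{i,k}=\sum_{v\in B^n_{i,k}}\sigma^2_{h(v)}$ is close to $\sigma^2|B^n_{i,k}|$ with high probability; bound the number of bad blocks (binomial tails), the boundary blocks crossed by the exploration (at most one per height-layer), and the maximal block size; then sum. No spine and no annealed reasoning enter.
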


We prove Theorem~\ref{thm:Luka} in two steps, 
focussing on $\cF_n$, the forest $\cF$ restricted to the first $n$ vertices explored. 
We start by controlling the scaling of $\cF_n$: 
we show in Proposition~\ref{prop:firstbounds} that,under \ref{(I)} and \ref{(II)}, 
$\cF_n$ has height and width (the size of the largest generation) of order $\Theta(\sqrt{n})$. 
Then, we bootstrap this estimate. 
We use a martingale convergence theorem from~\cite[Theorem 2.1 $(ii)$]{Whitt}, 
which is an extension of~\cite[Theorem~7.1]{EthierKurtz}.
This theorem gives convergence to a Brownian motion under three relatively weak conditions:  the first condition of~\cite[Theorem 2.1 $(ii)$]{Whitt} holds if the largest degree in $\cF_n$ is $o(\sqrt{n})$, ensured in our case by Condition~\ref{(III)};
the second condition holds if one can control the maximal variance of a degree, which is implied by Condition~\ref{(I)};
the third and last condition follows from \eqref{eq:crucial1}, which is established in Lemma~\ref{mart3}. In the proof,  we divide the tree vertically and horizontally into mesoscopic boxes and control the behavior of the forest on those boxes. While this may sound as a discretized version of a proof that would be easier or more elegant on the scaling limit, we have not found such a proof (referring e.g.~to the limit given by the generation sizes of a forest of $m$ trees, with $m\rightarrow\infty$, is a Feller diffusion, see the discussion in Section~\ref{sec:disc}). In particular, one difficulty is that $\cF_n$ cuts with high probability the last tree, and that the latter has a typical volume $\Theta(n)$, as the tree containing the $n$-th vertex has a much higher probability to be large than an unconditionned tree. In other words, we would need to cut a Feller diffusion on the right, along a curve that seems difficult to handle (corresponding to the ancestry line of the $n$-th vertex of $\cF$ in the discrete setting), so that the left part has a fixed volume. This complicates significantly the matter. On the other hand, if we stop the exploration of $\cF$ after $n$ trees have been fully seen (denote $t_n$ the corresponding stopping time), then many limit theorems, including the one of~\cite{Whitt}, can not be used. Moreover, controlling the random variations of $t_n$ is not an easy task.

Below, we state the following corollary of Theorem~\ref{thm:Luka} and the continuous mapping theorem, proved in Section \ref{sec:Luka}.
Recall that the law of a Brownian motion reflected above zero is that of its absolute value and 
let $I_n:=\min_{j\leq n}X_j$, so that $X_n-I_n$ is the {\L}ukasiewicz path reflected above its running minimum. 

\begin{corollary}\label{cor:Luka}
Let $\bmu$ be a strictly critical environment satisfying Conditions \ref{(I)}, \ref{(II)} and \ref{(III)}. We have that, under $\Pmu$,
\[
\left(\frac{X_{\lfloor nt\rfloor}-I_{\lfloor nt\rfloor}}{\sqrt{n}}\right)_{0\leq t\leq 1}\overset{(d)}{\longrightarrow}(\sigma \vert B_t\vert)_{0\leq t\leq 1}
\]
as $n\rightarrow \infty$ in Skorokhod topology, where $B$ is a standard Brownian motion.
\end{corollary}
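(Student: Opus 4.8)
The plan is to deduce Corollary~\ref{cor:Luka} from Theorem~\ref{thm:Luka} by a standard application of the continuous mapping theorem, the only subtlety being the choice of a suitable functional and the verification that it is continuous at the relevant limit points of the Skorokhod space. First I would introduce the map $\Psi\colon D([0,1])\to D([0,1])$ defined by $\Psi(\omega)(t)=\omega(t)-\inf_{0\le s\le t}\omega(s)$, which sends a càdlàg path to its reflection above the running infimum, and observe that by \eqref{def:Luka} and the definition $I_n=\min_{j\le n}X_j$ we have, for $0\le t\le1$,
\[
\frac{X_{\lfloor nt\rfloor}-I_{\lfloor nt\rfloor}}{\sqrt n}=\Psi\!\left(\frac{X_{\lfloor n\,\cdot\,\rfloor}}{\sqrt n}\right)(t),
\]
using that the running minimum of the rescaled path at time $t$ is $I_{\lfloor nt\rfloor}/\sqrt n$ (the infimum over $s\in[0,t]$ of the step function $X_{\lfloor ns\rfloor}/\sqrt n$ is attained at one of the jump times and equals $\min_{j\le \lfloor nt\rfloor}X_j/\sqrt n$). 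Thus the rescaled reflected {\L}ukasiewicz path is exactly the image under $\Psi$ of the rescaled {\L}ukasiewicz path, which converges in distribution to $(\sigma B_t)_{0\le t\le 1}$ by Theorem~\ref{thm:Luka}.

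Next I would invoke the continuous mapping theorem: it suffices to check that $\Psi$ is continuous (for the Skorokhod $J_1$ topology) at every path in the support of the limit law, i.e.\ at continuous paths. This is classical — the reflection map is Lipschitz for the uniform norm, $\sup_t|\Psi(\omega_1)(t)-\Psi(\omega_2)(t)|\le 2\sup_t|\omega_1(t)-\omega_2(t)|$, and when the limiting path is continuous, $J_1$-convergence to it is equivalent to uniform convergence, so $\Psi(\omega_n)\to\Psi(\omega)$ uniformly, hence in $J_1$. Since $t\mapsto \sigma B_t$ is a.s.\ continuous, $\Psi$ is a.s.\ continuous at the limit, and the continuous mapping theorem yields
\[
\Psi\!\left(\frac{X_{\lfloor n\,\cdot\,\rfloor}}{\sqrt n}\right)\overset{(d)}{\longrightarrow}\Psi\big((\sigma B_t)_{0\le t\le1}\big)=\big(\sigma B_t-\sigma\inf_{0\le s\le t}B_s\big)_{0\le t\le1}.
\]
Finally, by Lévy's theorem the process $\big(B_t-\inf_{s\le t}B_s\big)_{0\le t\le 1}$ has the law of a Brownian motion reflected above zero, which (as recalled just before the statement) is the law of $(|B_t|)_{0\le t\le1}$; scaling by $\sigma$ commutes with the reflection, so the limit is $(\sigma|B_t|)_{0\le t\le1}$, as claimed.

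There is no real obstacle here: the proof is entirely a matter of identifying the reflection functional and quoting its continuity together with Lévy's identity. The only point requiring a line of care is the commutation of the discrete running minimum with the rescaling (so that $\Psi$ applied to the rescaled path genuinely produces $(X_{\lfloor nt\rfloor}-I_{\lfloor nt\rfloor})/\sqrt n$ and not some off-by-one variant), and the observation that continuity of $\Psi$ need only hold on the set of continuous functions, which has full measure under the limit law — both of which are standard. All the substantive work has already been done in Theorem~\ref{thm:Luka}.
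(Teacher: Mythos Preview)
Your proof is correct and follows essentially the same approach as the paper: define the reflection map $\omega\mapsto \omega-\inf_{s\le \cdot}\omega(s)$ on $D([0,1])$, observe its continuity, and apply the continuous mapping theorem to Theorem~\ref{thm:Luka}. You supply a bit more detail than the paper (continuity at continuous limit points, the explicit appeal to L\'evy's identity), but the argument is the same.
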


\noindent
{\bf 2.\ Convergence of the height process.} 
The second part of our proof is to deduce from Corollary~\ref{cor:Luka} the joint convergence of the (reflected) normalised {\L}ukasiewicz path and the height process to the same reflected Brownian motion, up to a multiplicative constant. For this purpose, we require all five Conditions \ref{(I)}-\ref{(V)}.

\begin{theorem}\label{thm:jointcvLukaHeight}
Let $\bmu$ be a strictly critical environment satisfying Conditions \ref{(I)}-\ref{(V)}.
Then, under $\Pmu$,
\[
\frac{1}{\sqrt{n}}(X_{\lfloor nt\rfloor}-I_{\lfloor nt\rfloor},H_{\lfloor nt\rfloor})_{0\leq t\leq 1}\overset{(d)}{\longrightarrow}\left(\sigma \vert B_t\vert,\frac{2}{\sigma}\vert B_t\vert\right)_{0\leq t\leq 1}
\]
as $n\rightarrow \infty$ in Skorokhod topology for both coordinates, where $B$ is a standard Brownian motion.
\end{theorem}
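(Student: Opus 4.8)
\textbf{Proof plan for Theorem \ref{thm:jointcvLukaHeight}.}
The plan is to build on Corollary~\ref{cor:Luka}, which already gives the convergence of the first coordinate, and to prove that $H_{\lfloor nt\rfloor}/\sqrt n$ converges to $(2/\sigma^2)(X_{\lfloor nt\rfloor}-I_{\lfloor nt\rfloor})/\sqrt n$ uniformly on $[0,1]$ in probability; since $(2/\sigma^2)\sigma|B_t| = (2/\sigma)|B_t|$, this yields the claimed joint limit (a uniform-in-probability comparison is enough, because Corollary~\ref{cor:Luka} supplies tightness and identifies the marginal, and the joint law is then forced). The key structural input is the ``spine'' identity mentioned in the sketch: if $x_n$ is the $n$-th vertex in depth-first order and $a_0,a_1,\dots,a_{H_n}$ are its ancestors (with $a_k$ at height $k$), then $X_n-I_n$ counts exactly the children of the $a_k$'s that lie strictly to the right of the spine and have not yet been explored, i.e.\ $X_n - I_n = \sum_{k=0}^{H_n-1} R_k^{(n)}$ where $R_k^{(n)}\in\{0,\dots,\xi(a_k)-1\}$ is the number of younger siblings of $a_{k+1}$. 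So the task reduces to showing that $\frac1{H_n}\sum_{k=0}^{H_n-1} R_k^{(n)} \to \sigma^2/2$ in a suitable uniform sense, together with the already-known fact that $H_n = \Theta(\sqrt n)$ from Proposition~\ref{prop:firstbounds}.

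Next I would describe the degree structure along the spine. Conditioning on the event that the exploration reaches a given vertex and that this vertex is an ancestor of the $n$-th vertex biases its offspring count towards the size-biased law: the degree $\xi(a_k)$ of the height-$k$ spine vertex is, up to corrections that are controlled by Condition~\ref{(V)} (the $\omega_k(\varepsilon)$ modulus, which is precisely the tool that quantifies how $\varphi_k$ — hence the size-bias correction — behaves), distributed approximately as the size-biased $\bar\xi_k$, and given $\xi(a_k)=\bar\xi_k$ the position of the spine among the children, hence $R_k^{(n)}$, is approximately uniform on $\{0,\dots,\bar\xi_k-1\}$, i.e.\ approximately $\zeta_k$. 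This is the step where non-exchangeability of degrees bites, since the law of $\xi(a_k)$ genuinely depends on $k$, and one cannot invoke the clean spine decompositions of~\cite{BroutinMarckert} or~\cite{AddarioDevroyeJanson}; instead one works in the annealed picture and uses that the pairs $(\bar\xi_k,\zeta_k)$ are independent across $k$. Then Condition~\ref{(IV)}, $\frac1n\sum_{k=0}^{n-1}\zeta_k\to\sigma^2/2$ a.s., is exactly the law of large numbers that makes $\frac1{H_n}\sum_{k<H_n}R_k^{(n)}\to\sigma^2/2$; Condition~\ref{(III)} (large degrees) is used to discard the contribution of atypically large $\xi(a_k)$ and to justify truncation, and Conditions~\ref{(I)}--\ref{(II)} are used through Proposition~\ref{prop:firstbounds} to know that the relevant heights stay in the range $[c\sqrt n, C\sqrt n]$ where all these asymptotics are valid.

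To turn the pointwise statement into a functional one, I would first establish the convergence for a fixed $t$ (say $t=1$, then any $t$ by the same argument applied to $\cF_{\lfloor nt\rfloor}$), then upgrade to uniform convergence on $[0,1]$: monotonicity is not available for $H$, so instead I would use tightness of $(H_{\lfloor nt\rfloor}/\sqrt n)_t$ in the Skorokhod space — which follows from the already-proven convergence of $(X_{\lfloor nt\rfloor}-I_{\lfloor nt\rfloor})/\sqrt n$ together with the crude bound $0\le H_n$ and a modulus-of-continuity estimate coming from the fact that between times where $X-I$ changes by little, $H$ also changes by little (this is where one needs a two-sided comparison: $H$ does not decrease faster than the spine can be unwound, which is again controlled by the $R_k^{(n)}$ being typically of constant order) — and then identify all finite-dimensional limits by the fixed-$t$ result plus the relation $H_{\lfloor nt\rfloor}\approx (2/\sigma^2)(X_{\lfloor nt\rfloor}-I_{\lfloor nt\rfloor})$, which being a continuous functional of the Łukasiewicz path transfers joint tightness into joint convergence. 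The main obstacle, as flagged in the sketch, is the approximate independence and size-biasing of the spine degrees across different heights in a non-exchangeable, height-dependent environment: making rigorous the statement ``$(\xi(a_k),R_k^{(n)})_{k}$ behaves like $(\bar\xi_k,\zeta_k)_k$'' requires a careful annealed computation (a spinal change-of-measure / many-to-one type argument adapted to varying environments) plus quantitative error control via $\omega_k(\varepsilon)$, and it is here that Condition~\ref{(V)} is indispensable and that the bulk of the technical work lies; the functional upgrade and the use of Conditions~\ref{(III)}--\ref{(IV)} for the law of large numbers are comparatively routine once that is in place.
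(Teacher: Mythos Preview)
Your overall architecture matches the paper's: reduce to showing $\sup_{k\le n}\big|H_k-\tfrac{2}{\sigma^2}(X_k-I_k)\big|=o(\sqrt n)$ in probability, exploit the spine identity $X_n-I_n=\sum_{k<H_n}R_k^{(n)}$, and use Condition~\ref{(IV)} as the law of large numbers for the $\zeta_k$'s. The paper implements the ``spine degrees are approximately size-biased with uniform position'' step by an explicit comparison with the Geiger tree $\cT^*$ (where this is \emph{exact}), then transfers to $\cT$ conditioned on large height via the absolute-continuity relation $\Pmu(\cT\stackrel{m}{=}T)=\Pmu(\cT^*\stackrel{m}{=}T)/Z_m(T)$; Condition~\ref{(V)} enters through the survival-probability estimates of Lemma~\ref{lem:heightsurvivalproba}, not as a modulus for a size-bias correction. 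Condition~\ref{(III)} plays no direct role in the height-process step beyond its use in Corollary~\ref{cor:Luka}.

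The genuine gap is in your functional upgrade. You claim the tightness of $H_{\lfloor n\cdot\rfloor}/\sqrt n$ is ``comparatively routine'' once the spine LLN is in place, and propose to control negative variations of $H$ via ``$H$ does not decrease faster than the spine can be unwound, which is controlled by the $R_k^{(n)}$ being typically of constant order''. This does not work: $H$ can drop by many levels in a single exploration step whenever several consecutive ancestors have $R_k^{(n)}=0$, and the typical size of an individual $R_k$ says nothing about long runs of zeros. The paper flags exactly this difficulty and \emph{avoids} controlling negative variations directly. Instead it proves Lemma~\ref{KingCharles}, which bounds only the \emph{positive} increments $\max_{i\le\varepsilon n}(H_{k+i}-H_k)$ uniformly in $k$, via a stopping-time argument (the times $\tau_k^{(n)}$ at which $H$ has climbed by $\gamma\sqrt n$) together with survival-probability bounds from Lemma~\ref{lem:heightsurvivalproba} (this is the second, equally important, use of Condition~\ref{(V)}). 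Negative variations are then handled indirectly: Proposition~\ref{prop:fewbadvertices} gives a dense set of ``good'' times $k_1<\dots<k_K$ with $k_{i+1}-k_i\le\varepsilon n$ and $|H_{k_i}-\tfrac{2}{\sigma^2}(X_{k_i}-I_{k_i})|$ small; continuity of the limit of $X-I$ forces $|H_{k_{i+1}}-H_{k_i}|$ small; and for $j\in[k_i,k_{i+1}]$ one bounds $H_{k_i}-H_j$ by $(H_{k_i}-H_{k_{i+1}})+(H_{k_{i+1}}-H_j)$, the second term being a positive increment from $j$ to $k_{i+1}$ controlled by Lemma~\ref{KingCharles}. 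Your sketch contains neither this lemma nor this sandwiching trick, and without them the passage from ``most vertices are good'' to uniform control collapses.
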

As explained in  Section~\ref{sub:novelty}, the classical approach used for Bienaym\'e-Galton-Watson trees  does not apply, since the law of the $n$-th increment of the {L}ukasiewicz path for $n\geq 0$ depends potentially on the whole past of the trajectory. However, there is still one Markovian object in the exploration: the spine between the $n$-th vertex and the root of its tree, keeping track of the number of offspring of each of its ancestors that have not yet been explored (the number of red vertices at each generation in Figure~\ref{fig:spine}). The sequence of successive spines for $n\geq 0$ is a discrete version of the L\'evy snake introduced in~\cite{LeGallsnake}. 

\begin{figure}
\begin{center}
\includegraphics[width=14cm]{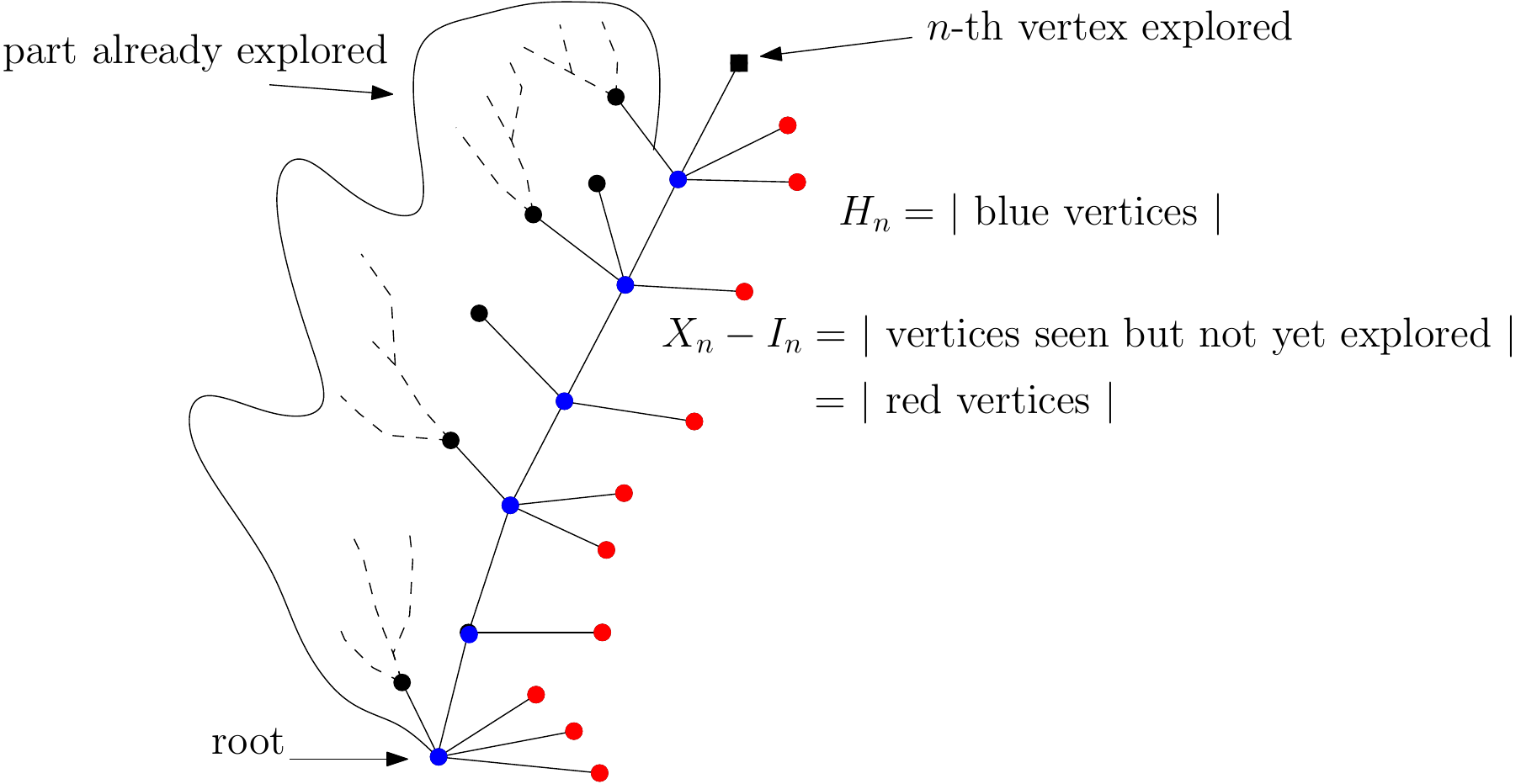}
\caption{Picture of the tree being currently explored. Vertices explored earlier in the depth-first order are on the left. The subtrees of the black circle vertices (dashed) have already been fully explored when visiting the $n$-th vertex of the forest (the black square).}
\label{fig:spine}
\end{center}
\end{figure}

The spine carries an important relation between $H_n$ and $X_n-I_n$, as illustrated in Figure~2. The idea is to show that the ancestors of the $n$-th vertex (blue vertices on Figure~\ref{fig:spine}) 
have  essentially independent offspring, with a size-biased distribution, as vertices with a larger offspring are more likely to have more descendants. This defines a spine from the root to the $n$-th explored vertex.
Intuitively, once a vertex $y$ at height $k$ is known to be along the spine, the next vertex in the spine is chosen uniformly among its children. In average, such a vertex $y$ has $\sigma_k^2+1$ children, as its offspring is size-biased. The two precedent facts put together imply that, in average, among the children of $y$, $\sigma_k^2/2$ will be on the left of it (black vertices) and $\sigma_k^2/2$ will be on the right of it (red vertices). This is rigorously done in Section~\ref{subsec:spinedecompoheight}. Averaging over $k$ and using Condition~\ref{(I)}, this heuristic explains why the limiting ratio between $X_n-I_n$ and $H_n$ is $\sigma^2/2$. For standard Galton-Watson tree, 
Note that, with $m=h(y)$, $\bar \xi_m$ defined
before Condition {\ref{(IV)}} stands for the number 
of children of the ancestor of the $n$-th vertex at height $m$, 
and $\zeta_m$ stands for the number of these children that are on the right of the spine, that is, black vertices on Figure \ref{fig:spine}.
\\
The delicate part of the proof is to replace the ``essentially independent offspring'' 
in the heuristic argument above by a rigorous claim. 
To do so, we show that the distribution of $\cT$ is close to that of $\cT^*$, the tree built by growing independent BPVE trees along an infinite spine with size-biased offspring (i.e.\ Kesten's tree in the varying environment, see Section~\ref{subsec:spinedecompoheight} for definition and details). 
On $\cT^*$, the ancestors of a given vertex have the desired distribution. 
This allows us to show that, for most vertices $x$ of $\cT$ at some large height (conditionally on $\cT$ reaching that height), the ratio $(X_x-I_x)/H_x$ between the reflected {\L}ukasiewicz path and the height process at $x$ is arbitrarily close to $\sigma^2/2$ (Proposition~\ref{prop:fewbadvertices}).
\\
From this, we prove that, for an arbitrarily large proportion of vertices of $\cF_n$, 
this ratio is close to $\sigma^2/2$. 
We conclude by a continuity argument to extend the convergence to the whole forest $\cF_n$ in Section~\ref{sect:prooflukaheight}. We need fine tail bounds on the height of $(\mu_k)_{k\geq k_0}$-BPVEs, uniformly in $k_0=O(\sqrt{n})$, to control increments of the form $H_i-H_j$ with $i-j\lesssim \varepsilon n$, uniformly in the generation $H_j$ (in Lemma~\ref{KingCharles}, with auxiliary estimates from Lemma~\ref{lem:heightsurvivalproba}).
\\
\\
The idea of exploiting this connection between $X_n-I_n$ and $H_n$ by approximating the distribution of the spine has been used in~\cite{BroutinMarckert} for finding the scaling limit of trees with a prescribed degree sequence (this also applies to standard Galton-Watson trees). We explain that we cannot resort to the same arguments, mainly because in our setting, degrees are not exchangeable due to the varying environment and we lose a lot of symmetry. For instance, to link the finite dimensional marginal of $(H_n)$ to the spine of Kesten's tree, the main difficulty is 
that there is no clear way to say something about a vertex selected uniformly at random in $\cF_n$. Also, it is a priori very hard to condition our tree to have (exactly or at least) a given large height, which is why we resort to $\cF_n$. By contrast, this information comes for free in a tree with prescribed degree sequence. Finally, the idea to show the tightness of $H$ in~\cite{BroutinMarckert} (inspired by~\cite{AddarioDevroyeJanson}) is to dominate the variations of $H$ by those of $X-I$ (which is tight, by the convergence of the {\L}ukasiewicz). It is crucial to have a good control of the location of the vertices with only one child (as they add an increment in $H$, but not in $X-I$). Due to the exchangeability, these nodes can be inserted ''uniformly at random'' in the skeleton made by the nodes of degree at least 2 (see the multinomial formula in the proof of Lemma 8 in \cite{BroutinMarckert}). This does not hold in our setting, as these nodes of degree 1 appear with different probabilities at each generation. 

\subsection{Discussion of the existing literature on BPVEs and BPREs}\label{sec:disc}
We give a brief review of existing results on BPVEs and BPREs; we refer the reader to Kersting and Vatutin's book~\cite{KerstingVatutinBook} for more details.

BPVEs have been investigated for more than half a century. First works focussed on the survival or extinction, and on the long-term behaviour. 
Lindvall~\cite{Lindvall} proved that, as long as the varying environment is non-degenerate,
i.e.\ $\mathbb P(\xi_n^{(1)} = 0)<1$ for all $n\geq 0$, 
then $Z_n$ converges almost surely to a possibly infinite random variable $Z_\infty$ 
(convergence in distribution was already known, see~\cite{AthreyaNey71, Church71, Jagers74}).
Lindvall also showed that, under some conditions on the varying environment, $0<Z_\infty<\infty$, which means that a BPVE does not obey the usual dichotomy of extinction or divergence to infinity. 
Jagers~\cite{Jagers74}, and later Kersting~\cite{Kersting20}, are interested in regular BPVEs, i.e.~when $Z_\infty\in \{0,+\infty\}$ almost surely. They classify these regular BPVEs into sub-critical, critical and super-critical (see, e.g.,~\cite[Proposition~1]{Kersting20}).

The first scaling limits to have been obtained for BPVEs were the limits of generation sizes: one starts with a large initial population and tracks the evolution of its size generation after generation. This does not provide information on the genealogical structure of the forest.
Such limits are the most accessible. First results track back to Kurtz~\cite{Kurtz78} and Helland~\cite{Helland}. 
They show that the scaling limit for the successive generation sizes is a Feller diffusion both for strictly critical BPREs and critical Bienaym\'e-Galton-Watson trees. While a third moment assumption was required in~\cite{Kurtz78}, these results have been considerably extended in the last decade, to cases with infinite variance, and bottlenecks (corresponding to a catastrophy in the environment killing almost all the population). See e.g., Bansaye and Simatos~\cite{BansayeSimatos} and Fang, Li and Liu~\cite{Lietal})
Contrary to Bienaym\'e-Galton-Watson trees, the limiting process is then no longer a Continuous State Branching Process and can be difficult to characterize.

In the last couple of years,  finer scaling limits have been proved for regular, critical BPVEs conditioned to survive, showing that these trees 
behave somewhat similarly to critical Bienaym\'e-Galton-Watson trees. 
On the one hand, Cardona-Tob\'on and Palau~\cite{CordonaPalau} give an asymptotic result 
for the size of the $n$-the generation of a regular, critical BPVE conditioned on non-extinction.
On the other hand, Kersting~\cite{Kersting21} shows that the height of the last common ancestor of all individuals at generation~$n$ is asymptotically uniform in $[0,n]$.

Weeks before this preprint was finished, more results were published in the same direction. Harris, Palau and Pardo~\cite{HarrisPalauPardo} analyze the genealogy of an arbitrary fixed number $k\in \dN$ of individuals chosen uniformly in the $n$-th generation, conditionally on survival until that height. They draw $k$ spines through the first $n$ generations of the tree, extending the 2-spine decomposition in~\cite{CordonaPalau}. Boenkost, Foutel-Rodier and Schertzer~\cite{BoenkostFRSchertzer}, using a spinal decomposition similar to~\cite{FRSchertzer}, obtain the genealogy in Gromov-Hausdorff-Prokhorov metric of the first $n$ generations towards the (possibly time-changed) Brownian coalescent point process. The latter tracks where the spines meet, i.e.~where pairs of individuals of generation $n$ have their last common ancestor.
All these results (for critical BPVEs in~\cite{HarrisPalauPardo}, and for nearly-critical BPVEs in~\cite{BoenkostFRSchertzer}, both under a second moment assumption) are consistent with the fact that a strictly critical BPRE converges to the continuum random tree, as we prove in our main result.
\\
Let us stress that our results and approach are conceptually different: in these papers, the idea is to sample several coalescing spines of fixed length (forward in time in~\cite{HarrisPalauPardo}, backward in~\cite{BoenkostFRSchertzer}) to analyze the genealogy of the tree. In our work, we track the evolution of a dynamic spine, with varying length, which follows the depth-first exploration of the tree. This is in the spirit of the works of Duquesne, Le Gall and Le Jan~\cite{LeGallsnake,LeGallLeJanExplo,DuquesneLeGall}.

\subsection{Open questions}\label{sub:open}

{\bf Deterministic conditions on the environment $\bmu$.} As mentioned in Section~\ref{sub:ideas}, Theorem~\ref{th:main} is valid under Conditions \ref{(I)}-\ref{(V)}. These conditions are satisfied by the branching process in random environment we consider, but it would be nice to find necessary and sufficient conditions on the deterministic varying environment $\bmu$. 
Condition \ref{(I)} ensures that the laws of successive generations are not too erratic, and Condition \ref{(II)} rules out cases where the tree would have long portions with vertices with only one offspring, which would take us out of the Brownian regime. These conditions could perhaps be relaxed by normalizing the height as a function of the cumulant sum of the variances on successive generations, as in the assumption (2) of~\cite{BoenkostFRSchertzer}, which may accordingly stretch the limiting object. Condition \ref{(III)} is essentially necessary in order to not see vertices with large degrees in the limit, and it should be noted that it is not a consequence of Conditions \ref{(I)} and \ref{(II)}. Note that 
Condition \ref{(III)} corresponds to assumption (3') in~\cite{BoenkostFRSchertzer}.
On the other hand, Conditions \ref{(IV)} and \ref{(V)} seem to be of technical nature. In fact, one can show that Condition \ref{(V)} is not necessary once one already has Conditions \ref{(I)}, \ref{(II)} and \ref{(III)}: we only use it to estimate the probability that a $\bmu$-BPVE survives at least $m$ generations for large $m$, in \eqref{eqn:heightsurvivalproba}. The latter can be shown by adapting the proof of the Kolmogorov estimate in~\cite{BoenkostFRSchertzer} (Theorem 4.1 - taking $m=\kappa_N$, one can replace $\sqrt{\kappa_N}$ by $\kappa_N$ in Lemma~4.3, using Condition \ref{(II)} to get a lower bound on the $\varphi_k(0)$'s in the first equation p.17).  

\medskip\noindent
{\bf Extension to other regimes.} As mentioned in Section~\ref{subsec:critstrictcrit}, in the non-strictly critical regime, the structure of $\cT^{\bmu}$ heavily depends on the environment $\bmu$, in particular on the sequence $(\log \overline{\mu_k})_{k\geq 0}$.  Let $S_i:=\log\overline{\mu_1}+\ldots \log\overline{\mu_i}$, for $i\ge0$. In the strictly critical regime, $(S_i)$ is a.s.~identically equal to $0$, while in the non-strictly critical case, $(S_i)$ is a random walk with i.i.d.~increments. Some finite but rather long portions of its trajectory (of order $i$ on $[0,i]$ if the increments of $(S_i)$ have a finite second moment, for instance) look as if it were induced by a super-critical environment, and a tree can typically exhibit super-polynomial growth on the corresponding generations, so that the Continuum Random Tree cannot be its scaling limit.  One can ask what would be good conditions on $(S_i)$ in order to still have convergence to the CRT, or a slightly modified version of it. For instance, the aforementioned result of~\cite{BoenkostFRSchertzer} holds for BPVEs in the near critical regime, i.e.~when $(S_i)$ is bounded, hence one can conjecture that the scaling limit of a large tree would be an  untruncated version of a Coalescent Point Process.  
\\
Moreover, one could investigate distributions with weaker moment assumptions, for instance heavy-tailed offspring distributions having an infinite variance. For standard Bienaym\'e-Galton-Watson trees, this has already been done in~\cite{DuquesneLeGall,Duquesne03,Igor13}. In varying or random environment, only the scaling limits the successive generation sizes are known. One intriguing question, asked by Bansaye and Simatos in~\cite{BansayeSimatos} (Section 2.5.1), is what happens when offspring distributions have different heavy tails.

\medskip\noindent
{\bf Conditioning on the total population size to be exactly~$n$ instead of at least~$n$.}
Generally speaking, when studying scaling limits of critical random trees, it is noticeably more delicate to condition the total population size to be \textit{exactly}~$n$. 
%
For usual critical Bienaym\'e-Galton-Watson trees with finite variance, this was proved by Aldous~\cite[Section~5]{AldousIII}, 
and later Marckert and Mokkadem~\cite{MarckertMokkadem} proposed a simpler proof,
but under the assumption that the offspring distribution has exponential moments.
The case of critical Bienaym\'e-Galton-Watson trees with heavy-tailed offspring distributions was treated by Duquesne~\cite{Duquesne03} and Kortchemski~\cite{Igor13}.
The arguments in the works mentioned above rely heavily on  exact algebraic or distributional identities that do not hold in varying environment. Hence, one would need here again to come up with alternative arguments in order to obtain this refined conditioning.\\
 To conclude, we believe that our result holds under this stronger conditioning, but its proof would require significant effort, hence we choose to leave this question for further work.


\subsection{Plan}
In Section~\ref{sec:basic}, we prove preliminary results 
whose counterparts for critical Bienaym\'e-Galton-Watson trees are either straightforward or classic, but not for a BPRE. 
In particular, Lemma~\ref{lem:deterministicconditions} states that Conditions \ref{(I)}-\ref{(V)} hold $\bP$-a.s., and Proposition~\ref{prop:firstbounds} states that $\cF_n$ has height and width $\Theta(\sqrt{n})$.
In Section~\ref{sec:Luka}, we prove Theorem~\ref{thm:Luka}. 
Finally, in Section~\ref{sec:heightprocess}, we prove Theorem~\ref{thm:jointcvLukaHeight}, 
and conclude the proof of Theorem~\ref{th:main}.

\section{Basic properties}\label{sec:basic}
The main results of this section are Lemma \ref{lem:deterministicconditions} and Proposition~\ref{prop:firstbounds}.

\subsection{A strictly critical BPRE satisfies Conditions \ref{(I)}-\ref{(V)}}
{Before proving in Lemma \ref{lem:deterministicconditions} that a BPRE satisfies almost surely our deterministic conditions, we provide below a technical lemma which will be useful in different places.}

\begin{lemma}\label{lem:deterministicLLNcuttopieces}
Let $(s_k)_{k\geq 0}$ be a sequence such that $\ell:=\lim_{k\rightarrow +\infty}\frac{s_1+\ldots +s_k}{k}$ exists and is finite. For all $0\le a<b$, 
\[
\frac{s_{\lfloor ak\rfloor+1}+s_{\lfloor ak\rfloor+2}+\ldots +s_{\lfloor bk\rfloor}}{(b-a)k}\underset{k\rightarrow+\infty}{\rightarrow}\ell.
\]
Moreover, for any $c>0$ and $M\in \dN$,
\[
\max_{1\leq j\leq M}\left\vert\frac{s_{\lfloor (j-1)ck\rfloor+1}+\ldots +s_{\lfloor jck\rfloor}}{ck}-\ell\right\vert\underset{k\rightarrow+\infty}{\rightarrow}0.
\]
\end{lemma}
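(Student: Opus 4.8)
The plan is to reduce everything to the hypothesis that the Cesàro averages $\frac1k\sum_{i=1}^k s_i$ converge to $\ell$, via summation by parts (writing partial sums of arbitrary ranges as differences of partial sums starting from $1$). Set $S_k := s_1 + \dots + s_k$, with $S_0 = 0$, so the hypothesis is $S_k/k \to \ell$, equivalently $S_k = \ell k + o(k)$.

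For the first display, fix $0 \le a < b$. The numerator is $S_{\lfloor bk\rfloor} - S_{\lfloor ak\rfloor}$. Using $S_m = \ell m + \varepsilon_m$ with $\varepsilon_m = o(m)$, we get
\[
\frac{S_{\lfloor bk\rfloor} - S_{\lfloor ak\rfloor}}{(b-a)k} = \frac{\ell(\lfloor bk\rfloor - \lfloor ak\rfloor) + \varepsilon_{\lfloor bk\rfloor} - \varepsilon_{\lfloor ak\rfloor}}{(b-a)k}.
\]
The main term is $\ell \cdot \frac{\lfloor bk\rfloor - \lfloor ak\rfloor}{(b-a)k} \to \ell$ since $\lfloor bk\rfloor - \lfloor ak\rfloor = (b-a)k + O(1)$. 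For the error term, $\varepsilon_{\lfloor bk\rfloor} = o(\lfloor bk\rfloor) = o(k)$ and likewise $\varepsilon_{\lfloor ak\rfloor} = o(k)$ (if $a = 0$ this term is just $\varepsilon_0 = 0$), so dividing by $(b-a)k$ gives $o(1)$. This settles the first claim.

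For the second display, fix $c > 0$ and $M \in \mathbb{N}$. For each $j \in \{1, \dots, M\}$ the $j$-th quantity equals $\frac{S_{\lfloor jck\rfloor} - S_{\lfloor (j-1)ck\rfloor}}{ck} - \ell$, and by the same expansion this is $\frac{\ell(\lfloor jck\rfloor - \lfloor (j-1)ck\rfloor - ck) + \varepsilon_{\lfloor jck\rfloor} - \varepsilon_{\lfloor (j-1)ck\rfloor}}{ck}$. Since the maximum is over a \emph{fixed finite} number $M$ of indices, and each of the $M$ terms tends to $0$ as $k \to \infty$ (the floor discrepancies are $O(1)$, and $\varepsilon_{\lfloor jck\rfloor} = o(k)$ uniformly over $j \le M$ because $jck \le Mck = O(k)$), the maximum tends to $0$ as well. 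More carefully: given $\eta > 0$, pick $k_0$ so large that for $k \ge k_0$ each of the finitely many terms is below $\eta$; this is possible since a finite max of sequences each going to $0$ goes to $0$.

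There is essentially no obstacle here — the only point requiring a little care is that in the second display one must check the $o(k)$ control on $\varepsilon_m$ is \emph{uniform} over the range $m \le Mck$, which is immediate because $\varepsilon_m/m \to 0$ and $m/k$ stays bounded by $Mc$; so $|\varepsilon_m|/k = (|\varepsilon_m|/m)(m/k) \le Mc \cdot \sup_{m' \ge \lfloor ck\rfloor}|\varepsilon_{m'}|/m' \to 0$. I would also note explicitly that the case $a = 0$ (resp. $j = 1$) is covered by the convention $S_0 = 0$, so no separate argument is needed.
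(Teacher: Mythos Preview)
Your proof is correct and takes essentially the same approach as the paper: write the partial sum over $\{\lfloor ak\rfloor+1,\ldots,\lfloor bk\rfloor\}$ as $S_{\lfloor bk\rfloor}-S_{\lfloor ak\rfloor}$, control each term via $S_m=\ell m+o(m)$, and deduce the second statement from the first by taking a finite maximum. The only cosmetic difference is that the paper phrases the estimate in $\varepsilon$--$K$ form while you use $o$-notation.
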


\begin{proof}
Let us fix $0\le a<b$. Fix $\varepsilon >0$.  There exists $K_{\varepsilon}\in\mathbb{N}$ such that for all  $k\ge K_{\varepsilon}$, and for $c\in\{a,b\}$, $\vert s_1+\ldots +s_{\lfloor ck\rfloor} -\ell ck\vert \leq \varepsilon (b-a)k/2$, as can be shown by a straightforward computation, using the definition of $\ell$. 
Using the triangle inequality, we obtain
\begin{align*}
\vert s_{\lfloor ak\rfloor+1}+s_{\lfloor ak\rfloor+1}+\ldots +s_{\lfloor bk\rfloor}-\ell(b-a)k \vert \leq &\vert s_1+\ldots +s_{\lfloor ak\rfloor} -\ell ak\vert 
\\
&+\vert s_1+\ldots +s_{\lfloor bk\rfloor} -\ell bk\vert
\\
\leq &\varepsilon (b-a)k.
\end{align*}
The above implies that for all $\varepsilon>0$, there exists $K_\varepsilon\in\mathbb{N}$ such that, for all $k\ge K_\varepsilon$, we have
\[
\left\vert \frac{s_{\lfloor ak\rfloor+1}+\ldots +s_{\lfloor bk\rfloor}}{(b-a)k}-\ell\right\vert \leq \varepsilon.
\]
This proves  the first statement. 
The second statement is a straightforward consequence of the first one by choosing $a=(j-1)c$, $b=jc$, for $1\leq j\leq M$.
\end{proof}

Recall that, as defined in Section \ref{sec:def}, under $\mathbf{P}$, $\bmu$ is a sequence of i.i.d.~random distributions, and that $\bar\mu_k$ denotes the average of $\mu_k$ and $\sigma^2_k$ its variance, for $k\ge0$.

\begin{lemma}\label{lem:deterministicconditions}
Consider a $\mathbf{P}$-BPRE such that 
$\bar\mu_0 = 1$ $\mathbf{P}$-almost surely, and $\sigma^2:={\bf E}[\sigma_0^2]<\infty$ $\mathbf{P}$-almost surely.
Then, $\bmu$ satisfies  Conditions~{\ref{(I)}}-{\ref{(V)}}  $\bP$-almost surely, where the constant $c>0$ in Condition \ref{(II)} is deterministic under $\bP$.
\end{lemma}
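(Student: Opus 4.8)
The statement asks us to verify that, under the strict-criticality hypothesis ($\bar\mu_0=1$ a.s.) and the second-moment hypothesis ($\sigma^2=\mathbf E[\sigma_0^2]<\infty$), the random environment $\bmu$ satisfies the five deterministic conditions \ref{(I)}--\ref{(V)} $\mathbf P$-almost surely. The overarching tool is that $(\mu_k)_{k\ge0}$ is i.i.d.\ under $\mathbf P$, so each condition will follow from a law of large numbers (for \ref{(I)}, \ref{(IV)}), a Borel--Cantelli / concentration argument (for \ref{(II)}, \ref{(III)}), or a direct moment estimate combined with dominated convergence (for \ref{(V)}). Lemma~\ref{lem:deterministicLLNcuttopieces} is the bridge that upgrades ``Cesàro convergence of $(s_1+\dots+s_k)/k$'' to ``uniform convergence over all dyadic-type windows $[\lfloor ak\rfloor,\lfloor bk\rfloor]$,'' which is exactly the form in which Conditions \ref{(I)}, \ref{(II)}, \ref{(III)} are phrased.

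\medskip\noindent\textbf{Conditions \ref{(I)} and \ref{(IV)}: laws of large numbers.} For \ref{(I)}, since $(\sigma_k^2)_{k\ge0}$ is i.i.d.\ with finite mean $\sigma^2$, the strong law of large numbers gives $(\sigma_0^2+\dots+\sigma_{n-1}^2)/n\to\sigma^2$ a.s.; the positivity $\sigma^2>0$ is part of the hypothesis ($\sigma^2\in(0,\infty)$ in Theorem~\ref{th:main}). For \ref{(IV)}, one first checks that $(\zeta_k)_{k\ge0}$ is, conditionally on $\bmu$, an independent sequence, and computes $\Emu[\zeta_k\mid\mu_k]=\tfrac12\Emu[\bar\xi_k-1\mid\mu_k]=\tfrac12\sigma_k^2$ (using that $\bar\xi_k$ is the size-biased version of $\mu_k$, so $\Emu[\bar\xi_k]=1+\sigma_k^2$, and $\zeta_k$ is uniform on $\{0,\dots,\bar\xi_k-1\}$). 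Then one needs a strong law for the $(\zeta_k)$ themselves; the cleanest route is a two-step argument: first the annealed SLLN (the pairs $(\mu_k,\zeta_k)$ are i.i.d.\ under the joint measure, and $\mathbf E[\zeta_0]=\tfrac12\mathbf E[\sigma_0^2]=\sigma^2/2<\infty$) gives $\tfrac1n\sum\zeta_k\to\sigma^2/2$ $\mathbf E_{\bmu}\otimes\mathbf P$-a.s.; then a Fubini argument transfers this to: for $\mathbf P$-a.e.\ $\bmu$, the convergence holds $\Pmu$-a.s. One must also invoke \ref{(I)} to identify the limit $\sigma^2$ with the one appearing in \ref{(IV)}.

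\medskip\noindent\textbf{Conditions \ref{(II)} and \ref{(III)}: control of small and large degrees along a growing window.} For \ref{(II)}, let $p_k=\mu_k(\{0\})$; these are i.i.d.\ with $\mathbf E[p_0]=:c_0$. One must have $c_0>0$: indeed if $\mu_0(\{0\})=0$ a.s., then $\mu_0$ would be supported on $\{1,2,\dots\}$ with mean $1$, forcing $\mu_0=\delta_1$ a.s.\ and hence $\sigma^2=0$, contradicting $\sigma^2>0$. Then by Lemma~\ref{lem:deterministicLLNcuttopieces} applied to $(s_k)=(p_k)$ (whose Cesàro averages converge to $c_0$ a.s.\ by the SLLN), we get $\liminf_k (p_{\lfloor ak\rfloor}+\dots+p_{\lfloor bk\rfloor})/((b-a)k)\ge c_0$ for every fixed $0\le a<b$; taking $c=c_0/2$ and noting that the claim is phrased for each fixed pair $a<b$ delivers \ref{(II)} with a deterministic constant. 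For \ref{(III)}, write $A_{n,\varepsilon}=\tfrac1{\sqrt n}\sum_{k=0}^{\lfloor h_n\sqrt n\rfloor}\mathbf E_{\bmu}[\xi_k^2\mathbf 1_{\{\xi_k^2\ge\varepsilon n\}}]$ and observe $\mathbf E[A_{n,\varepsilon}]=\tfrac{\lfloor h_n\sqrt n\rfloor+1}{\sqrt n}\,\mathbf E[\,\xi_0^2\mathbf 1_{\{\xi_0^2\ge\varepsilon n\}}\,]$, where under $\mathbf P\otimes\mathbf E_{\bmu}$ the generic offspring $\xi_0$ has $\mathbf E[\xi_0^2]=1+\sigma^2<\infty$. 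Choosing $h_n\to\infty$ slowly enough (e.g.\ $h_n=\min(\log n,\, g(n))$ where $g$ is tailored so that $h_n\cdot\mathbf E[\xi_0^2\mathbf 1_{\{\xi_0^2\ge\varepsilon n\}}]\to0$ for every $\varepsilon$; such $g$ exists because $\mathbf E[\xi_0^2\mathbf 1_{\{\xi_0^2\ge m\}}]\to0$ as $m\to\infty$ by integrability) makes $\mathbf E[A_{n,\varepsilon}]\to0$. One then needs the almost-sure statement, not just in expectation: this follows from a concentration/Borel--Cantelli argument along a subsequence (say $n=2^j$) combined with monotonicity in $\varepsilon$ and $n$ of the relevant sums, plus a diagonal argument over a countable dense set of $\varepsilon$'s; the i.i.d.\ structure gives a variance bound on $A_{n,\varepsilon}$ that decays fast enough after the normalization. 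Crucially the single sequence $(h_n)$ must work simultaneously for all $\varepsilon>0$ and a.e.\ $\bmu$, which is why it is chosen in a $\bmu$-independent, $\varepsilon$-independent way up front.

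\medskip\noindent\textbf{Condition \ref{(V)} and the main obstacle.} Condition \ref{(V)}, $\lim_{\varepsilon\to0}\limsup_n\tfrac1n\sum_{k=0}^n\omega_k(\varepsilon)=0$ with $\omega_k(\varepsilon)=\sup_{1-\varepsilon\le s\le t<1}|\varphi_k(s)-\varphi_k(t)|$, is the delicate one and I expect it to be the principal obstacle. The quantity $\omega_k(\varepsilon)$ is a modulus-of-continuity of $\varphi_k$ near $1$, and it is bounded by a function of $\mu_k$ alone; the first task is to find, from results in~\cite{KerstingVatutinBook} (the function $\varphi_k$ is introduced there precisely to control $1-f_k$), an explicit bound of the form $\omega_k(\varepsilon)\le\Psi(\varepsilon,\mu_k)$ with $\Psi(\varepsilon,\mu_k)\downarrow0$ as $\varepsilon\downarrow0$ for each fixed $\mu_k$, together with an $\varepsilon$-independent integrable dominating random variable $\Theta(\mu_k)$ with $\mathbf E[\Theta(\mu_0)]<\infty$ (here is where the finite-second-moment hypothesis $\mathbf E[\sigma_0^2]<\infty$ should enter, e.g.\ via $\varphi_k(0)=\tfrac1{1-f_k(0)}-1$ and $\varphi_k$ having derivative controlled by $\sigma_k^2$). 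Granting such a bound, the SLLN gives, for $\mathbf P$-a.e.\ $\bmu$ and every fixed $\varepsilon$ in a countable dense set, $\limsup_n\tfrac1n\sum_{k=0}^n\omega_k(\varepsilon)\le\limsup_n\tfrac1n\sum_{k=0}^n\Psi(\varepsilon,\mu_k)=\mathbf E[\Psi(\varepsilon,\mu_0)]$, and then dominated convergence (dominated by $\Theta(\mu_0)$) sends the right-hand side to $0$ as $\varepsilon\to0$; monotonicity of $\omega_k(\varepsilon)$ in $\varepsilon$ removes the restriction to the countable set. The genuine work is thus analytic rather than probabilistic: pinning down the right inequality $\omega_k(\varepsilon)\le\Psi(\varepsilon,\mu_k)$ from the properties of $\varphi_k$ established in Kersting--Vatutin, with a dominating function that is integrable under the second-moment hypothesis. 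Once all five conditions are verified on a $\mathbf P$-full-measure event (intersecting the five full-measure events, one per condition), the lemma follows.
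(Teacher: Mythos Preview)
Your plan is correct in outline and matches the paper's approach closely for Conditions~\ref{(I)}, \ref{(II)}, \ref{(IV)} (SLLN plus Lemma~\ref{lem:deterministicLLNcuttopieces}, with the same contradiction argument for $c_0>0$, and the same annealed-SLLN-then-Fubini route for~\ref{(IV)}). Two points deserve correction.

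\medskip
\textbf{Condition~\ref{(III)}.} Your proposed variance/Chebyshev bound on $A_{n,\varepsilon}$ would need $\mathbf E\big[(\Emu[\xi_0^2\mathbf 1_{\xi_0^2\ge\varepsilon n}])^2\big]<\infty$, i.e.\ essentially $\mathbf E[\sigma_0^4]<\infty$, which is \emph{not} assumed. The paper avoids this by using only the monotonicity you already noticed: for each $m$ it fixes a deterministic $n_m$ with $\mathbf E[e_{0,n_m}]\le m^{-2}$, applies the SLLN to the single i.i.d.\ sequence $(e_{k,n_m})_{k\ge0}$, and then dominates $e_{k,n}\le e_{k,n_m}$ for all $n\ge n_m$. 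This yields a random (i.e.\ $\bmu$-measurable) sequence of thresholds $(N_m)$ from which $(h_n)$ is built; in particular, $(h_n)$ is allowed to depend on $\bmu$, contrary to your stated constraint. So drop the variance step and run your monotonicity/subsequence idea exactly as the paper does.

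\medskip
\textbf{Condition~\ref{(V)}.} This is not the principal obstacle you anticipate. The explicit bounds you are looking for are in \cite[Proposition~1.4]{KerstingVatutinBook}: $\tfrac12\varphi_k(0)\le\varphi_k(s)\le 2\varphi_k(1)=\sigma_k^2$ for all $s\in[0,1]$, whence $\omega_k(\varepsilon)\le 2\sigma_k^2$ uniformly in $\varepsilon$. This gives the integrable dominating variable $\Theta(\mu_k)=2\sigma_k^2$, and since $\omega_0(\cdot)$ is nondecreasing with $\omega_0(0^+)=0$, your SLLN-plus-dominated-convergence scheme goes through immediately along $\varepsilon=1/m$.
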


\begin{proof}
{\bf Condition \ref{(I)}.} Under $\bP$, $(\sigma_k^2)_{k\ge0}$ is a sequence of i.i.d.~random variables with mean $\sigma^2<\infty$, hence it satisfies the Law of Large Numbers and therefore Condition \ref{(I)} holds $\bP$-almost surely.

{\bf Condition \ref{(II)}.} We will prove a statement that is stronger than Condition~\ref{(II)}, which is that there exists a constant $c>0$ such that $\bP$-a.s., for $0\le a<b$,
\begin{align}\label{cond22}
\lim_{k\rightarrow +\infty}\frac{\mu_{\lfloor ak\rfloor}(\{0\})+\ldots+ \mu_{\lfloor bk\rfloor}(\{0\})}{(b-a)k}= c.
\end{align}
First, note that it is enough to prove that this statement holds $\bP$-a.s.~for all pairs $(a,b)$ of  rationals. By union bound, it is enough to prove that for a pair of rationals $(a,b)$, \eqref{cond22} holds $\bP$-almost surely.\\
By definition of $\bP$ and $\bmu$, we have that $\mu_0(\{0\})\in[0,1]$ $\bP$-almost-surely, and  $(\mu_k(\{0\})_{k\ge0}$ is thus a sequence of bounded random variables. Hence, using  the Law of Large Numbers, Lemma \ref{lem:deterministicLLNcuttopieces} and  the fact that $\mu_{\lfloor ak\rfloor}(\{0\})$ is bounded by 1, we have that \eqref{cond22} is satisfied with $c=\bE[\mu_0(\{0\})]$. To conclude that Condition \ref{(II)} holds, it remains to prove that $\bE[\mu_0(\{0\})]>0$. For this purpose, assume by contradiction that $\bE[\mu_0(\{0\})]=0$. As $\mu_0(\{0\})\ge0$ and $\bar\mu_0=1$ $\bP$-a.s., this implies that $\mu_0(\{0\})=0$ $\bP$-a.s.~and thus that $\mu_0(\{1\})=1$ $\bP$-almost surely by strict criticality of $\mu_0$. This yields that $\sigma_0^2=0$ $\bP$-a.s., which contradicts that $\sigma^2=\bE[\sigma^2_0]>0$. This concludes the proof that Condition \ref{(II)} holds almost surely.

{\bf Condition \ref{(III)}.} Let us fix $\varepsilon>0$. 
For all $k\geq 0$ and $n\geq 1$, let 
\[e_{k,n}:=\mathrm E_{\bmu}[\xi_k^2\mathbf{1}_{\xi_k^2\geq \varepsilon n}].\]
We start by the following fact: since, for all $n\ge0$, the random variables $e_{k,n}$, $k\ge0$, are i.i.d.~and since $\xi_0^2$ is $\mathbb{P}$-integrable, we have that
\[
\lim_{n\to\infty}\bE[e_{k,n}] =\lim_{n\to\infty}\bE[e_{0,n}]=\lim_{n\to\infty}\mathbb E[\xi_0^2\mathbf{1}_{\xi_0^2\geq \varepsilon n}]=0.
\]
Hence, for all $m\ge1$, there exists $n_m\in\mathbb{N}$ such that
\[
\bE[e_{0,n_m}]\le \frac{1}{m^2}.
\]
Note again that $(e_{k,n_m})_{k\ge0}$ is an i.i.d.~sequence of integrable random variables, hence it satisfies the Law of Large Numbers. Moreover, for all $k\ge0$ and for all $n\ge n_m$, we have $e_{k,n}\le e_{k,n_m}$. Therefore,  we have that $\mathbf{P}$-almost surely, there  exists a random (but $\bmu$-mesurable) increasing sequence $(N_m)_{m\geq 1}$ of natural integers such that for all $m\ge1$ and all $n\ge N_m$, we have
\[
\inf_{h\ge1}\frac{1}{h\sqrt{n}}\sum_{k=0}^{\lfloor h \sqrt{n}\rfloor} e_{k,n} \leq
\inf_{h\ge1}\frac{1}{h\sqrt{n}}\sum_{k=0}^{\lfloor h\sqrt{n} \rfloor} e_{k,N_m} 
\leq \frac{2}{m^2}.
\]
For all $n\ge 1$, let $h_n$ be the unique index such that $N_{h_n} \le n<N_{h_n+1}$, letting $N_0=0$. Note that, because $N_m<\infty$ $\mathbf{P}$-a.s.~for all $m\ge0$, we have that $h_n$ goes to infinity with $n$ and $h_n\ge1$ for all $n\ge N_1$. Hence, all the previous implies that, $\mathbf{P}$-a.s., we have for all $n\ge N_1$,
\[
\frac{1}{h_n\sqrt{n}}\sum_{k=0}^{\lfloor h_n \sqrt{n}\rfloor} e_{k,n} 
\leq \frac{2}{h_n^2}.
\]
Altogether, we prove that for all $\varepsilon>0$,  there exists $\mathbf{P}$-a.s.~a sequence of real numbers $(h_n)_{n\geq 1}$ diverging to infinity such that
\[
\lim_{n\to\infty} \frac{1}{\sqrt{n}}\sum_{k=0}^{\lfloor h_n \sqrt{n}\rfloor} e_{k,n} 
\leq \lim_{n\to\infty} \frac{2}{h_n}=0,
\]
which concludes the proof of that Condition \ref{(III)} holds $\mathbf{P}$-almost surely.

{\bf Condition \ref{(IV)}.} 
Recall that, for $k\ge0$, $\bar \xi_k$ denotes a random variable distributed under the size-biased version of $\mu_k$, and that $\zeta_k$ is uniform in $\{0,\ldots, \xi_k-1\}$. Under $\Pmu$, the $\bar \xi_k$'s have different distributions, and we cannot apply the Law of Large Numbers. 
However,  the $\bar \xi_k$'s are i.i.d.~under $\dP$.
We let $\xi_0$ be a random variable of distribution $\mu_0$. 
With this notation, for all $k\geq 0$,
\begin{center}
$ \dE[\bar \xi_k] = \dE[\bar \xi_0]
= \bE[\Emu[\bar \xi_0]]
=\bE[\Emu[\xi_0^2]]
=\bE[\text{Var}_{\bmu}(\xi_0)+\Emu[\xi_0]^2]
=\bE[\sigma_0^2+1]=\sigma^2+1$,
\end{center}
so that, for all $k\geq 0$,
\[\dE[\zeta_k]=\frac{\dE[\bar \xi_k]-1}{2}=\frac{\sigma^2}{2}.\]
By the strong Law of Large Numbers applied to the sequence $(\zeta_k)_{k\geq 0}$, 
\[
1=\dP\bigg(\lim_{n\to+\infty}n^{-1}\sum_{k=0}^{n-1}\zeta_k
=\sigma^2 /2\bigg)
=\bE\left[\Pmu\left(\lim_{n\to+\infty}n^{-1}\sum_{k=0}^{n-1}\zeta_k=\sigma^2 /2\right)\right].
\] 
Hence, $\Pmu(n^{-1}\sum_{k=0}^{n-1}\zeta_k\longrightarrow\sigma^2/2)= 1$  $\bP$-a.s., as required.

{\bf Condition \ref{(V)}.} Recall the definition of $\varphi_k$ and $\omega_k$ in \eqref{eq:phik} and below it. We have that, $\bP$-a.s., for all $k\ge0$,
\begin{equation}\label{augustoislate}\begin{split}
\varphi_k(0)&\ge \mu_k(\{0\}), \quad \varphi_k(1):=\lim_{s\uparrow1}\varphi_k(s)= \frac{\sigma_k^2}{2}\quad \text{ and }\\
\frac12\varphi_k(0)&\leq \varphi_k(s)\leq 2\varphi_k(1),\quad \text{ for all }s\in[0,1],
\end{split}
\end{equation}
where the second line is given by \cite[Proposition~1.4]{KerstingVatutinBook}, and the first line can be easily proved by a straightfoward computation,  and by using Taylor expansion together with the fact that $\Emu[\xi_k]=1$ $\bP$-a.s., see \cite[Equation (1.9)]{KerstingVatutinBook} for a similar computation. Using the definition of $\omega_k(\cdot)$ and the triangle inequality, this implies that, $\bP$-a.s., for all $\varepsilon\in[0,1]$ and for all $k\ge0$, $0\le \omega_k(\varepsilon)\le 2\sigma_k^2$ and thus $\omega_k(\varepsilon)$ is integrable.\\
For all $\varepsilon\in[0,1]$,  $(\omega_k(\varepsilon))_{k\ge0}$ is a sequence of i.i.d.~random variables, thus the strong Law of Large Numbers implies that, $\bP$-a.s.,
\begin{align*}
\lim_{n\to+\infty} \dfrac{\sum_{k=0}^n \omega_k(\varepsilon)}{n}& = \bE[\omega_0(\varepsilon)]\le 2\sigma^2.
\end{align*}
Now, note that $\omega_0(\cdot)$ is non-decreasing, hence the sequence $(\omega_0(1/m))_{m\ge1}$ is non-increasing and converging to $0$ $\bP$-a.s.~by \eqref{augustoislate}. By the monotone convergence theorem, we have that $\bE[\omega_0(1/m)]$ goes to $0$ as $m$ goes to infinity, and thus
\begin{align*}
\lim_{\varepsilon\downarrow0} \limsup_{n\to+\infty} \dfrac{\sum_{k=0}^n \omega_k(\varepsilon)}{n}& =0.
\end{align*}
This concludes the proof that Condition \ref{(V)} holds $\bP$-almost surely.
\end{proof}

\subsection{Survival up to some height}\label{subsec:survivalheight}
The following lemma will be useful in Section~\ref{sec:heightprocess}.

\begin{lemma}\label{lem:heightsurvivalproba}
Let $\bmu$ be a strictly critical environment such that Conditions \ref{(I)},\ref{(II)} and \ref{(V)} hold. 
Let $\mathcal T = \mathcal T^{\bmu}$ be a $\bmu$-BPVE.
Then, for all $\eta >0$, there exists $\varepsilon_0=\varepsilon_0(\bmu,\eta)>0$ and $M=M(\bmu,\eta,\varepsilon_0)$ such that for all $\varepsilon\in (0,\varepsilon_0)$ and $m\geq M$,
\begin{align}
\label{haaaaaaa}
&\frac{1}{2\sigma^2 m}\le \Pmu(h(\cT)\geq m)\le \frac{4}{cm},\\
\label{eqn:heightsurvivalproba}
&\Pmu(h(\cT)\geq (1+\varepsilon)m)
\geq (1-\eta)\Pmu(h(\cT)\geq m), \quad \text{ and }\\ 
\label{eqn:ZmTmnottoosmall}
&\Pmu(Z_m(\cT)\leq \varepsilon m\,\vert\, h(\cT)\geq m)
\leq \eta.
\end{align}
\end{lemma}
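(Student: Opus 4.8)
The three estimates all concern the survival probability $u_m:=\Pmu(h(\cT)\geq m)$ and the generation size $Z_m$ of a $\bmu$-BPVE. The natural tool is the analysis of the generating function iteration via the functions $\varphi_k$ from \eqref{eq:phik}: writing $f_{[0,m)}=f_0\circ\cdots\circ f_{m-1}$ for the composed generating function, one has the classical identity
\[
\frac{1}{1-f_{[0,m)}(s)}=\frac{1}{1-s}+\sum_{k=0}^{m-1}\varphi_k\big(f_{[k+1,m)}(s)\big),
\]
so that, taking $s=0$ and using $u_m=1-f_{[0,m)}(0)$,
\[
\frac1{u_m}=1+\sum_{k=0}^{m-1}\varphi_k\big(1-u_{m-1-k}^{(k+1)}\big),
\]
where $u_j^{(k)}$ is the survival probability up to relative height $j$ of the environment shifted by $k$. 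The first step is to extract two-sided bounds on $\frac1{u_m}$: by \eqref{augustoislate} we have $\tfrac12\varphi_k(0)\le\varphi_k(\cdot)\le 2\varphi_k(1)=\sigma_k^2$, and $\varphi_k(0)\ge\mu_k(\{0\})$; combining these with Conditions \ref{(I)} (so $\sum_{k<m}\sigma_k^2\sim\sigma^2 m$) and \ref{(II)} (so $\sum_{k<m}\mu_k(\{0\})\gtrsim cm$ for $m$ large, using Lemma~\ref{lem:deterministicLLNcuttopieces}) yields $\tfrac{c m}{4}\lesssim \tfrac1{u_m}\lesssim 2\sigma^2 m$ for $m$ large, which after adjusting constants gives \eqref{haaaaaaa}. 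Here Condition \ref{(V)} is what lets us replace $\varphi_k(f_{[k+1,m)}(0))$ by $\varphi_k(1)$ up to a negligible error $\sum_{k<m}\omega_k(\varepsilon)=o(m)$ once the arguments $f_{[k+1,m)}(0)$ are within $\varepsilon$ of $1$, i.e.\ once $k$ is not too close to $m$ — and the contribution of the last $O(\varepsilon m)$ indices $k$ is controlled crudely by $\sum \sigma_k^2$ over that range, which is $O(\varepsilon m)$ by Condition \ref{(I)}.

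For \eqref{eqn:heightsurvivalproba}, the point is a near-multiplicativity of $u_m$ in $m$: from the estimates above, $\frac1{u_{(1+\varepsilon)m}}-\frac1{u_m}=\sum_{k=m}^{(1+\varepsilon)m-1}\varphi_k(\cdots)\le 2\sum_{k=m}^{(1+\varepsilon)m-1}\sigma_k^2$, which by Condition \ref{(I)} is at most $(2\varepsilon\sigma^2+o(1))m\le C\varepsilon/u_m$ for a constant $C$ and $m$ large. Hence $\frac{1}{u_{(1+\varepsilon)m}}\le\frac{1+C\varepsilon}{u_m}$, i.e.\ $u_{(1+\varepsilon)m}\ge\frac{u_m}{1+C\varepsilon}\ge(1-\eta)u_m$ provided $\varepsilon<\varepsilon_0(\eta)$ is small enough and $m\ge M$. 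This is exactly \eqref{eqn:heightsurvivalproba}.

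For \eqref{eqn:ZmTmnottoosmall}, I would argue that conditionally on $\{h(\cT)\ge m\}$, i.e.\ on $Z_m\ge 1$, the generation size $Z_m$ is typically of order $m$, matching the fact that $u_m\asymp 1/m$. The cleanest route is a first/second moment comparison: $\Emu[Z_m]=\prod_{k<m}\bar\mu_k=1$ by strict criticality, while $\Emu[Z_m(Z_m-1)]=\sum_{k<m}\sigma_k^2\prod(\text{means})=\sum_{k<m}\sigma_k^2\sim\sigma^2 m$ by Condition~\ref{(I)} (the standard variance recursion for BPVEs, telescoping because all means are $1$). Thus $\Emu[Z_m^2]=\sigma^2m+O(1)$, and since $\Pmu(Z_m\ge1)=u_m\sim 1/(\sigma^2 m)$ up to constants by \eqref{haaaaaaa}, we get $\Emu[Z_m\mid Z_m\ge1]=1/u_m\asymp m$ and $\Emu[Z_m^2\mid Z_m\ge1]=(\sigma^2m+O(1))/u_m\asymp m^2$. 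So conditionally on survival, $Z_m/m$ has bounded first and second moments bounded away from $0$; a Paley–Zygmund inequality applied to $Z_m$ under the conditional law then gives $\Pmu(Z_m> \delta m\mid Z_m\ge1)\ge \kappa(\delta)>0$ for a fixed small $\delta$. To upgrade this to a bound with $\eta$ arbitrarily small (rather than just bounded below), I would use the branching property: decompose the tree at generation $m$ into subtrees rooted at its $Z_m$ vertices and observe that reaching height $2m$ from generation $m$ requires at least one of the $Z_m$ subtrees (in the shifted environment) to survive $m$ more generations; since each does so with probability $u_m^{(m)}\asymp1/m$ independently, $\{h(\cT)\ge 2m\}$ conditionally on $Z_m$ has probability $\asymp\min(1,Z_m/m)$. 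Comparing $\Pmu(h(\cT)\ge 2m)\asymp 1/m$ (by \eqref{haaaaaaa} applied to height $2m$, using Conditions \ref{(I)}--\ref{(II)} on the shifted environments, which hold by Lemma~\ref{lem:deterministicLLNcuttopieces}) with $\Emu[\min(1,Z_m/m)\mid Z_m\ge1]\cdot u_m$ forces $\Emu[\min(1,Z_m/m)\mid Z_m\ge1]$ to be bounded, and a more careful version — conditioning instead on $\{h(\cT)\ge \lambda m\}$ for large $\lambda$ and letting $\lambda\to\infty$, or iterating — shows $\Pmu(Z_m\le\varepsilon m\mid h(\cT)\ge m)\to0$ as $\varepsilon\to0$ uniformly in large $m$, which is \eqref{eqn:ZmTmnottoosmall}.

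\textbf{Main obstacle.} The delicate point throughout is the uniformity: all constants must be controlled uniformly in $m$ large (and, for the applications in Section~\ref{sec:heightprocess}, uniformly over shifts of the environment by $O(\sqrt n)$ generations). This forces careful bookkeeping of the error terms coming from the ``last $\varepsilon m$ generations'' in the $\varphi_k$ sum and from replacing $\varphi_k$ arguments near $1$ via $\omega_k$; Condition \ref{(V)} is precisely what makes this replacement cost $o(m)$, and Conditions \ref{(I)}--\ref{(II)} via Lemma~\ref{lem:deterministicLLNcuttopieces} are what make the bounds survive the shift. The estimate \eqref{eqn:ZmTmnottoosmall} is the hardest, since turning the Paley–Zygmund lower bound (a fixed positive probability of $Z_m$ being macroscopic) into the statement that $Z_m$ is macroscopic with conditional probability $\to1$ as $\varepsilon\to0$ genuinely requires the branching/tree-surgery argument relating $\{h(\cT)\ge\lambda m\}$ to the subtree structure at generation $m$, and getting the constants right there is the crux.
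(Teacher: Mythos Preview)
Your treatment of \eqref{haaaaaaa} is essentially the paper's, but note that Condition~\ref{(V)} is not needed there: the crude two-sided bound $\tfrac12\varphi_k(0)\le\varphi_k(\cdot)\le 2\varphi_k(1)$ from \eqref{augustoislate} already yields $\tfrac{cm}{4}\le \tfrac{1}{u_m}\le 2\sigma^2 m$ for $m$ large, using only \ref{(I)} and \ref{(II)}.

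There is a genuine gap in your argument for \eqref{eqn:heightsurvivalproba}. The identity you write,
\[
\frac{1}{u_{(1+\varepsilon)m}}-\frac{1}{u_m}=\sum_{k=m}^{(1+\varepsilon)m-1}\varphi_k(\,\cdots\,),
\]
is false: the arguments of $\varphi_k$ for $k<m$ are $q_{k,m}$ in the first sum and $q_{k,(1+\varepsilon)m}$ in the second, and these differ. So the difference has an additional ``cross term''
\[
A_{\varepsilon,m}:=\sum_{k=0}^{m-1}\big(\varphi_k(q_{k,(1+\varepsilon)m})-\varphi_k(q_{k,m})\big),
\]
which is not a priori small and is in fact precisely the place where Condition~\ref{(V)} is needed. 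The paper handles $A_{\varepsilon,m}$ by splitting at $k=(1-\varepsilon)m$: for $k\le(1-\varepsilon)m$ both arguments lie in $[1-\varepsilon,1)$ (by the survival-probability bound in the shifted environment), so $|\varphi_k(q_{k,(1+\varepsilon)m})-\varphi_k(q_{k,m})|\le\omega_k(\varepsilon)$ and Condition~\ref{(V)} gives $\sum_{k\le(1-\varepsilon)m}\omega_k(\varepsilon)=o(m)$; the remaining range $k\in[(1-\varepsilon)m,m)$ is bounded crudely by $\sum\sigma_k^2=O(\varepsilon m)$. You actually described exactly this mechanism in your paragraph on \eqref{haaaaaaa}, but that is the wrong place --- it is \eqref{eqn:heightsurvivalproba}, not \eqref{haaaaaaa}, that requires it.

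For \eqref{eqn:ZmTmnottoosmall} your Paley--Zygmund-then-bootstrap plan can be made to work, but it is considerably more laborious than necessary, and the ``more careful version'' you gesture at is left vague. The paper's route is much shorter and uses \eqref{eqn:heightsurvivalproba} directly: given $\{h(\cT)\ge m\}$, with probability at least $1-\eta/2$ the tree in fact reaches height $(1+\varepsilon_0)m$. On that event, if $Z_m\le\alpha m$ then at least one of the $\le\alpha m$ individuals at generation $m$ survives a further $\varepsilon_0 m$ generations in the shifted environment $(\mu_k)_{k\ge m}$; by the analogue of \eqref{haaaaaaa} for the shifted environment (via \ref{(II)} and Lemma~\ref{lem:deterministicLLNcuttopieces}) each such survival has probability $O(1/(\varepsilon_0 m))$, so a union bound gives $O(\alpha/\varepsilon_0)<\eta/2$ for $\alpha$ small. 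This yields \eqref{eqn:ZmTmnottoosmall} in two lines, with no second-moment computation needed.
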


\begin{proof}
Remark that for all $m\geq 1$, $\varepsilon \mapsto \Pmu(h(\cT)\geq (1+\varepsilon)m) $ is non-increasing and $\varepsilon \mapsto  \Pmu(Z_m(\cT)\leq \varepsilon m\,\vert\, h(\cT)\geq m)$ is non-decreasing, so that it is enough to prove that the statements hold for $\varepsilon_0$ instead of $\varepsilon \in (0,\varepsilon_0)$.
\\
We start with the proof of \eqref{haaaaaaa}.
Fix $\eta >0$.
For all $k, m\geq 1$, we let $q_{k,m}:=\mathrm{P}_{(\mu_j)_{j\geq k}}(h(\cT)< m-k)$ 
be the $\Pmu$-probability that a given vertex at height~$k$ 
does not have descendants at height~$m$. 
By~\cite[Equation (2.11)]{KerstingVatutinBook}, we have
\begin{equation}\label{eqn:heightsurvivalexactformula}
\Pmu(h(\cT)\geq m)
=\frac{1}{1+\sum_{k=0}^{m-1} \varphi_k(q_{k,m})},
\end{equation}
where $\varphi_k$ is defined in~\eqref{eq:phik}.
Using~\eqref{augustoislate} together with Conditions~\ref{(I)} and~\ref{(II)}, we have that there exists $M_1=M_1(\bmu)$ such that, for all $m\ge M_1$,
\begin{equation}\label{eqn:heightsurvivalbounds}
\begin{split}
\sum_{k=0}^{m-1} \varphi_k(q_{k,m}) &\ge \tfrac12 \sum_{k=0}^{m-1}\varphi_k(0)\ge \tfrac12 \sum_{k=0}^{m-1}\mu_k(0)\ge \frac{c}{4}m,\\
1+\sum_{k=0}^{m-1} \varphi_k(q_{k,m})&\leq
1+2\sum_{k=0}^{m-1}\varphi_k(1)\le 2\sigma^2m,
\end{split}
\end{equation}
where $c$ is the constant provided by Condition \ref{(II)}. This implies that, for all $m\ge M_1$,
\[
\frac{1}{2\sigma^2 m}\le \Pmu(h(\cT)\geq m)\le \frac{4}{cm}
\]
and \eqref{haaaaaaa} follows. We now turn to the proof of \eqref{eqn:heightsurvivalproba}.
For all $m\ge0$ and $\varepsilon\in(0,1/2)$, we define
\begin{align*}
S_m&=\sum_{k=0}^{m-1} \varphi_k(q_{k,m}), \quad S_{(1+\varepsilon)m}=\sum_{k=0}^{\lfloor (1+\varepsilon)m\rfloor-1} \varphi_k\left(q_{k,\lfloor(1+\varepsilon)m\rfloor}\right),\\
A_{\varepsilon,m}&=\sum_{k=0}^{m-1}(\varphi_k\left(q_{k,\lfloor(1+\varepsilon)m\rfloor}\right)-\varphi_k(q_{k,m})),\quad\text{ and }\\
B_{\varepsilon,m}&=\sum_{k=m}^{\lfloor(1+\varepsilon)m\rfloor-1}\varphi_k\left(q_{k,\lfloor(1+\varepsilon)m\rfloor}\right).
\end{align*}
By \eqref{eqn:heightsurvivalexactformula}, we have that
\[
\Pmu(h(\cT)\geq  m)-\Pmu(h(\cT)\geq (1+\varepsilon) m)
{\le}\frac{A_{\varepsilon,m}+B_{\varepsilon,m}}{(1+S_m)(1+S_{(1+\varepsilon)m})}.
\]
Moreover, \eqref{augustoislate} yields that
\begin{equation}\label{eq:boundingB}
\limsup_{m\to\infty}\frac1m B_{\varepsilon,m}\leq \limsup_{m\to\infty} \frac1m\sum_{k=m}^{\lfloor(1+\varepsilon)m\rfloor-1}\sigma_{k}^2= \varepsilon \sigma^2,
\end{equation}
where we used Condition \ref{(I)} and Lemma \ref{lem:deterministicLLNcuttopieces} for the last equality. 
Thus, there exists $M_2=M_2(\bmu,\varepsilon)$ such that for all $m\ge M_2$, 
$B_{\varepsilon,m}\leq 2\varepsilon\sigma^2m$. 
Furthermore, by \eqref{eqn:heightsurvivalbounds}, we have that, for all $m\ge M_1$,
\[
(1+S_m)(1+S_{(1+\varepsilon)m})\geq \frac{c^2 m^2}{16}.
\] 
Therefore, for all $m\ge M_1\vee M_2$, we have that
\begin{equation}\label{eqn:lemmaheightsurvival1}
\Pmu(h(\cT)\geq  m)-\Pmu(h(\cT)\geq (1+\varepsilon)m)\leq 16\frac{A_{\varepsilon,m}}{c^2m^2}+32\frac{\varepsilon \sigma^2}{c^2 m}.
\end{equation}
We now aim at bounding $A_{\varepsilon, m}$ from above.
For all $k\ge1$ and $m\geq 1$, let
\begin{equation}\begin{split}
A_{\varepsilon,m}^{(1)}
&:=\sum_{k=0}^{ \lfloor(1-\varepsilon)m\rfloor-1}(\varphi_k(q_{k,\lfloor(1+\varepsilon)m\rfloor})-\varphi_k(q_{k,m})),\\
A_{\varepsilon,m}^{(2)}
&:=\sum_{k=\lfloor(1-\varepsilon)m\rfloor}^{ m-1}(\varphi_k(q_{k,\lfloor(1+\varepsilon)m\rfloor})-\varphi_k(q_{k,m})),
\end{split}
\end{equation}
so that $A_{\varepsilon,m}=A_{\varepsilon,m}^{(1)}+A_{\varepsilon,m}^{(2)}$.
First, proceeding as for $B_{\varepsilon,m}$ above, we obtain that there exists $M_3=M_3(\bmu,\varepsilon)$ such that, for all $m\ge M_3$,
\begin{equation}\label{eqn:lemmaheightsurvival2}
A_{\varepsilon,m}^{(2)} \leq 2\varepsilon\sigma^2m.
\end{equation}
Second,  by \eqref{augustoislate} and \eqref{eqn:heightsurvivalexactformula}, we have that
for all $k\leq (1-\varepsilon)m$, 
\begin{align*}
q_{k,m}
&=1-\mathrm{P}_{(\mu_j)_{j\geq k}}(h(\cT)\geq m-k) \\ 
&= 1- \frac{1}{1+\sum_{j=k}^{m-1} \varphi_j(q_{j,m})}\\
&\geq 1- \frac{1}{1+\tfrac12  \sum_{j=\lfloor (1-\varepsilon)m\rfloor}^{m-1} \mu_j(\{0\})}.
\end{align*}
Hence, by Condition \ref{(II)}, there exists $M_4=M_4(\bmu,\varepsilon)$, such that, for all $m\ge M_4$,
\begin{equation}\label{eq:above1}
\min_{0\le k\leq (1-\varepsilon)m}q_{k,m}\geq 1-\frac{4}{c\varepsilon m}\ge 1-\varepsilon.
\end{equation}
Using similar arguments, one can obtain that for all $m\ge M_4$,
\begin{equation}\label{eq:above2}
\min_{0\le k\leq (1-\varepsilon)m}q_{k,\lfloor(1+\varepsilon)m\rfloor}\geq 1-\varepsilon.
\end{equation}
Recall that $\omega_k(\varepsilon)=\sup_{1-\varepsilon\leq s\leq t\leq 1}\vert \varphi_k(s)-\varphi_k(t)\vert$. 
Using~\eqref{eq:above1},~\eqref{eq:above2} and the triangle inequality, 
we obtain that, for all $m\ge M_4$, 
and for all $k\le (1-\varepsilon)m$,
\begin{equation}\label{train2}
 \vert\varphi_k(q_{k,\lfloor(1+\varepsilon)m\rfloor}) -\varphi_k(q_{k,m})\vert\leq \omega_k(\varepsilon).
\end{equation}
Hence, for all $m\ge M_4$,
\[
A_{\varepsilon,m}^{(1)}\leq \sum_{k=0}^{m-1}\omega_k(\varepsilon).
\]
By Condition \ref{(V)}, 
there exists $\varepsilon=\varepsilon(\bmu,\eta)>0$ small enough and $M_5=M_5(\mu,\eta,\varepsilon)$, such that, for all $m\ge M_5$,
\begin{equation}\label{conv}
A_{\varepsilon,m}^{(1)}\leq \frac{c^2\eta}{48} m.
\end{equation}
Hence, putting \eqref{eqn:lemmaheightsurvival1}, \eqref{eqn:lemmaheightsurvival2} and \eqref{conv} together, we deduce the 
existence of $M_6=M_6(\bmu,\eta,\varepsilon)$, such that, for all $m\ge M_6$,
\begin{equation} \begin{split}
\Pmu(h(\cT)\geq  m)-\Pmu(h(\cT)\geq (1+\varepsilon)m)\leq \frac{\eta}{12\sigma^2 m}+16\frac{2\varepsilon\sigma^2}{c^2m}+32\frac{\varepsilon \sigma^2}{c^2 m}\le \frac{\eta}{4\sigma^2 m}.
\end{split}
\end{equation}
Using the above together with \eqref{haaaaaaa}, we obtain that there exists $M_7=M_7(\bmu,\eta,\varepsilon)$ such that, for all $m\ge M_7$, 
\begin{equation}\label{bluey}\begin{split}
\Pmu\left(h(\cT)
\geq (1+\varepsilon)m\right)
&\geq \Pmu\left(h(\cT)\geq  m\right)- \left(\Pmu\left(h(\cT)\geq  m\right)-\Pmu\left(h(\cT)\geq (1+\varepsilon)m\right)\right)\\ 
&\geq \Pmu(h(\cT)\geq m) - \frac{\eta}{4\sigma^2 m}\\
&\ge \Pmu(h(\cT)\geq m)\left(1-\frac{\eta}{2}\right)\ge \Pmu(h(\cT)\geq m)(1-{\eta}).
\end{split}\end{equation}
since $\eta>0$ was arbitrary, this concludes  the proof of~\eqref{eqn:heightsurvivalproba}, taking $\varepsilon_0=\varepsilon$ and $M=M_7$.

We finally establish \eqref{eqn:ZmTmnottoosmall}. Let $\eta >0$ and write $p_{\varepsilon_0,m}:=\Pmu\left(h(\mathcal{T})<(1+\varepsilon_0)m\right)$. By \eqref{eqn:heightsurvivalproba}, there exists $\varepsilon_0=\varepsilon_0(\bmu,\eta)>0$ and $M=M(\bmu,\eta, \varepsilon_0)\ge 0$ such that for all $m\ge M$ and $\alpha>0$, we have that
\begin{equation}\begin{split}
\Pmu(Z_m(\cT)\leq \alpha m\,\vert\,h(\cT)\geq m)
&\leq  \frac{p_{\varepsilon_0,m}+\Pmu\left(Z_{\lfloor(1+\varepsilon_0)m\rfloor}(\cT)>0\text{ and } 
0<Z_m(\cT)\leq \alpha m\right)}{\Pmu\left( h(\cT)\ge m\right)}\\
&\le\frac{\eta}{2}+\frac{\Pmu\left(Z_{\lfloor(1+\varepsilon_0)m\rfloor}(\cT)>0\text{ and } 
0<Z_m(\cT)\leq \alpha m\right)}{\Pmu\left( Z_m(\cT)>0\right)}\\
&\le\frac{\eta}{2}+\Pmu\big(Z_{\lfloor(1+\varepsilon_0)m\rfloor}(\cT)>0\,\vert\, 0<Z_m(\cT)\leq \alpha m\big)\\
&\le \frac{\eta}{2}+\alpha m \mathrm{P}_{(\mu_k)_{k\ge m}}\left(h(\mathcal{T})\ge \varepsilon_0 m\right),
\end{split}\end{equation}
wherer we used a union bound on the vertices of $Z_m(\cT)$ for the last inequality.
Using formulas similar to \eqref{eqn:heightsurvivalexactformula} and \eqref{eqn:heightsurvivalbounds} together with Lemma \ref{lem:deterministicLLNcuttopieces}, we obtain that there exists $M_8=M_8(\bmu,\eta,\varepsilon_0)$ such that, for all $m\ge M_8$ and for all $\alpha< \eta c \varepsilon_0/8$,
\begin{equation}\begin{split}
\Pmu\big(Z_m(\cT)\leq \alpha m\,\vert\,h(\cT)\geq m\big)
&
\leq \frac{\eta}{2}+\alpha m \frac{1}{1+\tfrac12 \sum_{k=m}^{\lfloor \varepsilon_0 m\rfloor} \varphi_k(0)}\\
&
\leq \frac{\eta}{2}+\alpha m \frac{4}{c \varepsilon_0 m }\\
&
\leq \frac{\eta}{2}+\frac{4\alpha}{c\varepsilon_0}\\
&\le \eta.
\end{split}\end{equation}
This concludes the proof.
\end{proof}

\subsection{Coarse shape of the forest $\cF_n$}\label{subsec:firstbounds}
Recall the definition \eqref{def:expl_forest} of $\cF_n$ the forest explored at step $n$. Let $t(\cF_n)$ be the number of trees in $\cF_n$ (including the last tree that is possibly not fully explored). Recall that $w(\cF_n)$ and $h(\cF_n)$ denote the width and the height of $\cF_n$, respectively, defined in \eqref{def:heightwidth}.

The proposition below states that at step $n$, the explored forest $\cF_n$ is contained in a square of height and width both of order $\sqrt{n}$: this means that the highest tree in $\cF_n$ is of order $\sqrt{n}$ and, that the total number of points of $\cF_n$ in a given generation is at most of order $\sqrt{n}$, see Figure \ref{fig:forest} for an illustration of this.

\begin{proposition}
\label{prop:firstbounds}
Let $\bmu$ be a strictly critical environment satisfying Conditions \ref{(I)} and \ref{(II)}. 
For all $\varepsilon >0$, there exist constants $a_1=a_1(\varepsilon), A_1=A_1(\varepsilon), a_2=a_2(\varepsilon), A_2=A_2(\varepsilon)>0$ depending only on $\sigma^2$, $\varepsilon$ and the constant $c$ from Condition \ref{(II)}, and 
there exists $N=N(\bmu,\varepsilon)$ such that, for all $n\ge N$,
\[
\Pmu\big(a_1\sqrt{n}\leq t(\cF_n)\leq w(\cF_n)\leq A_1\sqrt{n}\text{ and } a_2\sqrt{n}\leq h(\cF_n)\leq A_2\sqrt{n}\big)\geq 1-\varepsilon.
\]
\end{proposition}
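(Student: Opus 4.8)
The strategy is to couple the explored forest $\cF_n$ with the generation-size processes of the trees it contains, and to derive four separate bounds: an upper bound on the width (and hence on the number of trees), a lower bound on the number of trees, an upper bound on the height, and a lower bound on the height. I will treat each tree $\cT^{(i)}$ of the ambient forest $\cF$ as an i.i.d.\ $\bmu$-BPVE and use the survival estimate $\Pmu(h(\cT)\ge m)\asymp 1/m$ from Lemma~\ref{lem:heightsurvivalproba} (valid under Conditions~\ref{(I)}, \ref{(II)}, \ref{(V)}; here we only need \ref{(I)} and \ref{(II)}, so I would isolate the parts of that lemma that do not use \ref{(V)}, in particular \eqref{haaaaaaa}, which only requires Conditions~\ref{(I)} and \ref{(II)} via~\eqref{eqn:heightsurvivalbounds}). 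The point is that the stopping rule ``first $n$ explored vertices'' interacts awkwardly with the last, partially-explored tree, whose conditional law is size-biased and hence much larger; so I would first prove the bounds on the forest $\cF^{(\ell)}$ obtained by fully exploring the first $\ell$ trees, for $\ell$ of order $\sqrt n$, and then transfer them to $\cF_n$.

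\textbf{Step 1: the fully-explored forest of $\ell$ trees.} Let $\cF^{(\ell)}=\bigcup_{i=1}^\ell \cT^{(i)}$ and let $N^{(\ell)}=|\cF^{(\ell)}|$ be its total size. Since each $|\cT^{(i)}|$ is i.i.d.\ under $\Pmu$ with a heavy tail (one has $\Pmu(|\cT^{(i)}|\ge k)\asymp k^{-1/2}$, which follows from $\Pmu(h(\cT)\ge m)\asymp 1/m$ together with the width/height comparison below, or can be taken as an input), the sum $N^{(\ell)}$ is of order $\ell^2$ with high probability: more precisely, for any $\delta>0$ there are constants so that $\Pmu(b_1\ell^2\le N^{(\ell)}\le b_2\ell^2)\ge 1-\delta$ for $\ell$ large. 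For the height of $\cF^{(\ell)}$: using $\Pmu(h(\cT)\ge m)\le 4/(cm)$ and independence, $\Pmu(h(\cF^{(\ell)})\ge A\sqrt\ell\,\ell^{1/2}\cdot\text{(appropriately)})$ — concretely $\Pmu(h(\cF^{(\ell)})> K\ell)\le 1-(1-4/(cK\ell))^\ell\to$ small as $K\to\infty$, giving the upper bound $h(\cF^{(\ell)})\le A_2'\ell$ w.h.p.; for the lower bound $h(\cF^{(\ell)})\ge a_2'\ell$, use $\Pmu(h(\cT)\ge m)\ge 1/(2\sigma^2 m)$ to see that among $\ell$ independent trees at least one reaches height of order $\ell$ with probability bounded away from zero — but to get ``with probability $\ge 1-\varepsilon$'' one takes $m=\kappa\ell$ with $\kappa$ small so that $1-(1-1/(2\sigma^2\kappa\ell))^\ell\ge 1-\varepsilon$. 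For the width: each generation of $\cF^{(\ell)}$ is a sum over the $\ell$ trees of their generation sizes; controlling $\sum_i Z_m(\cT^{(i)})$ uniformly in $m$ requires a maximal inequality. The cleanest route is to note $\sum_{i=1}^\ell Z_m(\cT^{(i)})$ is, for fixed $m$, a sum of i.i.d.\ terms with mean $Z_0=1$ (since $\bar\mu_k=1$) and variance $\sum_{k=0}^{m-1}\sigma_k^2\asymp m$; a submartingale/Doob argument across $m$ up to the (already controlled) height $A_2'\ell$ then gives $w(\cF^{(\ell)})\le A_1'\ell$ w.h.p. The lower bound $t(\cF^{(\ell)})=\ell\ge a_1'\ell$ is trivial for this forest, and $\ell\le w(\cF^{(\ell)})$ holds since the $\ell$ roots all sit in generation $0$.

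\textbf{Step 2: transfer to $\cF_n$.} Define $\ell^-=\lfloor\sqrt{n/b_2}\rfloor$ and $\ell^+=\lceil\sqrt{n/b_1}\rceil$. On the event $\{b_1(\ell^-)^2\le N^{(\ell^-)}\}$ we have $N^{(\ell^-)}\le n$, so $\cF^{(\ell^-)}\subseteq \cF_n$ up to at most one extra partially-explored tree; similarly on $\{N^{(\ell^+)}\ge n\}$ we have $\cF_n\subseteq \cF^{(\ell^+)}$. Hence, on an event of probability $\ge 1-\varepsilon$ (intersecting the relevant high-probability events from Step~1 for $\ell^-$ and $\ell^+$, with $\delta$ chosen small relative to $\varepsilon$), $\cF^{(\ell^-)}\subseteq\cF_n\subseteq\cF^{(\ell^+)}$, and therefore $h(\cF^{(\ell^-)})\le h(\cF_n)\le h(\cF^{(\ell^+)})$, $t(\cF^{(\ell^-)})\le t(\cF_n)$, and $w(\cF_n)\le w(\cF^{(\ell^+)})$; also $t(\cF_n)\le w(\cF_n)$ always. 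Since $\ell^\pm\asymp\sqrt n$, substituting the bounds of Step~1 yields $a_1\sqrt n\le t(\cF_n)\le w(\cF_n)\le A_1\sqrt n$ and $a_2\sqrt n\le h(\cF_n)\le A_2\sqrt n$ for suitable constants depending only on $\sigma^2$, $c$, $\varepsilon$, once $n\ge N(\bmu,\varepsilon)$ (the dependence on $\bmu$ enters only through the threshold $M$ in Lemma~\ref{lem:heightsurvivalproba} and through the rate of the law of large numbers in Conditions~\ref{(I)}, \ref{(II)}).

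\textbf{Main obstacle.} The delicate point is the width bound: one needs a bound on $\max_{m\ge 0}\sum_{i=1}^\ell Z_m(\cT^{(i)})$ that holds simultaneously over all generations $m$, not just a fixed one, and the generation-size process is only a martingale (not a sum of small bounded increments), with per-generation variance growing linearly in $m$. The natural tool is Doob's maximal inequality applied to the nonnegative martingale $(\sum_i Z_m(\cT^{(i)}))_m$ up to the stopping time given by the (separately controlled) height $\le A_2'\ell$; the second moment at that time is $\asymp \ell\cdot A_2'\ell\asymp\ell^2$, giving the desired $O(\ell)$ bound on the width after taking square roots — but some care is needed because the height bound and the width bound must be made on the same event, so one proves the height bound first, conditions on it, and then runs the maximal inequality on the truncated process. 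A secondary subtlety is the lower bound on the height, where one must be careful that the probability is $1-\varepsilon$ rather than merely bounded away from $0$; this is handled by taking the height scale $a_2$ small enough that $1-(1-1/(2\sigma^2 a_2\sqrt n))^{a_1\sqrt n}\ge 1-\varepsilon$, which works since $a_1\sqrt n\cdot\frac{1}{2\sigma^2 a_2\sqrt n}=\frac{a_1}{2\sigma^2 a_2}\to\infty$ as $a_2\to 0$.
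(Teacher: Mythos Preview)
Your sandwiching strategy is workable in principle, but the argument for the lower bound $N^{(\ell)}\ge b_1\ell^2$ has a real gap. You invoke $\Pmu(|\cT|\ge k)\asymp k^{-1/2}$, justified by ``$\Pmu(h(\cT)\ge m)\asymp 1/m$ together with the width/height comparison below''; but that comparison --- that a single tree reaching height $m$ typically has $\Theta(m^2)$ vertices --- is precisely the kind of statement you are trying to prove, so the derivation is circular, and the tail of $|\cT|$ is not established anywhere in the paper to be ``taken as input''. Without $N^{(\ell^+)}\ge n$ you lose $\cF_n\subseteq\cF^{(\ell^+)}$, and with it the upper bounds on $t(\cF_n)$, $h(\cF_n)$ and $w(\cF_n)$. (There is also a slip in your Step~2: it is the event $\{N^{(\ell^-)}\le b_2(\ell^-)^2\}$ that gives $N^{(\ell^-)}\le n$ and hence $\cF^{(\ell^-)}\subseteq\cF_n$, not the lower-bound event you wrote.)

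The fix is the same Doob argument you already use for the width: view the generation sizes of the first $\ell$ trees jointly as a $\bmu$-BPVE $(Z_k)$ with $Z_0=\ell$; Lemma~\ref{lem:offspringvarianceandDoob} gives $\min_{k\le\delta\ell}Z_k\ge\ell/2$ with high probability, whence $N^{(\ell)}\ge(\ell/2)\cdot\delta\ell$. But this is exactly what the paper does, only applied directly to $\cF_n$ without the sandwiching layer: it shows the first $A_0\sqrt n$ trees carry at least $n$ vertices w.h.p., hence $t(\cF_n)\le A_0\sqrt n$; then bounds $h(\cF_n)$ from above by a union over those $A_0\sqrt n$ trees and the survival estimate; then $w(\cF_n)$ from above by Doob up to height $A_2\sqrt n$; then $h(\cF_n)$ from \emph{below} by the deterministic pigeonhole $n\le(1+h(\cF_n))\,w(\cF_n)$ (cleaner than your survival-based argument, and not needing a prior lower bound on $t(\cF_n)$); and finally $t(\cF_n)\ge a_1\sqrt n$ by bounding the total size of $a_1\sqrt n$ trees from above via height-times-width. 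So once you patch the gap, your approach reduces to the paper's, and the sandwiching layer buys nothing.
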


The following lemma will be used in the proof of Proposition~\ref{prop:firstbounds}, 
and several times in the rest of the paper.
\begin{lemma}\label{lem:offspringvarianceandDoob}
Let $\bmu$ be a strictly critical environment, let $k_0\in\mathbb{N}$ and define $\bnu:=(\mu_{k_0+k})_{k\geq 0}$. Let $z_0\in \dN$ and let $(Z_k)_{k\ge0}$ be a $\bnu$-BPVE with $Z_0=z_0$. Then, for all $k_1\geq 1$, we have that
\begin{equation}\label{eqn:offspringvariance}
\Var_{\bnu}(Z_{k_1})=z_0(\sigma_{k_0}^2+\ldots +\sigma_{k_0+k_1-1}^2),
\end{equation}
and, for all $K>0$,
\begin{equation}\label{eqn:offspringDoobL2}
\mathrm{P}_{\bnu} \big(\max_{k\leq k_1}\vert Z_k-Z_0\vert >K\big)
\leq \frac{4z_0(\sigma_{k_0}^2+\ldots +\sigma_{k_0+k_1-1}^2)}{K^2}.
\end{equation}
\end{lemma}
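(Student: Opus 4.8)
The plan is to prove \eqref{eqn:offspringvariance} by a straightforward induction on $k_1$ using the branching recursion and the law of total variance, and then deduce \eqref{eqn:offspringDoobL2} from Doob's $L^2$ maximal inequality applied to a suitable martingale.

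\textbf{Step 1: the variance identity.} Since $\bar\mu_k=1$ for all $k$ (strict criticality), the process $(Z_k)_{k\ge0}$ is a nonnegative martingale under $\mathrm P_{\bnu}$; in particular $\Emu[Z_k]=z_0$ for all $k$. To compute $\Var_{\bnu}(Z_{k_1})$ I would argue by induction. For $k_1=0$ the left-hand side is $0$ and the sum is empty, so the identity holds. Assuming it holds for $k_1$, condition on $Z_{k_1}$: writing $Z_{k_1+1}=\sum_{i=1}^{Z_{k_1}}\xi^{(i)}_{k_0+k_1}$ with the $\xi^{(i)}$ i.i.d.\ of law $\mu_{k_0+k_1}$ (mean $1$, variance $\sigma^2_{k_0+k_1}$) and independent of $Z_{k_1}$, one gets $\EE[Z_{k_1+1}\mid Z_{k_1}]=Z_{k_1}$ and $\Var(Z_{k_1+1}\mid Z_{k_1})=Z_{k_1}\sigma^2_{k_0+k_1}$. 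The law of total variance then gives
\[
\Var_{\bnu}(Z_{k_1+1})=\EE[Z_{k_1}\sigma^2_{k_0+k_1}]+\Var_{\bnu}(Z_{k_1})=z_0\sigma^2_{k_0+k_1}+z_0(\sigma^2_{k_0}+\dots+\sigma^2_{k_0+k_1-1}),
\]
which is exactly \eqref{eqn:offspringvariance} at rank $k_1+1$. (Here one uses $\EE[Z_{k_1}]=z_0$.) Note this requires no moment assumption beyond finiteness of each $\sigma^2_k$, which holds since the environment is deterministic and each $\mu_k$ has a variance; if some $\sigma^2_k=\infty$ the identity is trivially $\infty=\infty$ and \eqref{eqn:offspringDoobL2} is vacuous.

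\textbf{Step 2: the maximal inequality.} The process $(Z_k-z_0)_{k\ge0}$ is a martingale under $\mathrm P_{\bnu}$ started at $0$, with $\Var_{\bnu}(Z_{k_1})$ finite by Step 1 (assuming all the $\sigma^2_{k_0+k}$ for $k<k_1$ are finite). By Doob's $L^2$ maximal inequality,
\[
\Emu\Big[\max_{k\le k_1}(Z_k-z_0)^2\Big]\le 4\,\Emu[(Z_{k_1}-z_0)^2]=4\,\Var_{\bnu}(Z_{k_1})=4z_0(\sigma^2_{k_0}+\dots+\sigma^2_{k_0+k_1-1}).
\]
Applying Markov's inequality to the nonnegative random variable $\max_{k\le k_1}(Z_k-Z_0)^2$ at level $K^2$ yields
\[
\mathrm P_{\bnu}\big(\max_{k\le k_1}|Z_k-Z_0|>K\big)=\mathrm P_{\bnu}\big(\max_{k\le k_1}(Z_k-Z_0)^2>K^2\big)\le \frac{4z_0(\sigma^2_{k_0}+\dots+\sigma^2_{k_0+k_1-1})}{K^2},
\]
which is \eqref{eqn:offspringDoobL2}.

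This lemma is elementary and I do not expect a genuine obstacle; the only points requiring a little care are keeping track of the index shift $k_0$ in the environment and checking the martingale/integrability hypotheses needed to legitimately invoke Doob's inequality (finiteness of $\Var_{\bnu}(Z_{k_1})$, which Step 1 provides exactly). If one wanted to avoid even the integrability bookkeeping, one could instead apply Doob's inequality to the bounded-below submartingale and truncate, but that seems unnecessary here.
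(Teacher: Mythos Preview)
Your proof is correct and follows essentially the same approach as the paper: induction on $k_1$ via the law of total variance for \eqref{eqn:offspringvariance}, then Doob's $L^2$ maximal inequality for the martingale $(Z_k-z_0)$ to obtain \eqref{eqn:offspringDoobL2}. The only cosmetic difference is that the paper initializes the induction at $k_1=1$ rather than $k_1=0$.
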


\begin{proof}
We show (\ref{eqn:offspringvariance}) by induction on $k_1$. The initialisation at $k_1=1$ is straightforward: indeed $Z_1$ is simply a sum of $z_0$ independent random variables with distribution $\mu_{k_0}$ hence $\Var_{\bnu}(Z_1)=z_0\sigma^2_{k_0}$. Suppose now that (\ref{eqn:offspringvariance}) holds for some $k_1\geq 1$. Then 
by the law of total variance, 
\begin{equation}\label{eqqqqq1}
\Var_{\bnu}(Z_{k_1+1})= \Var_{\bnu}(\mathrm E_{\bnu}[Z_{k_1+1}\vert Z_{k_1}])+ \mathrm E_{\bnu}[\Var_{\bnu}(Z_{k_1+1}\vert Z_{k_1})].
\end{equation}
We have $\mathrm E_{\bnu}[Z_{k_1+1}\vert Z_{k_1}]=Z_{k_1}$ $\mathrm{P}_{\bnu}$-a.s., hence, using the induction hypothesis,
\begin{equation}\label{eqqqqq2}
\Var_{\bnu}(\mathrm E_{\bnu}[Z_{k_1+1}\vert Z_{k_1}])=\Var_{\bnu}(Z_{k_1})=z_0(\sigma_{k_0}^2+\ldots +\sigma_{k_0+k_1-1}^2).
\end{equation}
Moreover, we have that $\Var_{\bnu}(Z_{k_1+1}\vert Z_{k_1})=Z_{k_1}\sigma_{k_1}^2$ $\mathrm{P}_{\bnu}$-a.s., therefore, using that $\bmu$ is strictly critical, we have that
\begin{equation}\label{eqqqqq3}
\mathrm E_{\bmu}[\Var_{\bmu}(Z_{k_1+1}\vert Z_{k_1})]
=\mathrm E_{\bmu}[Z_{k_1}\sigma_{k_0+k_1}^2]=z_0\sigma_{k_0+k_1}^2.
\end{equation}
Combining \eqref{eqqqqq1}, \eqref{eqqqqq2} and \eqref{eqqqqq3}, we obtain that
\begin{equation}
\Var_{\bnu}(Z_{k_1+1})= z_0(\sigma_{k_0}^2+\ldots +\sigma_{k_0+k_1}^2).
\end{equation}
Hence, by induction, \eqref{eqn:offspringvariance} holds for all $k_1\ge1$.
\\
Finally, as $\bnu$ is strictly critical, $(Z_k)_{k\geq 0}$  is a martingale and  \eqref{eqn:offspringDoobL2} is a straightforward consequence of \eqref{eqn:offspringvariance} and Doob's $L^2$-inequality.
\end{proof}

We now turn to the proof of Proposition~\ref{prop:firstbounds}.
\begin{proof}[Proof of Proposition~\ref{prop:firstbounds}]
Let us fix $\delta =\varepsilon/12$ throughout the proof. We will proceed in five steps.

\textbf{Step 1.} 
Let us prove that there exists $N_0\in \dN$ such that, for all $n\ge N_0$,
\begin{equation}\label{step1}
\Pmu(t(\cF_n)\geq A_0\sqrt{n})\leq \delta,
\end{equation}
with $A_0=\max\{32\sigma^2,\tfrac{8}{\delta}\}$.\\ 
For all $k\geq 0$, let $Z_k$ denote the number of vertices at height~$k$ in the first~$\lfloor A_0\sqrt n\rfloor$ trees of~$\mathcal F$, so that $(Z_k)$ is a $\bmu$-BPVE with $Z_0=\lfloor A_0\sqrt n\rfloor$. 
By applying Lemma \ref{lem:offspringvarianceandDoob} with $k_0=0$, $k_1={\lfloor\delta\sqrt{n}\rfloor}$ and $K=A_0\sqrt{n}/2$,
we have 
\begin{align*}
\Pmu\hspace{-1mm}\left(\min_{k\leq \delta\sqrt{n}}Z_k{\le} \frac{A_0\sqrt{n}}{2}-1\right)\hspace{-1mm}
&\leq \Pmu\hspace{-1mm}\left(\max_{k\leq \delta\sqrt{n}}{\vert Z_k-Z_0\vert}> \frac{A_0\sqrt{n}}{2}\right)\hspace{-1mm}
\leq \frac{{16}\left(\sigma_0^2+\ldots+\sigma^2_{\lfloor{\delta\sqrt{n}}\rfloor}\right)}{A_0\sqrt{n}}.
\end{align*}
By Condition \ref{(I)}, there exists $N_0'=N_0'(\bmu,\delta)\in \dN$ such that, for all $n\geq N_0'$, we have that $(\sigma_0^2+\ldots+\sigma^2_{\lfloor\delta\sqrt{n}\rfloor})/\sqrt{n} \leq 2\delta\sigma^2$  and thus
\begin{equation}\label{jjjjj1}
\Pmu\left(\min_{k\leq \delta\sqrt{n}}Z_k{\le} \frac{A_0\sqrt{n}}{2}-1\right)\leq  \frac{32\delta\sigma^2\sqrt{n}}{A_0\sqrt{n}}\leq \delta.
\end{equation}
If $\min_{k\leq \delta\sqrt{n}}Z_k{>} (A_0\sqrt{n}/2)-1$, 
then there are at least $(A_0\sqrt{n}/2)-1$ vertices on the first ${\lfloor\delta\sqrt{n}\rfloor}$ 
generations of the first $\lfloor A_0\sqrt{n}\rfloor$ trees of $\mathcal F$. 
Hence, provided that $n\ge \delta^2\vee 2\delta^{-2}$, these trees together have a total number of vertices exceeding
\[
\left(\frac{A_0\sqrt{n}}{2}-1\right)\times\lfloor\delta \sqrt{n}\rfloor \geq \left(\frac{4\sqrt{n}}{\delta}-1\right)\times (\delta\sqrt{n}-1)\geq \frac{2\sqrt{n}}{\delta}\times \frac{\delta\sqrt{n}}{2}\geq n,
\]
thus  $\cF_n$ is included in these first $\lfloor A_0\sqrt{n}\rfloor$ trees. This proves that
\begin{equation}\label{jjjjjj2}
\left\{t(\cF_n)\geq A_0\sqrt{n}\right\}\subset \left\{\min_{k\leq \delta\sqrt{n}}Z_k{\le} \frac{A_0\sqrt{n}}{2}-1\right\}.
\end{equation}
Combining \eqref{jjjjj1} and \eqref{jjjjjj2} proves \eqref{step1} with $N_0(\bmu,\delta)=N_0'(\bmu,\delta)\vee\delta^2\vee 2\delta^{-2}$.

\medskip
\textbf{Step 2.}  Let us prove that there exists $N_2=N_2(\bmu,\delta)\in \dN$ such that, for all $n\ge N_2$,
\begin{equation}\label{step2}
\Pmu(h(\cF_n)\geq A_2\sqrt{n})\leq 2\delta,
\end{equation}
where $A_2=4A_0/(c\delta)=\max\{\tfrac{128\sigma^2}{c\delta},\tfrac{32}{c\delta^2}\}$.

{Let $(Z_k)_{k\ge0}$ be a $\bmu$-BPVE with $Z_0=1$.} By \eqref{eqn:heightsurvivalexactformula}, \eqref{augustoislate} and Condition \ref{(II)}, we have
\begin{equation}\label{eqn:heightdomination}
\limsup_{k\rightarrow +\infty}k\,\Pmu(Z_{k}>0)\leq 2/c,
\end{equation}
where $c>0$ is given by Condition \ref{(II)}.
{Using \eqref{eqn:heightdomination} above and \eqref{step1}, there exists $N_2=N_2(\bmu,\delta)\in\mathbb{N}$ such that, for all $n\ge N_2$,}
\begin{align*}
\Pmu(h(\cF_n)\geq A_2\sqrt{n})&\leq \Pmu(t(\cF_n){\ge}A_0\sqrt{n})+\Pmu(\{t(\cF_n){<} A_0\sqrt{n}\}\cap \{h(\cF_n)\geq A_2\sqrt{n} \})
\\
&\leq \delta +A_0\sqrt{n}\,\Pmu(Z_{\lceil A_2\sqrt{n}\rceil}>0\, \vert {Z_0=1})
\\
&\leq \delta +\frac{4A_0}{c}{\times \frac{\sqrt{n}}{\lceil A_2\sqrt{n}\rceil}}\\
&{\leq 2\delta,}
\end{align*} 
where we applied a union bound for the second inequality, and used that $A_2=4A_0/(c\delta)$ for the last inequality. This proves \eqref{step2}.

\medskip
\textbf{Step 3.} {Let us prove that there exists $N_1\in \dN$ such that, for all $n\ge N_1$,
\begin{equation}\label{step3}
\Pmu(w(\cF_n)\geq A_1\sqrt{n})\leq 4\delta,
\end{equation}}
with $A_1=A_0+\sqrt{8A_0A_2\sigma^2/\delta}$, where $A_0$ and $A_2$ are defined in \eqref{step1} and \eqref{step2}.

As in Step 1, for all $k\geq 0$, let $Z_k$ denote the number of vertices at height~$k$ in the first~$\lfloor A_0\sqrt n\rfloor$ trees of~$\mathcal F$,  so that $(Z_k)_k$ is a $\bmu$-BPVE with $Z_0=\lfloor A_0\sqrt n\rfloor$.  By \eqref{step1} and \eqref{step2}, we have that, for all $n\ge N_0\vee N_2$,
\begin{equation}\label{step31}\begin{split}
\Pmu(w(\cF_n)\geq A_1\sqrt{n}) \leq & \Pmu(t(\cF_n)> A_0\sqrt{n})\\
&+\Pmu(h(\cF_n)\geq A_2\sqrt{n})
+\Pmu\left(\max_{k\leq A_2\sqrt{n}}Z_k\geq A_1\sqrt{n}\right)
\\
\leq & 3\delta +\Pmu\left(\max_{k\leq A_2\sqrt{n}}\vert Z_k-Z_0\vert\geq (A_1-A_0)\sqrt{n}\right).
\end{split}\end{equation}
Noting that $A_1>A_0$ and applying Lemma \ref{lem:offspringvarianceandDoob}, we obtain
\[
\Pmu\left(\max_{k\leq A_2\sqrt{n}}\vert Z_k-Z_0\vert\geq (A_1-A_0)\sqrt{n}\right)
\leq \frac{4A_0\sqrt{n}}{(A_1-A_0)^2n}(\sigma_0^2+\ldots+\sigma_{\lfloor A_2\sqrt{n}\rfloor}^2).
\]
By Condition~\ref{(I)}, we know that 
$\lim_{n\rightarrow+\infty}({\sigma_0^2}+\ldots+\sigma_{\lfloor A_2\sqrt{n}\rfloor}^2)/\sqrt n= A_2\sigma^2$.
{Hence, there exists $N_1'=N_1'(\bmu,\delta)\in \dN$ such that for all $n\ge N_1'$,}
\begin{equation}\label{step32}
\Pmu\left(\max_{k\leq A_2\sqrt{n}}\vert Z_k-Z_1\vert\geq (A_1-A_0)\sqrt{n}\right)\leq \frac{{8}A_0A_2\sigma^2}{(A_1-A_0)^2}=\delta.
\end{equation}
Combining \eqref{step31} and \eqref{step32} concludes the proof of \eqref{step3} with $N_1(\bmu,\delta)=\max\{N_0,N_2,N_1'\}$.

\medskip
\textbf{Step 4.} {Let us prove that, for all $n\ge N_3$,
\begin{equation}\label{step4}
\Pmu(h(\cF_n)\leq a_2\sqrt{n})\leq 4\delta,
\end{equation}
with $a_2=1/(2A_1)$ and $N_3(\bmu,\delta)=\max\{N_1,4A_1^2\}$.
Note that, by definition of $\cF_n$, $w(\cF_n)$ and $h(\cF_n)$, we have that
\begin{align}\label{eqn:tailleforet}
n\le |\cF_n|\le (1+h(\cF_n))\times w(\cF_n),
\end{align}
which implies that 
\[
w(\cF_n)\geq \frac{n}{1+h(\cF_n)}.
\]
We conclude the proof of \eqref{step4} by noting that, for all $n\ge N_3$,
\begin{align*}
\Pmu(h(\cF_n)\leq a_2\sqrt{n}) &\le \Pmu\left(w(\cF_n)\ge \frac{n}{1+a_2\sqrt{n}}\right)\\
&\le \Pmu\left(w(\cF_n)\ge \frac{A_1\sqrt{n}}{\tfrac{A_1}{\sqrt{n}}+\tfrac{1}{2}}\right)\\
&\le \Pmu\left(w(\cF_n)\ge {A_1\sqrt{n}}\right)\\
&\le 4\delta,
\end{align*}
where we used that $a_2=1/(2A_1)$ for the second inequality, that $N_3\ge 4A_1^2$ for the third inequality and \eqref{step3} for the last inequality.
}

\medskip
\textbf{Step 5.} Let us prove that there exists  $N_4=N_4(\bmu,\delta)\in \dN$ such that, for all $n\ge N_4$,
\begin{equation}\label{step5}
\Pmu(t(\cF_n)\leq a_1\sqrt{n})\leq 2\delta,
\end{equation}
where
\[
a_1=\left(\frac{\delta c}{4\left(1+\sqrt{32\sigma^2/\delta^2 c}\right)}\right)^{1/2}.
\]
For all $k\ge 0$, let $Z_k$ be the number of vertices at height $k$ in the first $\lfloor a_1\sqrt{n}\rfloor$ trees of $\cF$, so that $(Z_k)_{k\ge0}$ is a $\bmu$-BPVE with $Z_0=\lfloor a_1\sqrt{n}\rfloor$. Let $\mathcal{F}^{a_1}$ be the forest consisting of the first $\lfloor a_1 \sqrt{n}\rfloor$ trees of $\mathcal{F}$. Using that $\vert \mathcal{F}^{a_1}\vert\le (1+h(\mathcal{F}^{a_1}))\times w(\mathcal{F}^{a_1})$, we obtain that
\begin{equation}\label{step41}\begin{split}
\Pmu\left(t(\cF_n)\leq a_1\sqrt{n}\right)&\leq \Pmu\left(\vert \mathcal{F}^{a_1}\vert\ge n\right)\\
&\leq \Pmu\left(h(\mathcal{F}^{a_1})\geq \frac{4a_1\sqrt{n}}{\delta c}-1\right)
+\Pmu\left(\max_{k\leq \frac{4a_1\sqrt{n}}{\delta c}-1}Z_k \geq\frac{\delta c}{4a_1}\sqrt{n}\right).
\end{split}\end{equation}
By \eqref{eqn:heightdomination}, there exists $N_4'=N_4'(\bmu,\delta)\in \dN$ such that for all $n\ge \max\{N_4',c^2\delta^2/a_1^2\}$, 
\begin{align}\label{step42}
\Pmu\left(h(\mathcal{F}^{a_1})\geq \frac{4a_1\sqrt{n}}{\delta c}-1\right)&\le \lfloor a_1 \sqrt{n}\rfloor \frac{3}{c\left(\frac{4a_1\sqrt{n}}{\delta c}-1\right)}\le \delta,
\end{align}
where we used a union bound on the $\lfloor a_1\sqrt{n}\rfloor$ trees of $\mathcal{F}^{a_1}$ for the first inequality.
Moreover, we have that
\begin{align*}
&\Pmu\left(\max\Big\{Z_k\colon {k\leq\frac{4a_1\sqrt{n}}{\delta c}-1}\Big\} >\frac{\delta c}{4a_1}\sqrt{n}\right)\\
&\leq \Pmu\left(\max\Big\{\vert Z_k-Z_0\vert\colon {k\leq \frac{4a_1\sqrt{n}}{\delta c}-1}\Big\} >\Big(\frac{\delta c}{4a_1}-a_1\Big)\sqrt{n}\right)
\\
&\leq \frac{4a_1}{(\frac{\delta c}{4a_1}-a_1)^2\sqrt{n}}\big(\sigma_0^2+\ldots+\sigma_{\lfloor \frac{4a_1\sqrt{n}}{\delta c}\rfloor}^2\big),
\end{align*}
where we used Lemma \ref{lem:offspringvarianceandDoob} for the last inequality, recalling that $Z_0=\lfloor a_1\sqrt{n}\rfloor$.
By Condition \ref{(I)}, there exists $N_4''=N_4''(\bmu,\delta)\in \dN$ such that, for all $n\ge N_4''$, 
\begin{align}\label{step43}
\Pmu\left(\max_{k\leq \frac{4a_1\sqrt{n}}{\delta c}-1}Z_k \geq\frac{\delta c}{4a_1}\sqrt{n}\right)&\le \frac{32a_1^2\sigma^2}{\delta c(\frac{\delta c}{4a_1}-a_1)^2}\le \delta,
\end{align}
where the last inequality is obtained from the definition of $a_1$ by a straightforward computation.
Combining \eqref{step41}, \eqref{step42} and \eqref{step43} concludes the proof of \eqref{step5} with $N_4(\bmu,\delta)=\max\{N_4',N_4'',c^2\delta^2/a_1^2\}$.\\
Finally, this concludes the proof of the lemma by combining Steps 2 to 4, noting that the inequality $t(\mathcal{F}_n)\le w(\mathcal{F}_n)$ is trivial, recalling that $\varepsilon=12 \delta$ and defining $N(\bmu,\varepsilon)=\max_{1\le i\le 4}N_i(\bmu,\delta)$.
\end{proof}

\section{Proofs of Theorem~\ref{thm:Luka} and Corollary \ref{cor:Luka}}\label{sec:Luka}

\subsection{Proof of Theorem~\ref{thm:Luka}}
In this section, we prove the convergence of the {\L}ukasiewicz path, i.e.~Theorem~\ref{thm:Luka}, which is a direct consequence of Proposition \ref{mart} and Lemmas \ref{mart1}, \ref{mart2} and \ref{mart3} below. We will consider a strictly critical environment $\bmu$ satisfying Conditions \ref{(I)}, \ref{(II)} and \ref{(III)}.
To prove the theorem, we will apply a criterion for the convergence of martingales given by~\cite[Theorem 2.1 $(ii)$]{Whitt}, which is a consequence of~\cite[Theorem 7.1]{EthierKurtz}. Recall that we want to prove that the process
\[
(M_n(t))_{0\le t\le  1}:=\left(\frac{X_{\lfloor nt\rfloor}}{\sqrt{n}}\right)_{0\leq t\leq 1}
\]
converges to a Brownian motion, where $(X_k)_{k\ge0}$ is the {\L}ukasiewicz path defined in \eqref{def:Luka}. It is straighforward to check that, as soon as $\bmu$ is strictly critical, $(X_k)_{k\ge0}$ is a martingale with respect to its natural filtration.\\
Due to technicalities to control the largest degree (see \eqref{eqn:WhittJump} below), 
we will in fact apply~\cite[Theorem 2.1$(ii)$]{Whitt}  to stopped martingales obtained from $(M_n)_{n\geq 1}$ by stopping the exploration processes when they exit a large rectangle. To make this more precise, let us prove the following easy consequence of Condition \ref{(III)}.

\begin{corollary}\label{bigrectangle}
Let $\bmu$ be an environment satisfying Condition \ref{(III)}. Then, there exist two sequences $(h_n)_{n\ge1}$ and $(w_n)_{n\ge1}$, both diverging to $+\infty$, such that
\begin{align*}
&h_n\le n^{1/3}\text{, for all }n\ge1,\\
&\frac{w_n}{\sqrt{n}}\sum_{k=0}^{\lfloor h_n\sqrt{n}\rfloor}\mathrm E_{\bmu}[\xi_{k}^2\mathbf{1}_{\xi_{k}^2\geq \varepsilon n}]\underset{n\rightarrow +\infty}{\longrightarrow} 0,
\end{align*}
where $\xi_k\sim \mu_k$, for all $k\ge0$.
\end{corollary}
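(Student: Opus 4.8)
The plan is to obtain both sequences directly from Condition \ref{(III)}: first truncate the index range so that the bound $h_n\le n^{1/3}$ holds, and then extract $(w_n)$ by a standard diagonal argument, exploiting that the relevant quantity is monotone in $\varepsilon$.

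First I would let $(h_n')_{n\ge1}$ be the sequence furnished by Condition \ref{(III)} and set $h_n:=\min\{h_n',\,n^{1/3}\}$, so that automatically $h_n\le n^{1/3}$ for all $n\ge1$ and $h_n\to+\infty$, since both $h_n'$ and $n^{1/3}$ diverge. Writing, for $\varepsilon>0$,
\[
a_n(\varepsilon):=\frac{1}{\sqrt{n}}\sum_{k=0}^{\lfloor h_n\sqrt{n}\rfloor}\mathrm E_{\bmu}\big[\xi_k^2\mathbf{1}_{\{\xi_k^2\ge\varepsilon n\}}\big],
\]
the inequality $h_n\le h_n'$ together with nonnegativity of the summands gives $a_n(\varepsilon)\le \frac{1}{\sqrt n}\sum_{k=0}^{\lfloor h_n'\sqrt n\rfloor}\mathrm E_{\bmu}[\xi_k^2\mathbf{1}_{\{\xi_k^2\ge\varepsilon n\}}]$, which tends to $0$ by Condition \ref{(III)}; hence $a_n(\varepsilon)\to0$ for every fixed $\varepsilon>0$. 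I would also record the elementary fact that $\varepsilon\mapsto a_n(\varepsilon)$ is non-increasing, i.e. $a_n(\varepsilon)\le a_n(\varepsilon')$ whenever $\varepsilon\ge\varepsilon'$, since enlarging $\varepsilon$ only shrinks the event $\{\xi_k^2\ge\varepsilon n\}$.

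Next I would build $(w_n)$ diagonally. For each integer $m\ge1$, since $a_n(1/m)\to0$ as $n\to\infty$, choose a strictly increasing sequence $(n_m)_{m\ge1}$ with $a_n(1/m)\le 4^{-m}$ for all $n\ge n_m$, and set $w_n:=2^m$ for $n_m\le n<n_{m+1}$ (and $w_n:=1$ for $n<n_1$); then $w_n\to+\infty$. Given $\varepsilon>0$, fix $m_0\ge1$ with $1/m_0\le\varepsilon$; for $n\ge n_{m_0}$, letting $m\ge m_0$ be the index with $n_m\le n<n_{m+1}$, monotonicity yields $a_n(\varepsilon)\le a_n(1/m)\le 4^{-m}$, hence $w_n\,a_n(\varepsilon)\le 2^m4^{-m}=2^{-m}\to0$ as $n\to\infty$. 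Since $\varepsilon>0$ was arbitrary, this is precisely the second asserted convergence, and the corollary follows.

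There is no real obstacle here: the only subtle point is that a single sequence $(w_n)$ must make the product tend to $0$ simultaneously for all $\varepsilon>0$, which is exactly why a diagonal construction (rather than a direct choice) is needed. The monotonicity of $a_n(\cdot)$ is what reduces this to controlling the countable family $(a_n(1/m))_{m\ge1}$, making the argument routine; no input beyond Condition \ref{(III)} is required.
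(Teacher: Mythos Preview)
Your proof is correct. The construction of $(h_n)$ by truncating at $n^{1/3}$ is exactly what the paper does. For $(w_n)$, however, your route differs from the paper's: the paper simply sets
\[
w_n=\Bigl(\tfrac{1}{\sqrt n}+a_n(\varepsilon)\Bigr)^{-1/2},
\]
and observes that $w_n\,a_n(\varepsilon)\le a_n(\varepsilon)^{1/2}\to0$. This is shorter but, as written, the formula for $w_n$ involves $\varepsilon$, so it does not immediately yield a \emph{single} sequence $(w_n)$ for which the convergence holds for every $\varepsilon>0$ --- a point that matters since $\tau_n$ in~\eqref{def:taun} is defined from $(h_n)$ and $(w_n)$ before any $\varepsilon$ is fixed. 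Your diagonal argument, exploiting the monotonicity $\varepsilon\mapsto a_n(\varepsilon)$, cleanly produces one sequence that works uniformly in $\varepsilon$, which is precisely what the later applications need. So your version is a little longer but arguably more careful on this point; otherwise the two arguments have the same content.
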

\begin{proof}
First, note that if Condition \ref{(III)} holds for a sequence $(\widetilde{h}_n)$, then it trivially holds for $(h_n)$ where, for all $n\ge1$, $h_n=n^{1/3}\wedge \widetilde{h}_n$. Consider such a sequence $(h_n)_{n\ge 1}$ so that Condition \ref{(III)} holds and $h_n\le n^{1/3}$ for all $n\ge1$.\\
Define, for all $n\ge1$,
\[
w_n=\left(\frac{1}{\sqrt{n}}+\frac{\sum_{k=0}^{\lfloor h_n\sqrt{n}\rfloor}\mathrm E_{\bmu}[\xi_{k}^2\mathbf{1}_{\xi^2_{k}\geq \varepsilon n}]}{\sqrt{n}}\right)^{-1/2}.
\]
By Condition \ref{(III)}, we have that $w_n\to+\infty$ and
\[
\left\vert\frac{w_n}{\sqrt{n}}\sum_{k=0}^{\lfloor h_n\sqrt{n}\rfloor}\mathrm E_{\bmu}[\xi_{k}^2\mathbf{1}_{\xi^2_{k}\geq \varepsilon n}]\right\vert\le \left\vert\frac{1}{\sqrt{n}}\sum_{k=0}^{\lfloor h_n\sqrt{n}\rfloor}\mathrm E_{\bmu}[\xi_{k}^2\mathbf{1}_{\xi_{k}^2 \geq \varepsilon n}]\right\vert^{1/2}\underset{n\rightarrow +\infty}{\longrightarrow} 0,
\]
which concludes the proof.
\end{proof}

For all $n\ge1$,
define the random time
\begin{equation}\label{def:taun}
\tau_n:=n\wedge \min\{k \colon h(\cF_k)\geq h_n\sqrt{n}\text{ or }w(\cF_k)\geq w_n\sqrt{n}\}.
\end{equation}
Recall that $H_n$ and $L_n$, ${n\ge0}$, denote respectively the height and the number of children of the $n$-th node along the depth-first exploration of the forest $\mathcal{F}$, so that $L_n=1+X_{n+1}-X_n$.  Recall that $\sigma_i^2$ is the variance of $\mu_i$, for $i\ge0$.\\
We start by stating and proving a proposition providing three sufficient conditions for the convergence of the {\L}ukasiewicz path. These three conditions are then proved to hold in Lemmas \ref{mart1}, \ref{mart2} and \ref{mart3}, which altogether proves Theorem \ref{thm:Luka}.

\begin{proposition}\label{mart}
Let $\bmu$ be a strictly critical environment satisfying Conditions \ref{(I)}, \ref{(II)} and \ref{(III)}. Consider $(h_n)_{n\ge1}$ and $(w_n)_{n\ge1}$ provided by Corollary \ref{bigrectangle}, and $(\tau_n)_{n\ge1}$ defined in \eqref{def:taun}. The conclusion of Theorem \ref{thm:Luka} holds if the following three conditions are satisfied:
\begin{align}\label{eqn:WhittMaxVariance}
&\lim_{n\rightarrow +\infty}\max_{0\leq i\leq h_n\sqrt{n}}\frac{\sigma_i^2}{n}=0,\\ \label{eqn:WhittJump}
&\lim_{n\rightarrow +\infty}\bE_{\bmu}\left[\max_{0\leq i\leq \tau_n-1}\frac{L_i^2}{n}\right]=0,\\ \label{eqn:WhittAverageVariance}
&\lim_{n\rightarrow +\infty}{\sum_{k=0}^{n-1}\frac{\sigma_{H_k}^2}{n}}=\sigma^2\text{, in $\Pmu$-probability}. 
\end{align}
\end{proposition}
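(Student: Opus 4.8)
The plan is to apply the martingale functional central limit theorem of Whitt~\cite[Theorem~2.1$(ii)$]{Whitt} (a consequence of~\cite[Theorem~7.1]{EthierKurtz}) to a localised version of $(M_n)_{n\ge1}$, and then to remove the localisation using Proposition~\ref{prop:firstbounds}. I would first record the martingale structure. Under $\Pmu$, conditionally on the exploration up to step $k$, the number of children $L_k$ of the $k$-th vertex has law $\mu_{H_k}$ and $H_k$ is a function of $X_0,\dots,X_k$, so strict criticality gives $\Emu[L_k-1\mid X_0,\dots,X_k]=0$ and $\Emu[(L_k-1)^2\mid X_0,\dots,X_k]=\sigma_{H_k}^2$. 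Hence $(X_k)_{k\ge0}$ is a martingale with predictable quadratic variation $\langle X\rangle_k=\sum_{j=0}^{k-1}\sigma_{H_j}^2$, and $(M_n(t))_{0\le t\le1}$ is a martingale with $\langle M_n\rangle_t=\frac1n\sum_{j=0}^{\lfloor nt\rfloor-1}\sigma_{H_j}^2$.

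I would then introduce the stopped martingale $M_n^{\tau_n}(t):=X_{\lfloor nt\rfloor\wedge\tau_n}/\sqrt n$, whose increments before $\tau_n$ only involve vertices of height at most $h_n\sqrt n$ inside a forest of width at most $w_n\sqrt n$, precisely the rectangle for which Corollary~\ref{bigrectangle} is designed. Since $h_n,w_n\to+\infty$, Proposition~\ref{prop:firstbounds} (applied for any fixed accuracy and $n$ large enough that $A_2\le h_n$ and $A_1\le w_n$) yields $\Pmu(\tau_n=n)\to1$; in particular $\Pmu(M_n=M_n^{\tau_n})\to1$, so it suffices to prove $M_n^{\tau_n}\Rightarrow\sigma B$ in $D([0,1])$, where $B$ is a standard Brownian motion.

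Next I would check the hypotheses of~\cite[Theorem~2.1$(ii)$]{Whitt} for $M_n^{\tau_n}$ with candidate limiting quadratic variation $t\mapsto\sigma^2 t$. \emph{(a) Negligibility of the maximal conditional variance of an increment:} for $k<\tau_n$ one has $H_k\le h_n\sqrt n$, hence $\sigma_{H_k}^2/n\le \max_{0\le i\le h_n\sqrt n}\sigma_i^2/n\to0$ by~\eqref{eqn:WhittMaxVariance}. \emph{(b) $L^2$-negligibility of the maximal jump of the rescaled path:} $\sup_{0\le t\le1}|M_n^{\tau_n}(t)-M_n^{\tau_n}(t-)|^2=\max_{0\le k<\tau_n}(L_k-1)^2/n\le 2\max_{0\le k<\tau_n}L_k^2/n+2/n$, whose expectation tends to $0$ by~\eqref{eqn:WhittJump}. \emph{(c) Convergence of the predictable quadratic variation:} for fixed $t\in(0,1]$, since $(H_j)_{j\ge0}$ is the height process of the fixed forest $\cF$ (independent of $n$) and $\lfloor nt\rfloor\to\infty$, applying~\eqref{eqn:WhittAverageVariance} with $\lfloor nt\rfloor$ in place of $n$ gives $\frac1{\lfloor nt\rfloor}\sum_{j=0}^{\lfloor nt\rfloor-1}\sigma_{H_j}^2\to\sigma^2$ in $\Pmu$-probability, hence $\langle M_n\rangle_t\to\sigma^2 t$, and on $\{\tau_n=n\}$ this equals $\langle M_n^{\tau_n}\rangle_t$; monotonicity of $t\mapsto\langle M_n^{\tau_n}\rangle_t$ together with continuity of the limit upgrades this to the form required by the theorem. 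The cited theorem then gives $M_n^{\tau_n}\Rightarrow\sigma B$, hence $M_n\Rightarrow\sigma B$, which is the conclusion of Theorem~\ref{thm:Luka}.

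The step I expect to be the main obstacle is item~(c): upgrading the single-time statement~\eqref{eqn:WhittAverageVariance} (which provides only $t=1$) to the convergence $\langle M_n\rangle_t\to\sigma^2 t$ for every $t\in[0,1]$. This rests on the fact that the height sequence $(H_j)_{j\ge0}$ does not depend on $n$, so that~\eqref{eqn:WhittAverageVariance} can be re-applied verbatim at every scale $\lfloor nt\rfloor$; a secondary care point is to align the three displayed conditions precisely with the hypotheses of~\cite[Theorem~2.1$(ii)$]{Whitt} and to handle cleanly the interplay between the localisation at $\tau_n$ and the bounds of Proposition~\ref{prop:firstbounds}.
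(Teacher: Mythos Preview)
Your proposal is correct and follows essentially the same route as the paper: introduce the stopped martingale $\widetilde M_n(t)=n^{-1/2}X_{\lfloor nt\rfloor\wedge\tau_n}$, use Proposition~\ref{prop:firstbounds} together with $h_n,w_n\to\infty$ to get $\Pmu(\tau_n<n)\to0$, and then verify the three hypotheses of \cite[Theorem~2.1$(ii)$]{Whitt} exactly as you do, including the key observation for part~(c) that $(H_j)_{j\ge0}$ does not depend on $n$, so \eqref{eqn:WhittAverageVariance} may be invoked at scale $\lfloor nt\rfloor$ for each fixed $t$. The only cosmetic differences are that the paper uses the cruder bound $(L_i-1)^2\le L_i^2+1$ in place of your factor~$2$, and does not bother with the monotonicity/continuity upgrade since Whitt's condition is only pointwise in~$t$.
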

The first condition is a domination of the maximal jump of the quadratic variation of the {\L}ukasiewicz path. The second condition provides a control of the maximal square degree encoutered by the exploration process. 
The third condition gives the convergence of the quadratic variation of the {\L}ukasiewicz path.
\begin{proof}
For all $n\ge1$ and for all $k\ge0$, define
\[
\widetilde{X}^{(n)}_k:=X_{k\wedge \tau_n}.
\]

It is straightforward to check that $\tau_n$ is a stopping time with respect to the natural filtration 
of $(X_k)_{k\geq 0}$.
Hence, $\widetilde{X}^{(n)}$ is a stopped martingale and thus a martingale. 
For all $n\geq 1$, define the process
\[
\left(\widetilde{M}_n\right)_{0\le t\le 1}:=\left(n^{-1/2}\widetilde{X}^{(n)}_{\lfloor nt\rfloor}\right)_{0\leq t\leq 1}.
\]
By Proposition~\ref{prop:firstbounds} and using that both $(h_n)$ and $(w_n)$ diverge to infinity, we have that, for all $\varepsilon>0$,
there exist  $N=N(\mu,\varepsilon)\in\mathbb{N}$ such that
 for all $n\ge N$, $w_n\ge  A_1(\varepsilon)$ and $h_n\ge A_2(\varepsilon)$ and thus
\[
\Pmu\left(\tau_n<n\right)\le\Pmu\left(h(\cF_n)\geq h_n\sqrt{n}\text{ or }w(\cF_n)\geq w_n\sqrt{n}\right)<\varepsilon.
\]
The above implies that
\begin{equation}\label{eqn:modifiedmartingale}
\limsup_{n\to\infty}\Pmu(M_n\neq \widetilde{M}_n)
= \limsup_{n\to\infty}\Pmu(\exists k\leq n, X_k^{(n)}\neq \widetilde{X}^{(n)}_k)= \limsup_{n\to\infty}\Pmu(\tau_n<n)= 0.
\end{equation}
\noindent
Therefore, it is enough to prove the convergence in law of $\widetilde{M}_n$ to $(\sigma B_t)_{0\leq t\leq 1}$, as $n$ goes to infinity.\\
Recalling that
\[
\widetilde{X}^{(n)}_k=\sum_{i=0}^{k\wedge \tau_n-1}(L_i-1),
\]
it is straightforward to check that the angle-bracket process associated with $\widetilde{M}_n$ is given, for all $t\in[0,1]$, by
\[
\langle M_n(t)\rangle = \sum_{k=0}^{\lfloor nt \rfloor\wedge \tau_n-1} \frac{\sigma^2_{H_i}}{n}
\]

By \cite[Theorem 2.1 $(ii)$]{Whitt}, the  three following conditions are sufficient for this convergence to hold:
\begin{align}\label{whitt1}
&\lim_{n\to \infty} \bE_{\bmu}\left[\max_{0\le i\le \tau_n-1} \frac{\sigma^2_{H_i}}{n}\right]=0,\\ \label{whitt2}
&\lim_{n\to \infty} \bE_{\bmu}\left[\max_{0\le i\le \tau_n-1} \frac{(L_i-1)^2}{n}\right]=0,\\ \label{whitt3}
&\langle M_n(t)\rangle \Rightarrow \sigma^2 t \text{ in distribution, for all }t\in[0,1].
\end{align}
For all $i\le \tau_n-1$, using definition of $\tau_n$, we have that $0\le H_i<h_n\sqrt{n}$, hence
\[
\bE_{\bmu}\left[\max_{0\le i\le \tau_n-1} \frac{\sigma^2_{H_i}}{n}\right]\le \max_{i\le h_n\sqrt{n}} \frac{\sigma^2_{i}}{n},
\]
and \eqref{eqn:WhittMaxVariance} implies \eqref{whitt1}. For all $i\ge0$, $(L_i-1)^2\le L_i^2 +1$, thus we have that  \eqref{eqn:WhittJump} implies \eqref{whitt2}. Finally, \eqref{eqn:WhittAverageVariance} implies that, for all $t\in[0,1]$,
\[
\lim_{n\rightarrow +\infty}\frac{1}{n}{\sum_{k=0}^{\lfloor nt\rfloor-1}\sigma_{H_k}^2}=\sigma^2t
\]
in $\Pmu$-probability. Using that, for all $t\in[0,1]$, $\Pmu(\tau_n<\lfloor nt\rfloor)\to 0$, we have that \eqref{eqn:WhittAverageVariance} implies that the limit in \eqref{whitt3} holds in probablity and thus in distribution. This concludes the proof of the proposition.
\end{proof}

\begin{lemma}\label{mart1}
Let $\bmu$ be a strictly critical environment satisfying Conditions \ref{(I)}, \ref{(II)} and \ref{(III)}. Consider $(h_n)_{n\ge1}$  provided by Corollary \ref{bigrectangle}. We have that
\begin{align*}
&\lim_{n\rightarrow +\infty}\max_{0\leq i\leq h_n\sqrt{n}}\frac{\sigma_i^2}{n}=0.
\end{align*}
In other words, \eqref{eqn:WhittMaxVariance} is satisfied.
\end{lemma}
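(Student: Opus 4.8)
The plan is to bound $\sigma_i^2$ by the raw second moment $\mathrm E_{\bmu}[\xi_i^2]$, truncate that second moment at level $\varepsilon n$, and absorb the remaining tail using Condition~\ref{(III)}.

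First I would fix $\varepsilon>0$. Since $\bmu$ is strictly critical, $\mathrm E_{\bmu}[\xi_i]=\bar\mu_i=1$ for all $i\ge 0$, so $\sigma_i^2=\mathrm E_{\bmu}[\xi_i^2]-1\le \mathrm E_{\bmu}[\xi_i^2]$. Splitting according to whether $\xi_i^2<\varepsilon n$ or not yields, for every $i\ge0$ and $n\ge1$,
\[
\sigma_i^2\le \mathrm E_{\bmu}[\xi_i^2\mathbf{1}_{\xi_i^2<\varepsilon n}]+\mathrm E_{\bmu}[\xi_i^2\mathbf{1}_{\xi_i^2\ge\varepsilon n}]\le \varepsilon n+\mathrm E_{\bmu}[\xi_i^2\mathbf{1}_{\xi_i^2\ge\varepsilon n}].
\]
Restricting to $0\le i\le h_n\sqrt n$, I would then bound the last term by the whole sum $\sum_{k=0}^{\lfloor h_n\sqrt n\rfloor}\mathrm E_{\bmu}[\xi_k^2\mathbf{1}_{\xi_k^2\ge\varepsilon n}]$, all summands being nonnegative. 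Recall from the proof of Corollary~\ref{bigrectangle} that the sequence $(h_n)$ there satisfies $h_n\le n^{1/3}$ and, being dominated by a sequence for which Condition~\ref{(III)} holds, itself satisfies Condition~\ref{(III)} (shrinking the range of summation only decreases a sum of nonnegative terms). Hence, applying Condition~\ref{(III)} with the $\varepsilon$ fixed above, and using that $\tfrac1n\le\tfrac1{\sqrt n}$ for $n\ge1$,
\[
\max_{0\le i\le h_n\sqrt n}\frac{\sigma_i^2}{n}\le \varepsilon+\frac1n\sum_{k=0}^{\lfloor h_n\sqrt n\rfloor}\mathrm E_{\bmu}[\xi_k^2\mathbf{1}_{\xi_k^2\ge\varepsilon n}]\le \varepsilon+\frac1{\sqrt n}\sum_{k=0}^{\lfloor h_n\sqrt n\rfloor}\mathrm E_{\bmu}[\xi_k^2\mathbf{1}_{\xi_k^2\ge\varepsilon n}],
\]
where the last sum tends to $0$ as $n\to\infty$ by Condition~\ref{(III)}. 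Letting $n\to\infty$ gives $\limsup_{n\to\infty}\max_{0\le i\le h_n\sqrt n}\sigma_i^2/n\le\varepsilon$, and since $\varepsilon>0$ is arbitrary the limit is $0$, which is exactly \eqref{eqn:WhittMaxVariance}.

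There is no substantial obstacle: the statement is essentially a quantitative rephrasing of Condition~\ref{(III)}. The only mild points of care are that Condition~\ref{(III)} is assumed for \emph{all} $\varepsilon>0$ with \emph{one} fixed sequence $(h_n)$ (so it may legitimately be invoked for the $\varepsilon$ chosen at the outset, rather than having to re-choose $(h_n)$), and that strict criticality is what lets one pass from $\sigma_i^2$ to $\mathrm E_{\bmu}[\xi_i^2]$.
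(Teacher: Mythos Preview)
Your proof is correct but follows a different route from the paper's. The paper bounds $\max_i \sigma_i^2/n$ by $\sum_i \sigma_i^2/n$, then writes this as $(h_n\sqrt n/n)\cdot\bigl(\sum_i \sigma_i^2/(h_n\sqrt n)\bigr)$ and uses Condition~\ref{(I)} for the second factor together with the bound $h_n\le n^{1/3}$ from Corollary~\ref{bigrectangle} to get $h_n\sqrt n/n\le n^{-1/6}\to 0$. You instead truncate $\mathrm E_{\bmu}[\xi_i^2]$ at level $\varepsilon n$ and absorb the tail directly via Condition~\ref{(III)}. Your argument is slightly cleaner in that it does not invoke Condition~\ref{(I)} nor the specific growth rate $h_n\le n^{1/3}$ (which you mention but never use), and it makes transparent that the claim is really a manifestation of Condition~\ref{(III)}. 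The paper's approach, on the other hand, shows that Conditions~\ref{(I)} and the $n^{1/3}$ bound already suffice, without appealing to the tail control in~\ref{(III)}. Either argument is perfectly adequate for this short lemma.
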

\begin{proof}
By Corollary \ref{bigrectangle}, we know that Condition \ref{(III)} holds with a sequence $(h_n)_{n\ge1}$ such that $h_n\le n^{1/3}$ for all $n\ge1$.
By Condition \ref{(I)}, we have that
\[
\limsup_{n\to \infty} \max_{0\leq i\leq h_n\sqrt{n}}\frac{\sigma_i^2}{n}\le \limsup_{n\to \infty}\frac{\displaystyle \sum_{i=0}^{\lfloor h_n\sqrt{n}\rfloor}\sigma_i^2}{n}\le n^{-1/6}\limsup_{n\to \infty}\frac{\displaystyle \sum_{i=0}^{\lfloor h_n\sqrt{n}\rfloor}\sigma_i^2}{h_n\sqrt{n}}=0.
\]
\end{proof}

\begin{lemma}\label{mart2}
Let $\bmu$ be a strictly critical environment satisfying Conditions \ref{(I)}, \ref{(II)} and \ref{(III)}. Consider $(h_n)_{n\ge1}$ and $(w_n)_{n\ge1}$ provided by Corollary \ref{bigrectangle}, and $(\tau_n)_{n\ge1}$ defined in \eqref{def:taun}.We have that
\begin{align*}
&\lim_{n\rightarrow +\infty}\bE_{\bmu}\left[\max_{0\leq i\leq \tau_n-1}\frac{L_i^2}{n}\right]=0. 
\end{align*}
In other words, \eqref{eqn:WhittJump} is satisfied.
\end{lemma}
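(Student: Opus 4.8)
The plan is to use the lattice (graphical) construction of $\cF$ from Remark~\ref{deflexico} to replace the random quantity $\max_{0\le i\le\tau_n-1}L_i^2$ by a maximum of \emph{independent} degrees ranging over a \emph{deterministic} rectangle, and then to invoke Condition~\ref{(III)} in the quantitative form of Corollary~\ref{bigrectangle}.

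\textbf{Localisation of the explored vertices.} First I would realise $\cF$ on the grid, so that the lexicographic vertex $v_{j,k}$ has $\xi_{j,k}$ children, with $(\xi_{j,k})_{j\ge1,k\ge0}$ independent and $\xi_{j,k}\sim\mu_k$. If the $i$-th vertex in depth-first order is $v_{j,k}$, then $k=H_i$ and $L_i=\xi_{j,k}$. Now fix $n$ and take $i\le\tau_n-1$: the corresponding vertex lies in $\cF_i\subseteq\cF_{\tau_n-1}$, and the definition~\eqref{def:taun} of $\tau_n$ forces $h(\cF_{\tau_n-1})<h_n\sqrt n$ and $w(\cF_{\tau_n-1})<w_n\sqrt n$. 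Hence $k=H_i\le\lfloor h_n\sqrt n\rfloor$; and since $v_{j,H_i}$ is the $j$-th vertex of generation $H_i$ of $\cF$, and the first $j$ such vertices all belong to $\cF_{\tau_n-1}$, we get $j\le Z_{H_i}(\cF_{\tau_n-1})\le w(\cF_{\tau_n-1})<w_n\sqrt n$, i.e.\ $j\le\lfloor w_n\sqrt n\rfloor$. Thus every explored degree $L_i$ ($i\le\tau_n-1$) equals some $\xi_{j,k}$ with $(j,k)\in R_n:=\{1,\dots,\lfloor w_n\sqrt n\rfloor\}\times\{0,\dots,\lfloor h_n\sqrt n\rfloor\}$.

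\textbf{Conclusion.} Fix $\varepsilon>0$. From the previous paragraph, whenever $\max_{0\le i\le\tau_n-1}L_i^2>\varepsilon n$ the maximising degree is some $\xi_{j^*,k^*}$ with $(j^*,k^*)\in R_n$ and $\xi_{j^*,k^*}^2\ge\varepsilon n$; therefore, in all cases,
\[
\max_{0\le i\le\tau_n-1}\frac{L_i^2}{n}\ \le\ \varepsilon+\sum_{(j,k)\in R_n}\frac{\xi_{j,k}^2}{n}\,\mathbf{1}_{\{\xi_{j,k}^2\ge\varepsilon n\}}.
\]
Taking $\bE_{\bmu}$, using that $\bE_{\bmu}[\xi_{j,k}^2\mathbf{1}_{\{\xi_{j,k}^2\ge\varepsilon n\}}]$ depends on $k$ only (and equals $\mathrm E_{\bmu}[\xi_k^2\mathbf{1}_{\{\xi_k^2\ge\varepsilon n\}}]$ for $\xi_k\sim\mu_k$), and summing over the $\lfloor w_n\sqrt n\rfloor$ values of $j$,
\[
\bE_{\bmu}\Big[\max_{0\le i\le\tau_n-1}\frac{L_i^2}{n}\Big]\ \le\ \varepsilon+\frac{\lfloor w_n\sqrt n\rfloor}{n}\sum_{k=0}^{\lfloor h_n\sqrt n\rfloor}\mathrm E_{\bmu}\big[\xi_k^2\mathbf{1}_{\{\xi_k^2\ge\varepsilon n\}}\big]\ \le\ \varepsilon+\frac{w_n}{\sqrt n}\sum_{k=0}^{\lfloor h_n\sqrt n\rfloor}\mathrm E_{\bmu}\big[\xi_k^2\mathbf{1}_{\{\xi_k^2\ge\varepsilon n\}}\big].
\]
By Corollary~\ref{bigrectangle} the last sum tends to $0$ as $n\to\infty$, so $\limsup_n\bE_{\bmu}[\max_{0\le i\le\tau_n-1}L_i^2/n]\le\varepsilon$; letting $\varepsilon\downarrow0$ yields~\eqref{eqn:WhittJump}.

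\textbf{Main obstacle.} I expect the only real difficulty to be the localisation step: one must check carefully that the lexicographic coordinates $(j,H_i)$ of every vertex explored before $\tau_n$ lie in the deterministic rectangle $R_n$ — this relies on $\cF_i\subseteq\cF_{\tau_n-1}$, on the definitions of $h(\cdot)$ and $w(\cdot)$ for forests, and on the fact that the lexicographic labelling of $\cF$ restricts to that of $\cF_{\tau_n-1}$. Once this is in place, the $\xi_{j,k}$ with $(j,k)\in R_n$ are i.i.d.\ within each row $k$ and independent across rows, the rectangle has $O(w_n\sqrt n)\times O(h_n\sqrt n)$ sites, and the estimate reduces to a single application of Corollary~\ref{bigrectangle}; everything else is routine.
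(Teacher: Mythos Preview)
Your proof is correct and follows essentially the same route as the paper: both localise the degrees explored before $\tau_n$ inside the deterministic rectangle $R_n$ via the definition of $\tau_n$, split off the event $\{\max L_i^2\le\varepsilon n\}$, bound the remaining contribution by $\sum_{(j,k)\in R_n}\xi_{j,k}^2\mathbf 1_{\{\xi_{j,k}^2\ge\varepsilon n\}}$, and finish with Corollary~\ref{bigrectangle}. Your pointwise inequality $\max\le\varepsilon+\sum$ is a slightly cleaner packaging of what the paper writes as a ``union bound decomposing according to which index realises the maximum,'' but the two arguments are the same in substance.
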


In the proof below, we use the notation $\bE_{\bmu}[A;B]=\bE_{\bmu}[A\mathbf{1}_B]$, for a random variable $A$ and an event $B$.

\begin{proof}
Fix $\varepsilon >0$. For all $k\geq 0$, 
let $(\xi_{k,i})_{i\geq 1}$ be a sequence of i.i.d.~random variables with distribution $\mu_k$. 
For all $n\ge1$, we have that
\begin{align*}
\bE_{\bmu}\left[\max_{0\leq i\leq \tau_n-1}\frac{L_i^2}{n}\right]&\le \varepsilon +\bE_{\bmu}\left[\max_{0\leq i\leq \tau_n-1}\frac{L_i^2}{n};\max_{0\leq i\leq \tau_n-1}{L_i^2}\ge \varepsilon n\right]\\
&\le \varepsilon +\bE_{\bmu}\left[ \max_{\substack{0\leq k\leq  h_n\sqrt{n}\\ 1\le i\le w_n \sqrt{n}}}
\frac{\xi_{k,i}^2}{n};
\max_{\substack{0\leq k\leq  h_n\sqrt{n}\\ 1\le i\le w_n \sqrt{n}}}
{\xi_{k,i}^2}\ge \varepsilon n\right]\\
&\le \varepsilon +\frac{1}{n}\sum_{\substack{0\leq k\leq  h_n\sqrt{n}\\ 1\le i\le w_n \sqrt{n}}}\bE_{\bmu}\left[ \xi_{k,i}^2;\xi_{k,i}^2=\max_{\substack{0\leq k'\leq  h_n\sqrt{n}\\ 1\le i'\le w_n \sqrt{n}}}
\xi_{k',i'}^2 \ge \varepsilon n \right]\\
&\le \varepsilon+\frac{1}{n}\sum_{\substack{0\leq k\leq  h_n\sqrt{n}\\ 1\le i\le w_n \sqrt{n}}} \bE_{\bmu}\left[\xi_{k,i}^2\mathbf{1}_{\{\xi_{k,i}^2\ge \varepsilon n\}}\right]
\\
&\le \varepsilon+\frac{w_n}{\sqrt{n}}\sum_{k=0}^{\lfloor h_n\sqrt{n}\rfloor} \bE_{\bmu}\left[\xi_{k,1}^2\mathbf{1}_{\{\xi_{k,1}^2\ge \varepsilon n\}}\right],
\end{align*}
where the third inequality follows from a union bound, decomposing the event 
\begin{center}
$\{\max_{\substack{0\leq k\leq  h_n\sqrt{n}\\ 1\le i\le w_n \sqrt{n}}}
{\xi_{k,i}^2}\ge \varepsilon n\}$ 
\end{center}
according to which index $(k,i)$ realizes the maximum.
The above and Corollary \ref{bigrectangle} imply that, for all $\varepsilon>0$,
\begin{align*}
\limsup_{n\to\infty} \bE_{\bmu}\left[\max_{0\leq i\leq \tau_n-1}\frac{L_i^2}{n}\right]&\le \varepsilon,
\end{align*}
which yields the conclusion by taking $\varepsilon$ to zero.
\end{proof}

In order to conclude the proof of Theorem \ref{thm:Luka}, it remains to prove that \eqref{eqn:WhittAverageVariance} holds. This is done in Lemma \ref{mart3} below, whose proof is a refinement of the reasoning in Section~\ref{subsec:firstbounds}. Recall that the goal is to prove that the sum of the variances collected along the depth-first order exploration of $\mathcal{F}$ at step $n$ is roughly equal to $\sigma^2 n$.\\
The general idea of the proof is close, in spirit, to a coarse graining, as we will divide  the forest $\cF_n$ in mesoscopic blocks of width and height $\Theta(\sqrt{n})$, 
and show that the average value of $\sigma_{H_k}^2$ on most blocks is close to $\sigma^2$. Before stating and proving Lemma \ref{mart3}, we will need a few other lemmas.\\
Recall that, in Section \ref{sectorder}, we defined the depth-first labeling of $\mathcal{F}$ together with the lexicographical order of $\mathcal{F}$.

\begin{figure}
\begin{center}
\includegraphics[width=13.5cm]{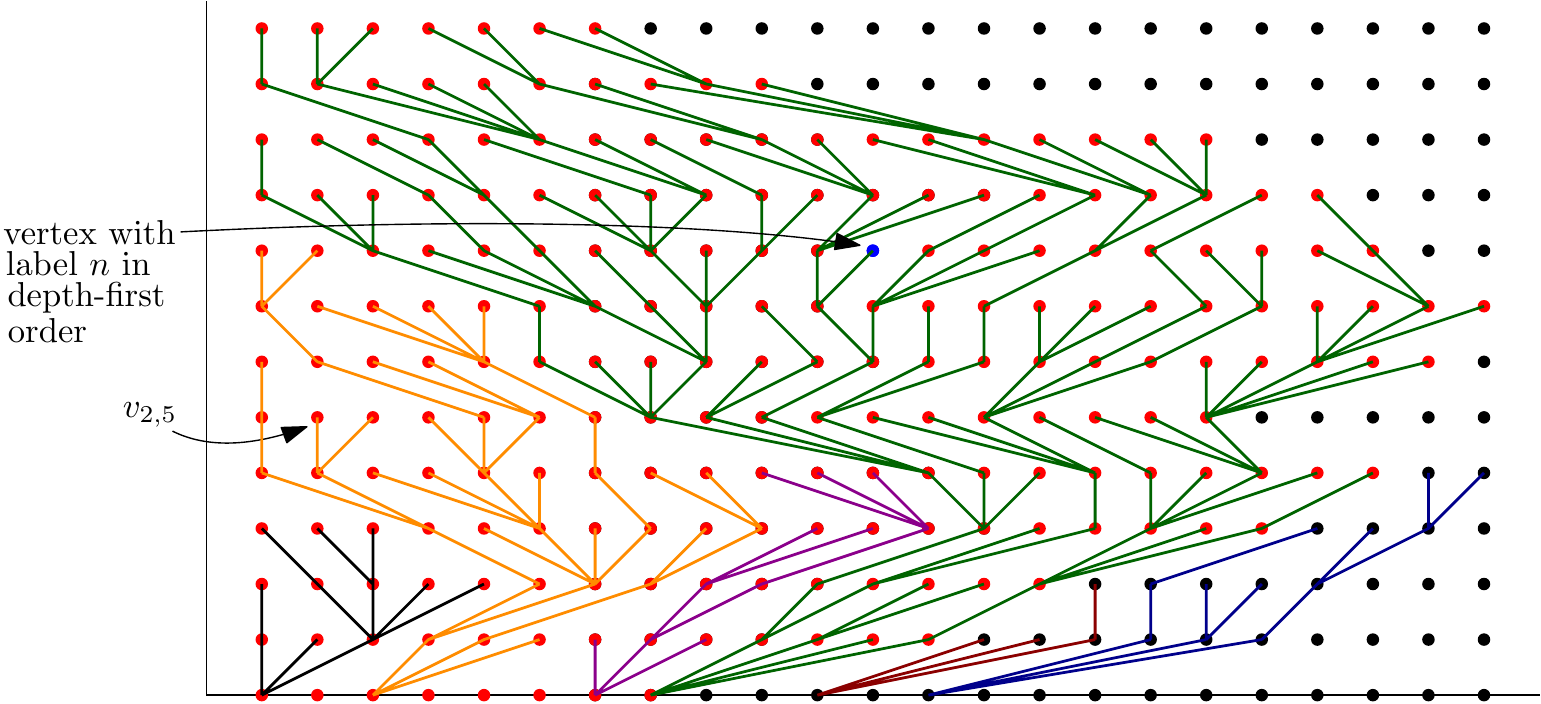}
\caption{A representation of the infinite forest $\cF_\infty$. The vertex $v_{k,\ell}$ is drawn in position $(k,\ell)$ on the plane. The different trees of the infinite forest have distinct colours. The vertices in red are the ones that belong to $\cF_n$.}
\label{fig:forest}
\end{center}
\end{figure}

Let us now define $(n,\delta,\gamma)$-blocks, or simply blocks, on the forest $\mathcal{F}$ with this lexicographical order. For this purpose, let us denote $\mathcal{T}(v)$ the set of descendants of $v$, that is the subtree rooted at $v$, for a vertex $v\in\mathcal{F}$. For $\gamma>0$ and $\delta>0$, for all $n\ge1$, for all $i\ge0$ and all $k\ge0$, we define the $(n,\delta,\gamma)$-block by
\begin{align}\label{defbij}
B^n_{i,k}=\bigcup_{j=i\lceil \delta\sqrt{n}\rceil+1}^{(i+1)\lceil\delta\sqrt{n}\rceil} \left\{v\in\mathcal{T}\left(v_{j,k\lceil \gamma\sqrt{n}\rceil}\right) : k\lceil \gamma\sqrt{n}\rceil\le h(v)< (k+1)\lceil \gamma\sqrt{n}\rceil\right\}.
\end{align}
See Figure \ref{fig:blocs} for a representation of these blocks. In words, $B^n_{i,k}$ is the collection of the $\lceil \delta\sqrt{n}\rceil$ finite subtrees of height (at most) $\lceil \gamma\sqrt{n}\rceil-1$ rooted at $v_{j,k\lceil \gamma\sqrt{n}\rceil}$, for  $i\lceil \delta\sqrt{n}\rceil<j\le (i+1)\lceil \delta\sqrt{n}\rceil$. In this notation, we ignore the dependence on $\delta$ and $\gamma$, as we will quickly fix these constants.
For a block $B_{i,k}^n$, $i,k\ge0$, we define the total variance of the block as
\begin{equation}\label{def:Wik}
W_{i,k}=\sum_{v\in B^n_{i,k}} \sigma^2_{h(v)}.
\end{equation}
In what follows, for $n\ge1$, $i,k\ge0$, and for $\varepsilon>0$, we say that the block $B^n_{i,k}$ is {\bf $\varepsilon$-good} if
\begin{equation}\label{defgood}
\left\vert \frac{W_{i,k}}{\vert B^n_{i,k}\vert\sigma^2}-1\right\vert\le 16\varepsilon.
\end{equation}
Moreover, if the above is not satisfied, then we say that $B^n_{i,k}$ is not {\bf $\varepsilon$-good}.

\begin{figure}
\begin{center}
\includegraphics[width=12cm]{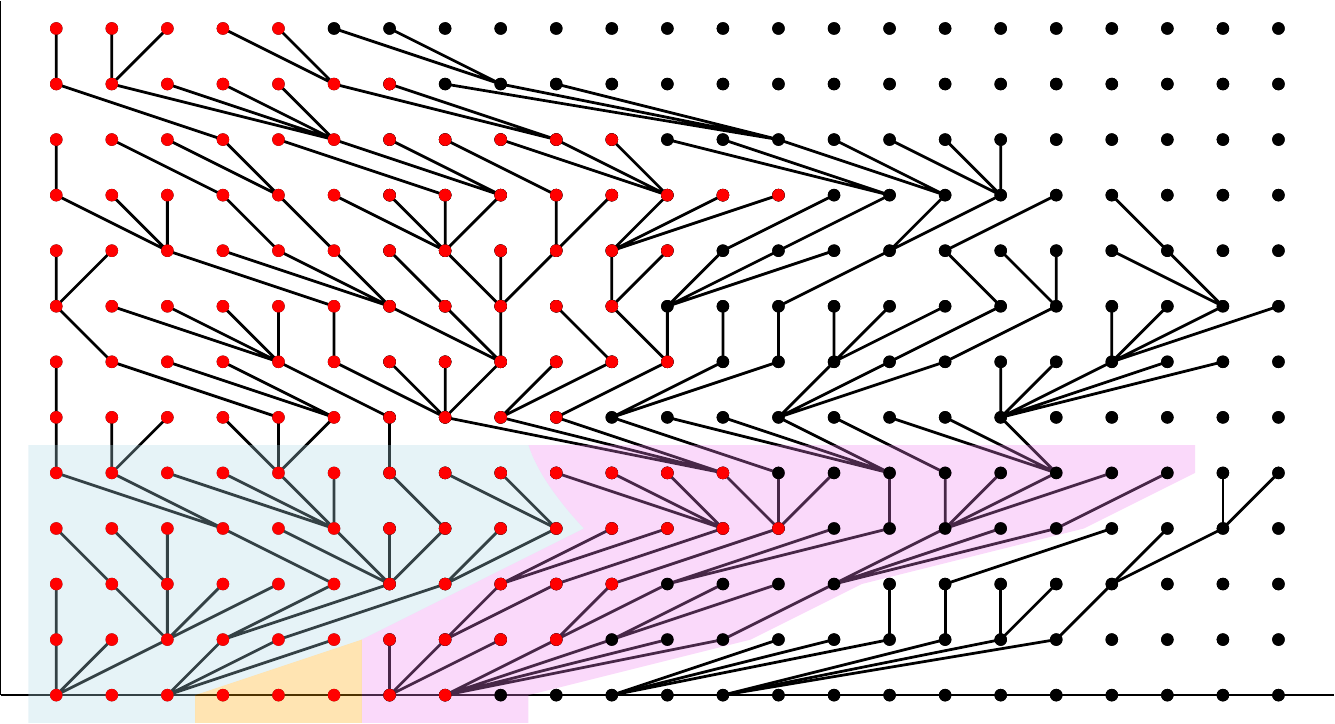}
\caption{A representation of the infinite forest $\cF_\infty$. If $\delta\sqrt{n}=3$ and $\gamma\sqrt{n}=5$, then
the block in blue is $B^n_{0,0}$, the block in orange is $B^n_{0,1}$, and the block in purple is $B^n_{0,2}$.}
\label{fig:blocs}
\end{center}
\end{figure}

\begin{remark}\label{remindep}
Note that, for all $n\ge1$, $\delta>0$ and $\gamma>0$, the pairs of random variables $(\vert B^n_{i,k}\vert, W_{i,k})$, $i\ge0$, $k\ge0$ are independent~under $\Pmu$ and their distributions depend on $k$ only, by definition of a BPVE. Therefore, the events $\left\{B^n_{i,k}\text{ is not }\varepsilon\text{-good}\right\}$, $i\ge0$, $k\ge0$ are independent.
\end{remark}

The following lemma is the main ingredient for the proof of \eqref{eqn:WhittAverageVariance} in Lemma~\ref{mart3}. It states that a given box has a large probability to be $\varepsilon$-good, and provides a control on the maximum volume and weight of a box (this will be used to prevent that a not $\varepsilon$-good box has too much importance in the sum in \eqref{eqn:WhittAverageVariance}).

\begin{lemma}\label{lemmart1}
Let $\bmu$ be a strictly critical environment satisfying Conditions \ref{(I)}, \ref{(II)} and \ref{(III)}. Fix $\varepsilon\in(0,1/2)$, $\delta>0$ and $\gamma>0$. For all $k\ge0$, there exists $N_k=N_k(\bmu,\varepsilon,\delta,\gamma)$ such that, for all $n\ge N_k$, we have that
\begin{equation}\label{eqn:balancedblock}
\Pmu\left(B^n_{0,k}\text{ is $\varepsilon$-good}\right)\geq 1-6\sigma^2\gamma\delta^{-1}\varepsilon^{-2},
\end{equation}
and, for all $K\ge6\gamma\delta(1+\sigma^2)$,
\begin{align*}
\Pmu\left(\vert B^n_{0,k}\vert> K n\right)&\le 6^3\gamma^3\delta \sigma^2 K^{-2},\\
\Pmu\left(\vert W_{0,k}\vert> K n\right)&\le 6^3\gamma^3\delta \sigma^6 K^{-2}.
\end{align*}
\end{lemma}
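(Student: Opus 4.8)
\emph{Proof strategy.} Write $L:=k\lceil\gamma\sqrt n\rceil$, $G:=\lceil\gamma\sqrt n\rceil$, $M:=\lceil\delta\sqrt n\rceil$ and $\bnu:=(\mu_{L+m})_{m\geq 0}$. The plan is first to reduce $\vert B^n_{0,k}\vert$ and $W_{0,k}$ to functionals of a single branching process. Since $\mathcal F$ is infinite, a.s.\ its generation $L$ has at least $M$ vertices, and by the branching property (equivalently, reading it off the graphical construction of Remark~\ref{deflexico}) the subtrees $\mathcal T(v_{1,L}),\dots,\mathcal T(v_{M,L})$ are i.i.d.\ $\bnu$-BPVEs started from a single vertex. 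Writing $\widehat Z_m$ for the total number of vertices of these subtrees at height $L+m$, the sequence $(\widehat Z_m)_{m\geq 0}$ is then a $\bnu$-BPVE with $\widehat Z_0=M$, and
\[
\vert B^n_{0,k}\vert=\sum_{m=0}^{G-1}\widehat Z_m,\qquad W_{0,k}=\sum_{m=0}^{G-1}\widehat Z_m\,\sigma_{L+m}^2 .
\]
Thus all three estimates turn into first- and second-moment bounds for these two sums.

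Next I would compute moments. By strict criticality $(\widehat Z_m)$ is a nonnegative martingale with $\Emu[\widehat Z_m]=M$, so $\Emu[\vert B^n_{0,k}\vert]=GM$ and $\Emu[W_{0,k}]=M\Sigma$, where $\Sigma:=\sigma_L^2+\dots+\sigma_{L+G-1}^2$. Condition~\ref{(I)} together with Lemma~\ref{lem:deterministicLLNcuttopieces}, applied with $a=k$, $b=k+1$ and $\lceil\gamma\sqrt n\rceil$ playing the role of the diverging index, gives $\Sigma/(G\sigma^2)\to 1$ as $n\to\infty$; this is where $N_k$ comes in, chosen (depending on $\bmu,\varepsilon,\gamma$ and $k$) so that $\vert\Sigma-G\sigma^2\vert\leq\varepsilon G\sigma^2$ for $n\geq N_k$. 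For the second moments, \eqref{eqn:offspringvariance} gives $\Var_{\bmu}(\widehat Z_m)=M(\sigma_L^2+\dots+\sigma_{L+m-1}^2)\leq M\Sigma$, and the martingale property gives $\Cov_{\bmu}(\widehat Z_m,\widehat Z_{m'})=\Var_{\bmu}(\widehat Z_{m\wedge m'})\leq M\Sigma$. Summing over the $G^2$ pairs $(m,m')$,
\[
\Var_{\bmu}(\vert B^n_{0,k}\vert)\leq M\Sigma\,G^2\qquad\text{and}\qquad
\Var_{\bmu}(W_{0,k})\leq M\Sigma\Big(\textstyle\sum_{m=0}^{G-1}\sigma_{L+m}^2\Big)^2=M\Sigma^3 .
\]

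The three claims then follow from Chebyshev's inequality. For the tail bounds, note $\vert W_{0,k}\vert=W_{0,k}\geq 0$; when $K\geq 6\gamma\delta(1+\sigma^2)$ and $n\geq N_k$ the means $GM$ and $M\Sigma$ are at most $\theta Kn$ for a fixed $\theta<1$, so Chebyshev gives $\Pmu(\vert B^n_{0,k}\vert>Kn)\leq C\,\Var_{\bmu}(\vert B^n_{0,k}\vert)/(Kn)^2$ and $\Pmu(W_{0,k}>Kn)\leq C\,\Var_{\bmu}(W_{0,k})/(Kn)^2$ for an absolute $C$; inserting the variance bounds above and $M,G=\Theta(\sqrt n)$, and tracking constants, yields the bounds $6^3\gamma^3\delta\sigma^2K^{-2}$ and $6^3\gamma^3\delta\sigma^6K^{-2}$. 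For \eqref{eqn:balancedblock}, Chebyshev gives $\Pmu\bigl(\vert\,\vert B^n_{0,k}\vert-GM\vert>\varepsilon GM\bigr)\leq\Sigma/(\varepsilon^2M)$ and $\Pmu(\vert W_{0,k}-M\Sigma\vert>\varepsilon M\Sigma)\leq\Sigma/(\varepsilon^2M)$, each of order $\gamma\sigma^2/(\delta\varepsilon^2)$; on the complement of these two events, together with $\vert\Sigma-G\sigma^2\vert\leq\varepsilon G\sigma^2$, one gets $(1-\varepsilon)^2/(1+\varepsilon)\leq W_{0,k}/(\vert B^n_{0,k}\vert\sigma^2)\leq(1+\varepsilon)^2/(1-\varepsilon)$, hence $\vert W_{0,k}/(\vert B^n_{0,k}\vert\sigma^2)-1\vert\leq 16\varepsilon$ for $\varepsilon\in(0,1/2)$, i.e.\ $B^n_{0,k}$ is $\varepsilon$-good; a union bound then gives \eqref{eqn:balancedblock}.

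I expect the main obstacle to be the first step: recognising $B^n_{0,k}$ as the first $G$ generations of one $\bnu$-BPVE started from $M$ individuals, since this is what unlocks the martingale machinery of Lemma~\ref{lem:offspringvarianceandDoob} and makes the rest a routine second-moment computation. Among those computations, the only slightly delicate one is the bound on $\Var_{\bmu}(W_{0,k})$, which needs the full covariance structure of $(\widehat Z_m)$ rather than a one-dimensional Doob inequality. Matching the explicit prefactors $6$ and $6^3$ then only requires enlarging $N_k$ (depending on $\gamma,\delta$) so that $\lceil\gamma\sqrt n\rceil\leq\tfrac32\gamma\sqrt n$ and $\lceil\delta\sqrt n\rceil\leq\tfrac32\delta\sqrt n$.
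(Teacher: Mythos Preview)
Your argument is correct. The reduction to a single $\bnu$-BPVE $(\widehat Z_m)_{m\ge 0}$ with $\widehat Z_0=M$ is exactly the right starting point, your moment computations are accurate (in particular $\Cov_{\bmu}(\widehat Z_m,\widehat Z_{m'})=\Var_{\bmu}(\widehat Z_{m\wedge m'})$ is the key identity for the variance of the weighted sum $W_{0,k}$), and your constant tracking is comfortably inside the stated $6$ and $6^3$.

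The paper takes a closely related but slightly different route: instead of applying Chebyshev directly to the sums $\sum_m \widehat Z_m$ and $\sum_m \widehat Z_m\sigma_{L+m}^2$, it invokes Doob's $L^2$ maximal inequality (\eqref{eqn:offspringDoobL2} in Lemma~\ref{lem:offspringvarianceandDoob}) on the martingale trajectory itself, obtaining a single high-probability event $\{\max_{\ell<G}|\widehat Z_\ell/\widehat Z_0-1|<M\}$ on which both $|B^n_{0,k}|$ and $W_{0,k}$ are controlled \emph{deterministically}. The three bounds of the lemma then all follow by choosing $M$ appropriately ($M=\varepsilon$ for \eqref{eqn:balancedblock}, $M\sim K/(\gamma\delta)$ for the tails). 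What the paper's approach buys is one event instead of two for the $\varepsilon$-good estimate, and it recycles \eqref{eqn:offspringDoobL2} rather than computing covariances; what your approach buys is that it avoids the maximal inequality entirely and gives a direct variance bound on $W_{0,k}$, which some readers may find more transparent. Either is a perfectly standard second-moment argument.
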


\begin{proof}
We start by proving that a block has a sufficient size with large probability.\\
For all $\ell$ such that $0\le \ell < \lceil \gamma\sqrt{n}\rceil$, let $Z_\ell$ denote the number of vertices of $B^n_{0,k}$ with height $k\lceil\gamma\sqrt{n}\rceil+\ell$. In particular, $(Z_\ell)_{0\le \ell < \lceil \gamma\sqrt{n}\rceil}$ is a piece of $(\mu_j)_{j\geq k\lceil \gamma\sqrt{n}\rceil}$-BPVE with $Z_0=\lceil \delta\sqrt{n}\rceil$. By Lemma \ref{lem:offspringvarianceandDoob}, we have that, for $M>0$,
\begin{equation}\begin{split}
\Pmu\left(\max_{0\leq \ell< \gamma\sqrt{n}}\bigg|\frac{Z_{\ell}}{Z_0}-1\bigg| \geq M\right)&\le  \frac{4\lceil \delta\sqrt{n}\rceil \left(\sigma^2_{k\lceil\gamma\sqrt{n}\rceil}+\dots+\sigma^2_{(k+1)\lceil \gamma\sqrt{n}\rceil}\right)}{\left(M\lceil \delta\sqrt{n}\rceil\right)^2}\\
&\le 4\gamma\delta^{-1} M^{-2}\frac{  \sigma^2_{k\lceil\gamma\sqrt{n}\rceil}+\dots+\sigma^2_{(k+1)\lceil \gamma\sqrt{n}\rceil}        }{\gamma\sqrt{n}}.
\end{split}
\end{equation}
By Condition \ref{(I)} and Lemma~\ref{lem:deterministicLLNcuttopieces}, there exists $\widetilde{N}_k=\widetilde{N}_k(\bmu,\varepsilon,\gamma)$ such that, for all $n\ge \widetilde{N}_k$,
\[
(1-\varepsilon)\sigma^2 \le \frac{  \sigma^2_{k\lceil\gamma\sqrt{n}\rceil}+\dots+\sigma^2_{(k+1)\lceil \gamma\sqrt{n}\rceil}        }{\gamma\sqrt{n}}\le (1+\varepsilon)\sigma^2,
\]
and therefore, for all $n\ge \widetilde{N}_k$, and for $M>0$,
\begin{equation}\label{Charliewarlie}\begin{split}
\Pmu\left(\max_{0\leq \ell< \gamma\sqrt{n}}\bigg|\frac{Z_{\ell}}{Z_0}-1\bigg| \geq M\right)
&\le 4(1+\varepsilon)\gamma\sigma^2\delta^{-1}M^{-2}\le 6\gamma\sigma^2\delta^{-1}M^{-2}.
\end{split}
\end{equation}
Note that, if $\max_{0\leq \ell< \gamma\sqrt{n}}\bigg|\frac{Z_{\ell}}{Z_0}-1\bigg| < M$ then we have that $(1-M)\lceil \delta\sqrt{n}\rceil\le Z_\ell\le (1+M)\lceil \delta\sqrt{n}\rceil$, for all $0\le \ell <\lceil\gamma\sqrt{n}\rceil$.
Let $N_k=\max\{\widetilde{N}_k, (\varepsilon\delta)^{-2}, (\varepsilon\gamma)^{-2}\}$ 
and observe that, for all $n\ge N_k$,
\begin{equation}\label{kkkkkk1}\begin{split}
\left\{    \max_{0\leq \ell< \gamma\sqrt{n}}\bigg|\frac{Z_{\ell}}{Z_0}-1\bigg| < M       \right\}&\subset\left\{W_{0,k}\le (1+M)\lceil\delta\sqrt{n}\rceil \left(\sigma^2_{k\lceil\gamma\sqrt{n}\rceil}+\dots+\sigma^2_{(k+1)\lceil \gamma\sqrt{n}\rceil}  \right)   \right\}\\
&\subset\left\{W_{0,k}\le (1+M)(1+\varepsilon)\gamma\sigma^2(\delta+{n}^{-1/2})n  \right\}\\
&\subset\left\{W_{0,k}\le (1+M)(1+\varepsilon)^2\gamma\delta\sigma^2 n  \right\}.
\end{split}\end{equation}
Moreover, we have that, for all $n\ge N_k$ and for $M>0$,
\begin{equation}\label{kkkkkk11}\begin{split}
\left\{    \max_{0\leq \ell< \gamma\sqrt{n}}\bigg|\frac{Z_{\ell}}{Z_0}-1\bigg| < M       \right\}
&\subset\hspace{-1mm}\left\{\vert B^n_{0,k}\vert\ge (1-M)\gamma\delta n \right\}\cap\hspace{-1mm}\left\{W_{0,k}\le (1+M)(1+\varepsilon)^2\gamma\delta\sigma^2 n  \right\}.
\end{split}\end{equation}
Following similar steps, one can prove that for all $n\ge N_k$ and for $M>0$,
\begin{equation}\label{kkkkkk112}\begin{split}
\left\{    \max_{0\leq \ell< \gamma\sqrt{n}}\bigg|\frac{Z_{\ell}}{Z_0}-1\bigg| < M       \right\}
\subset &\left\{W_{0,k}\ge (1-M)(1-\varepsilon)\gamma\delta\sigma^2 n  \right\}
\\
&\cap \left\{\vert B^n_{0,k}\vert\le (1+M)(1+\varepsilon)^2\gamma\delta n \right\}.
\end{split}\end{equation}

Applying \eqref{kkkkkk11} and \eqref{kkkkkk112} to $M=\varepsilon$, we obtain that, for all $n\ge N_k$,
\begin{align*}
\left\{    \max_{0\leq \ell< \gamma\sqrt{n}}\bigg|\frac{Z_{\ell}}{Z_0}-1\bigg| < \varepsilon       \right\}&\subset\left\{      \frac{W_{0,k}}{\vert B^n_{0,k}\vert \sigma^2}\le \frac{1+7\varepsilon}{1-\varepsilon}        \right\}
\subset\left\{      \frac{W_{0,k}}{\vert B^n_{0,k}\vert \sigma^2}\le 1+16\varepsilon        \right\},\\
\left\{    \max_{0\leq \ell< \gamma\sqrt{n}}\bigg|\frac{Z_{\ell}}{Z_0}-1\bigg| < \varepsilon       \right\}&\subset\left\{      \frac{W_{0,k}}{\vert B^n_{0,k}\vert \sigma^2}\ge 1-9\varepsilon        \right\}.
\end{align*}
Thus, by \eqref{Charliewarlie} applied with $M=\varepsilon$ and using the definition \eqref{defgood} of $\varepsilon$-good, we obtain that, for all $n\ge N_k$,
\[
\Pmu\left(B^n_{0,k}\text{ is $\varepsilon$-good}\right)\geq 1-6\sigma^2\gamma\delta^{-1}\varepsilon^{-2}.
\]
Now, combining \eqref{Charliewarlie} and \eqref{kkkkkk112} with $M=K3^{-1}\gamma^{-1}\delta^{-1}-1$, a straightforward computation using that $K\ge6\gamma\delta$ (and thus $M\geq K6^{-1}\gamma^{-1}\delta^{-1}$) yields
\[
\Pmu\left(\vert B^n_{0,k}\vert> K n\right)\le 6^3\gamma^3\delta \sigma^2 K^{-2}.
\]
Finally, by \eqref{Charliewarlie} and \eqref{kkkkkk1} with $M=K3^{-1}\gamma^{-1}\delta^{-1}\sigma^{-2}-1$, a similar computation using that $K\ge6\gamma\delta\sigma^2$ yields
\[
\Pmu\left(\vert W_{0,k}\vert> K n\right)\le 6^3\gamma^3\delta \sigma^6 K^{-2}.
\]
This concludes the proof.
\end{proof}

Let us define, for $\delta,\gamma>0$ and $A_1,A_2>0$, the set of indices
\begin{equation}\label{defI}
\mathcal{I}(\delta,\gamma,A_1,A_2)=\left\{(i,k) : 0\leq i\leq \lfloor \delta^{-1}A_1\rfloor, 0\le k\le \lfloor\gamma^{-1}A_2\rfloor\right\}.
\end{equation}
Note that, with this choice, the box
\begin{equation}\label{defBn}
B_n=\cup_{(i,k)\in\mathcal{I}(\delta,\gamma,A_1,A_2)} B^n_{i,k}
\end{equation}
is roughly of size $A_1\sqrt{n}$ by $A_2\sqrt{n}$.
Later, we will choose $A_1>0$ and $A_2>0$ so that the depth-first exploration of the tree is contained in $B_n$ with high probability. Hence, we start by controlling the behavior of the boxes $B^n_{i,k}$ for $(i,k)\in\mathcal{I}(\delta,\gamma,A_1,A_2)$.\\
Let us define, for $\varepsilon\in(0,1/2)$, $\delta>0$, $\gamma>0$, $A_1>0$, $A_2>0$ and $n\ge1$,
\begin{equation}\label{defbad}
\text{Bad}_n(\delta,\gamma,\varepsilon,A_1,A_2)=\left\vert\left\{(i,k)\in \mathcal{I}(\delta,\gamma,A_1,A_2): B^n_{i,k}\text{ is not }\varepsilon\text{ good}\right\}\right\vert.
\end{equation}

\begin{lemma}\label{lemmart2}
Let $\bmu$ be a strictly critical environment satisfying Conditions \ref{(I)}, \ref{(II)} and \ref{(III)}. Fix $\varepsilon\in(0,1/2)$, $\delta\in(0,1)$, $\gamma\in(0,1)$, $A_1>1$ and $A_2>1$.  Let $\mathcal{I}=\mathcal{I}(\delta,\gamma,A_1,A_2)$  and $\emph{Bad}_n=\emph{Bad}_n(\delta,\gamma,\varepsilon,A_1,A_2)$, recalling the definitions \eqref{defI} and \eqref{defbad}. Assume moreover that
\begin{equation}\label{vivo}
\delta<\frac{\varepsilon^{3}}{120A_1A_2(1+\sigma^2)}.
\end{equation}
There exists $N=N(\bmu,\varepsilon,\delta,\gamma)$ such that, for all $n\ge N$, we have that, for all $K\ge6\gamma\delta(1+\sigma^2)$,
\begin{align}\label{mart22}
\Pmu\left(\max_{(i,k)\in\mathcal{I}} \vert B^n_{i,k}\vert> K n\right)&\le 6^4 A_1A_2 \sigma^2 \gamma^2K^{-2},\\\label{mart23}
\Pmu\left(\max_{(i,k)\in\mathcal{I}} \vert W_{i,k}\vert> K n\right)&\le  6^4 A_1A_2 \sigma^6 \gamma^2K^{-2},\\
\label{mart21}
\Pmu\left(\emph{Bad}_n>\delta^{-3}\right)&\le \frac{\varepsilon}{5}.
\end{align}
\end{lemma}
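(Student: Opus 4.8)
The plan is to obtain all three bounds by summing the single-block estimates of Lemma~\ref{lemmart1} over the index set $\mathcal I$: a union bound for the maxima \eqref{mart22}--\eqref{mart23}, and a first-moment (Markov) bound for \eqref{mart21}. Beyond Lemma~\ref{lemmart1} the only ingredients are a crude count of $|\mathcal I|$ and a check that the numerical hypothesis \eqref{vivo} is exactly tuned to absorb the resulting constants.

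I would first set $N:=\max\{N_k(\bmu,\varepsilon,\delta,\gamma):0\le k\le\lfloor\gamma^{-1}A_2\rfloor\}$, a finite maximum of the thresholds provided by Lemma~\ref{lemmart1}, and record two facts valid for $n\ge N$. By Remark~\ref{remindep}, for every $i,k\ge0$ the law of $(|B^n_{i,k}|,W_{i,k})$ depends on $k$ only, hence equals that of $(|B^n_{0,k}|,W_{0,k})$; so the estimates of Lemma~\ref{lemmart1} hold for every block $B^n_{i,k}$ with $k\le\lfloor\gamma^{-1}A_2\rfloor$. And since $\delta,\gamma\in(0,1)$ and $A_1,A_2>1$, one has $|\mathcal I|=(\lfloor\delta^{-1}A_1\rfloor+1)(\lfloor\gamma^{-1}A_2\rfloor+1)\le 4\delta^{-1}\gamma^{-1}A_1A_2$.

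For \eqref{mart22} and \eqref{mart23}, fix $K\ge 6\gamma\delta(1+\sigma^2)$; a union bound over $(i,k)\in\mathcal I$ combined with the two tail estimates of Lemma~\ref{lemmart1} and the bound on $|\mathcal I|$ gives $\Pmu(\max_{(i,k)\in\mathcal I}|B^n_{i,k}|>Kn)\le 4\delta^{-1}\gamma^{-1}A_1A_2\cdot 6^3\gamma^3\delta\sigma^2K^{-2}=4\cdot 6^3\, A_1A_2\sigma^2\gamma^2K^{-2}$, and $4\cdot 6^3\le 6^4$ yields \eqref{mart22}; the same computation with $W_{i,k}$ and $\sigma^6$ in place of $|B^n_{i,k}|$ and $\sigma^2$ gives \eqref{mart23}. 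For \eqref{mart21}, linearity of expectation and the lower bound of Lemma~\ref{lemmart1} on the probability of being $\varepsilon$-good give $\Emu[\mathrm{Bad}_n]\le|\mathcal I|\cdot 6\sigma^2\gamma\delta^{-1}\varepsilon^{-2}\le 24\sigma^2A_1A_2\delta^{-2}\varepsilon^{-2}$; Markov's inequality then gives $\Pmu(\mathrm{Bad}_n>\delta^{-3})\le\delta^3\Emu[\mathrm{Bad}_n]\le 24\sigma^2A_1A_2\delta\varepsilon^{-2}$, and substituting \eqref{vivo} bounds the right-hand side by $\tfrac{\sigma^2}{5(1+\sigma^2)}\varepsilon\le\varepsilon/5$.

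There is no serious obstacle: this lemma is a routine aggregation of the per-block estimates of Lemma~\ref{lemmart1}. The only points requiring attention are bookkeeping ones — that the single threshold $N$ works for all $k\le\lfloor\gamma^{-1}A_2\rfloor$ simultaneously, that the distributional invariance in the $i$-direction is invoked so that Lemma~\ref{lemmart1} (stated for $B^n_{0,k}$) applies to every $B^n_{i,k}$, and that the rough count $|\mathcal I|\le 4\delta^{-1}\gamma^{-1}A_1A_2$ together with \eqref{vivo} leaves exactly enough slack; notably, the independence of distinct blocks from Remark~\ref{remindep} is not needed, the first-moment bound being sufficient.
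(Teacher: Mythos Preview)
Your proof is correct and, for \eqref{mart22}--\eqref{mart23}, identical to the paper's: same choice of $N$, same union bound over $\mathcal I$, same count $|\mathcal I|\le 4\delta^{-1}\gamma^{-1}A_1A_2$.

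For \eqref{mart21} you take a slightly different and in fact more elementary route than the paper. The paper invokes the independence of the events $\{B^n_{i,k}\text{ is not }\varepsilon\text{-good}\}$ from Remark~\ref{remindep} to stochastically dominate $\mathrm{Bad}_n$ by a binomial variable, then applies a Chernoff-type tail bound to that binomial. You bypass this entirely: a first-moment bound via Markov's inequality, together with the per-block estimate \eqref{eqn:balancedblock} and the hypothesis \eqref{vivo}, already yields $\Pmu(\mathrm{Bad}_n>\delta^{-3})\le 24\sigma^2A_1A_2\delta\varepsilon^{-2}\le \varepsilon/5$. Your observation that independence is unnecessary here is correct and is a genuine simplification; the paper's Chernoff argument buys nothing extra, since both approaches land on the same target bound $\varepsilon/5$.
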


\begin{proof}
Let $N=\max_{0\le k\le \lfloor \gamma^{-1}A_2\rfloor} N_k(\bmu, \varepsilon,\delta,\gamma)$, where the $N_k$'s are given by Lemma \ref{lemmart1}.\\
First, note that \eqref{mart22} and \eqref{mart23} are straightforward consequences of Lemma \ref{lemmart1} and a union bound over $\mathcal{I}$, observing that $\vert\mathcal{I}\vert\le 4A_1A_2\delta^{-1}\gamma^{-1}$.
Second, for $n\ge N$, by Lemma \ref{lemmart1} and Remark \ref{remindep}, $\text{Bad}_n$ is stochastically dominated by a binomial random variable $Y$ with parameters $\lfloor 4A_1A_2\delta^{-1}\gamma^{-1}\rfloor$ and $\min\{1,6\sigma^2\gamma\delta^{-1}\varepsilon^{-2}\}$. We will use the following bound, which can be deduced from a Chernoff bound. For all $\alpha>8$,
\begin{equation}\label{binomialeq}
\mathbb{P}[Y\ge (1+\alpha)\mathbb{E}[Y]]\le \exp\left((\alpha - (1+\alpha)\log (1+\alpha)) \dE[Y]\right)\le\exp\left(-\alpha\mathbb{E}[Y]\right)\le \frac{1}{\alpha\dE[Y]}
\end{equation}
Let $\alpha =\delta^{-3}\dE[Y]^{-1}-1$, so that by \eqref{vivo}, 
\[
\delta^3\mathbb{E}[Y]\le 24 A_1A_2\sigma^2\varepsilon^{-2} \delta\leq  \frac{\varepsilon}{5}<1/9
\]
and $\alpha >8$ . Using \eqref{binomialeq} together with the fact that $Y$ stochastically dominates $\text{Bad}_n$, we have that, for $n\ge N$,
\[
\Pmu\left(\text{Bad}_n\ge \delta^{-3}\right)\le \frac{\delta^3\dE[Y]}{\dE[Y](1-\delta^3\dE[Y])}\leq 2\delta^3\leq \frac{\varepsilon}{5}.
\]
This concludes the proof.

\end{proof}

\begin{lemma}\label{mart3}
Let $\bmu$ be a strictly critical environment satisfying Conditions \ref{(I)}, \ref{(II)} and \ref{(III)}. We have that
\begin{align*}
&\lim_{n\rightarrow +\infty}\frac{\sum_{k=0}^{n-1}\sigma_{H_k}^2}{n}=\sigma^2\text{, in $\Pmu$-probability}. 
\end{align*}
In other words, \eqref{eqn:WhittAverageVariance} is satisfied.
\end{lemma}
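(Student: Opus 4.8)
The plan is to reduce the statement to the block decomposition introduced above. Since $\sigma^2_{H_k}$ depends on the $k$-th explored vertex only through its height, and since the depth-first and lexicographical orders are built so that, at each height $h$, the first $n$ depth-first vertices at height $h$ form a lexicographical prefix $\{v_{1,h},\dots,v_{c_h,h}\}$, one has
\[
\sum_{k=0}^{n-1}\sigma^2_{H_k}=\sum_{v\in \mathcal V_n}\sigma^2_{h(v)},
\]
where $\mathcal V_n$ denotes the set of the first $n$ vertices of $\cF$ in depth-first order; crucially $|\mathcal V_n|=n$ exactly. The set $\mathcal V_n$ consists of the whole trees $\mathcal T^{(0)},\dots,\mathcal T^{(i_n-1)}$ preceding the tree $\mathcal T^{(i_n)}$ that contains the $n$-th vertex, together with the depth-first prefix of $\mathcal T^{(i_n)}$ ending at that vertex, so that its right boundary (the profile $h\mapsto c_h$) is traced by the ancestral line of the $n$-th vertex and, above its level, by the right edge of the subtrees hanging to the left of that line. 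I fix $\varepsilon\in(0,1/2)$; by Proposition~\ref{prop:firstbounds} I may choose $A_1=A_1(\varepsilon)$ and $A_2=A_2(\varepsilon)$ so that, with $\Pmu$-probability at least $1-\varepsilon$, the explored forest $\cF_n$ (hence $\mathcal V_n$ and its boundary) lies in $B_n=\bigcup_{(i,k)\in\mathcal I(\delta,\gamma,A_1,A_2)}B^n_{i,k}$; I then choose $\delta$ small, subject to \eqref{vivo}, then $\gamma$ small, and finally let $n\to\infty$ with $n\ge N(\bmu,\varepsilon,\delta,\gamma)$.

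On the above event I split $\mathcal I=\mathcal I(\delta,\gamma,A_1,A_2)$ into the blocks entirely contained in $\mathcal V_n$, the blocks disjoint from $\mathcal V_n$, and the \emph{straddled} blocks, i.e.\ those meeting $\mathcal V_n$ without being contained in it. Because $\mathcal V_n$ is a lexicographical prefix at each height, a straddled block $B^n_{i,k}$ is one whose (contiguous) column-footprint is met by the profile $h\mapsto c_h$ at some height $h$ in the slice $[k\lceil\gamma\sqrt n\rceil,(k+1)\lceil\gamma\sqrt n\rceil)$, or which has the spine vertex of that slice among its roots. To bound their number, I use the $L^2$/Doob estimate of Lemma~\ref{lem:offspringvarianceandDoob} along the relevant (random) ancestral lines to control the fluctuations of the generation sizes of $\cF_n$ and of $\mathcal V_n$: across any height slice of length $\Theta(\gamma\sqrt n)$ the profile $c_h$ and the column-footprints of the blocks vary by $O(\sqrt\gamma\,\sqrt n)$, so the profile meets at most $O(1+\sqrt\gamma/\delta)$ blocks in each of the $O(A_2/\gamma)$ slices and there are at most $C(\varepsilon,\delta,\gamma)$ straddled blocks.

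The straddled blocks carry negligible mass. Conditioning on the exploration below the bottom of a given slice (which already fixes the frontier block at that level), the subtrees of any block within the slice form a fresh forest of $\lceil\delta\sqrt n\rceil$ critical trees, so by Lemma~\ref{lemmart1} both $\bE_{\bmu}[W_{0,k}]$ and $\bE_{\bmu}[|B^n_{0,k}|]$ are $O(\gamma\delta n)$; summing over the $O(A_2/\gamma)$ frontier slices and the further $O(A_2\sqrt\gamma/\delta)$ straddled blocks, the expected total straddled weight and volume are $O((\delta+\sqrt\gamma)\,n)$, so by Markov's inequality $\sum_{\text{straddled}}W_{i,k}\le\varepsilon n$ and $\sum_{\text{straddled}}|B^n_{i,k}|\le\varepsilon n$ with probability at least $1-\varepsilon$ once $\delta$ and $\gamma$ are small enough. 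Next, by Lemma~\ref{lemmart2} and Remark~\ref{remindep}, with probability at least $1-3\varepsilon$ at most $\delta^{-3}$ blocks of $\mathcal I$ fail to be $\varepsilon$-good while $\max_{(i,k)\in\mathcal I}W_{i,k}\le Kn$ and $\max_{(i,k)\in\mathcal I}|B^n_{i,k}|\le Kn$ for the constant $K=K(\varepsilon,\delta,\gamma)$ of that lemma; choosing $\gamma$ so small that $\delta^{-3}K\le\varepsilon$, the non-$\varepsilon$-good blocks contained in $\mathcal V_n$ contribute at most $\varepsilon n$ to both $\sum W_{i,k}$ and $\sum|B^n_{i,k}|$.

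Putting the pieces together, \eqref{defgood} gives $W_{i,k}=(1+O(\varepsilon))\,\sigma^2|B^n_{i,k}|$ for every $\varepsilon$-good block, and the volume bookkeeping gives $\sum_{B^n_{i,k}\subseteq\mathcal V_n}|B^n_{i,k}|=|\mathcal V_n|-\sum_{\text{straddled}}|B^n_{i,k}\cap\mathcal V_n|=n+O(\varepsilon n)$, whence, with $\Pmu$-probability at least $1-O(\varepsilon)$,
\[
\sum_{k=0}^{n-1}\sigma^2_{H_k}=\sum_{B^n_{i,k}\subseteq\mathcal V_n}W_{i,k}+O(\varepsilon n)=(1+O(\varepsilon))\,\sigma^2\,n+O\big((1+\sigma^2)\,\varepsilon n\big);
\]
as $\varepsilon$ is arbitrary this yields the convergence in $\Pmu$-probability of $n^{-1}\sum_{k=0}^{n-1}\sigma^2_{H_k}$ to $\sigma^2$. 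The delicate point — and the reason this is the most technical estimate of the paper — is the control of the straddled blocks: the tree $\mathcal T^{(i_n)}$ typically has volume of order $n$, so $\mathcal V_n$ cannot be sandwiched between two unions of whole trees up to an error $o(n)$; one must genuinely follow the ancestral line of the $n$-th vertex through the block array and, handling the correlation between the frontier and the block it lies in by conditioning on the exploration below each slice, check that the mass carried by the cut-through blocks is a vanishing fraction of $n$.
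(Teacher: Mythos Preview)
Your overall plan is the same block decomposition the paper uses, and the bookkeeping with $\mathcal I$, the $\varepsilon$-good blocks, and Lemma~\ref{lemmart2} is fine. The problem is in how you handle the straddled blocks.

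First, the count of straddled blocks is much simpler than your profile-fluctuation argument suggests. The block $B^n_{i,k}$ is defined by its \emph{roots} $v_{j,m}$ with $j\in(i\lceil\delta\sqrt n\rceil,(i+1)\lceil\delta\sqrt n\rceil]$ at height $m=k\lceil\gamma\sqrt n\rceil$, not by a ``column-footprint'' at each height inside the slice. Because the depth-first subtree of $v_{j,m}$ is a contiguous label interval and the depth-first and lexicographic orders are compatible, if $v_{j+1,m}\in\mathcal V_n$ then the entire subtree of $v_{j,m}$ (hence its intersection with the block) is already in $\mathcal V_n$. So the only block at level $k$ that can be straddled is the one whose root range contains $c_m$; there is at most one per slice, and $|\partial\mathcal I|\le 2\gamma^{-1}A_2$. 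No Doob estimate is needed here, and the picture of the profile $h\mapsto c_h$ wandering across several block-footprints within a slice does not match the actual definition of the blocks.

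Second, and more seriously, your bound on the straddled mass via conditional expectation has a real gap. You write that ``conditioning on the exploration below the bottom of a given slice already fixes the frontier block at that level''. It does not: the index $c_m$ of the last explored root at height $m$ depends on the \emph{sizes of the subtrees rooted at $v_{1,m},v_{2,m},\dots$}, which live above level $m$ and are precisely what you call ``fresh''. Concretely, the $n$-th depth-first step falls in the subtree of $v_{c_m,m}$, so the straddled block is the one whose root range contains $c_m$, and this is correlated with the block sizes (large blocks are more likely to catch the $n$-th step). Hence $\bE_{\bmu}[\text{size of straddled block}]$ is not simply $\bE_{\bmu}[|B^n_{0,k}|]=O(\gamma\delta n)$, and your Markov step is unjustified.

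The paper avoids this correlation entirely by bounding block sizes \emph{uniformly over all} $(i,k)\in\mathcal I$ rather than conditionally: events $E_3$ (max block volume $\le\gamma^{1/2}n$), $E_4$ (at most $\delta^{-2}$ blocks with volume $>\gamma\delta^{1/2}n$) and $E_5$ (max block weight $\le\delta^3\varepsilon n$) give, with $|\partial\mathcal I|\le 2\gamma^{-1}A_2$,
\[
\sum_{(i,k)\in\partial\mathcal I}|B^n_{i,k}|\le \delta^{-2}\cdot\gamma^{1/2}n+2\gamma^{-1}A_2\cdot\gamma\delta^{1/2}n\le 2\varepsilon n
\]
after choosing $\delta$ small and then $\gamma\le\varepsilon^2\delta^6$. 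This is the step you should replace your conditioning argument with; once you do, your write-up and the paper's essentially coincide.
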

\begin{proof}
For $n\geq 1$, let us denote, $V_n$ the set of vertices of $\cF_n$ (i.e.~the vertices with depth-first label smaller than $n$), so that
\[
\sum_{k=0}^{n}\sigma_{H_k}^2=\sum_{v\in V_n} \sigma_{h(v)}^2.
\]
Now, for $\varepsilon\in(0,1/2)$, $\delta\in(0,1)$, $\gamma\in(0,1)$, $A_1>1$ and $A_2>1$, recalling the definitions \eqref{defI}, \eqref{defBn} and \eqref{defbad}, let us define
\begin{equation}\begin{split}
\mathcal{I}^{\mathrm{o}}&=\left\{(i,k)\in \mathcal{I}: B^n_{i,k}\subset V_n, B^n_{i,k}\text{ is $\varepsilon$-good}\right\}\\
\partial \mathcal{I}&=\left\{(i,k)\in \mathcal{I}: B^n_{i,k}\cap V_n\neq \emptyset, B^n_{i,k}\text{ is $\varepsilon$-good}\right\}\setminus \mathcal{I}^{\mathrm{o}}\\
\mathcal{I}^c&=\left\{(i,k)\in \mathcal{I}: B^n_{i,k}\text{ is not $\varepsilon$-good}\right\}.
\end{split}\end{equation}
Let us define the events
\begin{align}
E_1&=\left\{V_n\subset B_n\right\},\\
E_2&=\left\{\text{Bad}_n\le \delta^{-3}\right\},\\
E_3&=\left\{\max_{(i,k)\in\mathcal{I}} \vert B^n_{i,k}\vert \le \gamma^{1/2} n \right\},\\
E_4&=\left\{\left\vert\left\{(i,k)\in\mathcal{I}: \vert B^n_{i,k} \vert > \gamma\delta^{1/2}n\right\}\right\vert\le \delta^{-2}  \right\},\\
E_5&=\left\{\max_{(i,k)\in\mathcal{I}} \vert W_{i,k}\vert\le \delta^3\varepsilon n \right\}.
\end{align}
$E_1$ allows to locate $\cF_n$ inside $B_n$, $E_2$ bounds the number of not $\varepsilon$-good boxes. Then, to limitate the influence of those boxes, $E_3$ (resp. $E_5$ bounds the maximal volume (resp. weight) of a box. It turns out that the bound in $E_3$ is too rough, and $E_4$ bounds the number of boxes having some already large volume.
\\
On the event $E_1$,  using the definition \eqref{defgood} of $\varepsilon$-good blocks, we have that
\begin{equation}\label{redhot0}\begin{split}
\sum_{k=0}^{n}\sigma_{H_k}^2&\le \sum_{(i,k)\in\mathcal{I}^{\mathrm{o}}} W_{i,k} + \sum_{(i,k)\in\partial\mathcal{I}} W_{i,k}+\sum_{(i,k)\in\mathcal{I}^c} W_{i,k}\\
&\le (1+16\varepsilon)\sigma^2 \sum_{(i,k)\in\mathcal{I}^{\mathrm{o}}} \vert B^n_{i,k}\vert+ (1+16\varepsilon)\sigma^2 \sum_{(i,k)\in\partial \mathcal{I}} \vert B^n_{i,k}\vert +\sum_{(i,k)\in\mathcal{I}^c} W_{i,k}\\
&\le (1+16\varepsilon)\sigma^2 n + (1+16\varepsilon) \sigma^2 \sum_{(i,k)\in\partial \mathcal{I}} \vert B^n_{i,k}\vert + \vert\mathcal{I}^c\vert\max_{(i,k)\in\mathcal{I}}\vert W_{i,k}\vert,
\end{split}\end{equation}
and
\begin{equation}\label{redhot01}\begin{split}
\sum_{k=0}^{n}\sigma_{H_k}^2&\ge (1-16\varepsilon) \sum_{(i,k)\in\mathcal{I}^{\mathrm{o}}} \vert B^n_{i,k}\vert\\
&\ge (1-16\varepsilon)\sigma^2 \left(n- \sum_{(i,k)\in\partial \mathcal{I}} \vert B^n_{i,k}\vert -\vert\mathcal{I}^c\vert\max_{(i,k)\in\mathcal{I}}\vert B_{i,k}\vert\right)\\
&\ge \sigma^2 n -16\sigma^2 \varepsilon n -  \sigma^2\left(\sum_{(i,k)\in\partial \mathcal{I}} \vert B^n_{i,k}\vert +\vert\mathcal{I}^c\vert\max_{(i,k)\in\mathcal{I}}\vert B_{i,k}\vert\right).
\end{split}\end{equation}
Now, note that because the exploration is depth-first, we have that if $(i,k)\in\partial \mathcal{I}$, then $(j,k)\in \mathcal{I}^{\mathrm{o}}\cup \mathcal{I}^c$ for all $0\le j<i$ and $B^n_{\ell,k}\cap V_n=\emptyset$ for all  $\ell>i$. This implies that for each $k\ge0$, there is at most one index $i$ such that $(i,k)\in\partial \mathcal{I}$ and therefore
\[
\vert\partial\mathcal{I}\vert\le \gamma^{-1}A_2+1\le 2\gamma^{-1}A_2.
\]
Hence, on the event $E_3\cap E_4$, we have that, if
\begin{align}\label{vivo2}
\delta&\le \frac{\varepsilon^2}{4A_2^2},\\ \label{vivo3}
\gamma&\le {\varepsilon^2\delta^6},
\end{align}
then
\begin{equation}\label{redhot}
\sum_{(i,k)\in\partial \mathcal{I}} \vert B^n_{i,k}\vert \le \gamma^{1/2}n\times \delta^{-2}+\gamma\delta^{1/2}n\times 2\gamma^{-1}A_2= (\gamma^{1/2}\delta^{-2}+2\delta^{1/2} A_2)n\le 2\varepsilon n.
\end{equation}
Observe as well that $\vert\mathcal{I}^c\vert=\text{Bad}_n$, thus on the event $E_2\cap E_3\cap E_5$, we have that, if \eqref{vivo3} holds then
\begin{equation}\label{redhot2}\begin{split}
\vert\mathcal{I}^c\vert\max_{(i,k)\in\mathcal{I}}\vert W_{i,k}\vert& \le \delta^{-3}\times \delta^{3}\varepsilon n = \varepsilon n,\\
\vert\mathcal{I}^c\vert\max_{(i,k)\in\mathcal{I}}\vert B_{i,k}\vert& \le \delta^{-3}\times \gamma^{1/2} n \le \varepsilon n.
\end{split}
\end{equation}
Putting \eqref{redhot0}, \eqref{redhot01}, \eqref{redhot} and \eqref{redhot2} together and using that $\varepsilon<1/2$, we obtain that, if \eqref{vivo2} and \eqref{vivo3} hold then
\begin{equation}\label{nuit}\begin{split}
\left\{ \bigcap_{1\le i\le 5}E_i\right\}\subset\left\{\left\vert\frac{1}{n}\sum_{k=0}^{n}\sigma_{H_k}^2-\sigma^2 \right\vert \le 35(1+\sigma^2)\varepsilon\right\}.
\end{split}
\end{equation}
To conclude the proof, we will show that for all $\varepsilon\in(0,1/2)$, there exist $A_1>1$, $A_2>1$, $\delta\in(0,1)$, $\gamma\in(0,1)$ satisfying \eqref{vivo2} and \eqref{vivo3}, and there exists $N\in\mathbb{N}$ such that for all $n\ge N$,
\begin{equation}\label{finalei}
\Pmu\left(\left\{ \bigcap_{1\le i\le 5}E_i\right\}^c\right)\le \varepsilon.
\end{equation} 
Note that, by \eqref{defBn} and \eqref{defbij}, for all $A_1,A_2>1$, $\delta\in(0,1)$, $\gamma\in(0,1)$ and for all $n\ge1$,
\[
\left\{V_n\subset B_n\right\}^c\subset \left\{w(\mathcal{F}_n)>A_1\sqrt{n}\right\}\cup \left\{h(\mathcal{F}_n)>A_2\sqrt{n}\right\}.
\]
By Lemma \ref{prop:firstbounds}, for all $\varepsilon\in(0,1/2)$, there exists $A_1=A_1(\varepsilon)>1$, $A_2=A_2(\varepsilon)>1$ and $N_1=N_1(\bmu,\varepsilon)$ such that, for all $n\ge N_1$,
\begin{equation}\label{finale1}
\Pmu\left(E_1^c\right)\le \frac{\varepsilon}{5}.
\end{equation}
Now, let us choose
\begin{align*}
\delta&=\frac{\varepsilon^{3}}{6^6A_1A_2^2(1+\sigma^2)^2},\\
\gamma&=\frac{\varepsilon^3\delta^6}{6^5A_1 A_2(1+\sigma^2)^3},
\end{align*}
so that \eqref{vivo2} and \eqref{vivo3} are satisfied. Then \eqref{vivo} holds, so that we can apply Lemma \ref{lemmart2}: first, there exists $N_2=N_2(\bmu,\varepsilon)$ such that, for all $n\ge N_2$,
\begin{equation}\label{finale2}
\Pmu\left(E_2^c\right)\le \frac{\varepsilon}{5};
\end{equation}
second, using \eqref{mart22} with $K=\sqrt{\gamma}\ge 6\gamma\delta(1+\sigma^2)$, we have that, for all $n\ge N_2$
\begin{equation}\label{finale3}
\Pmu\left(E_3^c\right)\le 6^4 A_1A_2 \sigma^2 \gamma\le \frac{\varepsilon}{5};
\end{equation}
and third, using \eqref{mart23} with $K=\delta^3\varepsilon\ge 6\gamma\delta(1+\sigma^2)$, we obtain that, for all $n\ge N_2$,
\begin{equation}\label{finale5}
\Pmu\left(E_5^c\right)\le 6^4 A_1A_2 \sigma^6 \gamma^2\delta^{-6}\varepsilon^{-2}\le \frac{\varepsilon}{5}.
\end{equation}
Now, by Lemma \ref{lemmart1} with $K=\gamma\delta^{1/2}\ge 6\gamma\delta(1+\sigma^2)$, there exists $N_3=N_3(\bmu,\varepsilon)$ such that, for all $(i,k)\in\mathcal{I}$,
\[
\Pmu\left(\vert B^n_{i,k} \vert > \gamma\delta^{1/2}n\right)\le 6^3\sigma^2\gamma.
\]
Therefore,  $\left\vert\left\{(i,k)\in\mathcal{I}: \vert B^n_{i,k} \vert > \gamma\delta^{1/2}n\right\}\right\vert$ is dominated by a binomial random variable $Y$ with parameters $\lfloor 4A_1A_2\delta^{-1}\gamma^{-1}\rfloor$ and $6^3\sigma^2\gamma$. Using \eqref{binomialeq} and the definition of $\delta$, a straightforward computation yields that
\begin{equation}\label{finale4}
\Pmu\left(E_4^c\right)\le \mathbb{P}\left(Y>\delta^{-2}\right)\le \frac{\varepsilon}{5}.
\end{equation}
Finally, by \eqref{finale1}, \eqref{finale2}, \eqref{finale3}, \eqref{finale4} and \eqref{finale5}, for all $\varepsilon\in(0,1)$, choosing $A_1,A_2,\delta,\gamma$ as above and letting $N=\max\{N_1,N_2,N_3\}$, we have that \eqref{finalei} holds for all $n\ge N$. Using \eqref{nuit}, this yields that, for all $\varepsilon\in(0,1/2)$, there exists $N\in\mathbb{N}$ such that for all $n\ge N$,
\[
\Pmu\left(\left\vert\frac{1}{n}\sum_{k=0}^{n}\sigma_{H_k}^2-\sigma^2 \right\vert > 35(1+\sigma^2)\varepsilon\right)\le \varepsilon,
\]
which implies the required convergence in probability and concludes the proof.
\end{proof}

\subsection{Proof of Corollary \ref{cor:Luka}}
{Let $\phi$ be the map from $D([0,1])$ to itself such that for all $f\in D([0,1])$ and $t\in [0,1]$, $\phi(f)(t):=f(t)-\inf_{0\leq s\leq t} f(s)$. One checks straightforwardly that $\phi$ is continuous in Skorokhod space. Hence, Corollary~\ref{cor:Luka} follows from Theorem~\ref{thm:Luka} and the continuous mapping theorem applied to $\phi$. 
}

\section{Proof of Theorem~\ref{th:main}}\label{sec:heightprocess}

As we explain in Section~\ref{subsec:extraction}, Theorem~\ref{th:main} can be deduced from  Theorem~\ref{thm:jointcvLukaHeight}. Hence, most of this section is devoted to the proof of  Theorem~\ref{thm:jointcvLukaHeight}, stating the joint convergence of the {\L}ukasiewicz path and the height process.\\
To prove this convergence, we introduce Kesten's tree in varying environment $\cT^*$, also known as the Geiger tree, 
which is the local limit of the tree $\cT$ conditioned on having a large height.
The Geiger tree consists of an infinite spine along which the offspring distribution is size-biased at each generation, and on which are attached trees with the non-biased offspring distributions.
The first step of the proof of Theorem~\ref{thm:jointcvLukaHeight},  in Section~\ref{subsec:spinedecompoheight}, is to prove that for a typical
 vertex in $\cT^*$ with depth-first label $x$,  the ratio $(X_x-I_x)/H_x$ is close to $\sigma^2/2$, where we recall that $(X_x-I_x)$ is the {\L}ukasiewicz path reflected above its running minimum.
The second step of the proof of Theorem~\ref{thm:jointcvLukaHeight} is to transpose this to $\cT$ by comparing the distributions of $\cT$ and $\cT^*$, see Proposition~\ref{prop:badspineT}. 
This allows us to show that, for most vertices $x$ in $\cF_n$, 
the ratio $(X_x-I_x)/H_x$ is also close to $\sigma^2/2$, see Proposition~\ref{prop:fewbadvertices}. 
We conclude the proof of Theorem~\ref{thm:jointcvLukaHeight} by a continuity argument: this is done following the proof of~\cite[Theorem 2.3.1]{DuquesneLeGall} for the usual Bienaym\'e-Galton-Watson case, with more technicalities due to the varying environment.\\
Finally, in Section~\ref{subsec:extraction}, we explain how to obtain Theorem~\ref{th:main} from Theorem~\ref{thm:jointcvLukaHeight}.

\subsection{The height process via spine decomposition}\label{subsec:spinedecompoheight}
In this section, we study the height process of a BPVE and 
the main result is Proposition~\ref{prop:badspineT}.
We start by defining the distribution of the {Geiger tree $\cT^*$} on an environment $\bmu$. 
Let $(\overline{\xi}_m)_{m\geq 0}$ be a sequence of independent variables such that for all $m\geq 0$, 
$\overline{\xi}_m$ is distributed as a size-biased version of $\mu_m$. Formally,
$\mathrm P_{\bmu}(\overline{\xi}_m = i) = i\mathrm P_{\bmu}(\xi_m = i)/\mathrm E_{\bmu}[\xi_m]$ for all $i\geq 0$,
where $\xi_m$ is  random variable of distribution $\mu_m$. 
Let $(\mathcal T_{i, m})_{i\geq 1, m\geq 0}$ be a family of independent trees, independent of $(\overline{\xi}_m)_{m\geq 0}$, where for all $m\geq 0$ and $i\geq 1$, 
$\cT_{i, m}$ is a BPVE in environment $(\mu_{m+k})_{k\geq 0}$. 
We will define recursively the pair $(\cT^*,(v_m)_{m\ge0})$ consisting of the Geiger tree and a sequence of marked vertices, called the spine. We start with a pair $(\cT_0,v_0)$ where the tree $\cT_0$ consists of the single vertex $v_0$. Assume that for some $m\ge0$, we have constructed the pair $(\cT_m, (v_i)_{1\le i\le m})$ where $\cT_m$ is a tree of height $m+1$ and $v_1,\dots,v_m$ are spine vertices so that $v_{i+1}$ is an offspring of $v_i$, for all $i\ge0$. Then, we construct $(\cT_{m+1}, (v_i)_{1\le i\le m+1})$ as follows:
\begin{itemize}
\item assign $\overline{\xi}_m$ children to the spine vertex $v_m$,
\item choose a new spine vertex $v_{m+1}$ uniformly at random among the children of $v_m$,
\item attach $\cT_{i, m+1}$ to the $i$-th non-spine child of $v_m$, in lexicographical order.
\end{itemize}
As $m\rightarrow \infty$, this procedure produces the pair $(\cT^*,(v_m)_{m\ge0})$ where $\cT^*$ is the Geiger tree and  $(v_m)_{m\geq 0}$ is its spine. Note that, if $\bmu$ is strictly critical, then the trees $\cT_{i,m}$ are finite almost surely and $\cT^*$ contains a single infinite path. This implies that knowing $\cT^*$ is enough to recover  the spine $(v_m)_{m\ge0}$.\\

We want to study the {\L}ukasiewicz path on this infinite tree and compare it to the {\L}ukasiewicz path of a finite tree conditioned to be large. A difficulty is that the depth-first labeling and the {\L}ukasiewicz path are not well-defined on the Geiger tree $\mathcal{T}^*$, as the depth-first exploration will never be able to cross the spine. Nevertheless, the value of the {\L}ukasiewicz path reflected above its running minimum can be recovered from partial information. This value coincides with $\widetilde{X}_{x}$ introduced below, which is well-defined on finite, infinite trees or forests. At the beginning of Section \ref{sect:prooflukaheight}, we will provide details on how to relate $\widetilde{X}_x$ and the {\L}ukasiewicz path on the explored forest $\mathcal{F}_n$.\\
Let $\cF$ be a forest (possibly just a tree, infinite or finite) and recall the lexicographical order defined in Section \ref{sectorder}. We want to define the quantity $\widetilde{X}_x$ for a vertex $x$ of $\cF$. For a vertex $x$ at height $m\ge1$, there exists a unique $m$-uplet $(s_0,\dots,s_m)$ such that $x=v_{s_m,m}$ and, for all $i\in\{0,\dots,m-1\}$, $v_{s_i,i}$ is the ancestor of $x$ at height $i$. Now, for each $i\in\{0,\dots,m-1\}$, let $\zeta_i(x)$ be the number of offspring of $v_{s_i,i}$ with lexicographical label strictly larger than that of $v_{s_{i+1},i+1}$, i.e.~with label $v_{j,i+1}$ for some $j>s_{i+1}$. We define
\begin{equation}\label{deftilde}
\widetilde{X}_x=\zeta_0(x)+\dots+\zeta_{m-1}(x).
\end{equation}

%

For  a vertex $x\in \cF$,  denote $\mathcal{L}(x)$ the depth-first label of $x$, as explained in Section \ref{sectorder}. By definition of the height process, we have that $H_{\mathcal{L}(x)}=h(x)$, where $h(x)$ is the height of $x$ in $T$, as defined in Section \ref{sectorder}. Finally, note that, when applied to the spine vertex $v_m$, we have that
\begin{equation}\label{insidious}
\widetilde{X}_{v_m}=\zeta_0+\dots+\zeta_{m-1},
\end{equation}
where the random variables $\zeta_i$ are independent,  uniform in $\{0,\dots,\overline{\xi}_i-1\}$ with $\overline{\xi}$ has the size-biased distribution of $\mu_i$, as defined above \eqref{eq:phik}. These are the random variables used in Condition \ref{(IV)}.\\
%
For any tree $T$ and any $\varepsilon >0$, we say that $x$ is an \textbf{$\varepsilon$-bad vertex} if
\begin{equation}\label{badbadbad}
\left\vert \frac{\widetilde{X}_{x}}{h(x)}-\frac{\sigma^2}{2}\right\vert \geq \varepsilon,
\end{equation}
where $h(x)$ is the height of $x$ in $T$.
For $m\in \dN$, say that $T$ is $(\varepsilon,m)$-bad if there are at least $\varepsilon Z_m(T)$ $\varepsilon$-bad vertices among the $Z_m(T)$ vertices at generation $m$ of $T$. 

\begin{lemma}\label{lem:fewbadspineGeigertree}
Let $\bmu$ be a strictly critical environment satisfying Condition \ref{(IV)}. For all $\varepsilon >0$ and for all $\delta>0$, there exists $M=M(\bmu,\varepsilon,\delta)$ such that, for all $m\ge M$, 
\[
\Pmu\left(\cT^*\text{ is }(\varepsilon,m)\text{-bad}\right)\leq \delta.
\]
\end{lemma}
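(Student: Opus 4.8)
The plan is to exploit the fact that on the Geiger tree $\cT^*$, the spine vertex $v_m$ has $\widetilde{X}_{v_m} = \zeta_0 + \cdots + \zeta_{m-1}$ with $(\zeta_i)$ independent, and that by Condition \ref{(IV)} this quantity divided by $m$ converges $\Pmu$-a.s.\ to $\sigma^2/2$. So the spine vertex at generation $m$ is, for $m$ large, not $\varepsilon$-bad with probability close to $1$. The difficulty is that $Z_m(\cT^*)$ may be large and we need to control the proportion of $\varepsilon$-bad vertices among \emph{all} vertices at generation $m$, not just the spine vertex. First I would set up the conditional structure: condition on the spine $(v_i)_{i\ge 0}$ and on the offspring counts $\overline\xi_i$ along the spine, i.e.\ on $\widetilde X_{v_m}$ and on $m$. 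Every non-spine vertex $x$ at generation $m$ lies in some subtree $\cT_{j,i+1}$ hanging off a spine vertex $v_i$ at generation $i<m$; write $h(x)=m$ and decompose $\widetilde X_x = \widetilde X_{v_i} - r_i + \widetilde X^{(i)}_x$, where $r_i \le \zeta_i$ accounts for the fact that $x$'s ancestor at height $i+1$ is the $j$-th non-spine child of $v_i$ (a child with lexicographic label smaller than $v_{i+1}$, contributing a non-negative correction of size at most $\overline\xi_i$), and $\widetilde X^{(i)}_x$ is the analogous functional computed inside the subtree $\cT_{j,i+1}$, which reaches height $m-i-1$.

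The key estimate is then a two-regime argument. If $x$ branches off the spine at a generation $i = m - o(m)$ (say $i \ge (1-\eta)m$), then its ancestor at the branching point has $\widetilde X_{v_i}/i \approx \sigma^2/2$, and the remaining contributions ($r_i$ from one spine vertex, and $\widetilde X^{(i)}_x$ from a subtree of height $< \eta m$) are $O(\eta m)$ plus a single heavy-tailed term $\overline\xi_i$; but in a strictly critical environment the subtree $\cT_{j,i+1}$ of height $\le \eta m$ has width bounded in a controlled way and $\widetilde X^{(i)}_x \le$ (sum of offspring along a path of length $\le \eta m$), so with high probability (using a Markov/second-moment bound via Lemma \ref{lem:offspringvarianceandDoob}-style estimates and the moment hypothesis) $\widetilde X^{(i)}_x \le \varepsilon m /4$ for all but a negligible fraction. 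Conversely, vertices branching off the spine at an early generation $i < (1-\eta)m$ are rare: the number of such vertices at generation $m$ is $\sum_{i<(1-\eta)m}(\text{offspring of }v_i)\times(\text{size of }Z_{m-i-1}\text{ of a }(\mu_{i+1+\cdot})\text{-BPVE})$, whose $\Pmu$-expectation is $\sum_{i<(1-\eta)m}(\overline\xi_i - 1)\cdot 1 = \widetilde X_{v_{(1-\eta)m}} = O(m)$, while $\EE_{\bmu}[Z_m(\cT^*)]$ grows like a constant times $m$ as well — more precisely one uses that $\EE_{\bmu}[Z_m(\cT^*)] = 1 + \sum_{i=0}^{m-1}\EE_{\bmu}[\overline\xi_i - 1] = 1 + \sum_{i<m}\sigma_i^2 \sim \sigma^2 m$, so by choosing $\eta$ small one makes the early-branching vertices a proportion $\le \varepsilon/2$ of generation $m$ in expectation, hence (by Markov, conditionally on $\{Z_m(\cT^*)>0\}$, which on $\cT^*$ always holds) with probability $\ge 1-\delta/2$ it is $\le \varepsilon Z_m(\cT^*)$.

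Putting these together: conditionally on the spine data and on the good event that $\widetilde X_{v_m}/m \in (\sigma^2/2 - \varepsilon/8, \sigma^2/2 + \varepsilon/8)$ (probability $\ge 1-\delta/4$ for $m \ge M_1$, by Condition \ref{(IV)} and Egorov-type extraction of a uniform rate, or simply a.s.\ convergence), the number of $\varepsilon$-bad vertices at generation $m$ is at most (early-branching vertices) $+$ (late-branching vertices whose subtree functional exceeds $\varepsilon m/4$), and each term is $\le (\varepsilon/2) Z_m(\cT^*)$ with probability $\ge 1 - \delta/4$ by the above. A union bound then gives $\Pmu(\cT^* \text{ is }(\varepsilon,m)\text{-bad}) \le \delta$ for $m \ge M = M(\bmu,\varepsilon,\delta)$. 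The main obstacle I anticipate is controlling the late-branching subtree functional $\widetilde X^{(i)}_x$ \emph{uniformly over all} such vertices $x$: one heavy-tailed offspring count along the short path from the branch point to $x$ could make $\widetilde X^{(i)}_x$ large, so one needs either Condition \ref{(III)}-type truncation of large degrees or a careful first-moment computation over the whole generation-$m$ population of these subtrees (the expected total excess $\sum_x \widetilde X^{(i)}_x$ over generation-$m$ vertices of a height-$\le \eta m$ BPVE is $O(\eta m)\cdot \EE[Z_m]$), and then a Markov bound; this is the step requiring the most care.
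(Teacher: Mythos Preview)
Your dichotomy into early- and late-branching vertices contains a fatal arithmetic error. You compute (correctly) that the expected number of generation-$m$ vertices branching off the spine before height $(1-\eta)m$ is $\sum_{i<(1-\eta)m}\Emu[\overline\xi_i-1]=\sum_{i<(1-\eta)m}\sigma_i^2$, while $\Emu[Z_m(\cT^*)]=1+\sum_{i<m}\sigma_i^2$. But the ratio of these is approximately $1-\eta$, which tends to $1$ as $\eta\to 0$, not to $0$: early-branching vertices form the \emph{majority} of generation $m$, not a negligible fraction. Reversing the cutoff does not help either: if instead ``early'' means $i<\eta m$ with $\eta$ small so that those vertices are genuinely rare, then ``late'' vertices branch at heights $i\in[\eta m, m)$, and for those the subtree contribution $\widetilde X^{(i)}_x$ runs along a path of length up to $(1-\eta)m$ inside an ordinary (non-size-biased) BPVE conditioned to survive that long, about which Condition~\ref{(IV)} says nothing. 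No placement of the cutoff makes both regimes tractable with only Condition~\ref{(IV)}. (You also implicitly invoke $\sum_{i<m}\sigma_i^2\sim\sigma^2 m$, which is Condition~\ref{(I)}, and Condition~\ref{(III)}-type truncation, whereas the lemma assumes only~\ref{(IV)}.)

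The paper's proof sidesteps all of this with one structural fact about the Geiger tree: conditionally on $\cT^*\stackrel{m}{=}T$, the spine vertex $v_m$ is \emph{uniformly distributed} among the $Z_m(T)$ vertices at generation $m$ (\cite[Lemma~1.2]{KerstingVatutinBook}). Hence if $T$ is $(\varepsilon,m)$-bad, i.e.\ at least an $\varepsilon$-fraction of its generation-$m$ vertices are $\varepsilon$-bad, then $\Pmu(v_m\text{ is }\varepsilon\text{-bad}\mid\cT^*\stackrel{m}{=}T)\ge\varepsilon$, and summing over bad trees gives
\[
\Pmu\bigl(\cT^*\text{ is }(\varepsilon,m)\text{-bad}\bigr)\le\frac{1}{\varepsilon}\,\Pmu\bigl(v_m\text{ is }\varepsilon\text{-bad}\bigr)
=\frac{1}{\varepsilon}\,\Pmu\Bigl(\Bigl|\frac{\widetilde X_{v_m}}{m}-\frac{\sigma^2}{2}\Bigr|\ge\varepsilon\Bigr),
\]
which tends to $0$ by Condition~\ref{(IV)} alone. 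This is the missing idea: rather than analysing every vertex at generation $m$, one transfers the question to the single spine vertex via its conditional uniformity.
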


\begin{proof}
As above, we let $(v_m)_{m\ge0}$ be the spine of $\cT^*$, which is the only infinite path in the tree. We will use the  lexicographical order defined in Section \ref{sectorder}.
By~\cite[Lemma 1.2]{KerstingVatutinBook}, for $m\ge1$, for a tree $T$ with height at least $m$, and for $k\in\{1,\dots,Z_m(T)\}$,
we have that
\[
\Pmu\left(\mathcal{T}^*\stackrel{m}{=}T, v_m=v_{k,m}\right)=\frac{1}{Z_m(T)}\Pmu\left(\mathcal{T}^*\stackrel{m}{=}T\right),
\]
where the notation $\stackrel{m}{=}$ means that the two trees are identical up to height $m$ included.\\
Let $E_m^B$ be the countable set of trees of height exactly $m$ that are $(\varepsilon,m)$-bad. For a tree $T\in E_m^B$, we denote $V^B_m(T)$ the set of its vertices at height $m$ that are $\varepsilon$-bad. Hence, for $T\in E_m^B$, we have that $|V_m^B(T)|\ge \varepsilon Z_m(T)$. Using the above, recalling that $v_m$ denotes the spine vertex of $\cT^*$ at height $m$, we have that
\begin{equation}\label{encanto}
\begin{split}
\Pmu(\text{$\cT^*$ is $(\varepsilon,m)$-bad})&= \sum_{T\in E_m^B} \Pmu\left(\mathcal{T}^*\stackrel{m}{=}T\right)\\
&= \sum_{T\in E_m^B} \frac{Z_m(T)}{|V_m^B(T)|}\sum_{v\in V_m^B(T)} \Pmu\left(\mathcal{T}^*\stackrel{m}{=}T, v_m=v\right)\\
&\le \frac{1}{\varepsilon} \sum_{T\in E_m^B} \sum_{v\in V_m^B(T)}\Pmu\left(\mathcal{T}^*\stackrel{m}{=}T, v_m=v\right)\\
&= \frac{1}{\varepsilon} \sum_{T\in E_m^B} \Pmu\left(\mathcal{T}^*\stackrel{m}{=}T, v_m\text { is }\varepsilon\text{-bad}\right)\\
&\le \frac{1}{\varepsilon}  \Pmu\left( v_m\text { is }\varepsilon\text{-bad}\right).
\end{split}
\end{equation}
Hence, using \eqref{badbadbad}, it is enough to prove that, for $m$ large enough
\begin{equation}\label{eqn:vmbad}
\Pmu(\text{$v_m$ is $\varepsilon$-bad})\le \varepsilon\delta. 
\end{equation}
By \eqref{insidious}, $\widetilde{X}_{v_m}$ is a sum of $m$ independent random variables as in Condition \ref{(IV)}.
Hence, by Condition \ref{(IV)}, we have that $\mathrm P_\bmu$-almost surely, 
\[
\frac{\widetilde{X}_{v_m}}m \to \frac{\sigma^2}2,
\]
and thus this convergence holds in $\Pmu$-probability as well.
Hence, noting that $h(v_m)=m$, this implies that, for all $\varepsilon>0$ and for all $\delta>0$, there exists $M=M(\bmu,\varepsilon,\delta)$ such that, for all $m\geq M$,
\[
\Pmu(\text{$v_m$ is $\varepsilon$-bad})=\Pmu\left( \left|\frac{\widetilde{X}_{v_m}}{m}-\frac{\sigma^2}{2}\right| \ge \varepsilon\right)\le \varepsilon\delta.
\]
This concludes the proof.
\end{proof}

A similar result holds on the $\bmu$-BPVE $\cT$ conditioned on $\{h(\cT)\geq m\}$, as stated below.
\begin{proposition}\label{prop:badspineT}
Let $\bmu$ be a strictly critical environment that satisfies the four Conditions \ref{(I)},\ref{(II)},\ref{(IV)} and \ref{(V)}. 
For all $\varepsilon>0$ and $\delta>0$, there exists $M = M(\bmu,\varepsilon, \delta)$ such that, for all $m\geq M$ and for all $k\in\{0,\dots, m\}$,
\[
\Pmu\left( \left.\cT\text{ is }(\varepsilon,m)\text{-bad }\right|h(\cT)\ge k\right) \leq \delta.
\]
\end{proposition}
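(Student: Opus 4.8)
The plan is to transfer the estimate of Lemma~\ref{lem:fewbadspineGeigertree} from the Geiger tree $\cT^*$ to the conditioned $\bmu$-BPVE $\cT$, exploiting the fact that $\cT^*$ is the local limit of $\cT$ conditioned to be tall. The key point is that the law of $\cT$ restricted to the first $m$ generations, conditioned on $\{h(\cT)\ge k\}$ for $k\le m$, is absolutely continuous with respect to the law of $\cT^*$ restricted to the first $m$ generations, with a Radon--Nikodym derivative we can control uniformly. Since the event ``$\cT$ is $(\varepsilon,m)$-bad'' is measurable with respect to the first $m$ generations of $\cT$, the comparison lets us bound its probability by (a constant times) $\Pmu(\cT^* \text{ is }(\varepsilon,m)\text{-bad})$, which is $\le\delta'$ for $m$ large by Lemma~\ref{lem:fewbadspineGeigertree}.

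\textbf{Key steps.} First I would record the classical spine change-of-measure identity: for any tree $T$ of height exactly $m$ (or at least $m$), by the construction of $\cT^*$ and the size-biasing at each spine vertex,
\[
\Pmu\left(\cT^*\stackrel{m}{=}T\right)
= \Pmu\left(\cT\stackrel{m}{=}T\right)\cdot Z_m(T),
\]
which is essentially \cite[Lemma~1.2]{KerstingVatutinBook} and is already used in the proof of Lemma~\ref{lem:fewbadspineGeigertree}; here $\stackrel{m}{=}$ means agreement up to generation $m$. Rearranged, $\Pmu(\cT\stackrel{m}{=}T) = \Pmu(\cT^*\stackrel{m}{=}T)/Z_m(T)$. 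Summing over the set $E_m^B$ of $(\varepsilon,m)$-bad trees of height at least $m$, and using that on $\{h(\cT)\ge k\}$ with $k\le m$ the tree has nonempty generation $m$ only on a subevent, I would write
\[
\Pmu\left(\cT\text{ is }(\varepsilon,m)\text{-bad}\right)
= \sum_{T\in E_m^B}\frac{1}{Z_m(T)}\,\Pmu\left(\cT^*\stackrel{m}{=}T\right).
\]
Second, since on $E_m^B$ we have $Z_m(T)\ge 1$, this is trivially bounded by $\Pmu(\cT^*\text{ is }(\varepsilon,m)\text{-bad})$; but to get the \emph{conditional} statement I need to divide by $\Pmu(h(\cT)\ge k)$. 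Here I use the monotonicity $\Pmu(h(\cT)\ge k)\ge \Pmu(h(\cT)\ge m)$ and the lower bound $\Pmu(h(\cT)\ge m)\ge \tfrac{1}{2\sigma^2 m}$ from \eqref{haaaaaaa} in Lemma~\ref{lem:heightsurvivalproba} (valid for $m\ge M_1(\bmu)$ under Conditions \ref{(I)}, \ref{(II)}, \ref{(V)}). So
\[
\Pmu\left(\left.\cT\text{ is }(\varepsilon,m)\text{-bad}\right|h(\cT)\ge k\right)
\le 2\sigma^2 m\cdot \Pmu\left(\cT\text{ is }(\varepsilon,m)\text{-bad}\right).
\]
Third, I sharpen the bound on $\Pmu(\cT\text{ is }(\varepsilon,m)\text{-bad})$: following the computation in \eqref{encanto} but now for $\cT$ instead of $\cT^*$, and using the identity above, I obtain
\[
\Pmu\left(\cT\text{ is }(\varepsilon,m)\text{-bad}\right)
\le \frac1\varepsilon\,\Pmu\left(\cT^*\stackrel{m}{=}\cdot,\ v_m\text{ is }\varepsilon\text{-bad}\right)
\le \frac1\varepsilon\,\Pmu\left(v_m\text{ is }\varepsilon\text{-bad}\right),
\]
and then I argue, using the quantitative rate hidden in Condition \ref{(IV)} (that is, $\widetilde X_{v_m}/m\to\sigma^2/2$ $\Pmu$-a.s., hence in probability with some rate function $\rho(m)\to0$), that $\Pmu(v_m\text{ is }\varepsilon\text{-bad})\le \rho_\varepsilon(m)$ with $m\,\rho_\varepsilon(m)\to 0$. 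Choosing $M$ large enough that $2\sigma^2 m\cdot\tfrac1\varepsilon\rho_\varepsilon(m)\le\delta$ for all $m\ge M$ concludes.

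\textbf{Main obstacle.} The delicate point is the extra factor of $m$: the naive bound $\Pmu(\cT\text{ is }(\varepsilon,m)\text{-bad})\le\Pmu(\cT^*\text{ is }(\varepsilon,m)\text{-bad})$ loses a factor $\Pmu(h(\cT)\ge m)^{-1}\asymp m$ upon conditioning, so I genuinely need $\Pmu(v_m\text{ is }\varepsilon\text{-bad}) = o(1/m)$ rather than merely $o(1)$. This is where I must extract a \emph{rate} from Condition \ref{(IV)}: either Condition \ref{(IV)} is strong enough as stated to give such a rate for $\bP$-a.e.\ environment (plausible since $\widetilde X_{v_m}$ is a sum of $m$ independent bounded-in-$L^1$ variables, so a.s.\ convergence plus a Borel--Cantelli/Chebyshev argument over the environment upgrades ``in probability'' to a summable, hence $o(1/m)$, rate), or one invokes the finite-second-moment assumption $\sigma^2=\bE[\sigma_0^2]<\infty$ to get $\mathrm{Var}_{\bmu}(\widetilde X_{v_m})=O(m)$ under the annealed measure and a Chebyshev bound $\Pmu(v_m\text{ is }\varepsilon\text{-bad})\le C(\bmu)/(\varepsilon^2 m)\cdot o(1)$, which is $o(1/m)$. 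I would double-check which of these is cleanest given the precise statement of Condition \ref{(IV)}; modulo that rate, the rest is a routine assembly of Lemma~\ref{lem:fewbadspineGeigertree}, the spine identity, and the lower bound \eqref{haaaaaaa}.
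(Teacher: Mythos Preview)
Your approach has a genuine gap at exactly the point you flag as the ``main obstacle'': you need $\Pmu(v_m\text{ is }\varepsilon\text{-bad})=o(1/m)$, and neither of your proposed fixes delivers it. Condition~\ref{(IV)} asserts only $\Pmu$-a.s.\ convergence of $\widetilde X_{v_m}/m$, which gives convergence in probability with \emph{no rate whatsoever}; a.s.\ convergence can coexist with $\Pmu(v_m\text{ is }\varepsilon\text{-bad})\asymp 1/\log m$, for instance, and no Borel--Cantelli argument upgrades this. Your Chebyshev alternative also fails: the statement is quenched (the environment $\bmu$ is fixed and deterministic, so there is no annealed measure to invoke), the quenched variance $\mathrm{Var}_{\bmu}(\zeta_k)$ involves the third moment of $\mu_k$ which is not controlled by Conditions~\ref{(I)}--\ref{(V)}, and even if it were $O(m)$, Chebyshev gives $O(1/m)$, not $o(1/m)$. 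The factor of $m$ you lose is therefore unrecoverable along this route.

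The paper avoids needing any rate by exploiting the \emph{other} factor in the spine identity: the $1/Z_m(T)$. Instead of your crude bound $1/Z_m(T)\le 1$, the paper splits on the event $\{Z_m(\cT)>\delta' m\}$. The complementary event $\{Z_m(\cT)\le \delta' m\}$ has conditional probability at most $\delta/2$ given $\{h(\cT)\ge m\}$, by~\eqref{eqn:ZmTmnottoosmall} in Lemma~\ref{lem:heightsurvivalproba}. On $\{Z_m(\cT)>\delta' m\}$ one has $1/Z_m(T)\le 1/(\delta' m)$, which exactly cancels the factor $2\sigma^2 m$ coming from $\Pmu(h(\cT)\ge m)^{-1}$; what remains is $\tfrac{2\sigma^2}{\delta'}\Pmu(\cT^*\text{ is }(\varepsilon,m)\text{-bad})$, and for this Lemma~\ref{lem:fewbadspineGeigertree} (only $o(1)$, no rate) suffices. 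The missing ingredient in your argument is precisely this Yaglom-type lower bound on $Z_m$ conditioned on survival.
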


\begin{proof}
Fix $\varepsilon>0$ and $\delta>0$. Start by noting that, for all $m\ge0$ and all $k\in\{0,\dots,m\}$, using that a tree needs to be of height at leat $m$ to be $(\varepsilon,m)$-bad, we have that
\begin{align*}
\Pmu\left( \left.\cT\text{ is }(\varepsilon,m)\text{-bad }\right|h(\cT)\ge k\right)&=\Pmu\left( \left.\cT\text{ is }(\varepsilon,m)\text{-bad }\right|h(\cT)\ge m\right)\frac{\Pmu\left( h(\cT)\ge m\right)}{\Pmu\left(h(\cT)\ge k\right)}\\
&\le\Pmu\left( \left.\cT\text{ is }(\varepsilon,m)\text{-bad }\right|h(\cT)\ge m\right),
\end{align*}
hence it is enough to prove the statement for $k=m$.\\
By Lemma \ref{lem:heightsurvivalproba}, there exists $\delta'= \delta'(\bmu,\delta)>0$ and $M_0=M_0(\bmu,\delta)$ such that, for all $m\ge M_0$, we have that
\begin{equation}\label{netfliss1}
\Pmu\left(\left. Z_m(\cT)\le \delta'm\right| h(\cT)\ge m\right)\le \frac{\delta}{2} \text{ and } \Pmu\left(h(\cT)\ge m\right)\ge \frac{1}{2\sigma^2 m}.
\end{equation}
By Lemma \ref{lem:fewbadspineGeigertree}, there exists $M_1=M_1(\bmu,\varepsilon,\delta)$, such that, for all $m\ge M_1$, we have that
\begin{equation}\label{netfliss3}
\Pmu\left(\cT^*\text{ is }(\varepsilon,m)\text{-bad}\right)\leq \frac{\delta'\delta}{4\sigma^2}.
\end{equation}
As in the previous proof, for $m\geq 1$ 
and two  trees $T$ and $T'$, 
we write $T\overset{m}{=}T'$ when both trees are identical up to height $m$.  We will denote again $E_m^B$ the  set of trees of height exactly $m$ that are $(\varepsilon,m)$-bad, and we will denote $\widetilde{E}_m^B=\{T\in E_m^B: Z_m(T)>\delta'm\}$, which are both countable.\\
By~\cite[Lemma 1.2]{KerstingVatutinBook}, for all $m\ge1$ and for all $T\in E_m^B$, we have that
\begin{equation}\label{eq:KerstingVatutin12}
\Pmu\left(\cT\stackrel{m}{=} T\right)=\frac{\Pmu\left(\cT^*\stackrel{m}{=} T\right)}{Z_m(T)}.
\end{equation}
Define $M=M(\bmu,\varepsilon,\delta)=\max\{M_0,M_1\}$. Using \eqref{encanto}, \eqref{netfliss1}, \eqref{netfliss3} and \eqref{eq:KerstingVatutin12}, we conclude that for all $m\ge M$, 

\begin{align*}
\Pmu\left( \left.\cT\text{ is }(\varepsilon,m)\text{-bad }\right|h(\cT)\ge m\right)\le& \Pmu\left( \left. Z_m(\cT)\le \delta'm\right| h(\cT)\ge m\right)\\
& + \frac{\Pmu\left(\cT\text{ is }(\varepsilon,m)\text{-bad }, \ Z_m(\cT)> \delta' m\right)}{\Pmu\left(h(\cT)\ge m\right)}\\
\le& \frac{\delta}{2}
 + \frac{\Pmu\left(\cT\text{ is }(\varepsilon,m)\text{-bad }, \ Z_m(\cT)> \delta' m\right)}{\Pmu\left(h(\cT)\ge m\right)}\\
=& \frac{\delta}{2} + \sum_{T\in\widetilde{E}_m^B} \frac{\Pmu\left(\cT\stackrel{m}{=} T\right)}{\Pmu\left(h(\cT)\ge m\right)}\\
=& \frac{\delta}{2} + \sum_{T\in\widetilde{E}_m^B} \frac{\Pmu\left(\cT^*\stackrel{m}{=} T\right)}{Z_m(T)\Pmu\left(h(\cT)\ge m\right)}\\
\le& \frac{\delta}{2} +\frac{2\sigma^2}{\delta'} \sum_{T\in\widetilde{E}_m^B} \Pmu\left(\cT^*\stackrel{m}{=} T\right)\\
\le& \frac{\delta}{2} +\frac{2\sigma^2}{\delta'} \sum_{T\in {E}_m^B} \Pmu\left(\cT^*\stackrel{m}{=} T\right)\\
=& \frac{\delta}{2} +\frac{2\sigma^2}{\delta'}
\Pmu\left(\cT^*\text{ is }(\varepsilon,m)\text{-bad}\right)\\
\le& \frac{\delta}{2}+\frac{\delta}{2}=\delta.
\end{align*}
\end{proof}

Using the above, we can now prove that the proportion of bad vertices in the explored forest is arbitrarily small.

\begin{proposition}\label{prop:fewbadvertices}
Let $\bmu$ be a strictly critical environment satisfying Conditions \ref{(I)}, \ref{(II)},\ref{(IV)}  and \ref{(V)}. For every $\varepsilon \in(0,1/2)$ and all $\delta\in(0,1)$, there exists $N=N(\bmu,\varepsilon,\delta)$ such for all $n\geq N$, 
\[
\Pmu(\text{$\cF_n$ has at most $\varepsilon n$ $\varepsilon$-bad vertices})\geq 1-\delta.
\]
\end{proposition}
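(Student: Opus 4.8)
The plan is to combine Proposition~\ref{prop:badspineT} (which controls the proportion of $\varepsilon$-bad vertices in a single $\bmu$-BPVE conditioned to reach a given height) with the coarse shape estimates of Proposition~\ref{prop:firstbounds} and the structural independence coming from the block decomposition used in Lemma~\ref{mart3}. The key point is that the explored forest $\cF_n$ consists of $t(\cF_n)=\Theta(\sqrt n)$ independent trees (rooted at the successive lexicographical positions $v_{j,0}$), and an $\varepsilon$-bad vertex in $\cF_n$ at height $m$ is an $\varepsilon$-bad vertex of the tree it belongs to; so it suffices to bound the total number of bad vertices across those trees. Since $h(\cF_n)\le A_2\sqrt n$ with high probability, every bad vertex sits at some height $m\le A_2\sqrt n$, and we may split the count into ``low'' heights $m\le M_0$ (for a large constant $M_0$) and ``high'' heights $M_0<m\le A_2\sqrt n$.

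First I would handle the low heights crudely: the total number of vertices of $\cF_n$ at heights $\le M_0$ is at most $(M_0+1)\,w(\cF_n)\le (M_0+1)A_1\sqrt n$ with probability $\ge 1-\varepsilon/4$ by Proposition~\ref{prop:firstbounds}, which is $o(n)$ and hence $\le \tfrac{\varepsilon}{4}n$ once $n$ is large. So these contribute negligibly. For the high heights, I would fix $\delta' = \delta'(\bmu,\varepsilon,\delta)$ small and, using Proposition~\ref{prop:badspineT} with parameters $(\varepsilon,\delta')$, choose $M_0=M(\bmu,\varepsilon,\delta')$ so that for every $m\ge M_0$ and every $k\le m$, a $\bmu$-BPVE conditioned on $\{h(\cT)\ge k\}$ is $(\varepsilon,m)$-bad with probability at most $\delta'$. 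Summing the (conditional) expected number of $\varepsilon$-bad vertices at generation $m$ of a single unconditioned tree over the trees of $\cF$: for a fixed tree $T^{(j)}$, $\mathrm E_{\bmu}[\#\{\varepsilon\text{-bad vertices at height }m\}] \le \mathrm E_{\bmu}[\varepsilon Z_m(T^{(j)})] + \mathrm E_{\bmu}[Z_m(T^{(j)})\mathbf 1_{\{T^{(j)}\text{ is }(\varepsilon,m)\text{-bad}\}}]$; the first term sums (over $j\le t$ and $m$) to at most $\varepsilon\cdot |\cF_n|$-type contributions, while for the second term I would use the identity $\mathrm E_{\bmu}[Z_m(T^{(j)})\mathbf 1_A] = \mathrm P_{\bmu}(h(\cT^*)\,\text{-type weighting})$—more precisely, size-biasing by $Z_m$ turns the unconditioned tree into the Geiger tree truncated at height $m$, exactly as exploited via~\cite[Lemma 1.2]{KerstingVatutinBook} in the proofs of Lemma~\ref{lem:fewbadspineGeigertree} and Proposition~\ref{prop:badspineT}; thus $\mathrm E_{\bmu}[Z_m(T^{(j)})\mathbf 1_{\{T^{(j)}\text{ is }(\varepsilon,m)\text{-bad}\}}] = \mathrm P_{\bmu}(\cT^*\text{ is }(\varepsilon,m)\text{-bad})\le \delta'$ by Lemma~\ref{lem:fewbadspineGeigertree}, uniformly in $m\ge M_0$.

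With that identity in hand, the expected number of ``high'' $\varepsilon$-bad vertices in $\cF_n$ is at most
\[
\mathrm E_{\bmu}\Big[\sum_{j=1}^{t(\cF_n)}\sum_{m=M_0}^{\lfloor A_2\sqrt n\rfloor}\#\{\varepsilon\text{-bad at height }m\text{ in }T^{(j)}\}\Big]
\le \varepsilon\,\mathrm E_{\bmu}[|\cF_n|] + A_2\sqrt n\cdot A_1\sqrt n\cdot \delta'
\]
on the high-probability event $\{t(\cF_n)\le A_1\sqrt n,\ h(\cF_n)\le A_2\sqrt n\}$ (one restricts to this event first, then uses that on it the double sum is dominated by that of the first $\lfloor A_1\sqrt n\rfloor$ trees up to height $\lfloor A_2\sqrt n\rfloor$). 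Choosing $\delta' = \delta'(\varepsilon,A_1,A_2)$ small makes the second term at most $\tfrac{\varepsilon}{4}n$; the first term is awkward because $|\cF_n|$ is typically $\Theta(n)$ but can be larger—here I would instead bound $|\cF_n|\le n + w(\cF_n)\cdot h(\cF_n) \le n + A_1A_2 n$ on the good event (the last partially-explored tree adds at most one full ``slab''), so $\varepsilon\,\mathrm E_{\bmu}[|\cF_n|\,;\,\text{good}] \le \varepsilon(1+A_1A_2)n$; absorbing the constant $(1+A_1A_2)$ into a redefinition of $\varepsilon$ at the very start (replace $\varepsilon$ by $\varepsilon/(4(1+A_1A_2))$ throughout, legitimate since $A_1,A_2$ depend only on $\varepsilon$ through Proposition~\ref{prop:firstbounds}—one iterates this once) keeps it $\le\tfrac{\varepsilon}{4}n$. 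Then Markov's inequality on the restricted expectation gives that the number of high $\varepsilon$-bad vertices exceeds $\tfrac{\varepsilon}{2}n$ with probability at most $\tfrac{\varepsilon}{2}$ plus the probability of the complement of the good event, i.e.\ at most $\tfrac{\varepsilon}{2}+\tfrac{\varepsilon}{4}<\delta$ after the usual juggling. Adding the low-height contribution ($\le\tfrac{\varepsilon}{4}n$ deterministically on the good event) gives at most $\varepsilon n$ total with probability $\ge 1-\delta$, as required.

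\textbf{Main obstacle.} The delicate point is the appearance of $|\cF_n|$, whose annealed/quenched expectation is not $O(n)$ in a naive sense because the last (partially explored) tree of $\cF_n$ is size-biased and typically has $\Theta(n)$ vertices with a heavy right tail; the resolution is to never take a raw expectation of $|\cF_n|$ but to work on the event $\{t(\cF_n)\le A_1\sqrt n,\ h(\cF_n)\le A_2\sqrt n\}$ from Proposition~\ref{prop:firstbounds}, on which $|\cF_n|\le (1+A_1A_2)n$ deterministically, and only then apply Markov. The second subtlety is justifying the size-biasing identity $\mathrm E_{\bmu}[Z_m(T)\mathbf 1_{\{T\text{ is }(\varepsilon,m)\text{-bad}\}}] = \mathrm P_{\bmu}(\cT^*\text{ is }(\varepsilon,m)\text{-bad})$, which is precisely the content of~\cite[Lemma 1.2]{KerstingVatutinBook} summed over truncated trees—this is exactly the manipulation already carried out in~\eqref{encanto}, so it can be quoted rather than re-derived.
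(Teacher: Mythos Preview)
Your approach is correct and genuinely different from the paper's. The key step—the size-biasing identity $\Emu[Z_m(T)\mathbf 1_{\{T\text{ is }(\varepsilon',m)\text{-bad}\}}]=\Pmu(\cT^*\text{ is }(\varepsilon',m)\text{-bad})$ followed by Lemma~\ref{lem:fewbadspineGeigertree} and Markov's inequality over all heights $m\le A_2\sqrt n$—works cleanly and is noticeably simpler than what the paper does. The paper avoids summing over all $\Theta(\sqrt n)$ heights: it introduces a mesoscopic grid of heights $m_j=\lfloor j\gamma\sqrt n\rfloor$, applies Proposition~\ref{prop:badspineT} only at those finitely many heights, and then propagates the information to intermediate heights via the monotonicity $\widetilde X_{\text{ancestor}}\le \widetilde X_{\text{descendant}}$ together with BPVE variance estimates (the events $E_{n,4}$, $E_{n,5}$). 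Your first-moment argument bypasses this machinery entirely; as a bonus it uses neither Proposition~\ref{prop:badspineT} nor Condition~\ref{(V)} (only Lemma~\ref{lem:fewbadspineGeigertree} and Proposition~\ref{prop:firstbounds}, hence only Conditions~\ref{(I)},~\ref{(II)},~\ref{(IV)}).

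Two expository points to tighten. First, your handling of the factor $(1+A_1A_2)$ is phrased confusingly: $A_1,A_2$ depend on the \emph{failure probability} (i.e.\ on $\delta$), not on the badness threshold $\varepsilon$, so the right move is to decouple the two uses of $\varepsilon$—fix $A_1=A_1(\delta)$, $A_2=A_2(\delta)$, set $\varepsilon'=\varepsilon/(4(1+A_1A_2))$, and work with $(\varepsilon',m)$-bad trees while still counting $\varepsilon$-bad vertices (this is legitimate because $\varepsilon$-bad implies $\varepsilon'$-bad, so a tree that is not $(\varepsilon',m)$-bad has fewer than $\varepsilon' Z_m$ vertices that are $\varepsilon$-bad). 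Second, the line ``at most $\tfrac{\varepsilon}{2}+\tfrac{\varepsilon}{4}<\delta$'' should read $\tfrac{\delta}{2}+\tfrac{\delta}{4}$; and you never actually need Proposition~\ref{prop:badspineT} in the execution, only Lemma~\ref{lem:fewbadspineGeigertree} via the size-biasing identity.
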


\begin{proof}
Let us fix $\varepsilon \in (0,1/2)$ and $\delta\in(0,1)$. 
We start by bounding the size of the explored forest $\mathcal{F}_n$ and the number of tall trees. By Proposition~\ref{prop:firstbounds} and Lemma \ref{lem:heightsurvivalproba}, there exist $A_1=A_1(\delta)\ge 1$, $A_2=A_2(\delta)\ge 1$, $M=M(\delta)\ge1$ and $N_1=N_1(\bmu,\delta)$ such that, for all $n\ge N_1$, we have that
\begin{equation}
\Pmu\left(\left\{t(\mathcal{F}_n)\le w(\mathcal{F}_n)\le A_1\sqrt{n},\ h(\mathcal{F}_n)\le A_2\sqrt{n}\right\}^c\right)\le \frac{\delta}{6},
\end{equation}
and
\begin{equation}
\Pmu\left(
\left| \left\{i\in\{1,\dots,\lfloor A_1\sqrt{n}\rfloor\}: h\left(\mathcal{T}^{(i)}\right)\ge A^{-1}\varepsilon^2\sqrt{n}\right\}\right|>M\right)\le \frac{\delta}{6}.
\end{equation}
For the second inequality above, we used the fact that the number of tall trees is stochastically dominated by a binomial with parameters $4c^{-1}A_1\varepsilon^{-2}n^{-1/2}$ and $\lfloor A_1\sqrt{n}\rfloor$, together with Hoeffding's inequality.\\
Hence, we have that, for all $n\ge N_1$,
\begin{equation}\label{huel0}
\Pmu\left(E_{n,1}^c\right)\le \frac{\delta}{4},
\end{equation}
where
\begin{equation}\begin{split}
E_{n,1}=&\left\{t(\mathcal{F}_n)\le w(\mathcal{F}_n)\le A_1\sqrt{n},\ h(\mathcal{F}_n)\le A_2\sqrt{n}\right\}\\
&\cap\left\{     \left| \left\{i\in\{1,\dots,\lfloor A_1\sqrt{n}\rfloor\}: h\left(\mathcal{T}^{(i)}\right)\ge A^{-1}\varepsilon^2\sqrt{n}\right\}\right|\le M       \right\}.
\end{split}
\end{equation}

The strategy of the rest of the proof is as follows. Ideally, we would like to control the number of bad vertices by using Proposition \ref{prop:badspineT} and a union bound over all generations up to $A_2\sqrt{n}$. We encounter two problems in doing so. First, Proposition \ref{prop:badspineT} does not apply to small generations, hence we distinguish below between the generations at a relatively small height, and the other generations. Second, a union bound would not be sufficient because we have a number of generations of order $\sqrt{n}$. To solve this second issue, we apply Proposition \ref{prop:badspineT} only to a bounded number of generations (the $m_j$'s defined below \eqref{pasdenom}), regularly distributed over $A_2\sqrt{n}$ generations.  Then, using that $\widetilde{X}$ is non-decreasing from a vertex to its descendants, we show that if a lot of vertices $x$ at some intermediate generation $i$ are bad by having a value too large for $\widetilde{X}_{x}$, then a lot of vertices at the next generation $m_j$ are bad, see \eqref{En4}. Similarly, if many vertices at generation $i$ are bad by having a value too low for $\widetilde{X}_{x}$, then many vertices at the previous generation $m_{j-1}$ are bad, see \eqref{En5}.

Let us split the vertices depending on whether their height is larger than $A_1^{-1}\varepsilon^2\sqrt{n}$ or not. Observe that, on the event $E_{n,1}$, we almost surely have that
\begin{equation}\label{huel1}
\begin{split}
\left|\left\{v\in\mathcal{F}_n:\ h(v)\le A_1^{-1}\varepsilon^2\sqrt{n}\right\}\right|\le A_1^{-1}\varepsilon^2\sqrt{n}\times A_1\sqrt{n}=\varepsilon^2 n<\frac{\varepsilon}{2}n.
\end{split}
\end{equation}
The above deals with the vertices with low height, thus now we need to deal with the vertices with large height. For this purpose, recall that
\[
\mathcal{F}_n=\cT^{(1)}\cup\dots\cup\cT^{(t(\mathcal{F}_n))},
\]
hence on the event $E_{n,1}$, we have that
\begin{equation}
\mathcal{F}_n\subset \bigcup_{i=1}^{\lfloor A_1\sqrt{n}\rfloor}\cT^{(i)}.
\end{equation}
Let us define the event
\begin{equation}\label{huel2}
\begin{split}
E_{n,2}=\bigcap_{i=1}^{\lfloor A_1\sqrt{n}\rfloor}& \left\{\left\{w\left(\cT^{(i)}\right)>A_1\sqrt{n}\right\}\right.\\
&\left.\cup\bigcap_{m=\lceil A_1^{-1}\varepsilon^2\sqrt{n}\rceil}^{\lfloor A_2\sqrt{n}\rfloor}  \left\{\left|\left\{v\in\cT^{(i)}: h(v)=m, \ v \text{ is }\varepsilon\text{-bad}\right\}\right|<\frac{\varepsilon}{2MA_2} \sqrt{n}\right\}\right\}.
\end{split}
\end{equation}
Note that by \eqref{huel1}, \eqref{huel2} and the fact that on $E_{n,1}$, the trees in $\mathcal{F}_n$ have a width smaller than $A_1\sqrt{n}$ and less than $M$ of them have height larger than $A^{-1}\varepsilon^2\sqrt{n}$, we have that
\begin{equation}
E_{n,1}\cap E_{n,2}\subset \left\{\left|\left\{v\in\mathcal{F}_n: \ v \text{ is }\varepsilon\text{-bad}\right\}\right|<\varepsilon n\right\}.
\end{equation}
We now want to prove that
\begin{equation}\label{huel3}
\Pmu\left(\left(E_{n,1}\cap E_{n,2}\right)^c\right)\le \delta.
\end{equation}
For this purpose, let us define $\gamma= \varepsilon^3\delta/(2^{13}A_1A_2(1+\sigma^2))$, $\varepsilon_1={\delta\gamma^2\varepsilon^2}/(2^{12} A_1^4A_2^3M^3(1+\sigma^2))$ and, for all $i\ge1$, all $m\ge0$ and all $k\ge 0$,
\begin{equation}
\begin{split}
\text{Bad}^{(-,i)}_m&=\left\{v\in\cT^{(i)}:\ h(v)=m,\ \frac{\widetilde{X}_v}{h(v)}<\frac{\sigma^2}{2}-\varepsilon_1\right\},\\
\text{Bad}^{(-,i)}_m(k)&=\left\{v\in\cT^{(i)}:\ h(v)=m+k,\ v\text{ has an ancestor in }   \text{Bad}^{(-,i)}_m \right\},\\
Z^{(-,i,m)}_k &=\left|\text{Bad}^{(-,i)}_m(k)\right|,
\end{split}
\end{equation}
as well as, using the lexicographical order,
\begin{equation}\label{pasdenom}
\begin{split}
\text{Bad}^{(+)}_m&=\left\{i\in\{1,\dots,\lfloor A_1\sqrt{n}\rfloor\}: \ \frac{\widetilde{X}_{v_{i,m}}}{h(v_{i,m})}>\frac{\sigma^2}{2}+\varepsilon\right\}.
\end{split}
\end{equation}
Write $m_j=\lfloor j\gamma \sqrt{n}\rfloor$ for $j\geq 0$. For a set $S$ of vertices at a same height $h_S$, we denote $Z(S)$ the number of descendants at generation $m_{j_S}$ of the vertices in $S$, where $j_S$ is the smallest index $j$ such that $m_{j}\ge h_S$. Let us define the events
\begin{align}
E_{n,3}&=\bigcap_{i=1}^{\lfloor A_1^{-1}\sqrt{n}\rfloor} \bigcap_{j=\lfloor\gamma^{-1} A_1\varepsilon^2\rfloor}^{\lceil \gamma^{-1} A_2\rceil} \left\{\cT^{(i)}\text{ is }\left(\varepsilon_1,m_j\right)\text{-bad}\right\}^c\\ \label{En4}
\begin{split}
E_{n,4}&= \bigcap_{i=1}^{\lfloor A_1\sqrt{n}\rfloor}  \bigcap_{j=\lfloor\gamma^{-1} A_1^{-1}\varepsilon^2\rfloor}^{\lceil \gamma^{-1} A_2\rceil} \left\{Z_{m_j}\left(\cT^{(i)}\right)>A_1\sqrt{n} \right\}\\
&\qquad \cup \left\{ \max_{k\in\{0,\dots,\lfloor 2\gamma \sqrt{n}\rfloor\}}\left\{ \left( Z^{(-,i,m_j)}_k-Z^{(-,i,m_j)}_0\right)\le \frac{\varepsilon}{8MA_2}\sqrt{n}\right\}\right\}
\end{split}\\ \label{En5}
E_{n,5}&=\bigcap_{j=\lceil A_1^{-1}\varepsilon^2 \sqrt{n}\rceil}^{\lfloor A_2\sqrt{n}\rfloor}\left\{\left|\text{Bad}^{(+)}_j\right|<\frac{\varepsilon\sqrt{n}}{8A_2}\right\}\cup\left\{ Z(\text{Bad}^{(+)}_j)>\varepsilon_1 A_1\sqrt{n}\right\}
\end{align}

Now, note that for $\lfloor \gamma^{-1} A_1^{-1}\varepsilon^2\rfloor \le j \le \lceil \gamma^{-1} A_2\rceil$,  and for all $0\le k \le \lfloor 2\gamma\sqrt{n}\rfloor$, if  $v$ is a vertex at height $m_j$ which is not  $\varepsilon_1$-bad  and $x$ is a vertex with $h(x)=k+m_j$ and $y$ such that $h(y)=m_j-k$, then we have that, for all $n\ge N_2(\bmu,\delta,\varepsilon)=N_1+16\varepsilon^{-2}$,
\begin{equation}\label{sugar1}
\begin{split}
\frac{\widetilde{X}_x}{h(x)}&\ge \frac{\widetilde{X}_{v}}{h(v)}\times \frac{m_j}{m_j+2\gamma\sqrt{n}}\\
&\ge \frac{\widetilde{X}_{v}}{h(v)}\times \left(1-\frac{8\gamma}{ A_1^{-1}\varepsilon^2  }\right)\\
&\ge \left(\frac{\sigma^2}{2}-\varepsilon_1\right)\times  \left(1-\frac{8\gamma}{ A_1\varepsilon^2  }\right)\\
&\ge \frac{\sigma^2}{2}-4\varepsilon^{-2}\sigma^2\gamma-\varepsilon_1\\
&\ge \frac{\sigma^2}{2}-\frac{\varepsilon}{4}-\varepsilon_1\\
&\ge \frac{\sigma^2}{2}-\frac{\varepsilon}{2},
\end{split}
\end{equation}
and
\begin{equation}\label{sugar2}
\begin{split}
\frac{\widetilde{X}_y}{h(y)}&\le \frac{\widetilde{X}_{v}}{h(v)}\times \frac{m_j}{m_j-2\gamma\sqrt{n}}\\
&\le \frac{\widetilde{X}_{v}}{h(v)}\times \left(1+\frac{8\gamma}{ A_1\varepsilon^2  }\right)\\
&\le \left(\frac{\sigma^2}{2}+\varepsilon_1\right)\times  \left(1+\frac{8\gamma}{ A_1\varepsilon^2  }\right)\\
&\le \frac{\sigma^2}{2}+4\varepsilon^{-2}\sigma^2\gamma+2\varepsilon_1\\
&\le \frac{\sigma^2}{2}+\frac{\varepsilon}{4}+\varepsilon_1\\
&\le \frac{\sigma^2}{2}+\frac{\varepsilon}{2}.
\end{split}
\end{equation}

Thus, by \eqref{sugar1} and \eqref{sugar2}, for $n\ge N_2$, if there is a $\varepsilon$-bad vertex at some given generation $k\in \{\lceil A_1^{-1}\varepsilon^2\sqrt{n}\rceil,\dots, \lfloor A_2\sqrt{n}\rfloor\}$, then there exists $j$ such that $m_j\le k\le m_{j+1}$, and either it has an ancestor at generation $m_j$ that is $\varepsilon_1$-bad or all of its descendants (if any) at generation $m_{j+1}$ are $\varepsilon_1$-bad.\\

Using the above, for $n\ge N_2$ and on the event $E_{n,1}\cap E_{n,3} \cap E_{n,4}\cap E_{n,5}$,  for all $1\le i\le \lfloor A_1\sqrt{n}\rfloor$, for all $\lceil A^{-1}_1\varepsilon^2\sqrt{n}\rceil \le m\le \lfloor A_2\sqrt{n}\rfloor$, we have that, if $w(\cT^{(i)})\le A_1\sqrt{n}$ then
\[
\left|\left\{    v\in\cT^{(i)}:\ h(v)=m, \ v \text{ is }\varepsilon\text{-bad}   \right\}\right|\le \varepsilon_1A_1\sqrt{n}+\frac{\varepsilon}{4MA_2}\sqrt{n}<\frac{\varepsilon}{2MA_2}\sqrt{n},
\]
thus, we obtain that
\begin{equation}
E_{n,1}\cap E_{n,3} \cap E_{n,4}\cap E_{n,5}\subset E_{n,1}\cap E_{n,2}.
\end{equation}
Hence, we have that, for $n\ge N_4$,
\[
\Pmu\left(  \left(E_{n,1}\cap E_{n,2}\right)^c  \right)\le \Pmu(E_{n,1}^c)+\Pmu(E_{n,1}\cap E_{n,3}^c)+\Pmu(E_{n,1}\cap E_{n,3}\cap E_{n,4}^c)+\Pmu(E_{n,5}^c),
\]
and, by \eqref{huel0}, in order to prove \eqref{huel3} and conclude the proof of the proposition, it is enough to prove that
\begin{align}\label{huel4}
\Pmu(E_{n,3}^c)&\le \frac{\delta}{4},\\ \label{huel5}
\Pmu( E_{n,3}\cap E_{n,4}^c)&\le \frac{\delta}{4},\\ \label{huel6}
\Pmu(E_{n,5}^c)&\le \frac{\delta}{4}.
\end{align}
By a union bound, Lemma \ref{lem:heightsurvivalproba} and Proposition \ref{prop:badspineT}, there exists $N_3=N_3(\bmu,\varepsilon,\varepsilon_1,\delta,\gamma,A_1,A_2)$ such that, for all $n\ge N_3$,
\begin{equation}
\begin{split}
\Pmu(E_{n,3}^c)&\le A_1\sqrt{n}\times \gamma^{-1}A_2\max_{A_1^{-1}\varepsilon^2\sqrt{n}/2\le m \le 2A_2\sqrt{n}}\Pmu\left( \mathcal{T}\text{ is }(\varepsilon_1,m)\text{-bad}   \right)\\
&\le A_1A_2\gamma^{-1} \sqrt{n} \times \Pmu(h(\cT)\ge A_1^{-1}\varepsilon^2\sqrt{n}/2)\times ( \delta c A_1^{-2}A_2^{-1}\gamma \varepsilon^2/24)\\
&\le A_1A_2\gamma^{-1} \sqrt{n} \frac{8}{c A_1^{-1}\varepsilon^2\sqrt{n}} \times ( \delta c A_1^{-2} A_2^{-1}\gamma \varepsilon^2/24)\\
&\le \frac{\delta}{4},
\end{split}
\end{equation}
where $c$ is the constant from Condition \ref{(II)}. This proves \eqref{huel4}.\\

Now, note that by  Lemma \ref{lem:deterministicLLNcuttopieces} and Condition \ref{(I)}, there exists $N_4=N_4(\bmu,\gamma,A_1,A_2)$ such that, for all $n\ge N_4$
\begin{equation}\label{beast}
\begin{split}
\max_{\lfloor \gamma^{-1}A_1^{-1}\varepsilon^2\rfloor\le j\le \lceil \gamma^{-1}A_2\rceil}\frac{\sigma_{m_j}^2+\ldots +\sigma_{m_j+\lfloor 2\gamma\sqrt{n}\rfloor-1}^2}{2\gamma \sqrt{n}}&\le 2\sigma^2.
\end{split}
\end{equation}
Observe that, for $\lfloor \gamma^{-1}A_1^{-1}\varepsilon^2\rfloor\le j\le \lceil \gamma^{-1}A_2\rceil$,  on $E_{n,3}\cap \{Z_{\lfloor j\gamma\sqrt{n}\rfloor}\le A_1\sqrt{n}\}$, we have that 
\[
Z_0^{(i,\lfloor j \gamma\sqrt{n}\rfloor)}\le \varepsilon_1A_1\sqrt{n},
\]
for all $1\le i \le A_1\sqrt{n}$. Hence, on $E_{n,3}\cap E_{n,4}^c$, for $\lfloor \gamma^{-1}A_1^{-1}\varepsilon^2\rfloor\le j\le \lceil \gamma^{-1}A_2\rceil$ and $1\le i \le A_1\sqrt{n}$,  we have that $(Z_k^{(i,\lfloor j \gamma\sqrt{n}\rfloor)})_{\ge0}$ is stochastically dominated by a $\bmu_j$-BPVE $(Z_k^j)_{k\ge0}$ with $Z_0^j=\lfloor \varepsilon_1 A_1\sqrt{n}\rfloor$ and where $\bmu_j=(\mu_{\lfloor j\gamma\sqrt{n}\rfloor +k})_{k\ge 0}$. Thus, we obtain that, using Lemma \ref{lem:offspringvarianceandDoob} and \eqref{beast},  for all $n\ge N_4$
\begin{equation}\label{guillaume109}
\begin{split}
\Pmu(E_{n,3}\cap& E_{n,4}^c)
\\
&\le A_1\sqrt{n}\times 2\gamma^{-1}A_2\\
&\quad \times \hspace{-1mm}\max_{\lfloor \gamma^{-1}A_1\varepsilon^2\rfloor\le j\le \lceil \gamma^{-1}A_2\rceil}\hspace{-1mm} \mathrm{P}_{\bmu_j} \big(\max_{k\leq \lfloor2\gamma \sqrt{n}\rfloor}\hspace{-1mm}\vert Z_k^{(1,\lfloor j \gamma\sqrt{n}\rfloor)}-Z_0^{(1,\lfloor j \gamma\sqrt{n}\rfloor)}\vert >\frac{\varepsilon}{4MA_2}\sqrt{n}\big)\\
&\leq \varepsilon_1\times \frac{2^8 A_1^2A_2^3M^2 \gamma^{-2}}  {\varepsilon^2}\hspace{-2mm} \max_{\lfloor \gamma^{-1}A_1\varepsilon^2\rfloor\le j\le \lceil \gamma^{-1}A_2\rceil}\hspace{-2mm}\frac{\sigma_{\lfloor j\gamma \sqrt{n}\rfloor}^2+\ldots +\sigma_{\lfloor j\gamma \sqrt{n}\rfloor+\lfloor 2\gamma\sqrt{n}\rfloor-1}^2}{2\gamma \sqrt{n}}\\
&\le 
\varepsilon_1\times {2^{9} A_1^2A_2^3M^2 \gamma^{-2}\sigma^2\varepsilon^{-2}} \le \frac{\delta}{4}.
\end{split}
\end{equation}
This proves \eqref{huel5}.\\

Finally, to prove \eqref{huel6}, we explore, for a generation $j\ge0$, the offspring of the vertices $\{v_{i,j}: 1\le i\le A_1\sqrt{n}\}$, generation by generation, so that we can define the stopping time
\[
\tau_B=\min\left\{j\in \{\lceil A_1^{-1}\varepsilon^2\sqrt{n}\rceil,\dots,\lfloor A_2 \sqrt{n}\rfloor\}:  \left|\text{Bad}^{(+)}_j\right|\ge \frac{\varepsilon\sqrt{n}}{8A_2}\right\}.
\]
We let $k_B$ be the smalled integer $i$ such that $\tau_b+i=m_j$ for some $j$. Note that $k_B$ is $\tau_B$-measurable.
By applying the strong Markov property at time $\tau_B$, proceeding as in \eqref{guillaume109} by letting $Z_k$ be a BPVE with $Z_0=\lfloor \varepsilon \sqrt{n}/(8A_2)\rfloor$ and by using  again Lemma \ref{lem:offspringvarianceandDoob}  together with \eqref{beast}, we have that, for $n\ge  N_4$,
\begin{align*}
\Pmu(E_{n,5}^c)&\le \Emu\left[       \mathrm{P}_{\bmu_{\tau_B}}\left(          Z_{k_B}<\varepsilon_1 A_1\sqrt{n}              \right)    \right] \\
&\le \Emu\left[     \frac{2^{12}A_2\varepsilon^{-1}}{ 8\sqrt{n}  } \left(      \sigma^2_{\tau_B}+\dots+\sigma^2_{\tau_B+k_B-1}     \right)    \right] \\
&\le           2^{10}A_2\varepsilon^{-1}\gamma           \max_{\lfloor \gamma^{-1}A_1\varepsilon^2\rfloor\le j\le \lceil \gamma^{-1}A_2\rceil}\frac{\sigma_{\lfloor j\gamma \sqrt{n}\rfloor}^2+\ldots +\sigma_{\lfloor j\gamma \sqrt{n}\rfloor+\lfloor 2\gamma\sqrt{n}\rfloor-1}^2}{2\gamma \sqrt{n}}               \\
&\le  2^{11}A_2\varepsilon^{-1}\sigma^2\gamma  \\
&\le \frac{\delta}{4}.
\end{align*}
This concludes the proof.
\end{proof}

\subsection{Proof of Theorem \ref{thm:jointcvLukaHeight}}\label{sect:prooflukaheight}

We are ready for the proof of Theorem \ref{thm:jointcvLukaHeight}, which states that the height process is proportional to the {\L}ukasiewicz path reflected above its running minimum. In the next section, we will prove Theorem~\ref{th:main} by extracting the largest tree from the explored forest. We start by establishing a relation between the quantity $\widetilde{X}_x$ from the previous section and the value of the {\L}ukasiewicz path on the explored forest $\mathcal{F}_n$. We then give an outline of the proof.\\

Consider the forest $\mathcal{F}_n$ and recall that it consists of a sequence of finite trees $\cT^{(i)}$, $i\ge1$.  For  a vertex $x\in \cF_n$,  denote again $\mathcal{L}(x)$ the depth-first label of $x$, as explained in Section \ref{sectorder}. From the definition \eqref{def:Luka}, one can observe that, if $x\in\mathcal{T}^{(1)}$, then  $\widetilde{X}_{x}$ is precisely the value of the {\L}ukasiewicz path  right before revealing the offspring of $x$, i.e.~$X_{\mathcal{L}(x)}$ (this is the number of red vertices in Figure \ref{fig:spine}). Note however that every time the {\L}ukasiewicz path has fully explored a tree, this creates an increment $-1$  (corresponding in fact to the difference between the number of edges
and the number of vertices of the tree). More precisely, if we denote $n_i$ the depth-first label of the root of the tree $\cT^{(i)}$ for $i\ge1$, we have that
\begin{equation}\label{massagegun}
\min_{n_{i}\le k<n_{i+1}} \left(X_k-X_{n_i}\right)\ge 0\text{, and } X_{n_{i+1}}-X_{n_i}=-1.
\end{equation}
From \eqref{massagegun} and the definition of the {\L}ukasiewicz path, we have that, for a vertex $x\in\cF_n$,
\begin{equation}\label{pipi}
\widetilde{X}_x=X_{\mathcal{L}(x)}-I_{\mathcal{L}(x)},
\end{equation}
where we recall that $I_n=\min\{X_k: 0\le k\le n\}$. \\

{Now, we explain the proof strategy, which is similar to that of~\cite[Theorem2.3.1]{DuquesneLeGall}. From Proposition~\ref{prop:fewbadvertices}, we know that for every $\varepsilon>0$, with high probability, every interval of length $\varepsilon n$ in $[0,n]$ contains at least one integer $k$ such that 
\begin{equation}\label{eq:explaincvheight}
\vert (X_{k}-I_{k})/H_{k}-\sigma^2/2\vert \leq \varepsilon.
\end{equation} 
Let $k_1, \ldots, k_{K+1}$ be a sequence of such integers, for some  $K\geq 1$ (we drop the dependency on $n$ to lighten the notation) such that $k_{i+1}-k_i\leq \varepsilon n$ for all $i\leq K$. Using Corollary~\ref{cor:Luka} and the fact that the scaling limit of $X-I$ is continuous (and thus uniformly continuous on $[0,1]$), we can bound $\Delta:=\max_{1\leq i\leq K} \sup_{k_i\leq j<\ell \leq k_{i+1}}\vert (X_j-I_j)\, -\, (X_{\ell}-I_{\ell}) \vert $ by some constant $\gamma>0$ (which can be chosen arbitrarily close to $0$ as $\varepsilon \rightarrow 0$) with arbitrarily large probability. Therefore, it remains to control the variations of $H$ on the intervals $[k_i,k_{i+1}]$. 
\\
Controlling the negative variations of $H$ (i.e.~$\min_{j\in [k_i,k_{i+1}]}H_j-H_{k_i}$) does not seem obvious at all. In fact, the depth-first walk could in principle have large downwards jumps, when a vertex and a large number of its nearest ancesters are the last explored among their siblings (in Figures~\ref{fig:spine} and~\ref{sommetsv}, this would correspond to many consecutive blue vertices having no red neighbours). Fortunately, it is enough to control only the positive variations of $H$: Indeed, combining the upper bound on $\Delta$ and \eqref{eq:explaincvheight}, via the triangle inequality, allows us to control $\max_{i\leq K}\vert H_{k_{i+1}}-H_{k_i}\vert$ appropriately. Hence, negative and positive variations of $H$ in $[k_i,k_{i+1}]$ must be of the same order.
\\
To handle these variations, we bound the height of the largest subtree built during $\varepsilon n$ steps of the exploration, using the estimates of Lemma~\ref{lem:heightsurvivalproba}. This is the purpose of Lemma~\ref{KingCharles} below. We first establish this lemma, and then proceed to the proof of Theorem~\ref{thm:jointcvLukaHeight}.}

\begin{lemma}\label{KingCharles}
Let $\bmu$ be a strictly critical environment satisfying Conditions \ref{(I)} to \ref{(V)}.
For all $\gamma>0$ and all $\delta>0$, there exist $\varepsilon=\varepsilon(\gamma,\delta)>0$ and $N=N(\bmu,\delta)$ such that, for all $n\ge N$, we have that
\begin{equation}
\mathrm{P}_{\bmu}\left(\max_{0\le k\le n} \max_{0\le i\le \varepsilon n}\left( H_{k+i}-H_k\right)>\gamma \sqrt{n}\right)<\delta.
\end{equation}
\end{lemma}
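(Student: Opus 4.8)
\textbf{Proof plan for Lemma~\ref{KingCharles}.}

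The plan is to bound the supremum of positive increments $H_{k+i}-H_k$ over $0\le k\le n$ and $0\le i\le \varepsilon n$ by the height of the tallest subtree that can be grown during $\varepsilon n$ consecutive steps of the depth-first exploration. The starting observation is that if the $k$-th vertex is at height $H_k$, then among the next $\varepsilon n$ explored vertices, any one at height strictly above $H_k$ is a (strict) descendant of one of the vertices already on the ancestral line of the $k$-th vertex, or of a vertex explored after step $k$; in all cases it sits in a subtree hanging off a vertex $v$ explored between steps $k$ and $k+\varepsilon n$, at distance at most $H_{k+i}-H_v\le H_{k+i}-H_k + (\text{backtracking})$. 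More cleanly: every vertex explored in the window $[k,k+\varepsilon n]$ lies in a subtree $\mathcal T(v)$ for some $v$ which is explored in that window and whose parent is an ancestor of the $k$-th vertex or was already explored; the height increment $H_{k+i}-H_k$ is then at most the height of that subtree $\mathcal T(v)$ plus at most $h(v)-H_k\le 0$... rather, one should argue directly that $H_{k+i}-H_k$ is dominated by the height of some subtree rooted at a vertex at generation $\ge H_k$, hanging off the exploration, and that at most $\varepsilon n$ such subtrees are involved, each a $(\mu_j)_{j\ge j_0}$-BPVE with $j_0\le h(\mathcal F_{k+\varepsilon n})$.

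To make this effective I would first restrict to the event, of probability at least $1-\delta/2$ for $n\ge N_1(\bmu,\delta)$, that $h(\mathcal F_n)\le A_2\sqrt n$ and $w(\mathcal F_n)\le A_1\sqrt n$ (Proposition~\ref{prop:firstbounds} applied with $\varepsilon$ replaced by $\delta/2$), and also that $t(\mathcal F_n)\le A_1\sqrt n$. On this event every subtree $\mathcal T(v)$ with $v\in\mathcal F_n$ is rooted at a vertex at generation $h(v)\le A_2\sqrt n$, so it is a $(\mu_j)_{j\ge h(v)}$-BPVE with $h(v)\le A_2\sqrt n$. The window of $\varepsilon n$ explored vertices touches at most $\varepsilon n$ distinct vertices $v$ playing the role above (crudely, at most $\varepsilon n$ vertices are explored), so by a union bound
\[
\mathrm P_{\bmu}\!\left(\max_{0\le k\le n}\max_{0\le i\le\varepsilon n}(H_{k+i}-H_k)>\gamma\sqrt n\right)
\le \frac{\delta}{2}+\varepsilon n\cdot\max_{0\le k_0\le A_2\sqrt n}\mathrm P_{(\mu_j)_{j\ge k_0}}\!\big(h(\mathcal T)>\gamma\sqrt n\big).
\]
By the upper bound in \eqref{haaaaaaa} of Lemma~\ref{lem:heightsurvivalproba} (which applies uniformly in the shift $k_0$, since Conditions~\ref{(I)},~\ref{(II)},~\ref{(V)} are stable under shifting the environment and Lemma~\ref{lem:deterministicLLNcuttopieces} gives the required uniformity of the averages of $\sigma_k^2$ and $\mu_k(\{0\})$ over windows of length $\Theta(\sqrt n)$ starting at any $k_0=O(\sqrt n)$), there is $N_2(\bmu)$ such that for $n\ge N_2$ and all $k_0\le A_2\sqrt n$,
\[
\mathrm P_{(\mu_j)_{j\ge k_0}}\!\big(h(\mathcal T)>\gamma\sqrt n\big)\le \frac{4}{c\,\gamma\sqrt n}.
\]
Plugging this in gives an upper bound $\delta/2+\varepsilon n\cdot 4/(c\gamma\sqrt n)=\delta/2+4\varepsilon\sqrt n/(c\gamma)$; this is not $o(1)$, so the naive union bound over all $\varepsilon n$ subtrees is too lossy and must be sharpened.

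\textbf{The main obstacle and how to get around it.} The difficulty, as just seen, is that $\varepsilon n$ independent subtrees each of height $>\gamma\sqrt n$ with probability $\Theta(1/\sqrt n)$ have an expected number $\Theta(\varepsilon\sqrt n)$ of ``tall'' ones, which diverges. The fix is to not demand that \emph{no} subtree in the window is tall, but rather to observe that the $\varepsilon n$ subtrees touched in a fixed window $[k,k+\varepsilon n]$ together contain at most $\varepsilon n$ vertices, hence their \emph{total} size is $\le\varepsilon n$; a subtree of height $h$ rooted at generation $O(\sqrt n)$ has, by the second–moment estimate \eqref{eqn:offspringvariance}--\eqref{eqn:offspringDoobL2} of Lemma~\ref{lem:offspringvarianceandDoob} and Condition~\ref{(I)}, at least $\Theta(h)$ vertices with probability bounded below conditionally on reaching height $h$ (cf. \eqref{eqn:ZmTmnottoosmall}), so a subtree of height $>\gamma\sqrt n$ contributing to the window typically has $\gtrsim\gamma\sqrt n$ vertices — but this is compatible with $\le\varepsilon n$ total volume only if few such tall subtrees occur. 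More robustly, I would bound, for each of the $O(\sqrt n/\varepsilon)$ disjoint windows of length $\varepsilon n$ tiling $[0,n]$, the event that the window contains a subtree of height $>\gamma\sqrt n$ by: (i) the window contains $\ge\varepsilon n/(\gamma\sqrt n)$ distinct explored vertices that are roots of subtrees of height $>\gamma\sqrt n$ with unusually small population ($\le\gamma\sqrt n$ vertices each), which by \eqref{eqn:ZmTmnottoosmall} is a large-deviation event for a sum of $\varepsilon n/(\gamma\sqrt n)$ near-independent indicators and is super-polynomially small, or (ii) at least one such subtree has $\ge\gamma\sqrt n$ vertices, of which there can be at most $\varepsilon n/(\gamma\sqrt n)=\varepsilon\sqrt n/\gamma$ inside the window, and each occurs with probability $\le 4/(c\gamma\sqrt n)$ by \eqref{haaaaaaa}, so a union bound over the $\le\varepsilon n$ candidate roots times the $O(\sqrt n/\varepsilon)$ windows gives $O(\varepsilon n\cdot\sqrt n/\varepsilon\cdot 1/(c\gamma\sqrt n))=O(n/(c\gamma))$ — still too large, so one must instead bound the \emph{number} of tall subtrees in a window by its volume $\varepsilon n$ divided by their typical size $\gtrsim\gamma\sqrt n$ and run a first-moment/Markov argument: $\mathbb E[\#\{\text{tall subtrees in }[0,n]\}]=O(n\cdot 1/(c\gamma\sqrt n))=O(\sqrt n/(c\gamma))$, but their total volume is $\le n$, forcing, by Markov, that with probability $\ge 1-\delta/2$ at most $O(\sqrt n/(c\gamma\delta))$ of them occur \emph{and} each window of length $\varepsilon n$ with $\varepsilon$ small meets at most one of them — choosing $\varepsilon$ so that $\varepsilon n$ is smaller than the typical spacing $n/(\sqrt n/(c\gamma\delta))=c\gamma\delta\sqrt n$ between tall subtrees makes the positive increment in any window at most $\gamma\sqrt n$. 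Concretely: set $\varepsilon$ so that $\varepsilon n< c\gamma\delta\sqrt n/2$ once divided appropriately, i.e. $\varepsilon=\varepsilon(\gamma,\delta)$ small; on the event that $h(\mathcal F_n),w(\mathcal F_n)=O(\sqrt n)$ and that the (at most $O(1/(\gamma\delta))\cdot\sqrt n$, by Markov on the first moment $O(\sqrt n/\gamma)$) vertices $v\in\mathcal F_n$ with $h(\mathcal T(v))>\gamma\sqrt n$ are spaced more than $\varepsilon n$ apart in depth-first order — which holds with probability $\ge 1-\delta$ by a second-moment/de-Poissonisation argument using the near-independence of subtree heights (Remark~\ref{remindep}-type independence) — every window of length $\varepsilon n$ meets at most one tall subtree, and hence $\max_{0\le i\le\varepsilon n}(H_{k+i}-H_k)\le\gamma\sqrt n$ for all $k$. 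I expect the genuinely delicate point to be making the ``tall subtrees are spaced apart'' statement rigorous, which requires a second-moment computation on the positions and heights of subtrees hanging off the exploration, exploiting both the uniform height-survival estimate \eqref{haaaaaaa} (uniform in the shift, via Lemma~\ref{lem:deterministicLLNcuttopieces}) and the independence structure of disjoint subtrees.
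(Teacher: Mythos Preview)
Your diagnosis that the naive union bound loses a factor $\sqrt n$ is correct, and your instinct that one must exploit the \emph{size} of tall subtrees to rule out too many of them in a short window is also on the right track. However, the actual fix you propose---count the tall subtrees globally and argue they are spaced more than $\varepsilon n$ apart in depth-first order---does not work. A first-moment computation shows that the number of vertices $v\in\mathcal F_n$ whose subtree $\mathcal T(v)$ has height exceeding $\gamma\sqrt n$ is of order $\sqrt n$ (there are $\Theta(n)$ vertices, each with survival probability $\Theta(1/\sqrt n)$). You cannot space $\Theta(\sqrt n)$ points in $[0,n]$ with all gaps exceeding $\varepsilon n$: that would allow at most $1/\varepsilon$ points. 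So the spacing claim is false as stated, and no second-moment argument will rescue it. The deeper issue is that ``$\mathcal T(v)$ is tall for some $v$ explored in $[k,k+\varepsilon n]$'' is \emph{not} the event you need to control: the exploration may enter a tall subtree and leave the window long before reaching its deepest vertex.

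The missing idea is to work with the stopping times
\[
\tau_0^{(n)}=0,\qquad
\tau_{j+1}^{(n)}=\inf\Bigl\{t>\tau_j^{(n)}:\ H_t\ge \min_{\tau_j^{(n)}\le s\le t}H_s+\gamma\sqrt n\Bigr\},
\]
and to show that, conditionally on the exploration up to time $\tau_j^{(n)}$ (and on $H_{\tau_j^{(n)}}\le A_2\sqrt n$), the gap $\tau_{j+1}^{(n)}-\tau_j^{(n)}$ is at least $\eta n$ with probability at least $1-\delta/2$, for some $\eta=\eta(\gamma,\delta)>0$. The point is that after time $\tau_j^{(n)}$ the exploration visits, in order, the conditionally independent subtrees $T_1,T_2,\ldots$ hanging off the spine of the $\tau_j^{(n)}$-th vertex (and then new roots), each a BPVE in a shifted environment with shift at most $A_2\sqrt n$. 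The stopping time $\tau_{j+1}^{(n)}$ is reached only once the exploration enters a $T_m$ with $h(T_m)\ge\gamma\sqrt n$; before that it traverses $T_1,\ldots,T_{m-1}$ entirely. Now a union bound over the first $\kappa\sqrt n$ subtrees (with $\kappa$ small) and the uniform survival bound $\mathrm P_{(\mu_\ell)_{\ell\ge k_0}}(h(\mathcal T)\ge\gamma\sqrt n)\le 4/(c\gamma\sqrt n)$ shows that none of $T_1,\ldots,T_{\lfloor\kappa\sqrt n\rfloor}$ is tall, with probability $\ge 1-4\kappa/(c\gamma)$; and a separate lower bound (each $T_m$ has $|T_m|$ of order at least $\sqrt n$ with probability of order $1/\sqrt n$, via Lemma~\ref{lem:heightsurvivalproba} and Paley--Zygmund) shows $\sum_{m\le\kappa\sqrt n}|T_m|\ge\eta n$ with high probability. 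Together these give $\tau_{j+1}^{(n)}-\tau_j^{(n)}\ge\eta n$. From this, the number of stopping times before $n$ is $O(1)$ with high probability, and one then shrinks $\varepsilon$ so that every gap exceeds $\varepsilon n$. This is the approach the paper takes (adapted from Duquesne--Le~Gall); your outline is missing precisely this conditional/restart structure, which is what converts the hopeless global union bound into a bounded number of conditionally controlled steps.
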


\begin{proof}
We start by bounding the maximal height of the first $n$ vertices explored. By Proposition \ref{prop:firstbounds}, there exist $A_2=A_2(\delta)>0$ and $N_1=N_1(\bmu,\delta)$ such that, for all $n\ge N_1$,
\begin{equation}\label{oats}
\Pmu\left(\max_{0\le k\le n} H_k>A_2\sqrt{n}\right)= \Pmu\big(h(\cF_n)> A_2\sqrt{n}\big)|< \frac{\delta}{4}.
\end{equation}
Now, we will control the increments of $(H_k)_{0\le k\le n}$ on vertices with height smaller than $A_2\sqrt{n}$.
Fix $\gamma>0$ and $\delta>0$. For all $n\ge0$, let us define the sequence of stopping times $(\tau_k^{(n)})_{k\ge0}$ by $\tau_0=0$ and, for $k\ge1$, 
\begin{equation}\label{eqn:taukdef}
\tau_k^{(n)}:=\inf\{t> \tau_{k-1}, \, H_t\geq \min_{\tau_{k-1}\leq s\leq t}H_{s}+\gamma\sqrt{n}\}.
\end{equation}
{Recall that, for the vertex with depth-first label $n$ and for a height $m\in\{0,\dots,H_n-1\}$, we have, from \eqref{deftilde}, that $\zeta_{m}(n)$ is the number of vertices at height $m+1$ which are strictly on the right of the spine from the vertex $n$ to the root of its tree, and whose parent is on this spine. }

{
For $n\ge0$, let us define the $\sigma$-algebra
\[
\mathcal{G}_n=\sigma\left(X_0, \ldots, X_n\right).
\]
In particular, note that $H_n$ and  $\{\zeta_m(n), m\in\{0,\dots,H_n-1\}$ are $\mathcal{G}_n$-measurable. Finally, we will use the notation $\mathcal{G}_k^{(n)}$ for $\mathcal{G}_{\tau_k^{(n)}}$.
}
Our next goal is to prove that for all $\delta>0$ there exists a constant $\eta=\eta(\gamma,\delta)>0$ and there exists $N_2=N_2(\bmu,\gamma,\delta)$ such that, for all $n\ge N_2$ and for all $k\geq 0$,
\begin{equation}\label{eqn:LeGalltauks}
\1_{\left\{H_{\tau_k^{(n)}}\le A_2\sqrt{n}\right\}} \cdot \mathrm{P}_{\bmu}\left(\left.\tau_{k+1}^{(n)}-\tau_k^{(n)} < \eta n\right|\mathcal{G}_k^{(n)}\right)\leq \frac{\delta}{2}\,\, \text{a.s.}
\end{equation}
We will first prove the above, and then show how this implies the conclusion.

Fix integers $n\ge0$, $k\geq 0$ and suppose that we know $\mathcal G_k^{(n)}$, 
i.e.\ we have explored the forest until the node with depth-first label $\tau_k^{(n)}$.
Recall the definition \eqref{deftilde} of $\widetilde{X}_n$ and let us define the shorthand notation
\begin{equation}
M=1+\widetilde{X}_{\tau_k^{(n)}}=1+\zeta_0(\tau_k^{(n)})+\dots+\zeta_{H_{\tau_k^{(n)}}-1}(\tau_k^{(n)}),
\end{equation}
which counts $\tau_k^{(n)}$ plus the total number of vertices attached to the spine from the root of the tree of $\tau_k^{(n)}$ to $\tau_k^{(n)}$, and strictly on the right of this spine, see Figure \ref{sommetsv}. Let us denote these vertices $v_1,\dots,v_M$ in depth-first order. There exists an index $i_{k,n}$ such that $\tau_k^{(n)}$ belongs to the tree $\mathcal{T}^{(i_{k,n})}$ in $\mathcal{F}$. We will denote, for $j\ge1$, $v_{M+j}$ the root of the tree $\mathcal{T}^{(i_{k,n}+j)}$ (which has not yet been seen by the exploration).

\tikzset{every picture/.style={line width=0.75pt}} 
\begin{center}
\begin{figure}
\begin{tikzpicture}[x=0.75pt,y=0.75pt,yscale=-1.5,xscale=1.5]

\draw    (235.33,268.33) -- (254.67,319) ;
\draw    (240,227) -- (235.33,268.33) ;
\draw    (224,193.67) -- (240,227) ;
\draw    (229.25,164.5) -- (224,193.67) ;
\draw  [dash pattern={on 0.84pt off 2.51pt}]  (237.25,307) -- (254.67,319) ;
\draw  [dash pattern={on 0.84pt off 2.51pt}]  (228.75,243) -- (235.33,268.33) ;
\draw  [dash pattern={on 0.84pt off 2.51pt}]  (216.25,255) -- (235.33,268.33) ;
\draw  [dash pattern={on 0.84pt off 2.51pt}]  (207.75,169.67) -- (224,193.67) ;
\draw  [dash pattern={on 0.84pt off 2.51pt}]  (199.25,184.17) -- (224,193.67) ;
\draw  [dash pattern={on 0.84pt off 2.51pt}]  (220.75,166.17) -- (224,193.67) ;
\draw    (274.75,290.5) -- (254.67,319) ;
\draw    (258.75,290.5) -- (254.67,319) ;
\draw    (248.75,194.5) -- (240,227) ;
\draw    (272.25,195.5) -- (240,227) ;
\draw    (266.75,228) -- (235.33,268.33) ;
\draw    (253.25,169) -- (224,193.67) ;
\draw  [dash pattern={on 0.84pt off 2.51pt}]  (100.25,319.5) .. controls (74.25,294.5) and (134.25,159.5) .. (220.75,166.17) ;
\draw  [color={rgb, 255:red, 208; green, 200; blue, 27 }  ,draw opacity=1 ][fill={rgb, 255:red, 208; green, 200; blue, 27 }  ,fill opacity=1 ] (250.88,319) .. controls (250.88,317.16) and (252.57,315.67) .. (254.67,315.67) .. controls (256.76,315.67) and (258.46,317.16) .. (258.46,319) .. controls (258.46,320.84) and (256.76,322.33) .. (254.67,322.33) .. controls (252.57,322.33) and (250.88,320.84) .. (250.88,319) -- cycle ;
\draw  [color={rgb, 255:red, 74; green, 144; blue, 226 }  ,draw opacity=1 ][fill={rgb, 255:red, 74; green, 144; blue, 226 }  ,fill opacity=1 ] (231.54,268.33) .. controls (231.54,266.49) and (233.24,265) .. (235.33,265) .. controls (237.43,265) and (239.13,266.49) .. (239.13,268.33) .. controls (239.13,270.17) and (237.43,271.67) .. (235.33,271.67) .. controls (233.24,271.67) and (231.54,270.17) .. (231.54,268.33) -- cycle ;
\draw  [color={rgb, 255:red, 74; green, 144; blue, 226 }  ,draw opacity=1 ][fill={rgb, 255:red, 74; green, 144; blue, 226 }  ,fill opacity=1 ] (236.21,227) .. controls (236.21,225.16) and (237.91,223.67) .. (240,223.67) .. controls (242.09,223.67) and (243.79,225.16) .. (243.79,227) .. controls (243.79,228.84) and (242.09,230.33) .. (240,230.33) .. controls (237.91,230.33) and (236.21,228.84) .. (236.21,227) -- cycle ;
\draw  [color={rgb, 255:red, 74; green, 144; blue, 226 }  ,draw opacity=1 ][fill={rgb, 255:red, 74; green, 144; blue, 226 }  ,fill opacity=1 ] (220.21,193.67) .. controls (220.21,191.83) and (221.91,190.33) .. (224,190.33) .. controls (226.09,190.33) and (227.79,191.83) .. (227.79,193.67) .. controls (227.79,195.51) and (226.09,197) .. (224,197) .. controls (221.91,197) and (220.21,195.51) .. (220.21,193.67) -- cycle ;
\draw  [color={rgb, 255:red, 208; green, 2; blue, 27 }  ,draw opacity=1 ][fill={rgb, 255:red, 208; green, 2; blue, 27 }  ,fill opacity=1 ] (225.46,167.83) .. controls (225.46,165.99) and (227.16,164.5) .. (229.25,164.5) .. controls (231.34,164.5) and (233.04,165.99) .. (233.04,167.83) .. controls (233.04,169.67) and (231.34,171.17) .. (229.25,171.17) .. controls (227.16,171.17) and (225.46,169.67) .. (225.46,167.83) -- cycle ;
\draw  [color={rgb, 255:red, 208; green, 2; blue, 27 }  ,draw opacity=1 ][fill={rgb, 255:red, 208; green, 2; blue, 27 }  ,fill opacity=1 ] (270.96,290.5) .. controls (270.96,288.66) and (272.66,287.17) .. (274.75,287.17) .. controls (276.84,287.17) and (278.54,288.66) .. (278.54,290.5) .. controls (278.54,292.34) and (276.84,293.83) .. (274.75,293.83) .. controls (272.66,293.83) and (270.96,292.34) .. (270.96,290.5) -- cycle ;
\draw  [color={rgb, 255:red, 208; green, 2; blue, 27 }  ,draw opacity=1 ][fill={rgb, 255:red, 208; green, 2; blue, 27 }  ,fill opacity=1 ] (254.96,290.5) .. controls (254.96,288.66) and (256.66,287.17) .. (258.75,287.17) .. controls (260.84,287.17) and (262.54,288.66) .. (262.54,290.5) .. controls (262.54,292.34) and (260.84,293.83) .. (258.75,293.83) .. controls (256.66,293.83) and (254.96,292.34) .. (254.96,290.5) -- cycle ;
\draw  [color={rgb, 255:red, 208; green, 2; blue, 27 }  ,draw opacity=1 ][fill={rgb, 255:red, 208; green, 2; blue, 27 }  ,fill opacity=1 ] (262.96,228) .. controls (262.96,226.16) and (264.66,224.67) .. (266.75,224.67) .. controls (268.84,224.67) and (270.54,226.16) .. (270.54,228) .. controls (270.54,229.84) and (268.84,231.33) .. (266.75,231.33) .. controls (264.66,231.33) and (262.96,229.84) .. (262.96,228) -- cycle ;
\draw  [color={rgb, 255:red, 208; green, 2; blue, 27 }  ,draw opacity=1 ][fill={rgb, 255:red, 208; green, 2; blue, 27 }  ,fill opacity=1 ] (244.96,194.5) .. controls (244.96,192.66) and (246.66,191.17) .. (248.75,191.17) .. controls (250.84,191.17) and (252.54,192.66) .. (252.54,194.5) .. controls (252.54,196.34) and (250.84,197.83) .. (248.75,197.83) .. controls (246.66,197.83) and (244.96,196.34) .. (244.96,194.5) -- cycle ;
\draw  [color={rgb, 255:red, 208; green, 2; blue, 27 }  ,draw opacity=1 ][fill={rgb, 255:red, 208; green, 2; blue, 27 }  ,fill opacity=1 ] (249.46,169) .. controls (249.46,167.16) and (251.16,165.67) .. (253.25,165.67) .. controls (255.34,165.67) and (257.04,167.16) .. (257.04,169) .. controls (257.04,170.84) and (255.34,172.33) .. (253.25,172.33) .. controls (251.16,172.33) and (249.46,170.84) .. (249.46,169) -- cycle ;
\draw  [color={rgb, 255:red, 208; green, 2; blue, 27 }  ,draw opacity=1 ][fill={rgb, 255:red, 208; green, 2; blue, 27 }  ,fill opacity=1 ] (268.46,195.5) .. controls (268.46,193.66) and (270.16,192.17) .. (272.25,192.17) .. controls (274.34,192.17) and (276.04,193.66) .. (276.04,195.5) .. controls (276.04,197.34) and (274.34,198.83) .. (272.25,198.83) .. controls (270.16,198.83) and (268.46,197.34) .. (268.46,195.5) -- cycle ;
\draw  [color={rgb, 255:red, 208; green, 2; blue, 27 }  ,draw opacity=1 ][fill={rgb, 255:red, 208; green, 2; blue, 27 }  ,fill opacity=1 ] (307.63,319.5) .. controls (307.63,317.66) and (309.32,316.17) .. (311.42,316.17) .. controls (313.51,316.17) and (315.21,317.66) .. (315.21,319.5) .. controls (315.21,321.34) and (313.51,322.83) .. (311.42,322.83) .. controls (309.32,322.83) and (307.63,321.34) .. (307.63,319.5) -- cycle ;
\draw  [color={rgb, 255:red, 208; green, 2; blue, 27 }  ,draw opacity=1 ][fill={rgb, 255:red, 208; green, 2; blue, 27 }  ,fill opacity=1 ] (326.13,319.5) .. controls (326.13,317.66) and (327.82,316.17) .. (329.92,316.17) .. controls (332.01,316.17) and (333.71,317.66) .. (333.71,319.5) .. controls (333.71,321.34) and (332.01,322.83) .. (329.92,322.83) .. controls (327.82,322.83) and (326.13,321.34) .. (326.13,319.5) -- cycle ;
\draw  [color={rgb, 255:red, 208; green, 2; blue, 27 }  ,draw opacity=1 ][fill={rgb, 255:red, 208; green, 2; blue, 27 }  ,fill opacity=1 ] (288.13,319.5) .. controls (288.13,317.66) and (289.82,316.17) .. (291.92,316.17) .. controls (294.01,316.17) and (295.71,317.66) .. (295.71,319.5) .. controls (295.71,321.34) and (294.01,322.83) .. (291.92,322.83) .. controls (289.82,322.83) and (288.13,321.34) .. (288.13,319.5) -- cycle ;
\draw  [dash pattern={on 0.84pt off 2.51pt}]  (349.5,320) -- (372.75,320) ;
\draw  [color={rgb, 255:red, 0; green, 0; blue, 0 }  ,draw opacity=1 ][fill={rgb, 255:red, 0; green, 0; blue, 0 }  ,fill opacity=1 ] (106.21,318.67) .. controls (106.21,316.83) and (107.91,315.33) .. (110,315.33) .. controls (112.09,315.33) and (113.79,316.83) .. (113.79,318.67) .. controls (113.79,320.51) and (112.09,322) .. (110,322) .. controls (107.91,322) and (106.21,320.51) .. (106.21,318.67) -- cycle ;
\draw  [color={rgb, 255:red, 0; green, 0; blue, 0 }  ,draw opacity=1 ][fill={rgb, 255:red, 0; green, 0; blue, 0 }  ,fill opacity=1 ] (125.71,318.67) .. controls (125.71,316.83) and (127.41,315.33) .. (129.5,315.33) .. controls (131.59,315.33) and (133.29,316.83) .. (133.29,318.67) .. controls (133.29,320.51) and (131.59,322) .. (129.5,322) .. controls (127.41,322) and (125.71,320.51) .. (125.71,318.67) -- cycle ;
\draw  [color={rgb, 255:red, 0; green, 0; blue, 0 }  ,draw opacity=1 ][fill={rgb, 255:red, 0; green, 0; blue, 0 }  ,fill opacity=1 ] (227.21,319.17) .. controls (227.21,317.33) and (228.91,315.83) .. (231,315.83) .. controls (233.09,315.83) and (234.79,317.33) .. (234.79,319.17) .. controls (234.79,321.01) and (233.09,322.5) .. (231,322.5) .. controls (228.91,322.5) and (227.21,321.01) .. (227.21,319.17) -- cycle ;
\draw  [color={rgb, 255:red, 0; green, 0; blue, 0 }  ,draw opacity=1 ][fill={rgb, 255:red, 0; green, 0; blue, 0 }  ,fill opacity=1 ] (207.71,319.17) .. controls (207.71,317.33) and (209.41,315.83) .. (211.5,315.83) .. controls (213.59,315.83) and (215.29,317.33) .. (215.29,319.17) .. controls (215.29,321.01) and (213.59,322.5) .. (211.5,322.5) .. controls (209.41,322.5) and (207.71,321.01) .. (207.71,319.17) -- cycle ;
\draw  [dash pattern={on 0.84pt off 2.51pt}]  (145,318.5) -- (198.75,319) ;
\draw  [color={rgb, 255:red, 74; green, 144; blue, 226 }  ,draw opacity=1 ][fill={rgb, 255:red, 74; green, 144; blue, 226 }  ,fill opacity=1 ] (239.21,288.67) .. controls (239.21,286.83) and (240.91,285.33) .. (243,285.33) .. controls (245.09,285.33) and (246.79,286.83) .. (246.79,288.67) .. controls (246.79,290.51) and (245.09,292) .. (243,292) .. controls (240.91,292) and (239.21,290.51) .. (239.21,288.67) -- cycle ;

\draw (199.5,149.4) node [anchor=north west][inner sep=0.75pt]  [font=\small]  {$\tau _{k}^{( n)} =v_{1}$};
\draw (251.5,155.9) node [anchor=north west][inner sep=0.75pt]  [font=\small]  {$v_{2}$};
\draw (248.04,182.4) node [anchor=north west][inner sep=0.75pt]  [font=\small]  {$v_{3}$};
\draw (275,185.4) node [anchor=north west][inner sep=0.75pt]  [font=\small]  {$v_{4}$};
\draw (272,219.4) node [anchor=north west][inner sep=0.75pt]  [font=\small]  {$v_{5}$};
\draw (251,275.9) node [anchor=north west][inner sep=0.75pt]  [font=\small]  {$v_{6}$};
\draw (277.5,280.9) node [anchor=north west][inner sep=0.75pt]  [font=\small]  {$v_{7}$};
\draw (285,306.4) node [anchor=north west][inner sep=0.75pt]  [font=\small]  {$v_{8}$};
\draw (307,306.4) node [anchor=north west][inner sep=0.75pt]  [font=\small]  {$v_{9}$};
\draw (322.5,306.4) node [anchor=north west][inner sep=0.75pt]  [font=\small]  {$v_{10}$};
\draw (98.5,325.9) node [anchor=north west][inner sep=0.75pt]  [font=\footnotesize]  {$\mathcal{T}^{( 1)}$};
\draw (123.5,326.4) node [anchor=north west][inner sep=0.75pt]  [font=\footnotesize]  {$\mathcal{T}^{( 2)}$};
\draw (243.5,325.4) node [anchor=north west][inner sep=0.75pt]  [font=\footnotesize]  {$\mathcal{T}^{( i_{k,n})}$};

\end{tikzpicture}
\caption{Black vertices are roots of fully explored trees. The yellow vertex is the root of the tree being explored.  
{Blue vertices are on the spine of the vertex with label $n$. Red vertices are parents of subtrees ($v_1$ to $v_7$) and trees (from $v_8$ onwards) to be explored after the vertex $n$.}
}\label{sommetsv}
\end{figure}
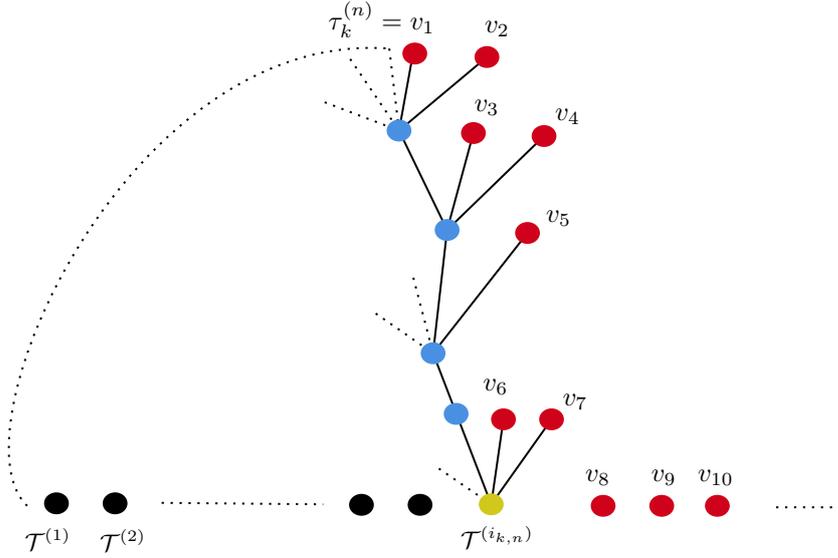
\end{center}

The offspring of these nodes $v_j$ for $j\ge1$ have not yet been explored. Hence, given $\mathcal{G}_k^{(n)}$, we can attach to each node $v_j$ a tree $T_j$ distributed like a BPVE in environment $\bmu_{h(v_j)}:=(\mu_k)_{k\geq h(v_j)}$, such that the trees $T_j$ are independent.\\
Note that the sequence $(H_{v_j})_{j\ge1}$ is non-increasing and that $\max_{j\geq 1}H_{v_j}\leq H_{\tau_k^{(n)}}$ (because the exploration is depth-first), hence the node $\tau_{k+1}^{(n)}$ belongs to the first tree $T_j$, $j\ge1$ whose height is at least $\gamma\sqrt{n}$.
Let us define the constant
\begin{equation}\label{defkappa}
\kappa=\frac{\delta\gamma c}{2^3},
\end{equation}
where $c$ is the constant from Condition \ref{(II)}.\\
Using the previous observations,  we have that conditionally on $\mathcal{G}_k^{(n)}$, for all $\eta>0$,
\begin{equation}
 \left\{\max_{1\leq j\leq \lfloor \kappa \sqrt{n}\rfloor}h(T_j)< \gamma\sqrt{n}\right\}\cap \left\{\sum_{j=1}^{\lfloor \kappa\sqrt{n}\rfloor} \left|T_j\right|\ge\eta n\right\}\subset\left\{\tau_{k+1}^{(n)}-\tau_k^{(n)}\ge\eta n\right\}.
\end{equation}

Therefore, in order to prove \eqref{eqn:LeGalltauks}, it is enough to show that there exist $\eta=\eta(\gamma,\delta)>0$ and  $N_2=N_2(\bmu,\delta)$ such that for all $n\ge N_2$ and for all $k\ge0$, the following holds almost surely:
\begin{equation}\label{spartanbeast}
\begin{split}
\1_{\left\{H_{\tau_k^{(n)}}\le A_2\sqrt{n}\right\}} \cdot\Pmu\left(\left. \max_{1\leq j\leq \lfloor \kappa \sqrt{n}\rfloor}h(T_j)\geq \gamma\sqrt{n}\,\right|\mathcal{G}_k^{(n)}\right)\leq \frac{\delta}{4},\\
\1_{\left\{H_{\tau_k^{(n)}}\le A_2\sqrt{n}\right\}} \cdot\Pmu\left(\left. \sum_{j=1}^{\lfloor \kappa\sqrt{n}\rfloor} \left|T_j\,\right| < \eta n\right|\mathcal{G}_k^{(n)}\right)\leq \frac{\delta}{4}.
\end{split}
\end{equation}

Let us prove the first inequality above. For an integer $0\le i \le A_2\sqrt{n}$, we denote $\bmu_{i}:=(\mu_k)_{k\geq i}$, and let us write simply $\cT$ for a $\bmu_i$-BPVE. By \eqref{augustoislate} and \eqref{eqn:heightsurvivalexactformula}, we have that
\begin{equation}\label{spartan}
\mathrm{P}_{\bmu_i}\left( h\left(\mathcal{T}\right)\ge \gamma \sqrt{n}\right)\le \frac{1}{\sum_{m=i}^{i+\lfloor\gamma\sqrt{n}\rfloor -1} \mu_m(\{0\})}. 
\end{equation}
By Condition \ref{(II)}, there exists $N_3=N_3(\bmu,A_2,\gamma)$ such that, for all $n\ge N_3$,
\[
\min_{0\le j\le 2A_2\gamma^{-1}+2} \frac{\sum_{m=j\lceil\gamma\sqrt{n}/2\rceil }^{(j+1)\lceil\gamma\sqrt{n}/2\rceil -1} \mu_m(\{0\})}{\sqrt{n}}\ge \frac{\gamma c}{4},
\]
hence, by \eqref{spartan}, for all $n\ge N_3$,
\begin{equation}
\max_{0\le i \le A_2\sqrt{n}}\mathrm{P}_{\bmu_i}\left( h\left(\mathcal{T}\right)\ge \gamma \sqrt{n}\right)\le \frac{4}{\gamma c \sqrt{n}}. 
\end{equation}
Therefore, using the fact that if $H_{\tau_k^{(n)}}\le A_2\sqrt{n}$ then $h(v_j)\le A_2\sqrt{n}$ for all $j\ge 1$, applying a union bound and using \eqref{defkappa}, we obtain that, for $n\ge N_3$ and for all $k\ge0$,
\begin{equation}\label{oats2}
\1_{\left\{H_{\tau_k^{(n)}}\le A_2\sqrt{n}\right\}} \cdot\Pmu\left(\left. \max_{1\leq j\leq \lfloor \kappa \sqrt{n}\rfloor}h(T_j)\geq \gamma\sqrt{n}\right|\mathcal{G}_k^{(n)}\right)\leq \frac{4\kappa}{\gamma c}\le \frac{\delta}{4}.
\end{equation}
This proves the first inequality of \eqref{spartanbeast} and we now turn to the proof of the second inequality. Note that \eqref{oats} and \eqref{oats2} do not depend on $\eta$. To prove this second inequality, define the constants
\begin{equation}\label{defcsts}
\begin{split}
\alpha &= \frac{c^2\kappa}{2^{15}\sigma^6 \ln(4/\delta)},\ 
\beta = \frac{\alpha c}{2^{10}\cdot\sigma^2},\
\eta = \frac{\alpha c }{2^6}=\frac{c^4\delta\gamma}{2^{24}\sigma^6 \ln(4/\delta)}.
\end{split}
\end{equation}
For $0\le i \le A_2\sqrt{n}$, by \eqref{augustoislate} and \eqref{eqn:heightsurvivalexactformula}, we have that
\begin{equation}\label{spartan2}
\frac{1}{1+\sum_{m=i}^{i+\lfloor\alpha\sqrt{n}\rfloor +1} \sigma^2_m}\le \mathrm{P}_{\bmu_i}\left( h\left(\mathcal{T}\right)\ge \alpha \sqrt{n}\right)\le \frac{1}{\sum_{m=i}^{i+\lfloor\alpha\sqrt{n}\rfloor -1} \mu_m(\{0\})}. 
\end{equation}
By Condition \ref{(I)} and Lemma \ref{lem:deterministicLLNcuttopieces} for the lower bound, and Condition \ref{(II)} for the upper bound, there exists $N_4=N_4(\bmu,A_2,\alpha)$ such that, for all $n\ge N_4$,
\begin{equation}\label{pop0}\begin{split}
\max_{0\le j\le A_2\alpha^{-1}/2+1} \frac{\sum_{m=j\lceil2\alpha\sqrt{n}\rceil }^{(j+1)\lceil2\alpha\sqrt{n}\rceil}\sigma^2_m}{\sqrt{n}}&\le 4\alpha \sigma^2,\\
\min_{0\le j\le 2A_2\alpha^{-1}+2} \frac{\sum_{m=j\lceil\alpha \sqrt{n}/2\rceil }^{(j+1)\lceil\alpha\sqrt{n}/2\rceil -1} \mu_m(\{0\})}{\sqrt{n}}&\ge \frac{\alpha c}{4}
\end{split}
\end{equation}
hence, by \eqref{spartan2}, for all $n\ge N_4$,
\begin{equation}\label{pop1}
\frac{1}{8\alpha \sigma^2 \sqrt{n}}\le \min_{0\le i \le A_2\sqrt{n}}\mathrm{P}_{\bmu_i}\left(h\left(\mathcal{T}\right)\ge \alpha \sqrt{n}\right)\le 
\max_{0\le i \le A_2\sqrt{n}}\mathrm{P}_{\bmu_i}\left(h\left(\mathcal{T}\right)\right)\le 
\frac{4}{\alpha c \sqrt{n}}, 
\end{equation}
where we assumed w.l.o.g.~that $N_4\ge 1/(4\alpha\sigma^2)^2$. 
To lighten the following equations, denote $Z_{\lfloor \alpha\sqrt{n}\rfloor}:=Z_{\lfloor \alpha\sqrt{n}\rfloor}(\cT)$ the number of vertices at height $\lfloor \alpha\sqrt{n}\rfloor$ in $\cT$. 
By Paley-Zygmund's inequality, we have that for all $0\le i \le A_2\sqrt{n}$,
\begin{equation}\label{pop2}
\begin{split}
\mathrm{P}_{\bmu_i}\left(\left. Z_{\lfloor \alpha\sqrt{n}\rfloor}\ge \frac{\alpha c \sqrt{n}}{32}\right| h(\mathcal{T})\ge \lfloor \alpha \sqrt{n}\rfloor\right)& \ge 
\frac{  \mathrm{E}_{\bmu_i}\left[\left. Z_{\lfloor \alpha \sqrt{n}\rfloor}\right|Z_{\lfloor \alpha \sqrt{n}\rfloor}>0\right]^2    }{   \mathrm{E}_{\bmu_i}\left[\left. Z_{\lfloor \alpha \sqrt{n}\rfloor}^2\right|Z_{\lfloor \alpha \sqrt{n}\rfloor}>0\right]    }\\
& \quad \times\left(1-\frac{\alpha c \sqrt{n}}{32 \mathrm{E}_{\bmu_i}\left[\left. Z_{\lfloor \alpha \sqrt{n}\rfloor}\right|Z_{\lfloor \alpha \sqrt{n}\rfloor}>0\right]}\right)^2
\end{split}
\end{equation}
By \eqref{pop0} and \eqref{pop1}, for $n\ge N_4$ and for all $0\le i \le A_2\sqrt{n}$, we have that
\begin{equation}
\begin{split}
  \mathrm{E}_{\bmu_i}\left[\left. Z_{\lfloor \alpha \sqrt{n}\rfloor}\right|Z_{\lfloor \alpha \sqrt{n}\rfloor}>0\right]&\ge \frac{\alpha c \sqrt{n}}{4},\\
\mathrm{E}_{\bmu_i}\left[\left. Z^2_{\lfloor \alpha \sqrt{n}\rfloor}\right|Z_{\lfloor \alpha \sqrt{n}\rfloor}>0\right]
&\le   8\alpha\sigma^2\sqrt{n}  \left(1+\sum_{m=i}^{i+\lfloor \alpha \sqrt{n}\rfloor-1} \sigma_m^2\right)
\le 2^5\alpha^2\sigma^4 n.
\end{split}
\end{equation}
By \eqref{pop2}, we have that, for all $n\ge N_4$, for all $0\le i \le A_2\sqrt{n}$
\begin{equation}\label{pop2bis}
\begin{split}
\mathrm{P}_{\bmu_i}\left(\left. Z_{\lfloor \alpha\sqrt{n}\rfloor}\ge \frac{\alpha c \sqrt{n}}{32}\right| h(\mathcal{T})\ge \lfloor \alpha \sqrt{n}\rfloor\right)& \ge 
\frac{c^2}{512 \sigma^4}\times \frac{15}{16}\ge 
\frac{c^2}{2^{10} \sigma^4}.
\end{split}
\end{equation}
Now, for all $0\le i \le A_2\sqrt{n}$, letting $(Z_k')$ be a BPVE in environment $\bmu_{i+\lfloor \alpha \sqrt{n}\rfloor}$ with $Z_0'=\lceil \alpha c \sqrt{n}/32\rceil$, we have by Lemma \ref{lem:offspringvarianceandDoob}
\begin{equation}\begin{split}
\mathrm{P}_{\bmu_{i}}\left(\left.Z_{\lfloor \alpha\sqrt{n}\rfloor+\lfloor \beta\sqrt{n}\rfloor}\left(\mathcal{T}\right)< \eta \sqrt{n}\right|  Z_{\lfloor \alpha\sqrt{n}\rfloor}\ge \frac{\alpha c \sqrt{n}}{32} \right)&\le \mathrm{P}_{\bmu_{i+\lfloor \alpha \sqrt{n}\rfloor}}\left(Z_{\lfloor \beta\sqrt{n}\rfloor}'< \frac{\alpha c}{64}\sqrt{n}\right)\\
&\le 
\mathrm{P}_{\bmu_{i+\lfloor \alpha \sqrt{n}\rfloor}}\left(\left|Z_{\lfloor \beta\sqrt{n}\rfloor}'-Z_0'\right|> \frac{Z_0}{2}'\right)\\
&\le \frac{128\left(\sum_{m=i+\lfloor \alpha \sqrt{n}\rfloor}^{i+\lfloor \alpha \sqrt{n}\rfloor+\lfloor \beta \sqrt{n}\rfloor}\sigma_m^2\right)}{\alpha c \sqrt{n}}\\
&\le \frac{128}{\alpha c}\max_{0\le j\le (A_2+\alpha)\beta^{-1}} \hspace{-1mm}\hspace{-1mm}\hspace{-3mm}\frac{\sum_{m=j\lfloor 2\beta \sqrt{n}\rfloor}^{(j+1)\lfloor 2\beta \sqrt{n}\rfloor}\sigma_m^2}{\sqrt{n}}.
\end{split}
\end{equation} 
Again, by Condition \ref{(I)} and Lemma \ref{lem:deterministicLLNcuttopieces}, there exists $N_5=N_5(\bmu,\beta)$ such that, for all $n\ge N_5$, we have that
\begin{equation}\label{pop3}
\begin{split}
\mathrm{P}_{\bmu_{i}}\left(\left.Z_{\lfloor \alpha\sqrt{n}\rfloor+\lfloor \beta\sqrt{n}\rfloor}\left(\mathcal{T}\right)< \eta \sqrt{n}\right|  Z_{\lfloor \alpha\sqrt{n}\rfloor} \ge \frac{\alpha c \sqrt{n}}{32} \right)
&\le \frac{512 \beta\sigma^2}{\alpha c}\le \frac{1}{2},
\end{split}
\end{equation} 
where we used the definition \eqref{defcsts} of $\beta$ in the second inequality.
Using \eqref{pop1}, \eqref{pop2bis} and \eqref{pop3} together with \eqref{defcsts}, we have that, for all $n\ge N_4\vee N_5$ and for all $0\le i\le A_2\sqrt{n}$,
\begin{equation}
\begin{split}
\mathrm{P}_{\bmu_i}\left(\left|\mathcal{T}\right|\ge \eta \sqrt{n}\right)\ge & 
\mathrm{P}_{\bmu_i}\left(h\left(\mathcal{T}\right)\ge \alpha \sqrt{n}\right)\\
&\times  
\mathrm{P}_{\bmu_i}\left(\left. Z_{\lfloor \alpha\sqrt{n}\rfloor}\left(\mathcal{T}\right)\ge \frac{\alpha c \sqrt{n}}{32}\right| h(\mathcal{T})\ge \lfloor \alpha \sqrt{n}\rfloor\right)\\
     & \times   
   \mathrm{P}_{\bmu_{i}}\left(\left.Z_{\lfloor \alpha\sqrt{n}\rfloor+\lfloor \beta\sqrt{n}\rfloor}\left(\mathcal{T}\right)\ge \eta \sqrt{n}\right|  Z_{\lfloor \alpha\sqrt{n}\rfloor}\left(\mathcal{T}\right) \ge \frac{\alpha c \sqrt{n}}{32} \right)\\
   \ge &\frac{1}{8\alpha\sigma^2\sqrt{n}}\times \frac{c^2}{2^{10}\sigma^4}\frac{1}{2}\\
   \ge &\frac{2\ln(4/\delta)}{\kappa\sqrt{n}}.
\end{split}
\end{equation}
Using the above, we obtain that, for all $n\ge N_4\vee N_5$ and if $n\ge 4\kappa^{-2}$, then 
\begin{equation}
\begin{split}
\1_{\left\{H_{\tau_k^{(n)}}\le A_2\sqrt{n}\right\}} \cdot\Pmu\left(\left. \sum_{j=1}^{\lfloor \kappa\sqrt{n}\rfloor} \left|T_j\right| < \eta n\right|\mathcal{G}_k^{(n)}\right)\le \left(1-\frac{2\ln(4/\delta)}{\kappa\sqrt{n}}\right)^{\lfloor\kappa\sqrt{n}\rfloor}\le \frac{\delta}{4}.
\end{split}
\end{equation}
This concludes the proof of \eqref{spartanbeast} and thus of \eqref{eqn:LeGalltauks}, with $N_2=N_3\vee N_4\vee N_5\vee (4\kappa^{-2})$ and $\eta=\eta(\gamma,\delta)$ as in \eqref{defcsts}.\\

We now want to prove that the sequence $(\tau_k^{(n)})_{k\geq 1}$ does not have too small increments before $\tau_k^{(n)}$ becomes larger than $n$.\\
By \eqref{eqn:LeGalltauks}, there exists $u>0$ and $N_6=N_6(\bmu)$ such that, for all $n\ge N_6$ and all $k\ge0$, 
\begin{equation}
\1_{\left\{H_{\tau_k^{(n)}}\le A_2\sqrt{n}\right\}} \cdot \mathrm{P}_{\bmu}\left(\left.\tau_{k+1}^{(n)}-\tau_k^{(n)} < u n\right|\mathcal{G}_k^{(n)}\right)\leq \frac{1}{2}.
\end{equation}

Note that as long as  $H_{\tau_k^{(n)}}\le A_2\sqrt{n}$, the random variable $\tau_{k+1}^{(n)}-\tau_k^{(n)}$ stochastically dominates $(un)\cdot B_k$, where $(B_k)_{k\ge0}$ is a sequence of i.i.d.~Bernoulli random variables with parameter $1/2$. Therefore, for all $K\ge0$, for all $n\ge N_6$,
\[
\mathrm{P}_{\bmu}\left(  \tau_{K}^{(n)} \le n, \max_{0\le k\le n}H_k\le A_2\sqrt{n} \right)\le \mathrm{P}_{\bmu}\left(   \sum_{k=0}^{K-1}B_k \le \frac{1}{u} \right).
\]
Because the sequence $(B_k)_{k\geq 0}$ satisfies the law of large numbers, there exists $K$ (depending on $u$ only, and $u$ does not depend on other parameters), such that, for all $n\ge N_2$, 
\begin{equation}\label{fast}
\mathrm{P}_{\bmu}\left(  \tau_{K}^{(n)} \le n, \max_{0\le k\le n}H_k\le A_2\sqrt{n} \right)\le \frac{\delta}{8}.
\end{equation}
Using \eqref{eqn:LeGalltauks} again, there exists $\varepsilon=\varepsilon(\delta,\gamma)>0$ and $N_7=N_7(\bmu,\delta)\ge N_6$ such that, for all $n\ge N_7$ and all $k\ge0$, 
\begin{equation}
\1_{\left\{H_{\tau_k^{(n)}}\le A_2\sqrt{n}\right\}} \cdot \mathrm{P}_{\bmu}\left(\left.\tau_{k+1}^{(n)}-\tau_k^{(n)} < \varepsilon n\right|\mathcal{G}_k^{(n)}\right)\leq \frac{\delta}{8K}.
\end{equation}
Thus, we obtain that for all $n\ge N_7$,
\begin{equation}\label{haaaafin}
\begin{split}
&\mathrm{P}_{\bmu}\left(\left\{\max_{0\le k \le n}H_k\le A_2\sqrt{n}\right\}\cap\bigcup_{k\ge0}\left\{\tau_k^{(n)}\le n, \tau_{k+1}^{(n)}-\tau_k^{(n)}<\varepsilon n\right\}\right)\\
&\le \mathrm{P}_{\bmu}\left(\max_{0\le k \le n}H_k\le A_2\sqrt{n} , \tau_K^{(n)}\le n\right)+\sum_{k=0}^{K-1}  \mathrm{P}_{\bmu}\left(H_{\tau_k^{(n)}\le A_2\sqrt{n}}, \tau_{k+1}^{(n)}-\tau_{k}^{(n)}< \varepsilon n\right)\\
&\le \frac{\delta}{8}+K\times\frac{\delta}{8K}\le \frac{\delta}{4}.
\end{split}
\end{equation}
To conclude, define the events
\begin{align*}
E_{1,n}&=\left\{\max_{0\le k \le n}H_k\le A_2\sqrt{n}\right\},\\
E_{2,n}&= \bigcap_{k\ge0}\left(\left\{\tau_k^{(n)}>n\right\}\cup\left\{\tau_{k+1}^{(n)}-\tau_k^{(n)}\ge \varepsilon n\right\}\right).
\end{align*}
Observe that
\[
E_{1,n}\cap E_{2,n}\subset \left\{   \max_{0\le k\le n} \max_{0\le i\le \varepsilon n}\left( H_{k+i}-H_k\right)\le\gamma \sqrt{n}   \right\}
\]
Hence, using \eqref{oats} and \eqref{haaaafin}, we have that, for all $n\ge N_1\vee N_7$,
\begin{align*}
\mathrm{P}_{\bmu}\left(\max_{0\le k\le n} \max_{0\le i\le \varepsilon n}\left( H_{k+i}-H_k\right)>\gamma \sqrt{n}\right)&\le \mathrm{P}_{\bmu}\left(E_{1,n}^c\right)+\mathrm{P}_{\bmu}\left(E_{1,n},E_{2,n}^c\right)\\
&\le \frac{\delta}{4}+\frac{\delta}{4}<\delta.
\end{align*}
This proves the statement of the lemma, with $N=N_1\vee N_7$.
\end{proof}

We are now ready to prove Theorem \ref{thm:jointcvLukaHeight}.

\begin{proof}[Proof of Theorem \ref{thm:jointcvLukaHeight}]
Let $\bmu$ be a strictly critical environment satisfyring Conditions \ref{(I)}-\ref{(V)}.
By Corollary~\ref{cor:Luka}, it is enough to prove that for all $\gamma>0$, and all $\delta>0$, there exists $N=N(\bmu,\gamma,\delta)$ such that, for all $n\ge N$,  
\[
\Pmu\left(\sup_{1\leq k\leq n} \frac{1}{\sqrt{n}}\bigg|  H_k- \frac{2(X_k-I_k)}{\sigma^2}\bigg|  >\gamma\right)\le \delta.
\]
For $\varepsilon\in(0,1/2)$, define
 $\cE_{1,n}(\varepsilon)$ the event that there exist (random) integers $K\in\mathbb{N}$, and $0=k_0< k_1< \ldots < k_K<k_{K+1}= n$ such that: 
\begin{itemize}
\item[$(i)$] for all $0\leq i\leq K$, $k_{i+1}-k_i \leq 1+\varepsilon n$,
\item[$(ii)$] for all $1\leq i \leq K$, $\left|\frac{(X_{k_i}-I_{k_i})}{H_{k_i}}-\frac{\sigma^2}{2}\right| \leq \varepsilon$.
\end{itemize}
Recall the definition \eqref{badbadbad} of a $\varepsilon$-bad vertex and recall the relation \eqref{pipi}. Using these two, note that if an index $k$ is not $\varepsilon$-bad, then it satisfies the item $(ii)$ above, and if there are less than $\varepsilon n$ $\varepsilon$-bad vertices, then we can find a collection of indices that are not $\varepsilon$-bad such that there are at most $\varepsilon n$ $\varepsilon$-bad vertices between any two indices.  Hence, we have that
\[
\{\mathcal F_n\text{ has at most }\varepsilon n \ \varepsilon\text{-bad vertices}\}\subset \cE_{1,n}(\varepsilon).
\]
Thus, by Proposition~\ref{prop:fewbadvertices}, 
there exists $N_1=N_1(\bmu,\varepsilon,\delta)$ such that for all $n\ge N_1$,
\begin{equation}
\Pmu\left(\cE_{1,n}^c(\varepsilon)\right)\le \frac{\delta}{10}.
\end{equation}
For all $n\ge0$, $\varepsilon>0$ and $\gamma>0$, let us define the event
\begin{equation}
\cE_{2,n}(\varepsilon,\gamma)=\bigcap_{\stackrel{1\leq j, k\leq n,}{\vert j-k\vert\leq 2\varepsilon n}}\left\{\frac{2(X_k-I_k)}{\sigma^2\sqrt{n}} < \frac{1}{\varepsilon^{1/2}},\ 
 \frac{1}{\sqrt{n}}\left\vert \frac{2(X_k-I_k)}{\sigma^2}- \frac{2(X_j-I_j)}{\sigma^2}\right\vert < \frac{\gamma}{10}\right\}.
\end{equation}

{To estimate the probability of the event above, we will use Corollary~\ref{cor:Luka}.  Let $(B_t)_{t\ge 0}$ be a standard Brownian motion under some measure $\mathbb{P}$. It is a.s.~uniformly continuous (hence bounded) on $[0,1]$. Thus there exists $\varepsilon_0=\varepsilon_0(\delta,\gamma)\in(0,1/2)$ such that for all $\varepsilon\in(0,\varepsilon_0)$,
\begin{equation}
\begin{split}
\mathbb{P}\left(\sup_{0\le t \le 1}\vert B_t\vert \ge \frac{1}{2\varepsilon^{1/2}}\right)+\mathbb{P}\left(\sup_{\stackrel{0\le s\le t \le 1}{|t-s|\le 2\varepsilon}}\vert B_t-B_s\vert \ge \frac{\gamma}{20}\right)  &\le \frac{\delta}{10}.
\end{split}
\end{equation}
By Corollary~\ref{cor:Luka}, for all $\varepsilon\in(0,\varepsilon_0)$, there exists $N_2=N_2(\bmu,\varepsilon,\delta,\gamma)\ge N_1(\bmu,\varepsilon,\delta)$ such that for all $n\ge N_2$,
\begin{equation}
\Pmu\left(\cE_{1,n}^c(\varepsilon)\cup \cE_{2,n}^c(\varepsilon,\gamma)\right)\le \frac{\delta}{5}.
\end{equation}
}

On $\cE_{1,n}(\varepsilon)\cap \cE_{2,n}(\varepsilon,\gamma)$, for all $1\leq i\leq K$, we have that
\begin{equation}\label{e2}
\left\vert \frac{X_{k_i}-I_{k_i}}{H_{k_i}}-\frac{\sigma^2}{2}\right\vert
\leq \varepsilon,
\end{equation}
and thus
\[
H_{k_i}\le \frac{2\left(X_{k_i}-I_{k_i}\right)}{\sigma^2+2\varepsilon}.
\]
Putting the two displays above together, on $\cE_1^n(\varepsilon)\cap \cE_2^n(\varepsilon,\gamma)$, we have for all $1\leq i\leq K$
\begin{equation}\label{teletele}
\left|\frac{2(X_{k_i}-I_{k_i})}{\sigma^2}-H_{k_i}\right|
\leq \left|\frac{X_{k_i}-I_{k_i}}{H_{k_i}}-\frac{\sigma^2}{2}\right|\cdot \frac{2 H_{k_i}}{\sigma^2}
\le \frac{4\varepsilon\left(X_{k_i}-I_{k_i}\right)}{\sigma^2(\sigma^2+2\varepsilon)}
\le\frac{4\varepsilon^{1/2}}{\sigma^2(\sigma^2+2\varepsilon)}\sqrt{n}.
\end{equation}
Let us define the event
\begin{equation}\label{teletele2}
\cE_{3,n}(\varepsilon,\gamma)=\left\{\max_{0\le k\le n} \max_{0\le i\le \varepsilon n}\left( H_{k+i}-H_k\right)\le\frac{\gamma}{10} \sqrt{n}\right\}.
\end{equation}
By Lemma \ref{KingCharles}, we have that
for all $\gamma>0$ and all $\delta>0$, there exists $\varepsilon_1=\varepsilon_1(\gamma,\delta)\in(0,\varepsilon_0(\gamma,\delta))$, such that for all $\varepsilon\in(0,\varepsilon_1)$, there exists $N_3=N_3(\bmu,\varepsilon,\delta,\gamma)\ge N_2(\bmu,\varepsilon,\delta,\gamma)$ such that if $n\ge N_3$, then
\begin{equation}\label{e3}
\mathrm{P}_{\bmu}\left( \cE_{3,n}^c(\varepsilon,\gamma) \right)<\frac{\delta}{10}.
\end{equation}

Using \eqref{teletele} and \eqref{teletele2}, we have that, on $\cE_{1,n}(\varepsilon)\cap \cE_{2,n}(\varepsilon,\gamma)\cap \cE_{3,n}(\varepsilon,\gamma)$, for all $j\in\{0,\dots,n\}$, there exists $i(j)\in\{1,\dots,K\}$ such that $|j-k_{i(j)}|\le \varepsilon n$, hence, we have that
\begin{equation}
\begin{split}
\left| H_j-\frac{2\left( X_j-I_j\right)}{\sigma^2}\right|
\le & \left\vert   H_j-H_{k_{i(j)}}              \right\vert+\left\vert H_{k_{i(j)}}-\frac{2\left(X_{k_{i(j)}}-I_{k_{i(j)}}\right)}{\sigma^2}\right\vert \\
&+ \left\vert \frac{2\left(X_{k_{i(j)}}-I_{k_{i(j)}}\right)}{\sigma^2}-\frac{2\left(X_j-I_j\right)}{\sigma^2}\right\vert\\
\le & \frac{\gamma}{10}\sqrt{n}+\frac{4\varepsilon^{1/2}}{\sigma^2(\sigma^2+2\varepsilon)}\sqrt{n}+\frac{\gamma}{10}\sqrt{n}.
\end{split}
\end{equation}
Choosing $\varepsilon$ small enough, depending on $\gamma$ and $\sigma^2$, the RHS of the last line can be made smaller than $\gamma\sqrt{n}$.\\
Hence, for all $\gamma>0$ and $\delta>0$, using the above together with \eqref{e2} and \eqref{e3}, there exists $\varepsilon=\varepsilon(\gamma,\delta)$ and $N=N(\bmu,\gamma,\delta)$ such that, for all $n\ge N$
\[
\Pmu\left(\max_{0\le j\le n} \frac{1}{\sqrt{n}}\left| H_j-\frac{2\left( X_j-I_j\right)}{\sigma^2}\right|>\gamma\right)\le \mathrm{P}_{\bmu}\left( \cE_{1,n}^c(\varepsilon) \cup\cE_{2,n}^c(\varepsilon,\gamma)\cup  \cE_{3,n}^c(\varepsilon,\gamma) \right)<\delta,
\]
which concludes the proof of the theorem.
\end{proof}

\subsection{Extracting large trees from the forest}\label{subsec:extraction}
In this section, we prove Theorem~\ref{th:main}, as a consequence of Theorem~\ref{thm:jointcvLukaHeight}. 
\\
Before doing so, we derive from Theorem~\ref{thm:jointcvLukaHeight} 
the convergence in distribution of the ordered excursions of the height process, following a standard reasoning originally due to Aldous~\cite{AldousLimic}.\\
For all $n\ge 1$ and $i\ge1$, let ${\bf e}_i^{(n)}$ be the $i$-th longest excursion above 0 of $n^{-1/2}(H_{\lfloor nt\rfloor})_{0\leq t\leq 1}$ that ends 
before time 1. For $n\ge1$ and $i\ge1$, 
let $l_i^{(n)}$ be the starting time of ${\bf e}_i^{(n)}$, and $r_i^{(n)}$  its ending time. Similarly, for $n\ge1$ and $i\ge1$, define ${\bf e}_i^{B}$, $l_i^B $ and $r_i^B$ the $i$-th longest excursion, its starting time and ending time, of $\frac{2}{\sigma}(\vert B_t\vert)_{0\leq t\leq 1}$, where $B$ is a standard Brownian motion. 
\begin{lemma}\label{almostdone}
Let $\bmu$ be a strictly critical environment satisfying Condition \ref{(I)}-\ref{(V)}. We have that
\begin{equation*}
({\bf e}_i^{(n)},l_i^{(n)})_{i\geq 1}\overset{(d)}{\longrightarrow }({\bf e}_i^B,l_i^B)_{i\geq 1},
\end{equation*}
as $n$ goes to infinity, in Skorokhod topology for the first coordinate and in product topology for the second.
\end{lemma}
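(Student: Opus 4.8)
The statement asserts joint convergence of the ordered excursions of the normalised height process to those of the reflected Brownian motion $\frac2\sigma|B|$. The plan is to deduce this from Theorem~\ref{thm:jointcvLukaHeight}, which already gives convergence of $n^{-1/2}H_{\lfloor nt\rfloor}$ to $\frac2\sigma|B_t|$ in Skorokhod topology, by a now-classical continuous-mapping argument for excursions; the reference point is the reasoning of Aldous in \cite{AldousLimic} (see also the treatment of excursion decompositions in \cite{AldousLimic}). The key technical point that makes the map ``excursions above $0$'' behave well under the limit is that the limiting process $\frac2\sigma|B|$ almost surely has no excursion of length exactly equal to that of another (all excursion lengths are distinct a.s.), its set of excursions is countable, and the complement of the open excursion intervals has zero Lebesgue measure; moreover $0$ is a.s.\ not a time at which an excursion starts or ends in a degenerate way. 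These are standard facts about Brownian motion / Brownian local time.

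\textbf{Key steps.} First I would recall the functional $\Psi_i$ that maps a nonnegative c\`adl\`ag path $f$ on $[0,1]$ with $f(0)=0$ to the pair $(\mathbf e_i(f), l_i(f))$ consisting of its $i$-th longest excursion interval (reparametrised as an element of the Skorokhod space, extended by $0$) together with the left endpoint of that interval, restricting to excursions that end before time~$1$. Second, I would establish that $\Psi:=(\Psi_i)_{i\ge1}$ is continuous at every path $f$ that is continuous, nonnegative, and has the property that its excursion lengths are all distinct and that $\{t: f(t)>0\}$ is a countable disjoint union of open intervals whose complement has no interior: this is the content of the continuity lemma used in \cite{AldousLimic}, and I would either cite it directly or reproduce the short argument (a uniformly small perturbation of such an $f$ cannot create, destroy, or re-order the finitely many longest excursions, nor move their endpoints by more than a controlled amount). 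Third, I would verify that $\frac2\sigma|B|$ falls in this set of ``good'' paths almost surely — distinctness of excursion lengths of reflected Brownian motion is classical, as is the structure of its zero set. Fourth, I would invoke Theorem~\ref{thm:jointcvLukaHeight} (in fact only its second coordinate, the convergence $n^{-1/2}H_{\lfloor n\cdot\rfloor}\Rightarrow \frac2\sigma|B_\cdot|$ in $D([0,1])$), note that the limit is a.s.\ continuous so the convergence can be upgraded to uniform convergence on $[0,1]$ along the Skorokhod coupling, and apply the continuous mapping theorem with the functional $\Psi$ to obtain $({\bf e}_i^{(n)},l_i^{(n)})_{i\ge1}\overset{(d)}{\longrightarrow}({\bf e}_i^B,l_i^B)_{i\ge1}$, in Skorokhod topology for the excursions and product topology over $i$ for the endpoints, which is exactly the claim.

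\textbf{Main obstacle.} I do not expect a serious obstacle here: the whole content has been pushed into Theorem~\ref{thm:jointcvLukaHeight}, and what remains is soft. The one point requiring minor care is the behaviour near the boundaries $0$ and $1$ — one must make sure the last, possibly incomplete, excursion (the one straddling time~$1$) is handled consistently (this is why the statement restricts to excursions ending before time~$1$), and that excursions of the discrete height process that are of mesoscopic but not macroscopic length do not accumulate so as to spoil the ordering; both are controlled because lengths are distinct in the limit and convergence is uniform, so for each fixed $i$ only finitely many excursions are relevant and their lengths are separated. A secondary nuisance is that $n^{-1/2}H_{\lfloor n\cdot\rfloor}$ is piecewise constant (hence its ``excursions above $0$'' must be defined with a fixed convention, e.g.\ via the closure of $\{t: H_{\lfloor nt\rfloor}>0\}$), but since the target is continuous these discretisation artefacts vanish in the limit. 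Thus the proof is essentially: quote the Aldous continuity lemma, check the good-path property for $\frac2\sigma|B|$, and apply continuous mapping to Theorem~\ref{thm:jointcvLukaHeight}.
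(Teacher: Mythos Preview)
There is a genuine gap in your continuity argument. The map $\Psi$ sending a nonnegative path to its ordered excursions above~$0$ is \emph{not} continuous at $\tfrac{2}{\sigma}|B|$ in the Skorokhod (equivalently, uniform) topology. Counterexample: take $f=\tfrac{2}{\sigma}|B|$ and $f_n=f+\tfrac1n$. Then $f_n\to f$ uniformly and $f_n\geq 0$, but $f_n>0$ everywhere on $[0,1]$, so $f_n$ has at most one excursion while $f$ has infinitely many. Your parenthetical justification---that a uniformly small perturbation cannot create, destroy, or re-order the longest excursions---is exactly what fails: an arbitrarily small positive shift erases every zero of $|B|$ and merges all excursions into one. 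The lemma from \cite{AldousLimic} you have in mind is for excursions of a process \emph{above its running minimum}, not for excursions above~$0$ of a nonnegative process; the former is robust because the zeros of $W-\underline W$ are forced by the structure of $W$, while the latter is fragile under perturbation.

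The paper repairs this with a small but essential device. Rather than extracting excursions from $n^{-1/2}H_{\lfloor n\cdot\rfloor}$ directly, it sets
\[
\widetilde H_n(t)=\frac{1}{\sqrt n}\Big(\frac{\sigma^2}{2}H_{\lfloor nt\rfloor}+I_{\lfloor nt\rfloor}\Big),
\]
which reaches a new \emph{strict} minimum (by $-1/\sqrt n$) each time a tree is completed, since $I$ drops by~$1$ at every root. By Theorems~\ref{thm:Luka} and~\ref{thm:jointcvLukaHeight} and continuous mapping, $\widetilde H_n\Rightarrow \sigma B$ (unreflected), and one can then apply an excursions-above-running-minimum result (Proposition~\ref{propapp}, adapted from~\cite{CKG}), whose hypotheses are met precisely because of these strict minima. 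The excursion intervals of $\widetilde H_n$ above its running minimum coincide with those of $n^{-1/2}H_{\lfloor n\cdot\rfloor}$ above~$0$, and the conclusion follows. In short, the extra term $I_{\lfloor nt\rfloor}$ manufactures the zeros that your bare continuous-mapping argument cannot produce from uniform convergence alone. Your approach is salvageable, but only by explicitly importing information about where $n^{-1/2}H_{\lfloor n\cdot\rfloor}$ actually vanishes---which is essentially what the paper's trick does in one line.
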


\begin{proof}
We are going to apply Proposition \ref{propapp}, stated in the appendix, as it is a slight modification of  Lemma~5.8 in~\cite{CKG}. To apply it, we need to have a better separation of the excursions of $(H_n)$.\\
Recall that, in \eqref{massagegun}, we established that every time $(X_n)$ has explored a whole tree, it reaches a new strict minimum that beats the previous minimum by $-1$. In other words, $(I_n)_n$, the running minimum of $(X_n)$, is equal to minus the number of trees fully explored at time $n$. On the other hand, $(H_n)$ starts at $0$ and returns to $0$ only when a tree has just been fully explored. Therefore, the process $(\frac{\sigma^2}{2}H_n+I_n)_n$ achieves a new strict minimum (by $-1$) every time a tree has been fully explored.\\
Let us define, for all $n\ge1$ and all $t\in[0,1]$,
\[
\widetilde{H}_n(t)=\frac{1}{\sqrt{n}}\left(\frac{\sigma^2}{2}H_{\lfloor nt\rfloor}+I_{\lfloor nt\rfloor}\right).
\] 
By Theorem \ref{thm:Luka}, Theorem~\ref{thm:jointcvLukaHeight} and the continuous mapping theorem, we have that $(\widetilde{H}_n(t))_{t\in[0,1]}$ converges to $(\sigma B_t)_{t\in[0,1]}$.\\
Now we can apply Proposition \ref{propapp} with $f=\sigma B$ and, for all $n\ge1$, $f_n=\widetilde{H}_n(\cdot)$. Moreover, let us define
{$J_n=-I_n+1$ (i.e.~the number of trees in $\mathcal{F}_n$), for all $1\le i\le J_n$,
 $t_{n,i}'= j(i)/n$ where $j(i)$ is the depth-first label of the root of the $i$-th tree, i.e.~the $i$-th time (multiplied by $n$) $\widetilde{H}_n(\cdot)$ is at a new strict minimum. 
}

The assumptions on $f$ are obtained by well known properties of Brownian motion. The facts that $t_{n,i}\in[0,1]$, that $f_n(t_{n,i})<f_n(s)$ for all $s<t_{n,i}$ and that $f_n(s)>f_n(t_{n,J_n})$ for all $s>t_{n,J_n}$ are easily seen to be satisfied by construction. In addition, remark that $\sqrt{n}\max_{1\le i < J_n}(f_n(t_{n,i})-f_n(t_{n,i+1}))=1$, 
thus we have that $\max_{1\le i < J_n}(f_n(t_{n,i})-f_n(t_{n,i+1}))$ goes to $0$ almost surely. Therefore, we have that the starting times (the $t_{n,i}$'s) and lengths of the excursions of $\widetilde{H}_n(\cdot)$ above its running minimum converge to those of $\sigma B$.\\
Now, remark that the starting times and lengths of the excursions of $\widetilde{H}_n(\cdot)$ are the starting times and lengths of the excursions of $(H_{\lfloor nt\rfloor}/\sqrt{n})$ strictly above $0$. This yields the convergence of $(l^{(n)}_i,r^{(n)}_i-l^{(n)}_i)$ towards $(l^{B}_i,r^{B}_i-l^{B}_i)$. Finally, the convergence of ${H}_{\lfloor n\cdot\rfloor}/\sqrt{n}$ towards $\frac{2}{\sigma}{B}$ on $[0,1]$ imply the convergence on the excursion intervals (which are non-empty, as $,r_i^B>l_i^B$ for all $i\geq 1$), and this concludes the proof of the lemma.
\end{proof}

\begin{proof}[Proof of Theorem~\ref{th:main}]
By Skorokhod's representation theorem, we can assume that this convergence holds $\Pmu$-almost surely. Then, for every $\varepsilon>0$, there exists $\delta=\delta(\varepsilon)$ small enough such that, 
with probability at least $1-\varepsilon$, 
the first excursion ${\bf e}^B(\delta)$ above 0 of $(\vert B_t\vert)_{0\leq t\leq 1}$ of length at least $\delta$ is entirely comprised in $[0,1/2]$. Moreover, there exists $\delta'=\delta'(\delta,\varepsilon)$ small enough 
such that with probability at least $1-\varepsilon$, no other excursion starting in $[0,1/2+\delta']$ has length at least $\delta-2\delta'$. This is due to the fact that for every $\delta >0$, if ${\bf e}$ is an excursion above $0$ of a standard Brownian motion, then the map $x\mapsto \dP(\ell({\bf e}) >x\,\vert \ell({\bf e}) >\delta/2)$ is continuous on $(\delta, \infty)$, where $\ell({\bf e})$ denotes the length of ${\bf e}$. 
\\
Denote ${\bf e}^{(n)}(\delta,\delta')$ the first excursion of $n^{-1/2}(H_{\lfloor nt\rfloor})_{0\leq t\leq 1}$ having length at least $\delta-\delta'$. Then Lemma \ref{almostdone} yields that, on these events,
\[
{\bf e}^{(n)}(\delta,\delta')\underset{n\to \infty}{\longrightarrow } {\bf e}^{B}(\delta)
\]
in Skorokhod space, as ${\bf e}^B(\delta)$ is the only excursion long enough starting in $[0,1/2+\delta']$ towards which ${\bf e}^{(n)}(\delta,\delta')$ can converge. But the distribution of ${\bf e}^{(n)}(\delta,\delta')$ coincides with that of the height process of $\cT_{\lceil (\delta-\delta') n\rceil}$, 
on the event that $n^{-1/2}(H_{\lfloor nt\rfloor})_{0\leq t\leq 1}$ has such an excursion before time 1 (which has probability at least $1-2\varepsilon$ for $n$ large enough). 
Hence Theorem~\ref{th:main} follows from this, 
from the construction of the CRT (by \cite[Equation~(8)]{CKG}, convergence of the excursions in Skorokhod space implies that of the underlying trees in Gromov-Hausdorff-Prokhorov metric) and from the usual scaling property of the Brownian motion.
\end{proof}

\begin{appendix}
\section*{}
\begin{proposition}\label{propapp}
Let $f:[0,1]\to \mathbb{R}$ be a continuous function, let $E(f)$ be the set of non-empty intervals $e=(l,r)\subset [0,1]$ such that $f(l)=\inf_{s\leq l}f(s)=f(r)$ and $f(s)>f(l)$ for all $s\in(l,r)$. 
We call such an interval $e$ an excursion of $f$ above its running minimum, and we call $r-l$ its length. We will assume furthermore that
\begin{itemize}
\item for all $\varepsilon>0$,  $E(f)$ contains only finitely many excursions of length greater than or equal to $\varepsilon$;
\item the set $[0,\sup_{e_\in E(f), s\in e} s]\setminus \bigcup_{e\in E(f)} e$ has Lebesgue measure $0$;
\item if $(l_1,r_1),(l_2,r_2)\in E(f)$ and $l_1<l_2$, then $f(l_1)>f(l_2)$.
\end{itemize}
Let $(f_n)_{n\ge1}$ be a sequence of c\`adl\`ag functions such that $f_n\to f$ in the Skorokhod sense. For each $n\ge1$, let $J_n\ge 1$, $(t_{n,i})_{1\le i\le J_n}$ be a collection of strictly increasing numbers in $[0,1]$, with $t_{n,1}=0$, such that $f_n(t_{n,i})< f_n(s)$ for all $s<t_{n,i}$, $f_n(s)\ge f_n(t_{n,J_n})$ for all $s>t_{n,J_n}$,  and $\lim_{n\to \infty} \max_{1\le i < J_n}(f_n(t_{n,i})-f_n(t_{n,i+1}))=0$.\\
Then, we have that
\[
\left\{(t_{n,i},t_{n,i+1}-t_{n,i}); {1\le i< J_n}\right\}\longrightarrow \left\{(l,r-l); (l,r)\in E(f)\right\},
\]
as $n$ goes to infinity and where the convergence holds in the topology of vague convergence of counting measures on $[0,1]\times (0,1]$. 
\end{proposition}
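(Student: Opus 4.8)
\textbf{Proof proposal for Proposition~\ref{propapp}.}

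The plan is to verify the two defining properties of vague convergence of counting measures on $[0,1]\times(0,1]$: namely, that for the point configurations $\Xi_n:=\{(t_{n,i},t_{n,i+1}-t_{n,i}):1\le i<J_n\}$ and $\Xi:=\{(l,r-l):(l,r)\in E(f)\}$, one has $\Xi_n(K)\to\Xi(K)$ along a suitable class of test rectangles, or equivalently that each point of $\Xi$ is the limit of a point of $\Xi_n$ and no spurious mass accumulates. Fix $\varepsilon>0$ and work only with the intervals (excursions) of length $\ge\varepsilon$; by the first hypothesis on $f$ there are finitely many of them, say $e_1=(l_1,r_1),\dots,e_p=(l_p,r_p)$, and by the third hypothesis their left endpoints are totally ordered with strictly decreasing values of $f$ at those endpoints. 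The first step is to translate the problem into one about the running-minimum processes: let $\underline f(t)=\inf_{s\le t}f(s)$ and $\underline f_n(t)=\inf_{s\le t}f_n(s)$. Skorokhod convergence $f_n\to f$ with $f$ continuous upgrades to uniform convergence $f_n\to f$, hence $\underline f_n\to\underline f$ uniformly on $[0,1]$; the excursions of $f$ above $\underline f$ are exactly the $e_j$, and the points $t_{n,i}$ are precisely the locations where $\underline f_n$ makes a (downward) jump or, more precisely, where $f_n-\underline f_n$ returns to $0$ and then leaves it, with the extra control $\max_i(f_n(t_{n,i})-f_n(t_{n,i+1}))\to 0$ saying that the jumps of $\underline f_n$ vanish uniformly.

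The second step, the core of the argument, is to show that for each excursion $e_j=(l_j,r_j)$ of $f$ with $r_j-l_j\ge\varepsilon$ there is, for all large $n$, a unique index $i=i_n(j)$ with $(t_{n,i_n(j)},t_{n,i_n(j)+1})$ converging to $(l_j,r_j)$, and conversely that every ``long'' interval $(t_{n,i},t_{n,i+1})$ of $f_n$ (say of length $\ge\varepsilon/2$) is close to some $e_j$. For the forward direction: pick a point $s_j\in(l_j,r_j)$ with $f(s_j)>\underline f(s_j)+\rho$ for some $\rho>0$ (possible since $f>\underline f$ on the open excursion and the excursion is nondegenerate); pick also points $l_j^-<l_j$ and $r_j^+>r_j$ with $f(l_j^-)<\underline f(l_j)$ is \emph{not} available since $f(l_j)=\underline f(l_j)$, so instead use the strict-minimum structure: because $f(l_j)=\underline f(l_j)=f(r_j)$ and the left endpoints have strictly decreasing $f$-values, one can find $\rho_j>0$ so that $\underline f(r_j+\eta)<\underline f(l_j-\eta)-\rho_j$ for $\eta$ small, isolating the excursion. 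For $n$ large, uniform closeness of $f_n,\underline f_n$ to $f,\underline f$ then forces $f_n-\underline f_n$ to be $>\rho/2$ near $s_j$ and to be very small (hence to have a return to near-$0$) somewhere in $(l_j-\eta,l_j+\eta)$ and again in $(r_j-\eta,r_j+\eta)$; combined with the vanishing-jump hypothesis this pins the endpoints $t_{n,i},t_{n,i+1}$ of the $f_n$-interval containing $s_j$ to within $\eta$ of $l_j,r_j$. Uniqueness follows because two distinct long $f_n$-intervals cannot both be within $\eta$ of the same $e_j$ once $\eta$ is smaller than half the minimal gap between consecutive $e_j$'s. For the converse (no extra mass): if some $(t_{n,i},t_{n,i+1})$ had length $\ge\varepsilon$ but stayed bounded away from all $e_j$, then on that interval $f_n-\underline f_n>0$ throughout while $f_n-\underline f_n$ is uniformly close to $f-\underline f$, which would produce an excursion of $f$ of length $\ge\varepsilon$ not among $e_1,\dots,e_p$, a contradiction; the second hypothesis (the complement of the excursions has measure zero) is what rules out $f$ having a long stretch where $f=\underline f$ without an honest excursion nearby, and handles the tail interval after $t_{n,J_n}$.

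The third step is bookkeeping: having matched the long intervals, let $\varepsilon\downarrow 0$. Vague convergence on $[0,1]\times(0,1]$ only tests against compactly supported continuous functions, equivalently against sets bounded away from the ``length $=0$'' boundary, so it suffices to have the matching for intervals of length $\ge\varepsilon$ for every fixed $\varepsilon>0$; the short intervals are invisible in the vague topology. Assembling: for any $\varepsilon>0$ not equal to one of the finitely many excursion lengths of $f$, $\Xi_n(\,[0,1]\times[\varepsilon,1]\,)=p=\Xi(\,[0,1]\times[\varepsilon,1]\,)$ for $n$ large, and the $p$ points converge individually; this gives vague convergence.

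The main obstacle I expect is the \textbf{isolation of excursion endpoints}: showing that a return of $f_n-\underline f_n$ to (near) zero happens in a small neighbourhood of each $l_j$ and $r_j$ and nowhere spuriously in between, \emph{uniformly}, using only uniform closeness of the running minima plus the hypothesis $\max_i(f_n(t_{n,i})-f_n(t_{n,i+1}))\to0$. The delicate point is that $\underline f_n$ could in principle descend gradually (through many tiny jumps) across a region where $\underline f$ is constant, creating many short $f_n$-intervals that must be shown not to coalesce into a spurious long one and not to perturb the location of the genuine long ones; the vanishing-jump hypothesis is exactly the quantitative input that defeats this, and the bulk of the work is turning it into the clean statement that the left/right endpoints of the long $f_n$-intervals converge to those of the $e_j$. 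This is precisely the content of Lemma~5.8 in~\cite{CKG}, adapted here to allow $f_n$ to be c\`adl\`ag rather than continuous and to start the marked times at $t_{n,1}=0$; the adaptation is routine once the endpoint-isolation step is in place.
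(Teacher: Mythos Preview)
Your proposal is correct and follows essentially the same route as the paper: the paper's proof consists entirely of invoking Lemma~5.8 of~\cite{CKG} and noting that the adaptations (continuous $f$ supported on $[0,1]$, dropping part of their condition~(2) and the second half of their condition~$(i)$) are routine. What you have written is a faithful sketch of the argument behind that lemma---upgrading Skorokhod to uniform convergence since the limit is continuous, isolating each long excursion of $f$ via the strict-minimum structure and the vanishing-jump hypothesis, matching it to a unique long $f_n$-interval, ruling out spurious long intervals, and concluding by letting $\varepsilon\downarrow 0$---so there is nothing to add.
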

\begin{proof}
This is a slight modification of Lemma~5.8 in~\cite{CKG}, whose proof can be easily adapted. The main difference is that we assume here that $f$ is continuous and supported on $[0,1]$. 
We assume in our statement that conditions (4), (5) and (6) of \cite{CKG} are satisfied. Conditions (1), (3) of \cite{CKG} are easily checked, by continuity, while condition (2) of \cite{CKG} can be dropped because $f$ is supported on $[0,1]$. 
Similarly, we do not need to assume the second part of condition $(i)$ of \cite{CKG} on the sequences $(t_{n,i})$.\\
The proof of Lemma 5.8 in \cite{CKG} can be followed line by line, with simple adaptations to our case.
\end{proof}
\end{appendix}
\bibliographystyle{alpha} 
\bibliography{gwre}       


\end{document}